\definecolor{red}{rgb}{0.7,0.15,0.15}
\definecolor{green}{rgb}{0,0.5,0}
\definecolor{blue}{rgb}{0,0,0.7}
\makeatletter \@addtoreset{equation}{section}
\newtheorem{theorem}{Theorem}[section]
\newtheorem{lemma}[theorem]{Lemma}
\newtheorem{proposition}[theorem]{Proposition}
\newtheorem{definition}[theorem]{Definition}
\newtheorem{remark}[theorem]{Remark}
\def \E{\mathbb{E}}
\def \F{\mathbb{F}}
\def \H{\mathbb{H}}
\def \M{\mathbb{M}}
\def \P{\mathbb{P}}
\def \R{\mathbb{R}}
\def \S{\mathbb{S}}
\def \U{\mathbb{U}}
\def \V{\mathbb{V}}
\def\Ac{{\cal A}}
\def\Bc{{\cal B}}
\def\Cc{{\cal C}}
\def\Fc{{\cal F}}
\def\Hc{{\cal H}}
\def\Lc{{\cal L}}
\def\Mc{{\cal M}}
\def\Pc{{\cal P}}
\def\Qc{{\cal Q}}
\def\Uc{{\cal U}}
\def\Vc{{\cal V}}
\def\eps{\varepsilon}
\def\erm{\mathrm{e}}
\def\drm{\mathrm{d}}
\title{Mean--field moral hazard for optimal energy demand response management\footnote{The authors gratefully acknowledge the support of the ANR project PACMAN ANR-16-CE05-0027.} }
\author{Romuald {\sc \'Elie} \footnote{LAMA, Universit\'e Gustave Eiffel, France, romuald.elie@univ-mlv.fr} \and Emma {\sc Hubert} \footnote{LAMA, Universit\'e Gustave Eiffel, France, emma.hubert@univ-paris-est.fr} \and Thibaut {\sc Mastrolia} \footnote{CMAP, \'Ecole Polytechnique, Palaiseau, France, thibaut.mastrolia@polytechnique.edu.} \and Dylan {\sc Possama\"{i}} \footnote{Columbia University, IEOR department, USA, dp2917@columbia.edu}}
\date{\today}
\begin{document}

\maketitle

\begin{abstract}
We study the problem of demand response contracts in electricity markets by quantifying the impact of considering a continuum of consumers with mean--field interaction, whose consumption is impacted by a common noise. We formulate the problem as a Principal--Agent problem with moral hazard in which the Principal -- \textit{she} -- is an electricity producer who observes continuously the consumption of a continuum of risk--averse consumers, and designs contracts in order to reduce her production costs. More precisely, the producer incentivises each consumer to reduce the average and the volatility of his consumption in different usages, without observing the efforts he makes. We prove that the producer can benefit from considering the continuum of consumers by indexing contracts on the consumption of one Agent and aggregate consumption statistics from the distribution of the entire population of consumers. In the case of linear energy valuation, we provide closed--form expression for this new type of optimal contracts that maximises the utility of the producer. In most cases, we show that this new type of contracts allows the Principal to choose the risks she wants to bear, and to reduce the problem at hand to an uncorrelated one.
\medskip

\noindent{\bf Key words:}  Electricity markets, demand response models, moral hazard, mean--field games with common noise, McKean–Vlasov controlled SDEs. 

\medskip

\noindent{\bf AMS 2000 subject classifications:} Primary: 91A13; Secondary: 60H30, 93E20.

\medskip

\noindent{\bf JEL subject classifications:} C61, C73, D82, D86, L94.
\end{abstract}

\section{Introduction}

Consumption management, in particular energy efficiency and demand response, is undeniably one of the most important challenge in the energy sector, all the more since it is one of the basis to guarantee the fulfilment of EU objectives in terms of green energy and emission reduction. Indeed, on the one hand, electric energy cannot easily be stored, which is why utilities have always tried to directly match supply and demand by limiting the production rate of their power plants, by commissioning or decommissioning generators, or by importing electricity from other utilities. However, what can be achieved on the supply side is limited: some production units can take a long time to reach full capacity, some units can be very expensive to operate, or demand can sometimes exceed the capacity of all the available power plants combined. Demand response seeks to adjust {\it electricity demand} rather than {\it supply}. On the other hand, the business--as--usual criticism of renewable energy is based on a presumed inadequacy with regard to demand. In fact, adequacy could be addressed: the variability of renewable energy production could be monitored through consumption management. The development of smart meters, for example, is a big step in this direction, but the quality of their usage will be crucial. In addition, optimised energy tariffs could allow a better sharing of risks between producers and consumers. This is where most of the potential improvement in energy consumption lies. 

\medskip

Demand response is a change in the energy consumption of an electricity utility consumer to better match demand with supply. Utilities can report demand requests to their customers in a variety of ways, including contracts under which the consumer normally receives electricity at a lower price than the standard rate in exchange for significantly higher prices at certain peak periods chosen by the generator. The customer can then adjust his consumption by postponing tasks that are costly in terms of electricity, or by paying a high price. Many experiments have been developed to accurately assess the benefits of demand response programs on consumption, see the references cited by \citet*{aid2018optimal} for more details. In \cite{aid2018optimal}, the authors focus in particular on the large scale demand response experiment of Low Carbon London Pricing Trial. Nevertheless, demand response mechanisms face challenges which have to be tackled before one can claim that they provide a level of flexibility comparable to thermal power plants. In particular, demand response programs exhibit a substantial variance in the response of the consumer to price signal. This leads to uncertainty on the total response of the solicited population. This large variance, called the responsiveness effect, is a significant gap in demand response programs. 

\medskip 

In this perspective, A\"id, Possama\"i and Touzi formulate in \cite{aid2018optimal} the mechanism of demand response programs during a fixed finite period as a continuous--time Principal--Agent problem with moral hazard. The Agent -- \textit{He} -- is a risk--averse CARA consumer who has a baseline consumption of electricity, calibrated in view of his lifestyle (his preferences, the size and thermal insulation of his housing,...) and the electricity price. He can deviate from his original demand, by reducing both the average level of his consumption and its volatility if he has incentives to do it. The reduction of the consumption volatility can also be seen as an increase of consumer's responsiveness. His efforts are costly, and may depend on the nature of the corresponding usage of electricity (heating or air conditioning, lightning, television, washing machine, computers...). The Principal -- \textit{She} -- is a risk--averse CARA producer who has to satisfy the random electricity demand of the consumer. She is subject to energy generation costs, and also to consumption volatility costs, which account for the limited flexibility of electricity production. As always in contracting theory, she wants to find an optimal way to encourage the Agent to reduce the mean and the volatility of his consumption, in order to minimise the costs she faces. {\color{black}However, in a moral hazard framework, she cannot offer a compensation directly related to Agent's effort, because she cannot observe the latter, and she only observes Agent's deviation consumption in continuous--time.} Thanks to the recent works of \citet*{cvitanic2017moral, cvitanic2018dynamic}, the optimal contract in this framework consists in a sum of a deterministic payment that depends on the duration of the demand response, a linear payment on each infinitesimal deviation, and a linear payment on the realised squared volatility. The main results of this model are that optimal contracting allows the system to bear more risk as the resulting volatility may increase, and that the control of the consumption volatility can lead to a significant increase of responsiveness. 

\medskip

In our paper, we extend the framework defined in \cite{aid2018optimal} to a model with a continuum of Agents with mean--field interaction, whose consumption is impacted by a common noise. Until now, continuous--time Principal--Agent models with many agents were restricted to drift control, see \citet*{elie2019contracting}, \citet*{koo2008optimal}, or \citet*{goukasian2010optimal} for a finite number of interacting Agents, or \citet*{elie2019tale}, and more recently \citet*{carmona2018finite} for a continuum of Agents without any correlation\footnote{There was a much larger number of attempts in static or discrete--time settings, but always with finitely many agents. We refer the reader to the most notable works in this direction, among which we can find \citet*{holmstrom1982moral, mookherjee1984optimal, green1983comparison, harris1982asymmetric, nalebuff1983prizes, demski1984optimal, Itoh2004moral, rey-biel2008inequity, bartling2010intensity, grund2005envy, demougin2006output} as well as \citet*{neilson2010piece} or \citet*{kragl2015group}.}. The first motivation behind our extension is that it seems more realistic that the producer does not consider the agents one by one, but wants to optimise the consumption in mean for a large pool of similar and correlated consumers. Therefore, we consider that each consumer can deviate from his baseline consumption, and each consumption is subject to a common noise, which accounts for the common random environment where all the consumers evolve. In regard to energy, this common random environment can for instance be interpreted as consequences of meteorological variations. Hence, the problem of the producer is to manage a pool of similar consumers, whose consumption is subject to the same climate hazards. The second motivation is that considering a continuum of consumers with mean--field interaction can be profitable for the producer, as she has access to more information by observing all consumption profiles. Apart from the mean--field aspect and the correlation between Agents through the common noise, the problem formulation is voluntarily as close as possible to that of \cite{aid2018optimal}, and is developed in Section \ref{sec:formulation}.
{\color{black} Nevertheless, due to the complexity induced by the presence of a continuum of consumers in interaction, we do not rely on the results of \cite{aid2018optimal}, besides some straightforward algebra, and need to develop new techniques. More precisely, we simply adopt a similar framework, and use the calibrated parameters for our numerical part, while our technical results are inspired by the general framework developed by \citet*{cvitanic2018dynamic}, which we extend to a continuum of Agents with mean--field interactions.}

\medskip

We work under a classical mean--field framework, where all agents are identical: all consumers have the same characteristics, the same costs, the same risk--aversion parameter. This assumption is justified for a major electricity producer or provider, who has a sufficiently large number of similar consumers. It allows us to restrict the analysis to a representative Agent, who is a single consumer identical to others, and too small to have an impact on the global consumption. Moreover, this framework prevents us from indexing the compensation of a consumer on a particular consumption profile, except his own. This is indeed the case in the energy sector: the General Data Protection Regulation specifies that the Energy Distribution Organising Authorities (EDOA), the licensing authorities, have only the right to access aggregated (and therefore anonymous) data on electricity production and consumption in a given territory\footnote{Decree No. 2017-948 of 10 May 2017 on the procedures for making electricity and gas consumption data available to consumers and Decree No. 2017-976 of 10 May 2017 on the procedures for consumers to access electricity or natural gas consumption data and for suppliers to make such data available.}. Moreover, in France, a report\footnote{\textit{Pack de conformit\'e pour les Compteurs communicants} [Compliance package for communicating meters], Commission Nationale de l'Informatique et des Libert\'es [National Commission for Information Technology and Liberties].} stipulates that a provider or social landlord may use a person's consumption data to compile statistics, if the data are anonymous or aggregated and therefore do not allow the identification of a physical person.

\medskip

The goal of this paper is thus to find a way for the Principal to benefit from dealing with a continuum of consumers. The idea we develop is to add a component to the relevant contract proposed by the Principal to an Agent in models with volatility control, as in \cite{aid2018optimal, cvitanic2017moral, cvitanic2018dynamic}, which consists in two parts:
\begin{enumerate}[label=$(\roman*)$]
    \item one is indexed on the process controlled by the Agent, to incentivise him to make an effort on the average of his process;
    \item the second is indexed on the quadratic variation of the process controlled by the Agent, and incentivises him to make an effort on the volatility of his process. 
\end{enumerate}
Indeed, in our framework and taking into account the development of smart--meters, the Principal has access to a large quantity of anonymous consumption profiles. Therefore, she can compute empirical statistics from these data points. In particular, she can approximate the conditional law, with respect to the common noise, of the deviation consumption of the pool of consumers she manages. Indeed, this law is the limit of the empirical distribution of $N$--Agent's deviation consumption, and is conditional because of the correlation of the consumption by the common noise. Hence, she can for instance design a new contract in order to penalise a consumer who makes less effort, on average, than the rest of the pool, or to reward him if he makes more effort. The corresponding penalisation/reward is paid at a future time fixed in the contract. This approach is motivated by the recent development of applications that make it now possible to compare one's own consumption with that of similar households, or even with the lowest consuming households\footnote{see for example the phone application \textit{EDF \& Moi} by the french electricity producer and provider \textit{Electricité de France}, or the website of Synergy, an Australian producer and provider.}. Moreover, results of the study by \citet*{dolan2015neighbors} on energy efficiency have shown that comparing energy usage with similar households or providing financial incentives can lead to an average reduction in household energy usage of 7\%. They found on the one hand that the communication of the average consumption incentivises people to reduce their own consumption, and on the other hand that financial compensations are an efficient way to reduce consumption. Towards this objective, we study a new class of contracts, adding to the relevant contract for drift and volatility control, a component indexing the contract on the law of the deviation consumption of other consumers. Section \ref{sec:consumer_problem} is devoted to the intuition leading to this new type of contracts and the formal proofs are {\color{black} postponed} in Appendix \ref{sec:technical_proofs}. This section also investigates the representative consumer's optimal response to the proposed contract as well as the resulting mean--field equilibrium between consumers. 

\medskip

The resolution of the producer's problem is discussed in Section \ref{sec:principal_problem}. Based on the results of Section \ref{sec:consumer_problem}, the Principal's problem is reduced to a McKean--Vlasov problem, because she impacts through the contract, and so somehow indirectly controls, the efforts of the Agents as well as the conditional law of the consumption deviation of the Agents. The intuition for the Principal's problem in this mean--field framework comes from the $N$--Agents case. Following the reasoning of \'Elie, Mastrolia and Possama\"i in \cite{elie2019tale}, \textit{without common noise and for {\rm CARA} utility function}, the Principal becomes risk--neutral in the limit when $N \longrightarrow + \infty$, by classical propagation of chaos arguments. However, in our framework, \textit{with common noise}, the consumption deviations of the Agents become asymptotically independent, conditionally to the common noise. Therefore, a risk--averse Principal does not become risk--neutral in our case, and remains impacted by the residual risk arising from the common noise. Nevertheless, in addition to the case of a risk--averse Principal, we will study the case where her risk--aversion tends to zero, \textit{i.e.}, the case of a risk--neutral Principal. The study of the first--best case, when there is no moral hazard and thus the Principal can directly choose the efforts of the consumers, is {\color{black} postponed} to Appendix \ref{sec:firstbest}. 

\medskip

The main point of this work is to prove that the new contracts we developed are more profitable than the traditional ones. Section \ref{sec:classical_contract} is devoted to this comparison, by implementing classical contracts in our framework. In order to have closed--form solutions, we focus on a particular case: the linear energy value discrepancy case. The energy value discrepancy is the difference between a consumer's preference toward his deviation consumption and the production costs of this deviation. If this difference is positive, this means that the energy is more valuable for the consumer than it is costly for the producer. Conversely, if the difference is negative, a decrease in consumption will have less negative effects on a consumer's welfare than positive effects on the producer's savings. If this difference is linear with respect to the deviation consumption, it is easy to show that the utility of the producer is increased by the use of our new contracts. Moreover, in most cases, these new contracts induce more efforts from the consumers to reduce the average level of their consumption and with less volatility. More precisely, by using the values of the parameters calibrated in \cite{aid2018optimal}, we can see an increase:
\begin{itemize}
    \item in the utility of the Principal up to $50\%$ (respectively $15\%$);
    \item in the effort to reduce the average consumption up to $50\%$ (resp. $30\%$);
    \item in the variance responsiveness up to $4\%$ (resp. $3\%$);
\end{itemize}
depending on the correlation with the common noise, for a risk--neutral Principal (resp. for a Principal with the risk--aversion calibrated in \cite{aid2018optimal}). Moreover, the greater the variance explained by the common noise, the more significant the results are. Therefore, these new contracts could improve demand response during periods where consumption is strongly affected by weather conditions, for example in winter, when the risk of electricity blackouts is high, and thus demand response is more than needed.

\medskip

Throughout this work, we consider a Principal who can not observe the common noise, or at least that there exist some regulatory rules preventing her from using the common noise directly in the contract. This hypothesis is relatively well established in the field of energy consumption. Although some electricity suppliers offer different prices depending on the day or time of consumption (peak--period, off--peak period...), called \textit{time--variant pricing}\footnote{see for example the proposal of the Environmental Defense Fund in \url{https://www.edf.org/sites/default/files/time-variant_pricing_fact_sheet_-_april_2015.pdf}}, these tariff offers correspond more to an indirect indexation on the weather through the spot price of electricity. We find that the indexation of the contract on others is another way to indirectly index the contract on weather. Indeed, it allows the Principal to divide the deviation consumption in two parts: the part \textit{actually controlled} by the Agent, corresponding to the deviation corrected for climate hazards, and the common noise. Hence, she can offer a compensation indexed on the \textit{really controlled deviation} to encourage the Agent for making effort. If she is risk--averse, she can add to this contract a part indexed on others, which is in fact an indexation on the common noise, to share the remaining risk, even if regulatory rules prevent her from using the common noise directly in the contract. Therefore, in the case where the Principal is allowed to index the contract on the common noise, studied in Section \ref{sec:observable_common_noise}, we obtain the same form of contracts. We thus conclude that contracting on the conditional law or on the common noise is strictly equivalent.

\medskip

In addition to the practical contributions of our model, we develop along the way new technical results. As explained in \cite{cvitanic2017moral, cvitanic2018dynamic}, moral hazard problems in continuous--time in which the Agent controls the volatility of the output are notoriously harder to study. As such, and as far as we know, our model is the first one which combines a multi--agent setting (actually continuum of agents with mean--field interaction), with both drift and volatility controlled. Moreover and unlike in \cite{elie2019tale}, or \cite{carmona2018finite}, we also consider a setting with common noise. Though natural to model electricity consumers, the common noise induces a wide range of additional mathematical difficulties, which have only been recently addressed, see \citet*{carmona2016meana}, albeit only for pure mean--field games, without an additional Stackelberg on top, as in our setting. In particular, and again unlike in \cite{elie2019tale} or \cite{carmona2018finite}, the simplifying class of contracts we derive in Section \ref{sec:consumer_problem} cannot be proved almost by definition to be without loss of generality, and we need to use generalised notions of mean--field second--order BSDEs to obtain this fundamental result. Moreover, we emphasise that even though we work with a specific model, the {\it modus operandi} we develop in this paper can readily be extended to general moral hazard problems with a continuum of Agents with mean--field interaction and common noise. This is one of the reasons why we have made specific efforts to ensure that all our statements and definitions are completely mathematically rigorous, in particular the definition of the weak formulation for mean--field games with common noise and optimal control problems of McKean--Vlasov stochastic differential equations, as these two tools are the fundamental cornerstones of our approach. We believe that this will prove useful for other applications and generalisations of our techniques.

\medskip
In summary, this paper is organised as follows. Section \ref{sec:formulation} describes the model. Section \ref{sec:consumer_problem} provides the class of optimal contracts, and solves the representative consumer's problem, as well as the mean--field equilibrium. Section \ref{sec:principal_problem} is devoted to solve the Principal's problem under moral hazard with the new contracts. Section \ref{sec:classical_contract} allows the comparison of utilities and efforts with the case where only classical contracts are offered to the consumers. Section \ref{sec:observable_common_noise} provides the results when the contracts can be indexed directly on the common noise, and we investigate in Section \ref{sec:firstbest} the first--best problem, as a benchmark in which the Principal can directly monitor the efforts of the Agents. Section \ref{sec:conclusion} concludes.

\vspace{0.5em}
{\bf Notations.}  Throughout this paper, $T>0$ denotes some maturity fixed in the contract. We set $d$ a positive integer representing the number of different usages of electricity. Let $\mathbb{N}^\star :=\mathbb{N}\setminus\{0\}$. Throughout this paper, for every $d$--dimensional vector $b$ with $d \in \mathbb{N}^\star $, we denote by $b^{1},\ldots,b^{p}$ its coordinates, for any $1\leq i\leq d$. Let ${\bf 0}_d$ and ${\bf 1}_d$ be the vectors of size $d$ whose coordinates are all equal to respectively $0$ and $1$. For any $(\ell,c)\in\mathbb N^\star \times\mathbb N^\star $, we identify $\mathbb R^{\ell\times c}$ with the space of $\ell\times c$ matrices with real entries. Elements of the matrix $M\in\mathbb R^{\ell\times c}$ will be denoted by $(M^{i,j})_{1\leq i\leq \ell,\; 1\leq j\leq c}$, and the transpose of $M$ will be denoted by $M^\top\in\mathbb R^{c\times\ell}$. For any positive integer $n$ and for $(\alpha,\beta) \in \R^n\times\R^n$ we also denote by $\alpha \cdot \beta$ the usual inner product, with associated norm $\| \cdot \|$.

\medskip

For any positive integer $n$, and any $t \in [0,T]$, let $\Cc_t^n := \Cc ( [0,t], \R^n)$ denote the set of continuous functions from $[0,t]$ to $\R^n$, and $\Cc_t := \Cc_t^1$. On $\Cc ( [0,t], \R^n)$, define for $s \in [0,t]$ the evaluation mappings $\pi_s$ by $\pi_s(x) = x_s$ and the truncated supremum norms $\| \cdot \|_s$ by
\begin{align*}
  \| x \|_s = \sup_{u \in [0,s]} \|x_u\|, \; \text{ for } s \in [0,t].
\end{align*}
Unless otherwise stated, $\Cc_T^n$ is endowed with the norm $\| \cdot \|_T$. 

\medskip

For a measurable space $(\Omega, \Fc)$, let $\Pc(\Omega)$ denote the set of probability measures on $(\Omega, \Fc)$. In particular, for $t \in [0,T]$, we denote by $\Pc (\Cc_t^n)$ the set of all probability measures on $\Cc([0,t],\R^n)$. For $\mu \in \Pc (\Cc_T^n)$, let $\mu_t \in \Pc (\Cc_t^n)$ denote the image of $\mu$ under $\pi_{\cdot \wedge t}$. For $p \geq 0$ and a separable metric space $(E, \ell)$, let $\Pc^p(E)$ denote the set of $\mu \in \Pc(E)$ with $\int_E \ell^p (x, x^{\circ}) \mu ( \drm x) < + \infty$ for some (and thus
for any) $x^{\circ} \in E$. For $p \geq 1$ and $\mu, \nu \in \Pc^p(E)$, let $\ell_{E,p}$ denote the $p$--Wasserstein distance, given by
\begin{align*}
    \ell_{E,p} (\mu, \nu) := \inf \bigg\{ \bigg( \int_{E \times E}  \ell^p (x, y) \gamma (\drm x, \drm y) \bigg)^{1/p} : \gamma \in \Pc (E \times E) \text{ has marginals } \mu, \nu \bigg\}.
\end{align*}
The space $\Pc^p(E)$ is equipped with the metric $\ell_{E,p}$, and $\Pc(E)$ has the topology of weak convergence. Both are equipped with the corresponding Borel $\sigma$--fields, which coincides with the $\sigma$--field generated by the mappings $\mu \in \Pc^p(E)$ (resp. $\Pc(E)$)  $\longrightarrow \mu(F)$, $F$ being any Borel subset of $E$.

\section{Problem formulation}\label{sec:formulation}

\subsection{Informal description}\label{sec:informal}

\medskip

We work under a classical mean--field framework, where all agents are identical: all consumers have the same characteristics, the same costs, the same risk--aversion parameter... We thus restrict our study to a representative Agent, who is a single consumer, identical to a pool of others, and too small to impact the global consumption. In order to properly define the mathematical set up of our problem, we need a process representing the deviation consumption of the representative Agent, driven by an idiosyncratic noise for each usage ($d$--dimensional) and a common noise ($1$--dimensional). The Agent controls this process by choosing a pair $\nu := (\alpha, \beta)$, where $\alpha$ and $\beta$ are respectively $A$-- and $B$--valued, see \eqref{eq:def_A_B} below for the precise definition of $A$ and $B$. We denote for simplicity $U:= A \times B$. More specifically, $\alpha$ represents the effort of the consumer to reduce the nominal level of consumption and $\beta$ is the effort to reduce the variability of his consumption for each usage of electricity. We emphasise that $\alpha$ and $\beta$ are $d$--dimensional vectors, thus capturing the differentiation between different usages, e.g. refrigerator, heating or air conditioning, lightning, television, washing machine, computers... The set of admissible efforts, denoted by $\Uc = \mathcal{A} \times \mathcal{B}$, will be defined rigorously in the next subsection. We also define a vector $\sigma\in(0,+\infty)^d$ representing the variability of the Agent's consumption when he does not make any effort, and a constant $\sigma^{\circ} \in \R_+$ representing the correlation with the common noise. Therefore, for a chosen control $\nu = (\alpha, \beta) \in \Uc$, the Agent's control process can be written informally, for any $t\in[0,T]$, as
\begin{align}\label{eq:informal}
    X_t &= x_0 - \int_0^t \alpha_s \cdot {\bf 1}_d \drm s + \int_0^t  \sigma (\beta_s) \cdot \drm W_s + \int_0^t \sigma^{\circ} \drm W^{\circ}_s,\;
    \text{with} \; \sigma (b) := \big( \sigma^{1}  \sqrt{b^1},\dots, \sigma^{d}  \sqrt{b^d} \big)^\top, \text{ for } b\in(0,1]^d.
\end{align}

The state variable $X$ represents the consumer's deviation from the deterministic profile of his consumption. An effort $\nu$ induces a separable cost $c(\nu) := \frac12 c_\alpha (\alpha) + \frac12 c_\beta (\beta)$, where:
\begin{align*}
    c_{\alpha}(a) &:= \sum_{k=1}^{d} \dfrac{ ( a^{k} )^2 }{\rho^{k}}, \; a \in A,\; 
    \text{ and } \; c_{\beta}(b) := \sum_{k=1}^{d} \dfrac{ ( \sigma^{k} )^2 }{\lambda^{k} \eta^{k} } \Big( ( b^{k} )^{- \eta^{k} } -1 \Big), \; b \in B,
\end{align*}
for fixed $(\rho,\lambda,\eta)\in (0,+\infty)^d\times (0,+\infty)^d\times (1,+\infty)^d$. In particular, the cost of the effort in the drift term of $X$, denoted by $c_{\alpha}$, is a classical quadratic cost function, meaning that no effort for the Agent induces no intrinsic cost, and such that he has no interest to provide negative efforts. The cost associated with the effort in the volatility of $X$ prohibits the Agent from removing the volatility ($b^{k} >0$) and is equal to zero if the Agent makes no effort (case $b^{k}=1$). For technical reasons, we need to consider bounded efforts\footnote{Note that since $X$ is a deviation from a baseline consumption, the upper bound for the drift control is coherent, since the agent cannot consume a negative amount of electricity.}, we then set
\begin{align}\label{eq:def_A_B}
    A := [0, \rho^1 A_{\text{max}} ] \times \dots \times [0, \rho^d A_{\text{max}} ] \; \text{ and } \; B:= [B_{\text{min}}, 1]^d,
\end{align}
for some constants $A_{\text{max}} > 0$ and $B_{\text{min}} \in (0,1)$.

\medskip

{\color{black} One may note that, apart from the term induced by the common noise $W^\circ$, the previously defined mathematical set up is voluntarily the same as in \citet*{aid2018optimal}. Nevertheless, the introduction of the common noise forces us to rigorously write the definition of the weak formulation. In the following sections, we properly define a canonical space $\Omega$ for the representative Agent in Subsection \ref{sss:omega}, as well as a canonical space $\widehat \Omega$ for others in Subsection \ref{sss:widehat_omega}.}

\subsection{Theoretical formulation}\label{ss:theoretical_formulation}

{\color{black}
\subsubsection{Canonical space for the representative Agent}\label{sss:omega}}

To be consistent with the weak formulation of control problems, we let $\mathbb U$ be the collection of all finite and positive Borel measures on $[0,T]\times U$, whose projection on $[0,T]$ is the Lebesgue measure. In other words, every $q \in \mathbb \U$ can be disintegrated as $q(\mathrm{d}s,\mathrm{d} v)=q_s(\mathrm{d} v)\mathrm{d}s$, for an appropriate Borel measurable  kernel $q_s$. The weak formulation requires to consider a subset of $\U$, namely the set $\U_0$ of all $q \in \U$ such that the kernel $q_s$ is of the form $\delta_{\phi_s}(\mathrm{d} v)$ for some Borel function $\phi$, where as usual, $\delta_{\phi_s}$ is the Dirac mass at $\phi_s$.

\medskip

In order to combine the theoretical formulations of mean--field games and McKean--Vlasov problems, we are led to consider the following canonical space
\[
    \Omega :=  \Omega^{\circ} \times \Omega^1 \times \Omega^2 \times \U, \;
    \text{where } \; \Omega^{\circ} := \Cc_T, \;
    \Omega^1 := \Cc_T \times \Cc_T^d, \;
    \text{and } \; \Omega^2 := \Pc (\Cc_T),
\]
{\color{black}recalling from the notations that $\Cc_T^d := \Cc([0,T], \R^d)$ and $\Cc_T := \Cc([0,T], \R)$,} with canonical process $(W^{\circ}, X, W, \mu, \Lambda)$, where for any $(t, w^{\circ}, x, w, u, q) \in [0,T] \times \Omega$,
\[
    W^{\circ}_t(w^{\circ},x,w,u,q):=w^{\circ}(t), \; X_t(w^{\circ},x,w,u,q) :=x(t),\; W_t(w^{\circ},x,w,u,q):=w(t),
\]
    \vspace{-1.2em}
\[  
    \mu_t(w^{\circ},x,w,u,q):=u(t),\; \text{and} \; \Lambda_t(w^{\circ},x,w,u,q):=q.
\]

Less formally, $X$ represents the deviation consumption controlled by the representative consumer, affected by an idiosyncratic noise $W$ and a common noise $W^{\circ}$. The process $\mu$ allows to keep track of the law of $X$ conditionally to the common noise and the space $\U$ corresponds to the controls. The canonical filtration $\F:=(\Fc_t)_{t\in[0,T]}$ is defined as
\begin{align*}
    \mathcal F_t:=\sigma\Big( \big(W^{\circ}_s, X_s, W_s, \mu_s, \Delta_s(\varphi) \big): (s,\varphi)\in[0,t]\times \Cc_b \big( [0,T]\times U, \R \big) \Big),\; t\in [0,T],
\end{align*}
where $\Cc_b([0,T]\times U,\R)$ is the set of all bounded continuous functions from $[0,T]\times U$ to $\R$, and for any $(s,\varphi)\in[0,T]\times \Cc_b([0,T]\times U,\R)$, $\Delta_s(\varphi):=\int_0^s\int_U \varphi(r,v ) \Lambda(\mathrm{d}r,\mathrm{d} v).$
We will also need a smaller filtration containing only the information generated by the common noise and the conditional law of $X$. Namely, we define $\F^{\circ}:=(\Fc^{\circ}_t)_{t\in[0,T]}$ by $
    \Fc^{\circ}_t:=\sigma\big((W^{\circ}_s,\mu_s):s\in[0,t]\big),\; t\in[0,T].$

\begin{remark}
The restriction to the filtration $\F^{\circ}$ stems from the presence of common noise in our model. As pointed out by \textnormal{\citet*{carmona2016meana}}, it is commonplace for control problems in weak formulation, and actually already for weak solutions to {\rm SDEs}, that the underlying driving noise is not rich enough to obtain a solution which is only adapted to it. In our context, this translates into the fact that in general the law of $X$ may fail to be measurable only with respect to the information generated by the common noise $W^\circ$, which justifies the enlargement in the definition of $\F^\circ$. This is linked to the so--called \textit{compatibility condition} in the {\rm MFG} theory with common noise, which intuitively means that a given player in the game does have access to the full information generated by the idiosyncratic and common noises $W$ and $W^\circ$, as well as the distribution of all other players' states, and his controls are allowed to be randomised externally to these observations, but such a randomisation must be conditionally independent of future information given current information.
\end{remark}

Let $\Cc^2_b(\mathbb R \times \R^d \times\R,\R)$ be the set of bounded twice continuously differentiable functions from $\mathbb R \times \R^d \times\R$ to $\R$, whose first and second derivatives are also bounded, and for any $(s,\varphi)\in[0,T]\times \Cc^2_b(\mathbb R \times \R^d \times\R,\R)$, we set
\begin{align*}
    M_s(\varphi):=&\ \varphi(X_s,W_s,W^{\circ}_s) - \int_0^s \int_U\bigg( A(v) \cdot \nabla \varphi(X_r,W_r,W^{\circ}_r) + \frac12  {\rm Tr} \big[ D^2 \varphi(X_r,W_r,W^{\circ}_r) B(v)  B^\top (v) \big] \bigg)\Lambda(\mathrm{d}r, \mathrm{d} v),
\end{align*}
where $D^2 \varphi$ denotes the Hessian matrix of $\varphi$, $A$ and $B$ are respectively the drift vector and the diffusion matrix of the vector process $(X, W, W^{\circ})^\top$
\begin{align*}
    A(v) := 
    \begin{pmatrix}
        - a \cdot \mathbf{1}_d \\
        \mathbf{0}_d \\
        0
    \end{pmatrix},
    \; 
    B(v) :=
    \begin{pmatrix}
        0 & \sigma^\top( b ) & \sigma^{\circ} \\
        \mathbf{0}_d & \mathrm{I}_d & \mathbf{0}_d \\
        0 & \mathbf{0}_d^\top & 1
    \end{pmatrix},\; v:=(a,b)\in U.
\end{align*}

Therefore, the covariation matrix of the vector process $(X, W, W^{\circ})^\top$ is defined for all $v \in U$ by
\begin{align*}
    B(v) B^\top (v) =
    \begin{pmatrix}
        \Sigma(b) + \big(\sigma^\circ\big)^2 & \sigma^\top( b ) & \sigma^{\circ} \\
        \sigma ( b ) & \mathrm{I}_d & \mathbf{0}_d \\
        \sigma^\circ & \mathbf{0}_d^\top & 1
    \end{pmatrix},
\end{align*}
where $\Sigma(b) := \sigma^\top(b) \sigma (b)$ for all $b \in B$. We fix some initial conditions, namely a probability measure $\varrho$ on $\R$ representing the law at $0$ of $X$. 

{\color{black} \begin{definition}\label{def:Pc}
Let $\M$ be the set of probability measures on $(\Omega,\Fc_T)$. The subset $\Pc \subset \M$ is composed of all $\P$ such that
\begin{enumerate}[label=$(\roman*)$]
    \item $M(\varphi)$ is a $(\P,\mathbb F)$--local martingale on $[0,T]$ for all $\varphi \in \Cc^2_b(\R \times \R^d \times \R,\R);$
    \item $\P \circ (X_0)^{-1} = \varrho$, and there exists a measure $\iota$ on $\mathbb R^d\times\mathbb R$ such that $\P \circ \big((W_0,W_0^{\circ})\big)^{-1} = \iota;$
    \item $\P\big[\Lambda \in \U_0]=1;$
    \item for $\P$--a.e. $\omega \in \Omega$ and for every $t \in[0,T]$, we have $\mu_t(\omega)=\P^{\omega}_t \circ (X_{t\wedge \cdot})^{-1},$ 
    where $(\P_t^{\omega})_{\omega\in\Omega}$ is a family of regular conditional probability distribution\footnote{We recall that these objects are such that for any $\omega\in\Omega$, $\P_t^\omega$ is a probability measure on $(\Omega,\Fc)$, such that for any $A\in\Fc$, the map $\omega \longmapsto \P_t^\omega[A]$ is $\Fc^{\circ}_t$--measurable, and such that for any $\P$--integrable random variable $\xi$ on $(\Omega,\Fc)$, we have
    \[
    \mathbb E^\P[\xi|\Fc^{\circ}_t](\omega)=\E^{\P_t^\omega}[\xi],\; \text{for $\P$--a.e. $\omega\in\Omega$.}
    \] 
    Notice that since $(\Omega,\Fc)$ is a Polish space and $\Fc_t^{\circ}$ is countably generated, the existence of these r.c.p.d. is guaranteed for instance by \citet*[Theorems 2.6.5 and 2.6.7]{cohen2015stochastic}. } $($r.c.p.d for short$)$ for $\P$ given $\Fc_t^{\circ}$. We will denote by $\E^{\P_t^\omega}$ the expectation under the distribution $\P_t^\omega$. For ease of notation, we will often omit the $\omega$ in the notation for the expectation$;$
    \item $(W^{\circ},\mu)$ is $\P$--independent of $W$.
\end{enumerate}
\end{definition}}

{\color{black} Roughly speaking, the set $\Pc$ represents the set of admissible controls in the weak formulation. Nevertheless,} the previous formulation does not give us access directly to the dynamic of the consumption deviation $X$. It is however a classical result that, enlarging the canonical space if necessary, one can construct Brownian motions allowing to write rigorously the dynamics \eqref{eq:informal}, see for instance \citet[Theorem 4.5.2]{stroock1997multidimensional}. It turns out here that since we enlarged the canonical space right from the start to account for the idiosyncratic and common noises, any further enlargement is not required, {\color{black} see Lemma \ref{lemma:rep} below, whose proof is {\color{black} deferred} to Appendix \ref{ss:proof_lemma}.}
\begin{lemma}\label{lemma:rep}
    For all $\P\in\Pc$, $\Lambda(\mathrm{d}s,\mathrm{d} v) = \delta_{\nu^\P_s}(\mathrm{d} v)\mathrm{d}s$ $\P$--{\rm a.s.}, for some $\F$--predictable control process $\nu^\P := \big( \alpha^\P, \beta^\P \big)$ and 
    \begin{align*}
        X_t &= X_0 - \int_0^t \alpha^{\P}_s \cdot{\bf 1}_d \drm s + \int_0^t  \sigma (\beta^\P_s) \cdot \drm W_s + \int_0^t \sigma^{\circ} \drm W^{\circ}_s, ~t\in[0,T], \; \P-\textnormal{a.s.}
    \end{align*}
\end{lemma}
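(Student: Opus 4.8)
The plan is to exploit the two structural conditions in Definition~\ref{def:Pc}, namely the local martingale property $(i)$ and the concentration $(iii)$ of $\Lambda$ on Dirac kernels, and to run a standard weak-to-strong identification of the coefficients of the canonical semimartingale $(X,W,W^\circ)$. First I would record that by $(iii)$ we may write $\Lambda(\mathrm{d}s,\mathrm{d}v)=\delta_{\phi_s}(\mathrm{d}v)\mathrm{d}s$ for a Borel map $\phi=(\alpha^\P,\beta^\P)$ taking values in $U=A\times B$; the genuine point is to upgrade $\phi$ to an $\F$-predictable process, which follows from a measurable-selection / progressive-measurability argument (one may replace $\phi$ by its predictable projection without changing $\Lambda$, $\P$-a.s.), and to note that admissibility of the values (boundedness of $\alpha^\P$, $\beta^\P\in[B_{\min},1]^d$) is inherited from $v\in U$.

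Next I would apply condition $(i)$ to a well-chosen family of test functions $\varphi\in\Cc^2_b(\R\times\R^d\times\R,\R)$. Plugging $\varphi$ equal to (smooth bounded truncations of) the coordinate maps $(x,w,w^\circ)\mapsto x$, $\mapsto w^i$, $\mapsto w^\circ$ shows that $X+\int_0^\cdot \alpha^\P_s\cdot\mathbf 1_d\,\mathrm{d}s$, $W$, and $W^\circ$ are $(\P,\F)$-local martingales with the drift $A(v)$ read off from the definition of $M(\varphi)$; plugging quadratic test functions (again suitably truncated), or equivalently using the standard fact that $(i)$ for all $\varphi\in\Cc^2_b$ identifies the characteristics, shows that the quadratic covariation of the continuous local martingale part of $(X,W,W^\circ)^\top$ is $\int_0^\cdot B(\phi_s)B^\top(\phi_s)\,\mathrm{d}s$, with $B$ as displayed in the excerpt. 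In particular $W$ and $W^\circ$ have the identity and $1$ as covariation densities and are orthogonal to each other, so by Lévy's characterisation $W$ is a $d$-dimensional $\F$-Brownian motion and $W^\circ$ a one-dimensional $\F$-Brownian motion (here one also uses $(ii)$ to fix their initial laws, and the orthogonality of $W$ with $(W^\circ,\mu)$ in $(v)$ is consistent with, but not needed for, this step). Then the covariation of $X$ with $(W,W^\circ)$ being $\int_0^\cdot(\sigma^\top(\beta^\P_s),\sigma^\circ)\,\mathrm{d}s$ lets me write the martingale part of $X$ as $\int_0^\cdot\sigma(\beta^\P_s)\cdot\mathrm{d}W_s+\int_0^\cdot\sigma^\circ\,\mathrm{d}W^\circ_s$ via the martingale representation theorem with respect to the Brownian filtration generated by $(W,W^\circ)$ — or, more cleanly, by checking that the difference $X-X_0+\int_0^\cdot\alpha^\P_s\cdot\mathbf 1_d\,\mathrm{d}s-\int_0^\cdot\sigma(\beta^\P_s)\cdot\mathrm{d}W_s-\int_0^\cdot\sigma^\circ\,\mathrm{d}W^\circ_s$ is a continuous local martingale with zero quadratic variation, hence $\P$-a.s. constant and equal to its value $0$ at time $0$. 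Combining these identifications yields exactly the stated SDE, with $X_0$ distributed according to $\varrho$ by $(ii)$.

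The main obstacle I anticipate is purely technical rather than conceptual: the test functions $\varphi$ in Definition~\ref{def:Pc}$(i)$ are required to be bounded with bounded derivatives, whereas the natural candidates $x$, $w^i$, $w^\circ$ and the quadratics $x^2$, $xw^i$, etc.\ are not. The fix is the usual localisation: apply $(i)$ to $\varphi=\chi_n\cdot(\text{coordinate or quadratic})$ for smooth cutoffs $\chi_n\uparrow 1$, obtain local martingale identities up to the exit time $\tau_n$ of the ball of radius $n$, and pass to the limit using $\tau_n\uparrow T$ $\P$-a.s.\ (continuity of the canonical paths) together with localisation of the resulting local martingales. One must also be mildly careful that the Borel kernel $\phi_s$ produced by $(iii)$ is only defined $\mathrm{d}s$-a.e.\ and needs to be made $\F$-predictable; this is where one invokes the optional/predictable projection or a measurable selection theorem, exactly as in the classical construction of weak solutions (cf.\ the reference to \citet[Theorem~4.5.2]{stroock1997multidimensional} mentioned in the text). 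None of these steps is deep, but assembling them rigorously — and in particular arguing that \emph{no further enlargement} of $\Omega$ is needed because $W$ and $W^\circ$ are already part of the canonical process — is the substance of the proof.
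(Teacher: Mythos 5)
Your proposal is correct, but the final representation step goes by a different route than the paper's. The paper's proof, after reading off from Definition \ref{def:Pc} that $(X,W,W^\circ)^\top$ is an It\=o process with drift $A(\nu^\P)$ and covariation density $B(\nu^\P)B^\top(\nu^\P)$, factorises that matrix and invokes \citet[Theorem 4.5.2]{stroock1997multidimensional} on a formally \emph{extended} space $\Omega^e=\Omega\times\Cc([0,T],\R^{d+2})$ to produce a $(d+2)$--dimensional Brownian motion $B^e$ realising the factorisation; it then observes that, because the rows of $B(v)$ corresponding to $W$ and $W^\circ$ are $(\mathbf 0_d,\mathrm I_d,\mathbf 0_d)$ and $(0,\mathbf 0_d^\top,1)$, the components of $B^e$ that actually drive $X$ coincide with the canonical $W$ and $W^\circ$, so the representation descends to $\Omega$. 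You instead prove directly that $W$ and $W^\circ$ are orthogonal $\F$--Brownian motions via L\'evy's characterisation and then show that $X-X_0+\int_0^\cdot\alpha^\P_s\cdot\mathbf 1_d\,\drm s-\int_0^\cdot\sigma(\beta^\P_s)\cdot\drm W_s-\int_0^\cdot\sigma^\circ\,\drm W^\circ_s$ is a continuous local martingale with vanishing quadratic variation (the cross-variations $\langle X,W\rangle$ and $\langle X,W^\circ\rangle$ read off from $BB^\top$ make the bracket collapse), hence is identically zero. This is more self-contained, avoids even the formal enlargement, and makes transparent \emph{why} no enlargement is needed (the first column of $B(v)$ vanishes, so the martingale part of $X$ lies in the span of $\drm W$ and $\drm W^\circ$); the paper's version is shorter by citation. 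Two minor remarks: your first-mentioned alternative, martingale representation in the filtration generated by $(W,W^\circ)$, would not apply as stated since $X$ is not adapted to that filtration — but your zero-quadratic-variation argument, which works in any filtration, is the right one and you correctly lead with it; and you are more careful than the paper about upgrading the Borel kernel from condition $(iii)$ to an $\F$--predictable process and about the localisation needed because Definition \ref{def:Pc}$(i)$ only quantifies over bounded test functions, both of which the paper's proof passes over silently.
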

{\color{black} Notice that, thanks to the previous lemma, the set $\Uc = \mathcal{A} \times \mathcal{B}$ of admissible efforts, introduced at the beginning of Section \ref{sec:informal}, is now well defined.}


\medskip

In order to apply the chain rule with common noise, as defined in \citet*[Theorem 4.17]{carmona2018probabilisticII}, we will need a copy of the process $X$, denoted by $\widetilde{X}$, driven by the same common noise $W^{\circ}$, and with the same conditional law $\mu$. For this purpose, we need to define a copy of the initial canonical space.
\begin{definition}[Copy of a space]\label{def:copy_space}
    Let $\Omega$ be a canonical space of the form $\Omega :=  \Omega^{\circ} \times \Omega^1 \times \Omega^2 \times \U$. A copy of $\Omega$ is defined by $\widetilde{\Omega} := \Omega^{\circ} \times \widetilde{\Omega}^1 \times \Omega^2 \times \widetilde \U$ where $\widetilde{\Omega}^1$ and $\widetilde \U$ are respectively standard copies of the spaces $\Omega^1$ and $\U$.
\end{definition}
This canonical space $\widetilde \Omega$ is supporting a canonical process $\big(W^{\circ}, \widetilde X, \widetilde W, \mu, \widetilde \Lambda\big)$, and the canonical filtration $\widetilde \F:=(\widetilde \Fc_t)_{t\in[0,T]}$ is defined exactly as $\F$.
{\color{black}
For a given probability $\P \in \Pc$, we can associate in a unique way a probability $\widetilde \P$ satisfying Definition \ref{def:Pc} on $( \widetilde \Omega, \widetilde \Fc_T )$, where, in particular, $\widetilde \P$ and $\P$ have the same r.c.p.d.
Therefore, abusing notations slightly, $\widetilde{\E}^{\P_t}$ will stand for the expectation on $(\widetilde \Omega, \widetilde \Fc_T)$ under the r.c.p.d. $\P_t$ for $\widetilde \P \in \Pc$ given $\Fc_t^\circ$. Then, by Lemma \ref{lemma:rep}, for any $\widetilde \P$, we have $\widetilde \Lambda(\mathrm{d}s,\mathrm{d} v) = \delta_{\nu^{\widetilde \P}_s}(\mathrm{d} v)\mathrm{d}s$ $\widetilde \P$--a.s., for some $\widetilde \F$--predictable process $\nu^{\widetilde \P} := ( \alpha^{\widetilde \P}, \beta^{\widetilde \P})$ and}
\begin{align}\label{eq:copy}
    \widetilde X_t = \widetilde X_0 - \int_0^t \alpha^{\widetilde \P}_s \cdot{\bf 1}_d \drm s + \int_0^t  \sigma (\beta^{\widetilde \P}_s) \cdot \drm \widetilde W_s + \int_0^t \sigma^{\circ} \drm W^{\circ}_s, \; t\in[0,T], \; \widetilde \P-\text{a.s.}
\end{align}
\begin{definition}[Copy of a process]\label{def:copy_process}
    The process $\widetilde X$ defined above by \eqref{eq:copy} is called a copy of $X$.
\end{definition}

\subsubsection{Canonical space of other Agents}\label{sss:widehat_omega}

{\color{black} To model the deviation consumption of other consumers (that is the ones different from the representative agent), affected by the same common noise $W^{\circ}$, we need to define an alternative probability space $\widehat \Omega$ by $\widehat{\Omega} := \Omega^{\circ} \times \widehat{\Omega}^1 \times \widehat{\Omega}^2 \times \widehat \U$. This canonical space is supporting a canonical process $\big(W^{\circ}, \widehat X, \widehat W, \widehat \mu, \widehat \Lambda\big)$, and the canonical filtration $\widehat \F:=(\widehat \Fc_t)_{t\in[0,T]}$ is defined in the same way as $\F$.}
We let $\widehat \M$ be the set of all probability measures on $( \widehat \Omega, \widehat \Fc_T )$, we can then define the subset $\widehat{\Pc} \subset \widehat \M$ in the same way we define $\Pc \subset \M$ in Definition \ref{def:Pc}. In particular, the notation $\widehat{\E}^{\widehat \P_t}$ will stand for the expectation under the r.c.p.d. $\widehat \P_t$ of $\widehat \P \in \widehat \Pc$ given $\widehat \Fc^\circ_t$, on the space $(\widehat \Omega, \widehat \Fc )$. Still applying Lemma \ref{lemma:rep}, for any $\widehat \P \in \widehat{\Pc}$, we can write
\begin{align*}
    \widehat X_t &= \widehat X_0 - \int_0^t \alpha^{\widehat \P}_s \cdot{\bf 1}_d \drm s + \int_0^t  \sigma (\beta^{\widehat \P}_s) \cdot \drm \widehat W_s + \int_0^t \sigma^{\circ} \drm W^{\circ}_s, ~t\in[0,T],
\end{align*}
where $\nu^{\widehat \P} := ( \alpha^{\widehat \P}, \beta^{\widehat \P})$ is some $\widehat \F$--predictable control process, chosen by others, satisfying $\widehat \Lambda(\mathrm{d}s,\mathrm{d} v) = \delta_{ \nu^{\widehat \P}_s}(\mathrm{d} v)\mathrm{d}s$ $\widehat \P$--a.s. {\color{black} Notice that this allows us to properly define the set $\widehat \Uc = \widehat{\mathcal{A}} \times \widehat{\mathcal{B}}$ of admissible efforts of others.}

\medskip

Copies of the deviation consumption of others, denoted by $\widecheck{X}$, are defined in the same way as copies $\widetilde X$ of $X$ by Definition \ref{def:copy_process}, on the space $\widecheck \Omega: = \Omega^{\circ} \times \widecheck \Omega^1 \times \widehat{\Omega}^2 \times \widecheck \U$, itself a copy of $\widehat \Omega$ in the sense of Definition \ref{def:copy_space}. This space is supporting a canonical process and the associated canonical filtration $\widecheck \F := ( \widecheck \Fc_t )_{t \in [0,T]}$. The notation $\widecheck{\E}^{\widehat \P_t}$ stands for the expectation under the r.c.p.d. $\widehat \P_t$ on the space $( \widecheck\Omega,\widecheck\Fc)$. 

\medskip

{\color{black} In words, the canonical process $(W^\circ, X, W, \mu, \Lambda)$ defined on the space $\Omega$ represents the choices of the representative Agent regarding his deviation consumption. Similarly, the canonical process $(W^\circ, \widehat X, \widehat W, \widehat \mu, \widehat \Lambda)$ defined on $\widehat \Omega$ allows us to represent the choices of other consumers, which may be different from the representative consumer, but are affected by the same common noise $W^\circ$. Then, notations involving $\widetilde{\cdot }$ will refer to copies of the initial space $\Omega$, while notations involving $\widecheck{\cdot}$ will refer to copies of the canonical space of others $\widehat \Omega$.
In order to compute his terminal payment $\xi$, the representative Agent is going to assume that the others have played some distribution $\widehat \mu$ (mainly the conditional law of $\widehat X$ under some $\widehat \P \in \widehat \Pc$ given $\widehat \F^\circ$ on $(\widehat \Omega, \widehat \Fc_T)$), and he is going to compute $\xi$ along his own deviation $X$ and $\widehat \mu$.}

%

\subsection{Definition of a contract}

In the work~\cite{aid2018optimal}, the Principal -- \textit{an energy producer} -- offers a contract to an Agent -- \textit{a consumer} -- indexed on his deviation consumption. In our investigation, the Principal faces a continuum of Agents with mean--field interaction and can therefore benefit from this additional information: she can offer contracts depending on both the deviation of a given consumer, and the aggregate statistics of the deviation of other consumers. 
In the framework we are interested in, the electricity producer is not allowed to reveal the consumption of a particular consumer to another consumer. Hence, she cannot directly design a remuneration for a consumer with respect to the deviation consumption of another dedicated one. This is why we consider aggregated statistics in this case. We insist on the fact that it is a legal requirement for electricity producers to respect the privacy of their consumers, even when they have access to their consumption profile through modern smart--meters. Moreover, in accordance with the mean--field framework, the Principal is facing a mass of identical and indistinguishable consumers, and thus can not choose a deviation consumption of another consumer to contract on it. Formally, the Principal proposes to the representative Agent a contract $\xi$ which is a random variable measurable with respect to the natural filtration generated by both $X$, $\widehat{\mu}$, denoted $\F^{\text{obs}}$, recalling that:
\begin{itemize}
    \item $X$ is the deviation consumption of the representative Agent,
    \item $\widehat\mu$ is the law of the deviation consumption of other Agents, conditionally to the common noise. 
\end{itemize}

In other words, $\xi$ must be a measurable functional of the paths of $X$ and $\widehat\mu$:
\begin{align}\label{contract:dependency}
    \xi : (X, \widehat \mu) \in \Cc_T \times \Pc (\Cc_T)  
    \longmapsto \xi (X, \widehat \mu) \in \mathbb{R}.
\end{align}

\begin{remark}\label{rk:conditional_common_noise}
    Considering the conditional law $\widehat \mu$ naturally comes from the limit of the $N$--Agents case. Indeed, if the Principal monitors $N$ consumers, she wants to index the contract for the $i$--th consumer on his own deviation $X^i$ and on the empirical distribution of other consumers, $\overline{\mu}^{-i}$, defined as: 
    \begin{align*}
        \overline{\mu}^{-i} := \bigg( \dfrac{1}{N-1} \sum_{j=1, j\neq i}^N \delta_{X_t^j} \bigg)_{t \geq 0} 
    \end{align*}
    where the deviations $X^j$ are not independent since all consumers are suffering from the common noise $W^{\circ}$. Hence, in the mean--field framework, we may wonder about the convergence of the empirical measure $\overline{\mu}^{-i}_t$ as $N$ tends to $\infty$. In the absence of the common noise, the standard theory of propagation of chaos applies: asymptotically, particles become independent and the $\overline{\mu}^{-i}_t$ converges to their common asymptotic distribution. By contrast, when there is a common noise, even in the limit $N \longrightarrow \infty$, the particles must still keep track of the common noise $W^{\circ}$, so they cannot become independent. Nevertheless, it has been proved in \textnormal{\citet*{carmona2018probabilisticII}} that particles become asymptotically independent conditionally on the common noise, and that the empirical distribution converges towards the common conditional distribution of each particle given the common noise \textnormal{(}$\widehat \mu$ in our case\textnormal{)}.
\end{remark}

Given this contract, and the conditional law of the deviation consumption of other consumers $\widehat \mu$, the representative consumer solves the following optimisation problem
\begin{equation}\label{pb:consumer}
	V^{A}_0 \big( \xi, \widehat \mu \big) := \sup_{\P \in \Pc} J_0^A(\xi, \widehat{\mu}, \P), \; \text{where} \; J_0^A(\xi, \widehat{\mu}, \P) := \mathbb{E}^{\mathbb{P}} \bigg[ U^A \bigg( \xi(X, \widehat \mu) - \int_0^T \big( c \big( \nu^\P_t \big) - f \big( X_t \big) \big) \mathrm{d} t \bigg) \bigg],
\end{equation}
where $c:\R_+^d \times (0,1]^d \longrightarrow \mathbb R_+$ is the cost function associated with the effort $\nu$ made by the Agent, and the function $f:\mathbb R \longrightarrow \mathbb R$ denotes the preference of the Agent toward his deviation consumption. The function $f$ is required to be concave, increasing, and centred at the origin. This means that the reduction of the Agent's consumption causes him discomfort, and conversely, if the consumption deviation is non-negative, the agent consumes more electricity, which gives him satisfaction. Closed--form solutions will be obtained for linear $f$. The function $U^A$ is an exponential utility function, with risk aversion parameter of the representative consumer $R_A>0$, defined by $U^A (x) = - \erm^{-R_A x}$. {\color{black} One may note that, apart from the contract dependency, the Agent's problem is identical to the one defined in \cite{aid2018optimal}.

\medskip

For $V_0^A(\xi,\widehat\mu)$ to make sense, we require minimal integrability on the contracts, by imposing that
\begin{equation}\label{eq:integxi}
    \sup_{\P\in\Pc} \E^\P \Big[ \erm^{pR_A|\xi|} \Big]<+\infty, \; \text{for some} \; p > 1.
\end{equation}
In addition and for technical reasons, we restrict our attention to contracts $\xi$ satisfying also the technical assumption \eqref{integrability:rp:xi} given below, only necessary in order to solve the problem of a CARA risk averse Principal (and useless to solve the Agent problem). These two technical assumptions are always satisfied in the application to linear energy value discrepancy (see Section \ref{sec:applicationEVD}).
Moreover, as usual in contract theory, we assume that consumers have an endogenous reservation utility $R_0<0$, below which they refuse the contract offered by the producer. The underlying idea is that without compensation (that is for $\xi=0$), a consumer could already exert efforts and modify his consumption, and thus receive utility $R_0$. He thus would, of course, refuse any contract which does not provide him with at least what he could get by himself\footnote{We refer to Appendix \ref{sec:reservation} for more details on how to compute the value of $R_0$.}. The corresponding class of contracts of the form \eqref{contract:dependency}, satisfying \eqref{eq:integxi}, \eqref{integrability:rp:xi}, and the participation constraint, is denoted by $\Xi$.}

\medskip

Finally, notice here that the contracts offered by the Principal have been assumed to not be indexed on the common noise $W^\circ$. This can either mean that the Principal cannot observe it perfectly, or that there are regulatory reasons preventing the producer from using it directly in the contract. However, our formulation allows to incorporate the case where this becomes possible. Thus, in Section \ref{sec:observable_common_noise}, we will study the case where the Principal is allowed to use the common noise directly in the contract. In this particular case, she can offer to the representative Agent a contract $\xi$, measurable with respect to the natural filtration generated by $X$, $W^\circ$ and $\widehat \mu$, denoted by $\F^{\textnormal{obs},\circ}$. In other words, $\xi$ must be in this case a measurable functional of the paths of $X$, $W^\circ$ and $\widehat\mu$:
\begin{align}\label{contract:dependency_w0}
    \xi : \big( X, W^\circ, \widehat\mu \big) \in \Cc_T \times \Cc_T \times \Pc (\Cc_T) \longmapsto \xi \big( X, W^\circ, \widehat\mu \big) \in \mathbb{R}.
\end{align}
%
The corresponding class of contracts will be denoted by $\Xi^\circ$.

\subsection{Definition of a mean--field equilibrium}

We work under a classical mean--field framework, where all agents are identical. Hence, similarly to \citet*{carmona2016meana}, we define a mean--field equilibrium as follows.
\begin{definition}[Mean--field equilibrium]\label{mfe}
    Let $\xi\in\Xi$ be a contract. We denote by $\Mc^{\star} (\xi)$ the collection of all mean--field equilibria, i.e., pairs $(\P^{\star}, \mu^{\star}) \in \Pc \times \Pc (\Cc_T)$ such that
\begin{enumerate}[label=$(\roman*)$]
    \item given $\mu^{\star} \in \Pc (\Cc_T)$, the probability $\P^{\star} \in \Pc$ is optimal for \eqref{pb:consumer}, \text{i.e.},
    \begin{align*}
	    V^{A}_0(\xi, \mu^{\star}) = \mathbb{E}^{\mathbb{P^{\star}}} \bigg[ U^A \bigg( \xi (X, \mu^{\star}) - \int_0^T \big( c \big( \nu^{\P^{\star}}_t \big) - f \big( X_t \big) \big) \mathrm{d} t \bigg) \bigg];
    \end{align*}
    \item for $\P^{\star}$--a.e. $\omega \in \Omega$ and for every $t \in [0,T]$, we need to have
    \begin{align*}
        \mu^{\star}_t(\omega)=\P^{\omega}_t \circ (X_{t \wedge \cdot} )^{-1},
    \end{align*}
    where $(\P_t^{\omega})_{\omega\in\Omega}$ is a family of regular conditional probability distribution for $\P^{\star}$ given $\Fc_t^{\circ}$. 
\end{enumerate}
We extend readily this definition to contracts in $\Xi^\circ$, and denote by $\Mc^{\star,\circ}(\xi)$ the associated set of mean--field equilibria.
\end{definition}
{\color{black}
\begin{remark}\label{rk:measure_on_pathspace}
Less formally, a mean--field equilibrium is characterised by
\begin{enumerate}[label=$(\roman*)$]
    \item a probability law $\P^{\star}$ of a process $X^{\star}$, which is the optimal deviation consumption of each Agent$;$
    \item the conditional law $\mu^{\star}$ of $X^{\star}$ with respect to the common noise.
\end{enumerate}
    The attentive reader may note that our definition of a mean--field equilibrium, mainly the fixed point constraint, involves the probability measure on the path space, i.e., $\mu^\star \in \Pc(\Cc_T)$. Although this is not standard in the theory of mean--field games in a Markovian setting, this makes sense in our non--Markovian framework, and a similar condition can be found in the work of \textnormal{\citet*{carmona2015probabilistic}}.
\end{remark}}


\subsection{The Producer}\label{ss:def_pb_producer}

We now turn to the problem of the Principal.
In the one Agent framework defined in \cite{aid2018optimal}, the Principal has an exponential utility function, with risk--aversion parameter $R_P>0$, defined by $U^P (x) = - \erm^{-R_P x}$ and wants to minimise:
\begin{enumerate}[label=$(\roman*)$]
    \item the compensation paid to the Agent: $\xi$.
    \item the cost of production, corresponding to additional costs induced by the deviation consumption: $g (X_t) $, where $g$ is concave and increasing. It means that if the Agent's deviation consumption is positive, the consumption has increased, hence the Principal has an additive cost of production. Conversely, a negative deviation consumption means that the consumption is decreasing, hence the Principal benefits from a decrease of production costs. 
    \item the quadratic variation of the deviation consumption: $\langle X\rangle_t $. This penalisation term allows the Principal to take into account the variations of consumption over time. This additional cost is particularly relevant in electricity markets, since the Producer has to follow the load curve, and the higher the volatility of the consumption, the more costly it is.
\end{enumerate}


The intuition for the Principal problem in the mean--field case comes from the $N$--Agents case. Formally, if we consider a $N$--players model, the Principal would minimise the (utility of the) sum of the previous costs. To ensure stability of these sums as $N$ grows, and therefore obtain a mean--field limit of the $N$--Agents problem, we can follow the line of \cite{elie2019tale} by assuming that each individual deviation consumption is scaled by the total number of Agents $N$. In their framework, \textit{without common noise and also with exponential utility functions}, the Principal becomes risk--neutral in the limit when $N \longrightarrow + \infty$, by classical propagation of chaos arguments. Another interpretation of the risk--neutrality of the Principal in this case is that the Principal is diversifying the risk by considering a large number of consumers: the random average penalised output in the $N$--players’ game converges to a deterministic quantity. In our framework, \textit{with common noise}, as explained in Remark \ref{rk:conditional_common_noise}, the consumption deviations of the Agents become asymptotically independent, conditionally to the common noise. Therefore, a risk--averse Principal does not become risk--neutral in our case, and remains impacted by the residual risk arising from the common noise. Nevertheless, we will consider both cases of a risk--averse and a risk--neutral Principal.

\vspace{0.5em}
As a consequence, similarly to \cite{elie2019tale} and \cite{aid2018optimal}, given a contract $\xi$, the performance criterion of the Principal is defined by
\begin{align}\label{eq:principal_value}
	J^P_0 (\xi, \P) &:= \mathbb{E}^{\P} \bigg[ U^P \bigg( - \mathbb{E}^{\P} \bigg[\xi+\int_0^Tg(X_s)\mathrm{d}s+\frac \theta 2\int_0^T\mathrm{d}\langle X\rangle_s\bigg| \Fc^\circ_T\bigg] \bigg)  \bigg], \text{ for any } \mathbb P\in \mathcal P,
\end{align}
where the function $U^P: \R \longrightarrow \R$ is the Principal's utility function and $\theta$ is a positive constant representing the costs induced by the quadratic variation of the consumption, accounting for the limited flexibility of production.

\medskip 

Anticipating the results we obtain in Section \ref{sec:consumer_problem}, for a contract $\xi \in \Xi$, any mean--field equilibrium $(\P^\star,\mu^\star) \in \Mc^{\star} (\xi)$ will give the same utility to the consumers, since they all have the same characteristics. Furthermore, in the absence of limited liability in our model (compensations $\xi$ need not be non--negative), the participation constraints of the consumers is going to be saturated, meaning that any optimal contract will provide them exactly their reservation utility level. Therefore, in the case where an optimal contract would lead to several possible equilibria, the consumers will be indifferent to the specific one chosen, implying that we can reasonably assume, as in the standard moral hazard literature, that the Principal can maximise her utility by choosing the optimal equilibrium for her. This leads to the following second best contracting problem
\begin{align*}
    V_0^P := \; \sup_{\xi \in \Xi} \; \sup_{(\P,\mu) \in \Mc^{\star} (\xi)} \; J_0^P (\xi, \P),
\end{align*}
with the usual convention $\sup_{\varnothing} = - \infty$. Notice that for the contracts in the class we will end up considering, there is only one possible equilibrium, which makes the above issue not really central to our analysis.

\medskip

In the moral hazard contracting problem considered here, the Principal has an interest in giving a contract for which there is at least a mean--field equilibrium, otherwise, by convention, her utility is equal to $- \infty$. Thus, contracts $\xi \in \Xi$ such that $\Mc^{\star} (\xi) = \varnothing$ will never be offered by the Principal, meaning that we can implicitly assume that there always will be an optimal response from the consumers to a contract proposed by the Principal. Moreover, since the contract has to satisfy the participation constraint, there exists $(\P^\star,\mu^\star)\in\Mc^\star(\xi)$ such that $V_0^A(\xi,\mu^\star)\geq R_0$. The set $\Xi$ of eligible contracts is now formally defined. We define similarly $\Xi^\circ$ and $V_0^{P,\circ}$.

\medskip
The study of the first--best contracting problem, \textit{i.e.}, when the Principal can observe in continuous--time the efforts of the Agents and thus can index the contract on the latter, is {\color{black} postponed} to Section \ref{sec:firstbest}.

\section{Agent's problem}\label{sec:consumer_problem}

We consider for now that the Principal only observes $X$ and $\widehat \mu$, which means that she is offering a $\F^{\textnormal{obs}}$--measurable contract as in \eqref{contract:dependency}. For a given conditional distribution $\widehat\mu$ for the other players, we consider the dynamic version of the value function of the representative consumer, $V_t^A$, which satisfies $V_0^A = V_0^A (\xi, \widehat \mu), \; \text{and} \; V_T^A = U^A (\xi (X, \widehat \mu))$. From this definition, we notice that the following explicit relationship between the payoff and the terminal value function holds
\begin{align}\label{eq:xiT}
    \xi(X, \widehat \mu) = - \dfrac{1}{R_A} \ln \big( - V_T^{A} \big).
\end{align}

In this section, we will start, for a given contract and given efforts chosen by other consumers, by introducing the appropriate Hamiltonian functional, which will allow to first compute formally the optimal response of the consumer. Intuitively, this Hamiltonian appears by applying the chain rule with common noise defined \citet*[Theorem 4.17]{carmona2018probabilisticII} to the dynamic value function of the consumer and considering the associated Master equation. Our next step is then to derive a class of so--called \textit{revealing} contracts, thus extending to a general mean--field game framework the main arguments of \cite{cvitanic2018dynamic}, which considered general moral hazard problems with one agent, and \cite{elie2019tale}, which considered mean--field game moral hazard problems where the agents controlled only the drift of the output process $X$. Informally, the class of revealing contract is obtained by still using the chain rule with common noise, but applying it to a transformed function of the consumer's dynamic value function, defined by \eqref{eq:xiT}. {\color{black} We insist on the fact that the analysis we make in the following subsection, to find the Agent's Hamiltonian as well as the relevant form of contracts, is informal. Indeed, we consider the Markovian framework, that is to say that we suppose that the Agent's dynamic value function at time $t$ only depends on $X_t$ and $\widehat \mu_t$, where here, $\widehat \mu_t$ is the conditional law of $\widehat X_t$ (and not of the paths of $\widehat X$ up to time $t$) knowing the common noise. This framework allows us to apply the chain rule with common noise, defined in \cite[Theorem 4.17]{carmona2018probabilisticII}. Nevertheless, the analysis we make, though informal at this point, relies strongly on recent progresses on the dynamic programming approach to the control of McKean--Vlasov SDEs and will rigorously be justified later in the paper, mainly in Appendix \ref{sec:technical_proofs}. Indeed, considering simple contracts inspired by the Markovian framework allows us to calculate the optimal efforts of the representative Agent and the associated mean--field equilibrium (see Theorem \ref{thm:mfe}) whose proof is based on the theory of second--order BSDEs (2BSDEs for short). The main result of this paper, mainly that the restriction to this type of so--called revealing contracts is in fact without loss of generality, is postponed to the next section (see Theorem \ref{thm:main}).}

{\color{black}
\subsection{Intuition from the Markovian framework}

One of the cornerstones of the approach to continuous--time moral hazard problems pioneered by Sannikov \cite{sannikov2008continuous}, and studied in full generality in \citet*{cvitanic2018dynamic}, is to obtain an appropriate probabilistic representation for incentive--compatible contracts. The goal of this section is to use informal dynamic programming type arguments to deduce such a representation in a context where a continuum of Agents with mean--field interactions is involved, each of them being able to control the volatility of the output process, and to then prove that mean--field equilibria are easily accessible for this class of contracts.

\medskip

We fix throughout this section a probability $\widehat \P \in \widehat \Pc$, chosen by other consumers on their own canonical space $\widehat \Omega$. Using Definition \ref{def:Pc}, we denote by $\widehat \P_t^\omega$ the r.c.p.d. for $\P$ given $\Fc_t^\circ$, and $\widehat \mu_t$ the associated conditional law of $\widehat X_{t \wedge \cdot}$. Almost by definition of $\widehat \P \in \widehat \Pc$, more precisely by Lemma \ref{lemma:rep}, there exists a control process representing the effort of other consumers, denoted $\widehat \nu := ( \widehat \alpha, \widehat \beta) \in \widehat \Uc = \widehat \Ac \times \widehat \Bc$, such that the dynamic of their deviation consumption $\widehat X$ is:
\begin{align*}
    \drm \widehat X_t = - \widehat \alpha_t \cdot \mathbf{1}_d \drm t + \sigma \big( \widehat \beta_t \big) \cdot \drm \widehat{W}_t + \sigma^\circ \drm W^{\circ}_t.
\end{align*}
Fixing $\widehat \P \in \widehat \Pc$ thus implies that $(\widehat \mu, \widehat \nu) \in \Pc(\Cc_T) \times \widehat \Uc$ are fixed too.

\medskip

Intuitively, in view of \eqref{eq:xiT}, we expect that a contract $\F^\textnormal{obs}$--measurable is any terminal value of the following process, as a function of $t$, the path of $X$ up to $t$, and $\widehat \mu_t$, the conditional law of $\widehat X_{t \wedge \cdot}$ with respect to the common noise:
\begin{align}\label{eq:contract_non_markov}
    \xi_t &:= - \dfrac{1}{R_A} \ln \big( - V_t^A \big) = u^A (t, X_{t\wedge \cdot}, \widehat \mu_t),
\end{align}
where $u^A : [0,T] \times \Cc_T \times \Pc(\Cc_T) \longrightarrow \R$. Therefore, the continuation utility $V_t^{A}$ of the consumer, given a contract $\xi\in\Xi$ and efforts of other consumers subsumed by the distribution $\widehat \mu$, may be written as
\begin{align*}
    V^{A}_t := v^A (t, X_{t \wedge \cdot}, \widehat \mu_{t}), \; \text{where} \; v^A := - \erm^{- R_A \cdot} \circ u^A,
\end{align*}
\textit{i.e.}, the process $V^{A}$ at time $t$ depends on $t$, on the path history of $X$, and on the conditional law $\widehat \mu$ of deviation consumption $\widehat X$ of others. Indeed, the contract being only indexed on $X$ and $\widehat \mu$, the continuation utility should not depend on all the information contained in $\F$ and $\widehat \F$. 

\medskip

To find the relevant form of contracts, the intuition is to focus on the Markovian framework, \textit{i.e.} when the function $v^A$, and thus the function $u^A$, only depend on $t$, $X_t$, and $\widehat \mu_t$, where here, $\widehat \mu_t$ is the conditional law of $\widehat X_t$ (and not of the paths of $\widehat X$ up to time $t$) with respect to the common noise. Note that in this particular case, both $v^A$ and $u^A$ are functions from $[0,T] \times \R \times \Pc(\R)$ with values in $\R$.

\subsubsection{Consumer's Hamiltonian}\label{ss:hamiltonian}

In the Markovian framework, \textit{i.e.}, when $V_t^A$ only depends on $X_t$ and $\widehat \mu_t$ where here, $\widehat \mu_t$ is the conditional law of $\widehat X_t$ (and not of the paths of $\widehat X$ up to time $t$) knowing the common noise, and if $v^A$ is smooth enough in the sense of \citet*[Section 4.3.4]{carmona2018probabilisticII}, we can apply the chain rule with common noise (for function of both the state and the measure, see \cite[Theorem 4.17]{carmona2018probabilisticII}) to $v^A : [0,T] \times \R \times \Pc(\R) \longrightarrow \R$:
\begin{align}\label{eq:ito_mesure_v}
    v^{A} (t, X_{t}, \widehat \mu_{t}) = &\ V_0^{A} 
    + \int_0^t \partial_s v^{A} (s, X_s, \widehat \mu_s) \drm s 
    + \int_0^t \partial_{x} v^{A} (s, X_s, \widehat \mu_s) \drm X_s
    + \int_0^t \widehat{\mathbb{E}}^{\widehat \P_s} \Big[ \partial_{\mu} v^{A}  (s, X_s, \widehat \mu_s) \big( \widehat X_s \big) \drm \widehat X_s \Big] \nonumber \\
    &+ \dfrac{1}{2} \int_0^t \partial_{x x}^2 v^{A}  (s, X_s, \widehat \mu_s) \drm \langle X \rangle_s 
    + \int_0^t \widehat{\E}^{\widehat \P_s} \Big[ \partial_{x} \partial_{\mu} v^{A} (s, X_s, \widehat \mu_s) \big( \widehat X_s \big) \drm \langle X, \widehat{X} \rangle_s \Big]
     \nonumber \\
    &+ \dfrac{1}{2}  \int_0^t \widehat{\E}^{\widehat \P_s} \Big[ \partial_v \partial_{\mu} v^{A} (s, X_s, \widehat \mu_s) \big( \widehat X_s \big) \drm \langle \widehat X \rangle_s \Big]
    + \dfrac{1}{2} \int_0^t \widehat{\E}^{\widehat \P_s} \widecheck{\E}^{\widecheck \P_s} \Big[ \partial_{\mu}^2 v^{A} (s, X_s, \widehat \mu_s) \big( \widehat X_s, \widecheck X_s \big) \drm \langle \widehat X, \widecheck{X} \rangle_s \Big],
\end{align}
where $\widecheck{X}$ is a copy of $\widehat X$ in the sense of Definition \ref{def:copy_process}. One may note that this particular It\=o's formula involves derivatives with respect to a measure: we refer to \cite[Section 4.3.4]{carmona2018probabilisticII} for a rigorous definition of these types of derivatives, and we denote by $\widetilde \Lc$ (resp. $\widetilde \Lc^2$)  the set of Borel measurable functionals from $\R$ (resp. $\R^2$) into $\R$, to consider the latter.

\medskip

Intuitively, as in classical control theory, the Hamiltonian of the representative Agent should be composed of all the drift terms appearing in the previous It\=o's expansion. Therefore, by computing the quadratic variations and covariations between the deviation consumption of the representative consumer and the others, mainly:
\begin{align*}
    &\ \drm \langle X \rangle_t = \big( \Sigma \big( \beta^\P_t \big) + (\sigma^\circ)^2 \big) \drm t, \;
    \drm \langle \widehat X \rangle_t = \big( \Sigma \big( \widehat \beta_t \big) + ( \sigma^\circ)^2 \big) \drm t, \;
    \drm \langle X, \widehat X \rangle_t = \drm \langle \widehat X, \widecheck{X} \rangle_t = ( \sigma^\circ)^2 \drm t,
\end{align*}
and using in addition the dynamics of $X$ and $\widehat X$, one obtain the following form for the hamiltonian, for $(t, x, y) \in [0,T] \times \R \times \R$, $p := (z, z^\mu, \gamma, \gamma^\mu, \gamma^{\mu,1}, \gamma^{\mu,2}) \in \R \times \widetilde \Lc \times \R \times \widetilde \Lc \times \widetilde \Lc^2 \times \widetilde \Lc$, and computed along $(\widehat \mu, \widehat \nu) \in \Pc(\Cc_T) \times \widehat \Uc$:
\begin{align*}
    H (t, x, y, p, \widehat \mu_t, \widehat \nu_t) := \sup_{v \in U} h (t, x, y, p, \widehat \mu_t, \widehat \nu_t, v),
\end{align*}
where, for $v \in U$,
\begin{align*}
    h (t, x, y, p, \widehat \mu_t, \widehat \nu_t, v) := &
    - R_A ( c (v) + f (x)) y
    - z a \cdot \mathbf{1}_d
    - \widehat{\mathbb{E}}^{\widehat{\P}_t} \big[ z^{\mu}  \big( \widehat X_{t} \big) \widehat \alpha_t \cdot \mathbf{1}_d \big]
    + \dfrac{1}{2} \gamma \Big( \Sigma (b) + (\sigma^{\circ})^2 \Big)  \\
    &+ (\sigma^{\circ})^2 \widehat{\mathbb{E}}^{\widehat{\P}_t} \big[ \gamma^{\mu} \big( \widehat X_{t}  \big) \big]
    + \dfrac{1}{2} \widehat{\mathbb{E}}^{\widehat{\P}_t} \Big[ \gamma^{\mu,1} \big( \widehat X_{t}  \big) \big( \Sigma \big( \widehat \beta_t \big) + ( \sigma^{\circ})^2 \big) \Big] 
    + \dfrac{1}{2}  \big( \sigma^{\circ} \big)^2 \widehat{\mathbb{E}}^{\widehat{\P}_t} \widecheck{\mathbb{E}}^{\widecheck{\P}_t} \big[ \gamma^{\mu,2} \big( \widehat X_{t}, \widecheck{X}_{t} \big) \big].
\end{align*}
Following the classical reasoning in control theory, the value function should satisfy the following Hamilton--Jacobi--Bellman (HJB) equation:
\begin{align}\label{eq:HJBva}
    & - \partial_t v^A(t,x,\widehat \mu_t) - H \big( t, x, \widehat \mu_t, v^{A} (t, x, \nabla v^{A} (t, x, \widehat \mu_t), \Delta^2 v^{A} (t, x, \widehat \mu_t), \widehat \mu_t, \widehat \nu_t) = 0,
\end{align}
where
\begin{align*}
    \nabla v^{A} (t, x, \widehat \mu_t) &:= \big( \partial_{x} v^{A} (t, x, \widehat \mu_t), \partial_{\mu} v^{A}  (t, x, \widehat \mu_t) \big), \\ \text{ and } \; 
    \Delta^2 v^{A} (t, x, \widehat \mu_t) &:= \big( 
    \partial_{x x}^2 v^{A}  (t, x, \widehat \mu_t),
    \partial_{x} \partial_{\mu} v^{A} (t, x, \widehat \mu_t), 
    \partial_v \partial_{\mu} v^{A} (t, x, \widehat \mu_t),
    \partial_{\mu}^2 v^{A} (t, x, \widehat \mu_t) \big).
\end{align*}

\subsubsection{Toward a relevant form of contract}

By still considering the Markovian framework, and assuming that we can apply to the function $u^A$ defined by \eqref{eq:contract_non_markov} the chain rule with common noise under $\Cc^{1,2,2}$--regularity, defined in \cite[Theorem 4.17]{carmona2018probabilisticII}, we can obtain a formula similar to \eqref{eq:ito_mesure_v} for $u^A$. 
By computing\footnote{{\color{black} Once again, we refer to \cite[Section 4.3.4]{carmona2018probabilisticII} for computations rules of derivatives, but to give an idea, we have: 
\begin{align*}
    &\partial_{\mu} u^A (t, x, \widehat \mu_t) (\widehat x) = - \dfrac{1}{R_A V_t^A} \partial_{\mu} v^A (t, x, \widehat \mu_t) (\widehat x), \text{ and }
    \partial_v \partial_{\mu} u^A (t, x, \widehat \mu_t) (\widehat x) = - \dfrac{1}{R_A V_t^A} \partial_v \partial_{\mu} v^A (t, x, \widehat \mu_t) (\widehat x).
\end{align*}
}} the partial derivatives of $u^A$ in terms of the partial derivatives of $v^A$, we obtain after some tedious but simple computations:
\begin{align}\label{eq:contract_markov}
    \xi_t = &\ - \dfrac{1}{R_A} \ln \big(-V_0^A \big)
    + \int_0^t \partial_s u^A (s, X_{s}, \widehat \mu_s) \drm s 
    + \int_0^t Z_s \drm X_s
    + \int_0^t \widehat{\mathbb{E}}^{\widehat{\P}_s} \big[ Z^\mu_s (\widehat X_{s}) \drm \widehat X_s \big] 
    + \dfrac{1}{2} \int_0^t \big(  \Gamma_s + R_A Z_s^2 \big) \drm \langle X \rangle_s \nonumber \\
    &+ \dfrac{1}{2} \int_0^t  \widehat{\mathbb{E}}^{\widehat{\P}_s} \Big[ \Gamma^{\mu,1}_s (\widehat X_{s}) \drm \langle \widehat X \rangle_s \Big] 
    + \dfrac{1}{2} \int_0^t \widehat{\mathbb{E}}^{\widehat{\P}_s} \widecheck{\mathbb{E}}^{\widecheck{\P}_s} \Big[ \Big( \Gamma_s^{\mu,2} \big(\widehat X_s, \widecheck X_s \big) + R_A Z_s^\mu \big( \widehat X_s \big) Z^\mu_s \big( \widecheck X_s \big) \Big) \drm \big\langle \widehat X, \widecheck{X} \big\rangle_s \Big] \nonumber \\
    &+ \int_0^t  \widehat{\mathbb{E}}^{\widehat{\P}_s} \Big[ \Big( \Gamma^{\mu}_s \big( \widehat X_{s} \big) + R_A Z_s Z^\mu_s \big( \widehat X_{s} \big) \Big) \drm \big\langle X, \widehat{X} \big\rangle_s \Big],
\end{align}
where the process $( Z, Z^\mu, \Gamma, \Gamma^\mu, \Gamma^{\mu,1}, \Gamma^{\mu,2})$ takes values in $\R \times \widetilde \Lc \times \R \times \widetilde \Lc \times \widetilde \Lc^2 \times \widetilde \Lc$ and is defined for all $t \in [0,T]$ by:
\begin{align*}
    \big( Z_t, Z_t^\mu, \Gamma_t, \Gamma_t^\mu, \Gamma_t^{\mu,1}, \Gamma_t^{\mu,2} \big) := - \dfrac{1}{R_A V_t^A} \big( \partial_x v^A, \partial_{\mu} v^A , \partial_{x x}^2 v^{A},
    \partial_{x} \partial_{\mu} v^{A}, 
    \partial_v \partial_{\mu} v^{A},
    \partial_{\mu}^2 v^{A}\big) (t, X_t, \widehat \mu_t).
\end{align*}
Using the HJB equation  satisfied by $v^A$, see \eqref{eq:HJBva}, we can state the HJB equation  satisfied by $u^A$:
\begin{align}\label{eq:HJBua}
    &- \partial_t u^A(t,x,\widehat \mu_t) - \widetilde H \big( t, x, Z_t, Z_t^\mu, \Gamma_t, \Gamma_t^\mu, \Gamma_t^{\mu,1}, \Gamma_t^{\mu,2}, \widehat \mu_t, \widehat \nu_t) = 0,
\end{align}
where $\widetilde H$ is a slightly modified version of the initial Hamiltonian $H$, more convenient when dealing with CARA utility functions, and satisfying for $(\widehat \mu, \widehat \nu) \in \Pc(\Cc_T) \times \widehat \Uc$, $(t, x) \in [0,T] \times \R$, and $(z, z^\mu, \gamma, \gamma^\mu, \gamma^{\mu,1}, \gamma^{\mu,2}) \in \R \times \widetilde \Lc \times \R \times \widetilde \Lc \times \widetilde \Lc^2 \times \widetilde \Lc$,
\begin{align}\label{eq:hamiltonian_markovian}
    \widetilde H (t, x, z, z^\mu, \gamma, \gamma^{\mu, 1}, \gamma^{\mu,2}, \gamma^{\mu}, \widehat \mu_t, \widehat \nu_t) = \dfrac{1}{2} H_d (z) + \dfrac{1}{2} H_v(\gamma) + H_c (x, \gamma) + H_\circ (z^\mu, \gamma^{\mu, 1}, \gamma^{\mu,2}, \gamma^{\mu}, \widehat \mu_t,  \widehat \nu_t), 
\end{align}
where
\begin{align*}
    H_d (z) := &- \inf_{a \in A} \big\{2 z a \cdot \mathbf{1}_d + c_\alpha(a) \big\}, \;
    H_v(\gamma) := - \inf_{ b \in B} \big\{c_{\beta} (b) - \gamma \Sigma(b) \big\}, \;
    H_c (x, \gamma) := \dfrac{1}{2} \gamma (\sigma^{\circ})^2 + f(x),
\end{align*}
\vspace{-1.2em}
\begin{align*}
    \text{and } \; H_\circ (z^\mu, \gamma^{\mu, 1}, \gamma^{\mu,2}, \gamma^{\mu}, \widehat \mu_t, \widehat \nu_t) :=& 
    - \widehat{\mathbb{E}}^{\widehat{\P}_t} \Big[ z^{\mu}\big( \widehat X_{t} \big) \widehat \alpha_t \cdot \mathbf{1}_d \Big]
    + (\sigma^{\circ})^2 \widehat{\mathbb{E}}^{\widehat{\P}_t} \Big[ \gamma^{\mu}\big( \widehat X_{t} \big) \Big]
    + \dfrac{1}{2} \widehat{\mathbb{E}}^{\widehat{\P}_t} \Big[ \gamma^{\mu,1}\big( \widehat X_{t} \big) \big( \Sigma \big(\widehat \beta_t \big) + (\sigma^{\circ})^2 \big) \Big] \\
    & + (\sigma^{\circ})^2 \dfrac{1}{2} \widehat{\mathbb{E}}^{\widehat{\P}_t} \widecheck{\mathbb{E}}^{\widecheck{\P}_t} \Big[  \gamma^{\mu,2} \big( \widehat X_{t} , \widecheck{X}_{t}  \big) \Big].
\end{align*}
Therefore, the Hamiltonian of the representative consumer in this case consists of four parts. The first three, $H_d$, $H_v$ and $H_c$, are the classical parts for drift and volatility control, which do not depend on the efforts and the distribution of other players' states. The last part, $H_\circ$, does depend on the law and the efforts of others, and act as a constant part for the representative consumer, since he cannot control it. Note that the optimisers are given by
    \begin{align}\label{eq:optimal_effort}
        a^{k,\star} (z) := \rho^k (z^{-} \wedge A_{\textnormal{max}}) \; \text{ and } \; b^{k,\star} (\gamma) := 1 \wedge \big( \lambda^k \gamma^{-} \big)^{\frac{-1}{\eta^k + 1}} \vee B_{\textnormal{min}}, \; \text{ for } k=1, \dots, d. 
    \end{align}
We thus claim that, in our framework, the Hamiltonian of the representative consumer should be somehow a path dependent version of  \eqref{eq:hamiltonian_markovian}, and that the relevant contracts should be of the form \eqref{eq:contract_markov}, parametrised by a process $( Z, Z^\mu, \Gamma, \Gamma^\mu, \Gamma^{\mu,1}, \Gamma^{\mu,2})$. Nevertheless, some modifications are necessary, mainly by considering a path dependent version, but also some simplifications are possible. In particular, by writing explicitly the quadratic variations of last three integrals of \eqref{eq:contract_markov}, we can show that the terms indexed by $\Gamma^\mu$, $\Gamma^{\mu,1}$ and $\Gamma^{\mu,2}$ can be simplified with some part of the Hamiltonian $\widetilde H$, and are therefore unnecessary. We provide details in the next section.}

\subsection{Solving the mean--field thanks to simple contracts}

{\color{black}The intuition in the Markovian framework developed in the previous subsection allows us to intuit the form of revealing contracts in our framework, given below in Definition \ref{def:simple_contracts}. In particular, the relevant form of contract is inspired by \eqref{eq:contract_markov}, adapted for a non--Markovian framework, and noticing that some simplification are possible. Therefore, starting from a contract form indexed by a tuple of processes $( Z, Z^\mu, \Gamma, \Gamma^\mu, \Gamma^{\mu,1}, \Gamma^{\mu,2})$, we finally obtain that the tuple of process $( Z, Z^\mu, \Gamma)$ should be sufficient to parametrise the relevant contract. 
This type of contracts, indexed only by $( Z, Z^\mu, \Gamma)$, then allows us to calculate the optimal efforts of the representative Agent and the associated mean--field equilibrium (see Theorem \ref{thm:mfe}) whose proof is based on the theory of 2BSDEs. The main result of this paper, mainly that the restriction to this type of so--called revealing contracts is in fact without loss of generality, is postponed to the next section (see Theorem \ref{thm:main}).

\medskip

Throughout the following, we denote for simplicity, for any positive integer $n$, by $\mathcal L^n$ the set of Borel measurable functionals from $\Cc([0,T],\R^n)$ into $\R$, and $\Lc:=\Lc^1$. Indeed, instead of considering that the indexation parameter $Z^\mu$ takes values in $\widetilde \Lc$ as in the previous section, we should now consider, in our non--Markovian framework, that for all $t \in [0,T]$, the function $Z_t^\mu$ can be applied to the paths of $\widehat X$ until $t$.

\subsubsection{Simple contracts}\label{ss:contract_zmu}

Recall that in view of \eqref{eq:xiT}, we expect that a contract $\F^\textnormal{obs}$--measurable is defined by \eqref{eq:contract_non_markov}, as a function of $t$, the path of $X$ up to $t$, and $\widehat \mu_t$, the conditional law of $\widehat X_{t \wedge \cdot}$. 
Thanks to the reasoning developed in the previous section, and noticing moreover that, replacing the Hamiltonian $\widetilde H$ by its value (see \eqref{eq:hamiltonian_markovian}) in the form of the contract \eqref{eq:contract_markov}, some simplifications are possible between parts of the Hamiltonian and terms related to the quadratic variations. In particular, by setting for $(x, z, z^{\mu}, \gamma) \in \R \times \R \times \Lc \times \R$ and $(\widehat \mu, \widehat \alpha) \in \Pc(\Cc_T) \times \widehat \Ac$:
\begin{align}\label{eq:hamiltonian}
    \Hc (x, \widehat \mu_t, z, z^{\mu}, \gamma, \widehat \alpha_t) := &\ \dfrac12 H_d (z) + \dfrac12 H_v(\gamma) + H_c (x, \gamma)
    -  \widehat{\mathbb{E}}^{\widehat{\P}_t} \big[z^{\mu} \big(\widehat X_{t \wedge \cdot} \big) \widehat \alpha_t \cdot \mathbf{1}_d \big],
\end{align}
we obtain that the contract should only be parametrised by a process $\zeta := (Z, Z^{\mu}, \Gamma)$, taking values in $\R \times \Lc \times \R$, and should satisfy
\begin{align}\label{contractform}
    \xi_t = &\ \xi_0  - \int_0^t \Hc (X_s, \widehat \mu_s, \zeta_s,  \widehat \alpha_s)  \drm s 
    + \int_0^t Z_s \drm X_s 
    + \int_0^t \widehat{\mathbb{E}}^{\widehat{\P}_s} \Big[ Z_s^{\mu} \big(\widehat X_{s \wedge \cdot}\big) \drm \widehat X_s \Big]
    + \dfrac{1}{2} \int_0^t \big( \Gamma_s + R_A Z_s^2 \big) \drm \langle X \rangle_s
    \nonumber \\
    &+ \dfrac{1}{2} R_A \int_0^t  \widehat{\mathbb{E}}^{\widehat{\P}_s} \widecheck{\mathbb{E}}^{\widecheck{\P}_s} \Big[ Z_s^{\mu} \big(\widehat X_{s \wedge \cdot} \big) Z_s^{\mu} \big(\widecheck{X}_{s \wedge \cdot} \big) \drm \big\langle \widehat X, \widecheck{X} \big\rangle_s \Big]
    + R_A \int_0^t Z_s \widehat{\mathbb{E}}^{\widehat{\P}_s}  \Big[Z_s^{\mu} \big(\widehat X_{s \wedge \cdot} \big) \drm \big\langle X, \widehat X \big\rangle_s \Big],
\end{align}
for some $\xi_0 \in \R$.

\medskip

Therefore, $\Hc$ defined by \eqref{eq:hamiltonian} is the relevant Hamiltonian in our non--Markovian framework. One may note that $\Hc$ is a simplified version of the Hamiltonian $\widetilde H$ defined by the equation \eqref{eq:hamiltonian_markovian}. Indeed, some parts of the Hamiltonian which are not controlled by the consumer simplify with some parts of the contract. Therefore, the triple $(\Gamma^{\mu,1}, \Gamma^{\mu,2},\Gamma^{\mu})$ no longer appears in the contract. We thus obtain a simplified form for the contract, only indexed by a triple $\zeta := (Z, Z^\mu, \Gamma)\in\R\times\Lc\times\R$. This triple $\zeta$ will be called the triple of payment rates. Moreover, one may note that the Hamiltonian as well as the contract for the representative Agent do not depend anymore on the other consumers' effort on the volatility, namely $\widehat \beta$, but still depends on the effort on the drift of other consumers, \textit{i.e.} $\widehat \alpha$.}

\medskip 

Though the expression \eqref{contractform} is appealing to be used as our generic contract form, it is not possible to use it directly in the context of a Principal--Agent problem with moral hazard. In fact, this form depends explicitly on the drift's effort of other consumers, namely $\widehat \alpha$, through the Hamiltonian, and this effort is not supposed to be observable, nor contractible upon, for the Principal. Nevertheless, we can overcome this difficulty by replacing $\widehat \alpha$ by the optimal drift process of other consumers, which has to be formally computed as the maximiser in the Hamiltonian denoted by $\widehat \alpha^{\star}$ and defined by \eqref{eq:optimal_effort} so that $\widehat a^{k,\star} (\widehat z) := \rho^k (\widehat z^{-} \wedge A_{\textnormal{max}}), \; k=1, \dots, d,$ where $\widehat z$ is the payment rate for the other consumers' drift effort. Indeed, at equilibrium, each consumer should consume optimally. Moreover, in our mean--field framework, the consumers are identical and indistinguishable. Therefore, the Principal will offer the same contract for all agents, that is the payment rate for drift effort will be the same for all the consumers, since no discrimination is allowed. Hence, the optimal drift process of other consumers will be $\widehat \alpha^\star(Z)$. We are thus led to consider a particular type of revealing contracts, precisely described in the definition below.

\begin{definition}[Simple contracts]\label{def:simple_contracts}
For any $\R\times\Lc\times\R$--valued $\F^{\rm obs}$--predictable process $\zeta:=(Z,Z^\mu,\Gamma)$, and any $\xi_0\in\R$, let us define the following process $\xi^{\xi_0, \zeta}$ for all $t\in[0,T]$ by
\begin{align}\label{contract_form_mu}
    \xi_t^{\xi_0,\zeta} := &\ \xi_0  - \int_0^t \Hc (X_s, \widehat \mu_s, \zeta_s, \widehat \alpha_s^\star)  \drm s 
    + \int_0^t Z_s \drm X_s 
    + \dfrac{1}{2} \int_0^t \big( \Gamma_s + R_A Z_s^2 \big) \drm \langle X \rangle_s
    + \int_0^t \widehat{\mathbb{E}}^{\widehat{\P}_s} \Big[ Z_s^{\mu} \big(\widehat X_{s\wedge\cdot}\big) \drm \widehat X_s \Big]
    \nonumber \\
    &+ \dfrac{1}{2} R_A \int_0^t  \widehat{\mathbb{E}}^{\widehat{\P}_s} \widecheck{\mathbb{E}}^{\widecheck{\P}_s} \Big[ Z_s^{\mu} \big(\widehat X_{s\wedge\cdot}\big) Z_s^{\mu} \big(\widecheck{X}_{s\wedge\cdot}\big) \drm \big\langle \widehat X, \widecheck{X} \big\rangle_s \Big]
    + R_A \int_0^t Z_s \widehat{\mathbb{E}}^{\widehat{\P}_s}  \Big[Z_s^{\mu} \big(\widehat X_{s\wedge\cdot}\big) \drm \big\langle X, \widehat X \big\rangle_s \Big],
\end{align}
where the function $\Hc$ is defined by \eqref{eq:hamiltonian}. We let then $\Vc$ the set of $\R\times\Lc\times\R$--valued $\F^{\rm obs}$--predictable process $\zeta$ such that 
\[
\sup_{\P\in\Pc}\E^\P\bigg[\sup_{0\leq t\leq T}\mathrm{e}^{pR_A|\xi_t^{\xi_0,\zeta}|}\bigg]<+\infty,
\]
where $p$ is the same as in \textnormal{Condition \eqref{eq:integxi}}. We call random variables of the form $(\xi_T^{\xi_0,\zeta})$, for $(\xi_0,\zeta)\in\R\times\Vc$, simple contracts, and denote the corresponding set by $\Xi_{\mathrm{S}}$. Moreover, for any $\R\times\Lc\times\R$--valued $\F$--predictable process $\zeta:=(Z,Z^\mu,\Gamma)$, we will denote by $\overline \zeta = (Z, \overline Z^\mu, \Gamma)$ the $\R^3$--valued $\F$--predictable process, with $\overline Z^\mu_t = \widehat \E^{\widehat \P_t} [Z_{t}^\mu (\widehat X_{t \wedge \cdot})]$, for $t \in [0,T]$. We will say that $\overline \zeta \in \overline \Vc$ if $\zeta \in \Vc$.
\end{definition}

\begin{remark}
Notice that the integrability requirement in the definition of the set $\Vc$ is rather implicit. It is however clear that $\Vc$ is not empty as it contains trivially constant processes, since the drift and the volatility of $X$ are always bounded. Besides, this is exactly the integrability we need to be able to solve the {\rm MFG} for the agents given a contract in $\Xi_{\mathrm{S}}$, as the proof of \textnormal{Theorem \ref{thm:mfe}} below will make clear.
\end{remark}

\subsubsection{Interpretation of the form of contracts}

{\color{black} The form of contracts given in Definition \ref{def:simple_contracts}} is mainly composed of two parts: one is an indexation on the process controlled by the consumer, that is to say his deviation consumption, the other one is an indexation on other consumers through the law $\widehat \mu$. In particular, similarly to \cite{aid2018optimal}, the contract has a linear part in the level of consumption deviation $X$ and the corresponding quadratic variation $\langle X \rangle$, with linearity coefficients $Z$ and $\Gamma$. This part of the contract is the classic contract for drift and volatility control. The constant part is slightly different from the usual one in the moral hazard framework. In fact, we can divide it into three integrals:
\begin{align*}
    \int_0^T \Hc (X_s, \widehat \mu_s, \zeta_s, \widehat \alpha_s^{\star}) \drm s  = \int_0^T \bigg(  \dfrac12 H_d (Z_s) +  \dfrac12  H_v (\Gamma_s) + f (X_s) \bigg) \drm s 
    - \int_0^T \widehat{\mathbb{E}}^{\widehat \P_s} \big[Z_s^{\mu} (\widehat X_{s \wedge \cdot}) \widehat \alpha_s^{\star} \cdot \mathbf{1}_d \big] \drm s
    + \dfrac12 ( \sigma^\circ)^2 \int_0^T \Gamma_s \drm s.
\end{align*}
The first one represents the certainty equivalent of the utility gain of the consumer that can be achieved by an optimal response to the contract, and is thus subtracted from the Principal's payment, in agreement with usual Principal--Agent moral hazard type of contracts. Moreover, due to the risk--aversion of the consumer, the infinitesimal payment $Z_t \drm X_t$ must be compensated by the additional payment $\frac12 R_A Z_t^2 \drm \langle X \rangle_t$. Following the same reasoning, the second integral and the additional payment 
\begin{align*}
    \dfrac{1}{2} R_A \big( \sigma^{\circ} \big)^2 \int_0^t  \Big( \widehat{\E}^{\widehat \P_s} \big[ Z_s^{\mu} (\widehat X_{s \wedge \cdot})  \big] \Big)^2 \drm s,
\end{align*}
in \eqref{contract_form_mu} are compensations for the infinitesimal payment $ \widehat{\E}^{\widehat \P_s} \big[ Z_s^{\mu} (\widehat X_{s \wedge \cdot}) \drm \widehat X_s \big]$ indexed on others, and the last integral is a compensation for the covariation induced by the two infinitesimal payments. In summary, the Principal will choose the triple of controls $\zeta = (Z, Z^\mu, \Gamma)$ where the payment rates $(Z, \Gamma)$ index the contract on the deviation consumption of the considered consumer, in agreement with usual Principal--Agent moral hazard type of contracts for drift and volatility control, and the payment rate $Z^\mu$ indexes the contract on the behaviour of other consumers, represented by the conditional law $\widehat \mu$.

\begin{remark}\label{rk:contract_w0_observable}
    Until now, we supposed that the Principal could not observe the common noise, or at least was not allowed to directly index compensations on it. Whenever she can do so, we notice that the contract defined by \eqref{contractform} can be written in the following way
    \begin{align*}
        \xi_t = &\ \xi_0 - \int_0^t \Hc^\circ (X_s, Z_s, \Gamma_s)  \drm s 
        + \int_0^t Z_s \drm X_s 
        + \sigma^{\circ} \int_0^t \widehat{\mathbb{E}}^{\widehat{\P}_s} \big[ Z_s^{\mu} \big(\widehat X_{s\wedge\cdot}\big) \big] \drm W^{\circ}_s 
        + \dfrac{1}{2} \int_0^t \left( \Gamma_s + R_A Z_s^2 \right) \drm \langle X \rangle_s \nonumber \\
        &+ \dfrac{1}{2} R_A \int_0^t \widehat{\mathbb{E}}^{\widehat{\P}_s} \widecheck{\mathbb{E}}^{\widecheck{\P}_s} \big[  Z_s^{\mu} \big(\widehat X_{s\wedge\cdot}\big) Z_s^{\mu} \big(\widecheck{X}_{s\wedge\cdot}\big) \drm \big\langle \widehat X, \widecheck{X} \big\rangle_s \big]
        + R_A  \int_0^t Z_s \widehat{\mathbb{E}}^{\widehat{\P}_s}  \big[Z_s^{\mu} \big(\widehat X_{s\wedge\cdot}\big) \drm \big\langle X, \widehat X \big\rangle_s \big], 
    \end{align*}
    where for $(x, z, \gamma) \in \R \times \R \times \R$,
    \begin{align}\label{eq:Hc_circ}
        \Hc^\circ (x, z, \gamma) = \dfrac{1}{2} H_d (z) + \dfrac{1}{2} H_v(\gamma) + H_c (x, \gamma),
    \end{align}
    and thus does not depend on the others' effort $\widehat \alpha$ anymore. We can even go further in the simplifications by noticing that, given the common noise, $\widehat X$ and $\widecheck X$ are independent. Hence, recalling the notation $ \overline Z_s^{\mu} = \widehat{\mathbb{E}}^{\widehat{\P}_s} \big[ Z_s^{\mu} \big(\widehat X_{s\wedge\cdot}\big) \big]$, we have
    \begin{align*}
        \int_0^t \widehat{\mathbb{E}}^{\widehat{\P}_s} \widecheck{\mathbb{E}}^{\widecheck{\P}_s} \big[ Z_s^{\mu} \big(\widehat X_{s\wedge\cdot}\big) Z_s^{\mu} \big(\widecheck{X}_{s\wedge\cdot}\big) \drm \big\langle \widehat X, \widetilde{X} \big\rangle_s \big] 
        &= \big( \sigma^{\circ} \big)^2 \int_0^t \widehat{\mathbb{E}}^{\widehat{\P}_s} \big[ Z_s^{\mu} \big(\widehat X_{s\wedge\cdot}\big) \big] \widecheck{\mathbb{E}}^{\widecheck{\P}_s} \big[ Z_s^{\mu} \big(\widecheck{X}_{s\wedge\cdot}\big) \big] \drm s 
        = \big( \sigma^{\circ} \big)^2 \int_0^t \big( \overline Z_s^{\mu} \big)^2 \drm s,  \\
  \int_0^t \widehat{\mathbb{E}}^{\widehat{\P}_s} \big[ Z_s  Z_s^{\mu} \big(\widehat X_{s\wedge\cdot}\big) \drm \big\langle X, \widehat X \big\rangle_s \big]
        &= \big( \sigma^{\circ} \big)^2 \int_0^t Z_s \widehat{\mathbb{E}}^{\widehat{\P}_s} \big[ Z_s^{\mu} \big(\widehat X_{s\wedge\cdot}\big) \big] \drm s
        = \big( \sigma^{\circ} \big)^2 \int_0^t Z_s \overline Z_s^{\mu} \drm s.
    \end{align*}
    Therefore, the actual compensation parameter of the contract in this case is the triple $\overline{\zeta} := \big(Z, \overline{Z}^\mu, \Gamma \big) \in \overline{\Vc}$ and the form of contracts becomes
    \begin{align}\label{eq:xi_simplify}
        \xi_0 - \int_0^t \Hc^\circ (X_s, Z_s, \Gamma_s)  \drm s 
        + \int_0^t Z_s \drm X_s 
        + \sigma^{\circ} \int_0^t \overline{Z}_s^{\mu} \drm W^{\circ}_s 
        + \dfrac{1}{2} \int_0^t \big( \Gamma_s + R_A Z_s^2 \big) \drm \langle X \rangle_s + \dfrac{1}{2} R_A  ( \sigma^{\circ} )^2 \int_0^t  \overline Z_s^{\mu} \big(\overline Z_s^{\mu}+ 2 Z_s\big)  \drm s.
    \end{align}
    
The previous form of contract is nothing more than a rewriting of the one given by \eqref{contractform}. This shows that indexing on the conditional law is actually a hidden indexing on the common noise: the compensation term in the contract depending on others is rewritten as a term depending only on the common noise. Therefore, in the case where the producer is allowed to use $W^\circ$, she can directly offer this type of contracts. Otherwise, if there are some regulatory reasons preventing her from using it directly in the contract, she can offer the contract in \textnormal{Definition \ref{def:simple_contracts}}. This being said, when the Principal uses contracts in $\Cc^\circ$, we need to add the common noise as a state variable in the value function of the representative consumer, and can then show similarly that the contract is indexed on the common noise through a parameter $Z^{\circ}$, and the payment rate $\zeta^\circ$ chosen by the Principal is measurable with respect to the natural filtration generated by $X$, $W^{\circ}$ and $\widehat \mu$. Hence, if the Principal observes the common noise, we simply have to extend the space of controls chosen by the Principal. In fact, the form of optimal contracts will be the same, leading to the same effort of the consumers and the same utility for the Principal. We refer to \textnormal{Section \ref{sec:observable_common_noise}} for the detailed contract and the resolution of optimal contracting in this particular case.
\end{remark}

The previous remark underlines the fact that if $\sigma^\circ = 0$, a simple contract in Definition \ref{def:simple_contracts} is exactly a standard contract for drift and volatility control (see \cite{aid2018optimal, cvitanic2017moral, cvitanic2018dynamic}). Therefore, in absence of common noise, it is straightforward to conclude that considering contracts indexed on the consumption of the population of consumers do not improve the results given in \cite{aid2018optimal}. We refer to Section \ref{sec:classical_contract} for more details on this result.

\subsubsection{Solving the mean--field game}
By considering simple contracts, we are able to compute the optimal efforts of the representative Agent, which were given informally by \eqref{eq:optimal_effort}. Intuitively, maximising the Hamiltonian given by \eqref{eq:hamiltonian} is sufficient to obtain optimal efforts, but the formal proof relies on the theory of 2BSDEs. We will note that the consumer's optimal efforts do not depend on the efforts of the others, which simplifies the task of obtaining the unique mean--field equilibrium given by Theorem \ref{thm:mfe}. In other words, each consumer optimises his deviation consumption independently of everyone else. Therefore, there is a unique mean--field equilibrium, given by the following theorem.


\begin{theorem}\label{thm:mfe}
    Given a contract $\xi_T^{\xi_0,\zeta}\in\Xi_{\rm S}$ indexed by the triple of parameters $\zeta := (Z, Z^{\mu}, \Gamma)\in \mathcal V$, {\color{black}in the sense of \textnormal{Definition \ref{def:simple_contracts}}}, there exists a unique mean--field equilibrium in the sense of \textnormal{Definition \ref{mfe}} denoted by $(\mathbb P^{\star}, \mu^{\star})$ where 
    \begin{enumerate}[label=$(\roman*)$] 
      \item the optimal drift effort of the consumer is given by the process $\alpha^{\star}:=a^{\star}(Z)$ where 
        \begin{align*}
            a^{k,\star} (z) := \rho^k (z^{-} \wedge A_{\textnormal{max}}),\; z\in\R,\; k=1,\dots,d;
        \end{align*}
        \item the optimal volatility effort of the consumer is given by the process $\beta^{\star}:=b^{\star}(\Gamma)$ where
        \begin{align*}
            b^{k,\star} (\gamma) :=  1 \wedge \big( \lambda^k \gamma^{-} \big)^{\frac{-1}{\eta^k + 1}} \vee B_{\textnormal{min}},\; \gamma\in\R,\; k=1,\dots,d;
        \end{align*}
        \item $\mathbb P^{\star}$ is the law of $X$ driven by optimal controls
        \begin{align}\label{eq:optimal_deviation}
            \drm X_t = - \overline{\rho} \big( Z_t^{-}\wedge A_{\textnormal{max}} \big)  \mathrm{d} t + \sigma^{\star} (\Gamma_t) \cdot \mathrm{d}W_t + \sigma^{\circ} \mathrm{d}W^{\circ}_t;
        \end{align}
        \item $\mu^{\star}$ is the conditional law of $X$ given $\F^{\circ}$. 
    \end{enumerate}
\end{theorem}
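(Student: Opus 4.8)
The plan is to recognise that, for a simple contract $\xi_T^{\xi_0,\zeta}$, the dynamic value function $V^A$ of the representative Agent admits an explicit representation as the solution of a mean--field second--order backward stochastic differential equation, whose driver is precisely the Hamiltonian $\Hc$ in \eqref{eq:hamiltonian}. From this representation, maximising the Hamiltonian pointwise will yield the optimal efforts, and the fixed--point constraint of Definition \ref{mfe} will pin down $\mu^\star$. I would proceed in four steps.

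\medskip

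\textbf{Step 1: a 2BSDE representation.} Starting from the definition of the simple contract \eqref{contract_form_mu} and the relation $\xi_T=-\frac{1}{R_A}\ln(-V_T^A)$, I would posit the candidate
\[
    V_t^A = -\exp\!\Big(-R_A\big(\xi_0 + Y_t - \textstyle\int_0^t(c(\nu_s^\P)-f(X_s))\drm s\big)\Big),
\]
for an appropriate process $Y$, and use It\=o's formula under a generic $\P\in\Pc$ (via Lemma \ref{lemma:rep}, so that $X$ has the semimartingale dynamics there) to identify the equation that $Y$ must satisfy. The key algebraic point — the one that makes the CARA structure work — is that the quadratic terms $\frac12 R_A Z_s^2\drm\langle X\rangle_s$, the covariation terms in $R_A Z_s\widehat\E[\cdots]$, and the term $\frac12 R_A\widehat\E\,\widecheck\E[\cdots]$ built into the contract \eqref{contract_form_mu} exactly cancel the second--order Doléans--Dade correction coming from the exponential, so that what remains in the drift of $V^A$ involves only $\Hc$ evaluated at $\zeta_s$ minus the running cost, and the drift effort $\widehat\alpha_s^\star$ of others enters only through the term $-\widehat\E^{\widehat\P_s}[Z_s^\mu(\widehat X_{s\wedge\cdot})\widehat\alpha_s^\star\cdot\mathbf 1_d]$ already present in $\Hc$. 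This leads to a mean--field 2BSDE of the form $\drm Y_t = \big(c(\nu_t)-f(X_t)-\Hc(X_t,\widehat\mu_t,\zeta_t,\widehat\alpha_t^\star)\big)\drm t + Z_t\drm X_t+\cdots$, whose well--posedness I would invoke from the generalised theory alluded to in the introduction (the integrability built into $\Vc$ is exactly what is needed).

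\medskip

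\textbf{Step 2: verification / optimal response.} Given the representation, for any admissible $\P\in\Pc$ with control $\nu^\P=(\alpha^\P,\beta^\P)$, the process $V^A$ is a $\P$--supermartingale and is a $\P$--martingale precisely when $\nu^\P$ maximises $h$ (equivalently, achieves the supremum in $\Hc$) pointwise, $\drm t\otimes\drm\P$--a.e. The Hamiltonian $\Hc$ in \eqref{eq:hamiltonian} separates additively into the drift part $\frac12 H_d(Z)$ (optimised by $a^{k,\star}(Z)=\rho^k(Z^-\wedge A_{\max})$, obtained by minimising $a\mapsto 2Za\cdot\mathbf 1_d+c_\alpha(a)$ over the box $A$) and the volatility part $\frac12 H_v(\Gamma)$ (optimised by $b^{k,\star}(\Gamma)=1\wedge(\lambda^k\Gamma^-)^{-1/(\eta^k+1)}\vee B_{\min}$, obtained by minimising $b\mapsto c_\beta(b)-\Gamma\Sigma(b)$ over $B$), while the remaining terms $H_c(X,\Gamma)$ and $-\widehat\E[Z^\mu(\widehat X)\widehat\alpha^\star\cdot\mathbf 1_d]$ do not involve the Agent's own control. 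A short convex--analysis computation — stationarity on the interior, boundary otherwise — gives both optimisers, establishing $(i)$ and $(ii)$; crucially these do not depend on $\widehat\mu$ or on others' efforts, so the best response is the same regardless of the population.

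\medskip

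\textbf{Step 3: closing the fixed point.} Since every Agent faces the same contract parameter $\zeta$ and has the unique best response $\nu^\star=(a^\star(Z),b^\star(\Gamma))$ of Step 2, I plug this back into the dynamics of $X$: with $\overline\rho$ denoting the vector $(\rho^1,\dots,\rho^d)$ so that $\alpha^\star\cdot\mathbf 1_d=\overline\rho\,(Z^-\wedge A_{\max})$ componentwise summed, and $\sigma^\star(\Gamma):=\sigma(b^\star(\Gamma))$, one obtains exactly \eqref{eq:optimal_deviation}. The law $\P^\star$ of this McKean--Vlasov--type SDE exists and is unique because the coefficients are bounded and the $\F^{\rm obs}$--predictability of $\zeta$ together with Lemma \ref{lemma:rep}--type arguments make the equation well posed; then $\mu^\star$ is \emph{defined} as the r.c.p.d. conditional law of $X_{t\wedge\cdot}$ given $\F^\circ$ under $\P^\star$, which by construction satisfies the fixed--point condition $(ii)$ of Definition \ref{mfe}. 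Uniqueness of the equilibrium follows because the best response in Steps 1--2 was independent of $\widehat\mu$, so there is no genuine fixed--point multiplicity to resolve.

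\medskip

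\textbf{Main obstacle.} The delicate point is Step 1: rigorously justifying the mean--field 2BSDE representation of $V^A$ in the non--Markovian, common--noise setting. Unlike in the no--common--noise mean--field case of \cite{elie2019tale}, one cannot simply appeal to a pathwise argument; one must handle the $\widehat\mu$--dependence of the driver, the measurability subtleties of the conditional law (the enlargement defining $\F^\circ$), and the fact that the relevant class of 2BSDEs is a generalised one. Establishing existence, uniqueness, and the requisite integrability for these objects — and checking that the aggregation of the associated family of controlled measures produces a bona fide probability in $\Pc$ — is where the real work lies; the optimisation in Step 2 and the fixed point in Step 3 are then comparatively routine. (All of this is precisely what is deferred to Appendix \ref{sec:technical_proofs}.)
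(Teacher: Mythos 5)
Your proposal is correct and follows essentially the same route as the paper: an explicit construction of a solution to the mean--field 2BSDE from the simple contract via It\=o's formula and the CARA cancellation of the quadratic--variation terms, verification through the general best--reaction result (Proposition \ref{prop:genbestreac}/Theorem \ref{th:genbestreac}), and uniqueness of the equilibrium from the fact that the optimal response does not depend on $\widehat\mu$ or on others' efforts. The only point the paper treats slightly more carefully is that when others play an arbitrary $\widehat\alpha\neq\widehat\alpha^\star$ the contract no longer admits the simplified decomposition \eqref{eq:xi_simplify}, so one must first extract the unique optimal drift effort (independent of $\widehat\nu$) from the 2BSDE driver $F$ before the cancellation argument, and hence uniqueness of $\P^\star$ and $\beta^\star$, can be run — but this is exactly the independence observation you already make in Step 2.
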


{\color{black}The proof of the previous theorem, which is summarised below, relies on the theory of 2BSDEs, mostly postponed to Appendix \ref{sec:technical_proofs}: the crux of the argument here is to use the general result of Proposition \ref{prop:genbestreac} and Theorem \ref{th:genbestreac}, and show that we can construct directly a solution to 2BSDE \eqref{eq:2bsde2mf} whenever $\xi^{\xi_0,\zeta}\in\Xi_{\mathrm S}$. Indeed, contrary to the intuition presented in the previous section, the chain rule with common noise cannot be applied in our non--Markovian framework.}

\begin{proof}
{\color{black}We first assume that other consumers are playing according $\P^\star$, meaning in particular that their efforts are $\nu^\star = (\alpha^\star, \beta^\star)$ and that $\widehat \mu=\mu^\star$. In this case, one can notice that the simplification \eqref{eq:xi_simplify} of the contract holds, since other consumers are playing $\alpha^\star$.} Define then 
\[
Y_t:=-\mathrm{e}^{-R_A\xi_t^{\xi_0,\zeta}},\;  Z^1_t:=-R_AY_tZ_t,\;   Z^2_t:=-R_AY_t \widehat \E^{\P^\star_t}\big[Z^\mu_t(\widehat X_{t\wedge\cdot})\big],\;  \underline \Gamma_t:= -R_AY_t\Gamma_t,\; t\in[0,T],
\]
\[
 K_t:=\int_0^t\bigg(-R_AY_s \Hc^\circ(X_s,\zeta_s)-\frac12\underline\Gamma_s \Big( S_s + \big( \sigma^\circ \big)^2 \Big) - F(X_s, Y_s,  Z^1_s, S_s ) \bigg) \mathrm{d}s,\; t\in[0,T],
\]

where for any $S\geq 0$, $\Sigma^{-1}(S)$ is the pre--image of the singleton $\{S\}$ by the map $\Sigma:B\longrightarrow \R_+$, where we denote by $\Sigma(B)$ the image of $B$ by $\Sigma$, where the map $F:\R\times\R\times \R\times\Sigma(B)\longrightarrow \R$ is defined by
\begin{equation}\label{def:F}
F(x,y,z,S):=\sup_{(a,b)\in A\times\Sigma^{-1}(S)}\big\{-a\cdot\mathbf{1}_dz+R_Ay(c(a,b)-f(x))\big\},\; (x,y,z,S)\in \R\times\R\times \R\times\Sigma(B),
\end{equation}
and $\Hc^\circ$ by Equation \eqref{eq:Hc_circ}. A simple application of the usual It\=o's formula then leads to
\begin{align*}
    Y_t = &
    - \erm^{-R_A \xi_T} 
    + \int_t^T F\big(X_s,Y_s, Z^1_s,S_s\big) \drm s
    - \int_t^T  Z^1_s \drm X_s 
    - \sigma^{\circ}  \int_t^T  Z_s^{2} \drm W^{\circ}_s
    + \int_t^T \drm  K_s, \; t \in [0,T].
\end{align*}

By definition of $\Hc^\circ$ and $F$, we also directly check that $K$ is always a non--decreasing process, which vanishes on the support of any probability measure corresponding to the efforts $\beta^\star$ defined in the statement of the proposition. Indeed, 
\begin{align*}
    \drm K_s = 
    \dfrac{1}{2} R_A Y_s \Big(  \inf_{b \in B} \big\{c_{\beta} (b) - \underline\Gamma_s \Sigma(b) \big\} 
    - \inf_{b \in \Sigma^{-1}(S_s)} c_{\beta} (b) 
    + \underline\Gamma_s S_s  \Big) \drm s.
\end{align*}
To ensure that $(Y,( Z^1, Z^2)^\top, K)$ solves 2BSDE \eqref{eq:2bsde2mf}, it therefore remains to check that all the integrability requirements in Definition \ref{def:2BSDE} are satisfied, since the fixed--point constraint is satisfied by definition. The one for $Y$ is immediate by definition of the set $\Vc$. The required integrability on $(( Z^1, Z^2)^\top, K)$ then follows from \citet*[Theorem 2.1 and Proposition 2.1]{bouchard2018unified}.

\medskip
We have therefore obtained that the candidate provided in the statement of the proposition was indeed an equilibrium. Let us now prove uniqueness. Let $\widehat \nu = (\widehat \alpha, \widehat \beta)$ be the arbitrary effort of other consumers, and the associated conditional distribution $\widehat \mu$. In this case, a contract $\xi^{\xi_0,\zeta}\in\Xi_{\mathrm S}$ no longer admits the decomposition \eqref{eq:xi_simplify}. Nevertheless, $\Xi_{\mathrm S} \subset \Xi$ and by Proposition \ref{prop:genbestreac}, the optimal effort $\nu^\star$ is the maximiser of the map $F$, and does not depend on $\widehat \nu$. Although there is no uniqueness in general of the probability $\P^\star$, and therefore of the effort $\beta^\star$, the effort $\alpha^\star$ is unique and is the one defined in the statement of the theorem. To sum up, given a contract in $\Xi_{\mathrm S}$ and for arbitrary efforts $\widehat \nu$ of others, each consumer has a unique optimal drift effort $\alpha^\star$, independent of $\widehat \nu$. We can therefore already conclude that the optimal effort $\alpha^\star$ is the same for all consumers. Using the dynamic of $\widehat X$ with $\alpha^\star$, the contract $\xi^{\xi_0,\zeta}$ admits the decomposition \eqref{eq:xi_simplify}. Therefore, we can apply the reasoning above to construct a solution to the 2BSDE \eqref{eq:2bsde2mf}, and we have now uniqueness of the probability $\P^\star$, and therefore on the volatility effort $\beta^\star$, given by point $(ii)$ of the theorem. To conclude, given an arbitrary law $\widehat \mu$ and a contract $\xi^{\xi_0,\zeta}\in\Xi_{\mathrm S}$, the optimal effort is $\nu^\star = (\alpha^\star, \beta^\star)$, inducing the law $\P^\star$ and the conditional law $\mu^\star$. It is therefore the unique equilibrium.
\end{proof}

Throughout this work, we will denote by $v^\star(z,\gamma) := ( a^\star(z), b^\star(\gamma) )$ the optimal response of a given agent. The cost associated to this effort will be denoted by $c^\star(z,\gamma)$.

\subsubsection{Interpretation of the optimal efforts}\label{ss:interpretation_effort}

For a tuple of payment rates $\zeta \in \R\times\Lc\times\R$,

\begin{enumerate}[label=$(\roman*)$] 
    \item the $z$ component of the payment induces an effort of the consumer on all usages to reduce his consumption in average. The effort on the $k$--th usage is proportional to its cost, $1/\rho^k$, non increasing with $z$, positive if $z$ is negative and zero otherwise. Hence, the more $z$ is negative, the more the Agent will reduce his consumption deviation in average;
    \item the component $\gamma$ induces an effort only on the usages whose cost $1/\lambda^k$ is lower than the payment. If $\gamma$ is non negative, $b^{k,\star} (\gamma) = 1$ for all usages, hence the consumer makes no effort on the volatility of his consumption deviation. The more $\gamma$ is negative, the more $b$ will be close to zero, i.e. the more the Agent will reduce the volatility of his consumption deviation;
    \item the $z^{\mu}$ component has no influence on the consumer's efforts: although his payment is indexed on the deviation consumption of others, the consumer will not take it into account to optimise his deviation consumption. This model can be criticised but seems rather logical in the sense that a consumer optimises his consumption independently of what his neighbours do, even if the price of electricity depends on the global demand. 
\end{enumerate}

Hence, the efforts of the consumer are the same as those defined in \cite{aid2018optimal}, although the contracts we consider have more components. The additional components do not affect the optimal effort of the consumer.

\begin{remark}\label{rk:drift_vol_ctrl}
    These results are consistent with classical results on drift and volatility control $($see {\rm \cite{cvitanic2017moral}} or {\rm\cite{cvitanic2018dynamic}}$)$, contracts indexed by $(z, \gamma)$ are sufficient to incentivise the Agent for making effort on drift and volatility. Therefore it is quite natural that another parameter in the contract will not directly affect the effort, but may increase the value function of the Principal.
\end{remark}

To sum up this section, we provided a new form of contracts in Definition \ref{def:simple_contracts}, called simple contracts, allowing us, by Theorem \ref{thm:mfe}, to compute the optimal efforts of the consumers and the associated unique mean--field equilibrium. The aim of the following section is to prove that there is no loss of generality to consider only simple contracts, and to solve the Principal's problem thus restricted to these simple contracts.

\section{Principal's problem}\label{sec:principal_problem}

We recall that the optimisation problem of the Principal has been defined as follows
\[
V_0^P := \sup_{\xi \in \Xi} \sup_{(\P, \mu) \in \Mc^{\star} (\xi)}  \mathbb{E}^{\P} \bigg[ U^P \bigg( - \mathbb{E}^{\P} \bigg[\xi+\int_0^Tg(X_s)\mathrm{d}s+\frac \theta 2\int_0^T\mathrm{d}\langle X\rangle_s\bigg| \Fc^\circ_T\bigg] \bigg)  \bigg].
\]
Following the general approach of \cite{cvitanic2018dynamic}, we expect that there is no loss of generality for the Principal to restrict to contracts in $\Xi_{\rm S}$, {\color{black} in the sense of Definition \ref{def:simple_contracts}}, instead of $\Xi$. This property had been obtained before in \citet*{elie2019tale} for general moral hazard problems with a continuum of Agents with mean--field interaction, but who were constrained to simply control the drift of the diffusion $X$. We show here that this general result also extends to cases where volatility can be controlled as well, using 2BSDEs theory. For notational simplicity, we define for any $(\xi_0,\zeta)\in\R\times\Vc$ the following process
\[
L_t^{\xi_0,\zeta} := \xi_t^{\xi_0,\zeta} + \int_0^t g ( X_s ) \mathrm{d} s + \dfrac{\theta}{2} \int_0^t \drm \langle X \rangle_s,  \text{ for }  t \in [0,T].
\]

\begin{theorem}\label{thm:main}
The following equality holds
\begin{align*}
V_0^P &= \sup_{(\xi_0,\zeta)\in [U_A^{-1} (R_0),+\infty)\times\Vc} 
\mathbb{E}^{\P^\star} \Big[ U^P \Big( - \mathbb{E}^{\P^\star} \Big[ L^{\xi_0,\zeta}_T \Big| \Fc^\circ_T \Big] \Big)  \Big]=\sup_{\zeta\in\Vc} 
\mathbb{E}^{\P^\star} \Big[ U^P \Big( - \mathbb{E}^{\P^\star} \Big[L^{U_A^{-1} (R_0),\zeta}_T \Big| \Fc^\circ_T\Big] \Big)  \Big].
\end{align*}
\end{theorem}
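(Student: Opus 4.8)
The plan is to establish the theorem by proving two inequalities. The inequality $V_0^P \geq \sup_{\zeta\in\Vc} \mathbb{E}^{\P^\star}[U^P(-\mathbb{E}^{\P^\star}[L^{U_A^{-1}(R_0),\zeta}_T|\Fc^\circ_T])]$ is the easy direction: by Theorem \ref{thm:mfe}, any simple contract $\xi_T^{\xi_0,\zeta}$ with $\zeta\in\Vc$ admits a unique mean--field equilibrium $(\P^\star,\mu^\star)$, and if $\xi_0 = U_A^{-1}(R_0)$ the participation constraint is saturated (one checks $V_0^A(\xi_T^{\xi_0,\zeta},\mu^\star)=R_0$, which follows because along the equilibrium the 2BSDE value process started from $\xi_0$ yields exactly $U^A(\xi_0)=R_0$), hence $\xi_T^{U_A^{-1}(R_0),\zeta}\in\Xi$ and $(\P^\star,\mu^\star)\in\Mc^\star(\xi_T^{U_A^{-1}(R_0),\zeta})$. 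Plugging this admissible contract into the definition of $V_0^P$ and rewriting the Principal's criterion via the process $L^{\xi_0,\zeta}$ gives the bound. The same argument with $\xi_0\in[U_A^{-1}(R_0),+\infty)$ gives the middle expression as a lower bound as well.

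The reverse inequality $V_0^P \leq \sup_{(\xi_0,\zeta)}\mathbb{E}^{\P^\star}[U^P(-\mathbb{E}^{\P^\star}[L^{\xi_0,\zeta}_T|\Fc^\circ_T])]$ is the substantive part. Fix an arbitrary $\xi\in\Xi$ with $\Mc^\star(\xi)\neq\varnothing$ and a mean--field equilibrium $(\P^\star,\mu^\star)\in\Mc^\star(\xi)$. The key step is to show that $\xi$ is \emph{revealing}, i.e. that there exist $\xi_0\in\R$ and $\zeta=(Z,Z^\mu,\Gamma)\in\Vc$ such that $\xi = \xi_T^{\xi_0,\zeta}$ $\P^\star$--a.s., and moreover such that the consumer's optimal response to $\xi_T^{\xi_0,\zeta}$ coincides with $\P^\star$. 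This is exactly the content of the general representation result: one considers the consumer's dynamic value process $V^A_t$ associated with $\xi$ and the fixed distribution $\widehat\mu=\mu^\star$, uses the integrability \eqref{eq:integxi} to get that $Y_t:=-\frac1{R_A}\ln(-V^A_t)$ is well--posed, and appeals to the generalised mean--field second--order BSDE theory (Proposition \ref{prop:genbestreac} and Theorem \ref{th:genbestreac}) to obtain a representation of $Y$ — hence of $\xi = Y_T$ — of the form \eqref{contractform}, with the identification $\xi_0=Y_0$ and the triple $\zeta$ read off from the martingale integrands and the density of the quadratic variation. Since $\xi$ satisfies the participation constraint, $Y_0 = -\frac1{R_A}\ln(-V^A_0)\geq U_A^{-1}(R_0)$, so $\xi_0\in[U_A^{-1}(R_0),+\infty)$. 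Finally, because the contract is revealing, the unique equilibrium of $\xi_T^{\xi_0,\zeta}$ produced by Theorem \ref{thm:mfe} is precisely $(\P^\star,\mu^\star)$, so the Principal's criterion is unchanged; taking the supremum over $\xi\in\Xi$ then over $(\xi_0,\zeta)$ yields the bound.

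To pass from $\sup_{(\xi_0,\zeta)\in[U_A^{-1}(R_0),+\infty)\times\Vc}$ to $\sup_{\zeta\in\Vc}$ with $\xi_0$ pinned at $U_A^{-1}(R_0)$, I would use a monotonicity argument: increasing $\xi_0$ shifts $L^{\xi_0,\zeta}_T$ up by the constant $\xi_0-U_A^{-1}(R_0)\geq 0$ (the rest of \eqref{contract_form_mu} does not involve $\xi_0$), hence $-\mathbb{E}^{\P^\star}[L^{\xi_0,\zeta}_T|\Fc^\circ_T]$ decreases pointwise and, $U^P$ being increasing, the Principal's criterion is nonincreasing in $\xi_0$; so the supremum over $\xi_0$ is attained at the left endpoint $\xi_0=U_A^{-1}(R_0)$. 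Note that one must check $\zeta\in\Vc$ is unaffected by the choice of $\xi_0$, which is clear since the integrability condition defining $\Vc$ involves $\xi_t^{\xi_0,\zeta}$ only through $|\xi_t^{\xi_0,\zeta}|\leq|\xi_0|+|\xi_t^{0,\zeta}|$, a bounded shift.

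The main obstacle is the revealing property in the reverse inequality: unlike in \cite{elie2019tale} or \cite{carmona2018finite}, where the drift--only structure makes the representation essentially tautological, here the consumer controls the volatility, so the representation of $Y$ genuinely requires a second--order object, and the presence of common noise means the relevant 2BSDE is of McKean--Vlasov type with the conditional law $\mu^\star$ as a frozen input. Verifying that the integrands extracted from this 2BSDE land in $\Vc$ (in particular the implicit exponential--moment bound) and that the simplification from the six--parameter form \eqref{eq:contract_markov} down to the three--parameter form \eqref{contractform}–\eqref{contract_form_mu} is licit at the level of the equilibrium measure $\P^\star$ — using that other agents also play $\alpha^\star$ so that \eqref{eq:xi_simplify} applies — is where the real work lies; this is precisely what is deferred to Appendix \ref{sec:technical_proofs} via Proposition \ref{prop:genbestreac} and Theorem \ref{th:genbestreac}.
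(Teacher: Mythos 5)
Your overall architecture --- the easy inequality by plugging simple contracts into $V_0^P$ via Theorem \ref{thm:mfe}, the hard inequality by representing the agent's continuation utility through the mean--field 2BSDE and identifying the contract as a member of $\Xi_{\rm S}$, and monotonicity in $\xi_0$ for the second equality --- matches the paper's. But there is a genuine gap in the hard direction, precisely where you claim the triple $\zeta$ can be ``read off from the martingale integrands and the density of the quadratic variation.'' The representation furnished by Proposition \ref{prop:genbestreac} writes $-\erm^{-R_A\xi}$ in terms of $(Y,(Z^1,Z^2)^\top,K)$ where $K$ is merely a non--decreasing process satisfying the minimality condition \eqref{minimality2}; it has no Lebesgue density in general, so there is no density from which to extract a $\Gamma$, and the contract $\xi$ itself is generally \emph{not} of the form \eqref{contract_form_mu}. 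The missing idea is the mollification $K^\eps_t:=\frac1\eps\int_{(t-\eps)^+}^tK_s\,\mathrm{d}s$: one replaces $K$ by the absolutely continuous $K^\eps$, checks that $(Y^\eps,(Z^1,Z^2)^\top,K^\eps)$ still solves a 2BSDE with a \emph{new} terminal condition $-\erm^{-R_A\xi^\eps}$, and then uses a measurable selection based on the surjectivity on $(0,\infty)$ of the map \eqref{surjective} to produce a $\Gamma^\eps$ realising the density $\dot K^\eps$. Only the modified contract $\xi^\eps$ lies in $\Xi_{\rm S}$; the argument closes because $K=0$ (hence $K^\eps=0$) $\P^\star$--a.s., so that $\xi^\eps=\xi$ $\P^\star$--a.s., the equilibrium and the Principal's criterion are unchanged, and the membership of the extracted $\zeta$ in $\Vc$ follows from the a priori estimates \eqref{estimates-eps} rather than from anything you can see on $\xi$ directly.

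A secondary imprecision: you locate ``the real work'' in the six--to--three parameter simplification of \eqref{eq:contract_markov} and in the applicability of \eqref{eq:xi_simplify}, but that reduction belongs to the heuristic Markovian derivation of Section \ref{sec:consumer_problem}; the rigorous proof of Theorem \ref{thm:main} never passes through the six--parameter form. The genuine content is the passage from the abstract non--decreasing $K$ to an absolutely continuous one, as described above, together with the observation that modifying the contract off the support of $\P^\star$ changes neither the equilibrium (since $K^\eps=0$ iff $K=0$ under a given measure) nor the Principal's value.
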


The proof of this main theorem is {\color{black} postponed} to Appendix \ref{proof:main}. 

\medskip

From now on, it is assumed that $f$ and $g$ have linear growths. Thus, the result of Proposition \ref{prop:RU} holds. To lighten the notations, the exponent $U_A^{-1} (R_0)$ will often be omitted, since, using Proposition \ref{prop:RU}, it is fixed once and for all by $U_A^{-1} (R_0) = \psi(0,X_0)$. Notice that from the Principal's point of view, when considering contracts in $\Xi_{\mathrm S}$, and when the consumers are at the unique equilibrium, the deviation of other consumers $\widehat X$ is nothing but a copy of $X$, in the sense of Definition \ref{def:copy_process}, which we denoted by $\widetilde X$. {\color{black}Besides, noticing then that the distributions $\widehat \P$ and  $\P^\star$ coincide, in particular that $\widehat \mu = \mu^\star$, and imposing $\xi_0:= U_A^{-1} (R_0) $ from now on, one obtain:}
\begin{align*}
    \xi_t^{\zeta} = &\ \xi_0- \int_0^t \Hc (X_s, \mu^\star_s, \zeta_s, \alpha_s^{\star})  \drm s 
    + \int_0^t Z_s \drm X_s 
    + \int_0^t \widetilde{\E}^{\P^\star_s} \big[ Z_s^{\mu} (\widetilde X_s) \drm \widetilde X_s \big]
    + \dfrac{1}{2} \int_0^t \big( \Gamma_s + R_A Z_s^2 \big) \drm \langle X \rangle_s
    \nonumber \\
    &+ \dfrac{1}{2} R_A \big( \sigma^{\circ} \big)^2 \int_0^t  \Big( \widetilde{\E}^{\P^\star_s} \big[ Z_s^{\mu} (\widetilde X_s)  \big] \Big)^2 \drm s
    + R_A \big( \sigma^{\circ} \big)^2 \int_0^t Z_s \widetilde{\E}^{\P^\star_s}  \big[Z_s^{\mu} (\widetilde X_s) \big] \drm s.
\end{align*}
{\color{black} Given the form of the Principal's control problem and of the contract, the intuition is that her value function should depend only on time and on the conditional law $\mu^Y$ of the state variable $Y = (X,L)^\top$.}



\subsection{The general case}

{\color{black} The attentive reader have noticed that the right--hand side of Theorem \ref{thm:main} looks like the value function of a stochastic control problem of a McKean--Vlasov stochastic differential equation with common noise. However, one of the two state variable, mainly $L$, seems to be considered in the strong formulation (it is indexed by the control $\zeta$), while the other state variable $X$ is considered in weak formulation (the control $\zeta$ only impacts the distribution of $X$ through $\P^\star$). As highlighted by \citet*[Remark 5.1.3]{cvitanic2012contract}, it makes little sense to consider a control problem of this form directly. Therefore, from our point of view, there is no reason why we should adopt anything but the weak formulation to state the problem of the Principal, contrary to what is usually done in Principal--Agent problems (see, e.g., \cite{cvitanic2018dynamic}), since this is the one which makes sense for the problem of the Agent\footnote{{\color{black}Notice that at the end of the day, this is not really an issue. Indeed, provided that the problem has enough regularity (typically some semi--continuity of the terminal and running reward with respect to state), one can expect the strong and weak formulations to coincide. This is proved in a setting with common noise by \citet*{djete2019mckean,djete2019general}.}}. We will thus formulate it below.}

\medskip

Let $\V$ be the collection of all finite and positive Borel measures on $[0,T]\times \R^3$, whose projection on $[0,T]$ is the Lebesgue measure, and we call $\V_0$ the set of all $q \in \V$ of the form $\delta_{\phi_s}(\mathrm{d} \overline v )\mathrm{d}t$ for some Borel function $\phi$. The intuition is that the Principal's problem depends only on time and on the conditional law $\mu^Y$ of the state variable $Y = (X,L)^\top$. Following the same methodology used for the Agent's problem, to properly define the weak formulation of the Principal's problem, we are led to consider the following canonical space
\[
    \Omega^P :=  \Omega^{\circ} \times \Omega^{1,P} \times \Omega^{2,P} \times \V, \; \text{where}\;
    \Omega^{P,1} := \Cc ([0,T],\R^2 \times \R^d), \;
    \text{and } \; \Omega^{P,2} := \Pc_T (\R^2),
\]
with canonical process $(W^{\circ}, Y, W, \mu^Y, \Lambda^P)$, where for any $(t, w^{\circ}, y, w, u, q) \in [0,T] \times \Omega^P$
\[
    W^{\circ}_t(w^{\circ},y,w,u,q):=w^{\circ}(t), \; Y_t(w^{\circ},y,w,u,q) :=y(t),\; W_t(w^{\circ},y,w,u,q):=w(t),
\]
    \vspace{-1.2em}
\[  
    \mu^Y_t(w^{\circ},y,w,u,q):=u(t),\; \Lambda^P_t(w^{\circ},y,w,u,q):=q.
\]

Less formally, for all $t\in[0,T]$, $\mu^Y_t \in \Pc(\R^2)$ will be the conditional distribution of $Y_t = (X_t,L_t)$, and we will denote by $\mu^X$ and $\mu^L$ the marginal distributions of $\mu^Y$. When no confusion is possible, in order to lighten the notations, we will often omit the space for the integrals with respect to the conditional distribution, by denoting for example:
\begin{align*}
    \int \phi (x, \ell) \mu^Y (\drm x, \drm \ell) := \int_{\R^2} \phi (x, \ell) \mu^Y (\drm x, \drm \ell), \; \text{ for any } \; \phi : \R^2 \longrightarrow \R.
\end{align*}

The canonical filtration $\F^P:=(\Fc^P_t)_{t\in[0,T]}$ is defined as
\begin{align*}
    \mathcal F^P_t:=\sigma\Big( \big(W^{\circ}_s, Y_s, W_s, \mu^Y_s, \Delta_s(\varphi) \big): (s,\varphi)\in[0,t]\times \Cc_b \big( [0,T]\times \R^3, \R \big) \Big),\; t\in [0,T],
\end{align*}
where $\Cc_b([0,T]\times \R^3,\R)$ is the set of all bounded continuous functions from $[0,T]\times \R^3$ to $\R$, and for any $(s,\varphi)\in[0,T]\times \Cc_b([0,T]\times \R^3,\R)$, $\Delta_s(\varphi):=\int_0^s\int_{\R^3} \varphi(r, \overline v ) \Lambda^P(\mathrm{d}r,\mathrm{d} \overline v).$ We also define $\F^{P,\circ}:=(\Fc^{P,\circ}_t)_{t\in[0,T]}$, a smaller filtration containing only the information generated by the common noise and the conditional law of $Y$, 
    $\Fc^{P,\circ}_t:=\sigma\big((W^{\circ}_s,\mu^Y_s):s\in[0,t]\big),\; t\in[0,T].$
Let $\Cc^2_b(\mathbb R^2 \times \R^d \times\R,\R)$ be the set of bounded twice continuously differentiable functions from $\mathbb R^2 \times \R^d \times\R$ to $\R$, whose first and second derivatives are also bounded, and for any $(s,\varphi)\in[0,T]\times \Cc^2_b(\mathbb R^2 \times \R^d \times\R,\R)$, we set
\begin{align*}
    M^P_s(\varphi):=&\ \varphi(Y_s,W_s,W^{\circ}_s) - \int_0^s \int_U\bigg( A_P(\overline v) \cdot \nabla \varphi(Y_r,W_r,W^{\circ}_r) + \frac12  {\rm Tr} \big[ D^2 \varphi(Y_r,W_r,W^{\circ}_r) B_P(\overline v)  B_P^\top (\overline v) \big] \bigg)\Lambda^P(\mathrm{d}r, \mathrm{d} \overline v),
\end{align*}
where $D^2 \varphi$ denotes the Hessian matrix of $\varphi$, $A_P$ and $B_P$ are respectively the drift vector and the diffusion matrix of the vector process $(Y, W, W^{\circ})^\top$
\begin{align}\label{eq:drift_vol}
    A_P(\overline v) := 
    \begin{pmatrix}
        - \overline{\rho} \big( z^{-}\wedge A_{\textnormal{max}} \big) \\
        b \big( X_t, \overline v \big) \\
        \mathbf{0}_d \\
        0
    \end{pmatrix},
    \; 
    B_P(\overline v) :=
    \begin{pmatrix}
        0 & 0 & \sigma^\star(\gamma)^\top & \sigma^{\circ} \\
        0 & 0 & z \sigma^\star(\gamma)^\top & (z + \overline z^\mu )\sigma^{\circ} \\
        \mathbf{0}_d & \mathbf{0}_d & \mathrm{I}_d & \mathbf{0}_d \\
        0 & 0 & \mathbf{0}_d^\top & 1
    \end{pmatrix},
\end{align}
where for $\overline v:=(z,\overline z^\mu, \gamma)\in \R^3$
\begin{align*}
    b (x, \overline v) := &\ 
	c^\star (z, \gamma)
    + g(x) - f (x) 
    + \dfrac{1}{2} R_A z^2 \Sigma^{\star} ( \gamma )
    + \dfrac{1}{2} R_A \big(\sigma^{\circ}\big)^2 \big( z + \overline z^{\mu} \big)^2
    + \dfrac{\theta}{2} \big( \Sigma^{\star} ( \gamma ) + \big(\sigma^{\circ}\big)^2 \big).
\end{align*}

We fix some initial condition, namely a probability measure $\varrho^P$ on $\R^2$ representing the law at $0$ of $Y$. The marginals of $\varrho^P$ are given by $\varrho$, the law of $X_0$, and $\psi(0,\cdot) \circ \varrho$, the law of $U_A^{-1} (R_0) = \psi(0,X_0)$. 
{\color{black}
\begin{definition}\label{def:Qc_principal}
Let $\M^P$ be the set of all probability measures on $(\Omega^P,\Fc^P_T)$. The subset $\Qc \subset \M^P$ is composed of all $\P$ such that 
\begin{enumerate}[label=$(\roman*)$]
    \item $M^P(\varphi)$ is a $(\P,\mathbb F^\P)$--local martingale on $[0,T]$ for all $\varphi \in \Cc^2_b(\R^2 \times \R^d \times \R,\R);$
    \item $\P \circ (Y_0)^{-1} = \varrho^P$, and $\P \circ \big((W_0,W_0^{\circ})\big)^{-1} = \iota;$
    \item $\P \big[\Lambda^P \in \V_0]=1;$
    \item for $\P$--a.e. $\omega \in \Omega^P$ and for every $t \in[0,T]$, we have $\mu^Y_t(\omega)=\P^{\omega}_t \circ (Y_{t})^{-1},$
    where $(\P_t^{\omega})_{\omega\in\Omega}$ is a family of {\rm r.c.p.d.} for $\P$ given $\Fc_t^{P,\circ}$. Similarly as before, we will denote by $\E^{\P_t}$ the conditional expectation under the {\rm r.c.p.d.} $\P_t^\omega;$
    \item $(W^{\circ},\mu^Y)$ is $\P$--independent of $W$.
\end{enumerate}
\end{definition}

\begin{remark}
    One may notice that the previous definition, in particular the point $(iv)$, does not involve a probability measure on the path space anymore, contrary to \textnormal{Definition \ref{def:Pc}}, and as noticed in \textnormal{Remark \ref{rk:measure_on_pathspace}}. Indeed, the form of contracts we are considering makes the Principal's problem Markovian in that sense. 
\end{remark}}

Following the reasoning developed in Subsection \ref{ss:theoretical_formulation}, we can construct a copy of the canonical space $\Omega^P$ and a copy of $Y$ in the sense of Definitions \ref{def:copy_space} and \ref{def:copy_process}. Thanks to the previous formulation, we can write the weak formulation of the Principal's problem as follows
\begin{align*}
V_0^P &= \sup_{\P \in \Qc} 
\mathbb{E}^{\P} \Big[ U^P \Big( - \mathbb{E}^{\P_T} \Big[L^{\overline \zeta^\P}_T \Big] \Big)  \Big],
\end{align*}
{\color{black}where, given some $\P \in \Qc$, the notation $\E^{\P_t}$ will refer to the conditional expectation under the r.c.p.d. $\P_t$ of some $\P \in \Qc$ given $\Fc_t^{P,\circ}$ for all $t \in [0,T]$, in the sense of Definition \ref{def:Qc_principal} $(iv)$.}


\begin{remark}
    First, recall that simple contracts were defined as random variables of the form $( \xi_T^{\xi_0, \zeta})$, for $(\xi_0, \zeta) \in \R \times \Vc$. By \textnormal{Equation \eqref{eq:drift_vol}}, we notice that the drift vector and the diffusion matrix of the process $(Y,W,W^\circ)^\top$ are defined as function of $\overline v = (z,\overline z^\mu, \gamma)$. This is why we consider that the Principal controls, through a probability $\P \in \Qc$, the triple of controls $\overline \zeta^\P = (Z, \overline Z^\mu, \Gamma) \in \overline \Vc$, where for all $t \in [0,T]$, $\overline Z_t^\mu = \widetilde \E^{\P_t} [Z_t^\mu (\widetilde X_t) ]$, instead of $\zeta^\P \in \Vc$.
    Moreover, the conditional laws of $X$ and $L$ do not impact their dynamics, therefore the Principal's problem does not seem to be a McKean--Vlasov control problem, but only a standard control problem. Nevertheless, the Principal's criterion reveals the conditional law of $L$, which transforms the problem into a McKean--Vlasov one.
\end{remark}


In order to apply the chain rule with common noise to functions depending on time and conditional distribution, we define the regularity assumption needed.

\begin{definition}[$\Cc^{1,2}$--regularity]\label{def:C12}
    A function $u : [0,T] \times \Pc (\R^d) \longrightarrow \R, (t, \mu) \longmapsto u(t, \mu)$ is smooth enough in the sense of Chain Rule under $\Cc^{1,2}$--regularity if
    \begin{enumerate}[label=$(\roman*)$]
        \item $u$ is differentiable with respect to $t$, and the partial derivative $\partial_t u : [0,T] \times \Pc (\R^d) \longrightarrow \R$ is continuous;
        \item for all $t \in [0,T]$, the mapping $\mu \in \Pc (\R^d) \longmapsto u(t, \mu)$ is simply $\Cc^2$, in the sense defined in {\rm\cite[Section 4.3.2]{carmona2018probabilisticII}}, and satisfies the assumptions of {\rm\cite[Theorem 4.14]{carmona2018probabilisticII}}.
    \end{enumerate}
\end{definition}

The previous definition allows us to consider the natural extension for time dependent functions of the chain rule under $\Cc^2$--regularity defined in \cite[Theorem 4.14]{carmona2018probabilisticII}. Therefore, for any function $v : [0,T] \times \Pc (\R^2) \longrightarrow \R$ smooth enough in the sense of Definition \ref{def:C12}, the chain rule under $\Cc^{1,2}$--regularity is written as
\begin{align*}
    \drm v ( t, \mu^Y_t ) = &\ \partial_t v ( t, \mu^Y_t ) \drm t 
    + \mathbb{E}^{\P_t} \big[ \partial_{\mu} v ( t, \mu^Y_t ) (Y_t) \cdot \drm Y_t \big] 
    + \dfrac{1}{2} \mathbb{E}^{\P_t} \widetilde{\mathbb{E}}^{\P_t} \Big[ \mathrm{Tr} \big[ \partial_{\mu}^2 v ( t, \mu^Y_t ) \big( Y_t, \widetilde{Y}_t \big) \drm \langle Y, \widetilde{Y} \rangle_t \big] \Big] \\
    &+ \dfrac{1}{2} \mathbb{E}^{\P_t} \Big[ \mathrm{Tr} \big[ \partial_{y} \partial_{\mu}  v ( t, \mu^Y_t ) (Y_t) \drm \langle Y \rangle_t \big] \Big],
\end{align*}
where $\widetilde{Y}$ is a copy of $Y$ in the sense of Definition \ref{def:copy_process} {\color{black}and $\E^{\P_t}$ must be understood according to Definition \ref{def:Qc_principal} $(iv)$}.

\begin{remark}
Notice that another way to obtain this chain rule is to simplify the chain rule in {\rm\cite[Theorem 4.17]{carmona2018probabilisticII}} by considering a function which does not depend on the state process. 
\end{remark}

{\color{black} We are thus led to consider the following HJB equation, written on the space of measures:}
\begin{align}\label{eq:pde_vP}
    0 = &\ \partial_t v ( t, \mu^Y_t )
    + \dfrac{1}{2} \big( \sigma^{\circ} \big)^2 \bigg( 
    \iint \partial_{\mu^X}^2  v ( t, \mu^Y_t ) \big(y, \widetilde y \big) \mu^Y_t (\drm y) \mu^Y_t (\drm \widetilde y)
    + \int \Big(\theta\partial_{\mu^L} v ( t, \mu^Y_t ) (y) + \partial_{x} \partial_{\mu^X} v ( t, \mu^Y_t ) (y) \Big) \mu^Y_t (\drm y) \bigg)
    \nonumber \\
    &+ \int \partial_{\mu^L} v ( t, \mu^Y_t ) (y) (g-f)(x) \mu^Y_t (\drm y) 
    + \dfrac{1}{2} \sup_{\overline v \in \R^3} h\big(\mu^Y_t, \partial_{\mu} v (t, \mu^Y_t), \partial_{y} \partial_{\mu} v (t, \mu^Y_t), \partial^2_{\mu} v (t, \mu^Y_t), \overline v\big),
\end{align}
with terminal condition $v ( T, \mu^Y_T) = U^P \big( - \mathbb{E}^{\mu^L_T} [L_T] \big)$ and where, for any $v_{\mu} \in \Lc^{1 \times 2}$, $v_{y,\mu} \in \Lc^{2 \times 2}$ and $v_{\mu, \mu} \in \big(\Lc^2\big)^{2 \times 2}$,
\begin{align}\label{eq:def_h}
    h(\mu, v_{\mu}, v_{y,\mu}, v_{\mu,\mu}, \overline v) :=
    &- 2 \overline{\rho} \big( z^{-} \wedge A_{\textnormal{max}} \big) \int v^1_{\mu} (y) \mu (\drm y)
    + 2 c^\star (z,\gamma) \int v^2_{\mu} (y) \mu (\drm y) \nonumber \\
    &+ \Sigma^{\star} (\gamma) \int \Big(
    \big(\theta+ R_A  z^2 \big) v^2_{\mu} (y)
    + v^{1,1}_{y,\mu}(y)
    + z^2 v^{2,2}_{y,\mu}(y)
    + 2 z v^{1,2}_{y,\mu}(y)
    \Big) (y) \mu (\drm y)
    \nonumber \\
    &+ \big(\sigma^{\circ}\big)^2 \big(z + \overline z^{\mu} \big)^2 \int \bigg(
    R_A v^2_{\mu}(y) + v^{2,2}_{y,\mu}(y)
    + \int v^{2,2}_{\mu,\mu} \big(y, \widetilde y \big) \mu (\drm \widetilde y) \bigg) \mu (\drm y) \nonumber \\
    &+ 2 \big( \sigma^{\circ} \big)^2 \big( z + \overline z^{\mu} \big) \int \bigg(
    v^{1,2}_{\mu,\mu} (y, \widetilde y ) \mu (\drm \widetilde y)
    + v^{1,2}_{y,\mu}(y) \bigg) \mu (\drm y),
\end{align}

\begin{theorem}\label{thm:Pb_Principal_general}
    If there is a solution $v$ to {\rm PDE} \eqref{eq:pde_vP}, smooth enough in the sense of \textnormal{Definition \ref{def:C12}}, with partial derivatives satisfying for each $\P\in\Qc$
    \begin{align}\label{eq:conditions_martingale_general}
        \E^\P \bigg[ \bigg(\int_0^T \bigg( \int \partial_{\mu^X} v(t, \mu^Y_t)(x, \ell) \mu^Y_t (\drm x, \drm \ell) \bigg)^2\mathrm{d}t\bigg)^{1/2}\bigg] + \E^\P \bigg[ \sup_{0\leq t\leq T}\bigg| \int\partial_{\mu^L} v ( t, \mu^Y_t ) (x, \ell)\mu^Y_t (\drm x, \drm \ell)\bigg|^{\frac{p^\prime}{p^\prime-1}} \bigg] < + \infty,
    \end{align}
    where $p^\prime>1$ is the exponent appearing in \textnormal{Lemma \ref{lemma:integZ}},
    and a function $\overline v^\star : [0,T] \times \Pc (\R^2) \longrightarrow \R^3$ satisfying
    \begin{align*}
        h(\mu^Y_t, \partial_{\mu} v (t, \mu^Y_t), \partial_{y} \partial_{\mu} v (t, \mu^Y_t), \partial^2_{\mu} v (t, \mu^Y_t), \overline v^\star (t, \mu^Y_t))
        = \sup_{\overline v \in \R^3} h(\mu^Y_t, \partial_{\mu} v (t, \mu^Y_t), \partial_{y} \partial_{\mu} v (t, \mu^Y_t), \partial^2_{\mu} v (t, \mu^Y_t), \overline v),
    \end{align*}
    then
    \begin{enumerate}[label=$(\roman*)$]
        \item $v(0,\mu^Y_0) = V_0^{P}$;
        \item the process $\overline \zeta^\star$ defined for all $t \in [0,T]$ by $\overline \zeta_t^\star := \overline v^\star(t, \mu^Y_t)$ is an optimal triple of parameters for the contract. 
    \end{enumerate}
\end{theorem}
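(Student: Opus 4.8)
The plan is to prove Theorem \ref{thm:Pb_Principal_general} by a verification argument for the weak--formulation McKean--Vlasov control problem $V_0^P=\sup_{\P\in\Qc}\mathbb E^\P\big[U^P\big(-\mathbb E^{\P_T}[L_T^{\overline{\zeta}^{\P}}]\big)\big]$ established just above the statement. The key structural observation is that $\mu^Y_t$ is $\Fc^{P,\circ}_t$--measurable by definition of $\F^{P,\circ}$, hence so is the deterministic functional $v(t,\mu^Y_t)$, while the terminal condition reads $v(T,\mu^Y_T)=U^P\big(-\mathbb E^{\mu^L_T}[L_T]\big)=U^P\big(-\mathbb E^{\P_T}[L_T]\big)$, which is exactly the integrand of the Principal's criterion. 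It therefore suffices to show that $t\longmapsto v(t,\mu^Y_t)$ is a $\P$--supermartingale for every $\P\in\Qc$, and a true $\P^\star$--martingale for a well--chosen $\P^\star$ associated with $\overline v^\star$.

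First I would fix $\P\in\Qc$, with associated controls $\overline{\zeta}^{\P}=(Z,\overline Z^\mu,\Gamma)\in\overline\Vc$, and apply to $t\longmapsto v(t,\mu^Y_t)$ the chain rule under $\Cc^{1,2}$--regularity recalled after Definition \ref{def:C12} (licit since $v$ is smooth enough in the sense of Definition \ref{def:C12}). Inserting the drift vector $A_P$ and diffusion matrix $B_P$ of \eqref{eq:drift_vol}, computing $\mathrm{d}\langle Y\rangle_t$ and $\mathrm{d}\langle Y,\widetilde Y\rangle_t$ from $B_P$ (the idiosyncratic noise feeding $\Sigma^\star(\Gamma_t)$ into $\langle Y\rangle$ only, the common noise feeding $(\sigma^\circ)^2$ into both $\langle Y\rangle$ and $\langle Y,\widetilde Y\rangle$), and regrouping, one identifies the finite--variation part of $\mathrm{d}v(t,\mu^Y_t)$ as $\partial_t v(t,\mu^Y_t)$ plus the control--free $(\sigma^\circ)^2$-- and $(g-f)$--terms of \eqref{eq:pde_vP}, plus $\tfrac12\, h_t(\overline{\zeta}^{\P}_t)$, where $h_t(\overline v):=h\big(\mu^Y_t,\partial_\mu v(t,\mu^Y_t),\partial_y\partial_\mu v(t,\mu^Y_t),\partial^2_\mu v(t,\mu^Y_t),\overline v\big)$ with $h$ as in \eqref{eq:def_h}; the martingale part is a stochastic integral against $W^\circ$ whose integrand is built from $\int\partial_{\mu^X}v(t,\mu^Y_t)\,\mathrm{d}\mu^Y_t$ and $\int\partial_{\mu^L}v(t,\mu^Y_t)\,\mathrm{d}\mu^Y_t$ (the purely idiosyncratic $W$--contributions do not survive the conditioning $\mathbb E^{\P_t}$ together with the $\Fc^{P,\circ}_t$--measurability of $v(t,\mu^Y_t)$). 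By the HJB equation \eqref{eq:pde_vP}, this finite--variation part equals $\tfrac12\big(h_t(\overline{\zeta}^{\P}_t)-\sup_{\overline v\in\R^3}h_t(\overline v)\big)\le 0$, so $v(\cdot,\mu^Y_\cdot)$ is a $\P$--supermartingale up to the aforementioned $W^\circ$--stochastic integral. The role of hypothesis \eqref{eq:conditions_martingale_general}, combined with the $L^{p^\prime}$--estimates on $(Z,\overline Z^\mu,\Gamma)$ provided by Lemma \ref{lemma:integZ}, is precisely to upgrade this local martingale to a genuine $\P$--martingale, through the Burkholder--Davis--Gundy and Hölder inequalities (pairing the $L^{p^\prime/(p^\prime-1)}$--bound on $\int\partial_{\mu^L}v\,\mathrm{d}\mu^Y$ with the $L^{p^\prime}$--bound on $Z+\overline Z^\mu$, and using boundedness of $\Sigma^\star(\Gamma)$ for the term built from $\int\partial_{\mu^X}v\,\mathrm{d}\mu^Y$). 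Taking $\P$--expectations in the chain rule and using the terminal condition then gives $v(0,\mu^Y_0)\ge \mathbb E^\P\big[U^P\big(-\mathbb E^{\P_T}[L_T^{\overline{\zeta}^{\P}}]\big)\big]$; since $\P\in\Qc$ was arbitrary, $v(0,\mu^Y_0)\ge V_0^P$.

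For the converse inequality and the optimality claim, I would take the feedback $\overline{\zeta}^{\star}_t:=\overline v^\star(t,\mu^Y_t)$. One first checks admissibility: that the McKean--Vlasov SDE with common noise driven by this feedback admits a weak solution $\P^\star\in\Qc$, and that $\overline{\zeta}^{\star}\in\overline\Vc$ so that, via Theorem \ref{thm:main}, it corresponds to a genuine simple contract in $\Xi_{\mathrm S}$ and hence to an admissible competitor in the supremum defining $V_0^P$ (here one uses the growth of $\overline v^\star$ inherited from the PDE, together with the boundedness of the drift and volatility of $X$ at the equilibrium of Theorem \ref{thm:mfe}). Along $\P^\star$, the maximality of $\overline v^\star$ makes the finite--variation part above vanish identically, so $v(\cdot,\mu^Y_\cdot)$ is a true $\P^\star$--martingale and $v(0,\mu^Y_0)=\mathbb E^{\P^\star}\big[U^P\big(-\mathbb E^{\P^\star_T}[L_T^{\overline{\zeta}^{\star}}]\big)\big]\le V_0^P$. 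Combining the two inequalities yields $v(0,\mu^Y_0)=V_0^P$, which is $(i)$, and that $\overline{\zeta}^{\star}$ is an optimal triple of parameters, which is $(ii)$.

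The step I expect to be the main obstacle is the true--martingale (rather than merely local--martingale) property of the $W^\circ$--stochastic integral under the natural integrability \eqref{eq:conditions_martingale_general}: its integrand mixes the bounded idiosyncratic volatility $\Sigma^\star(\Gamma)$ against $\int\partial_{\mu^X}v\,\mathrm{d}\mu^Y$ but also the \emph{unbounded} quantity $Z+\overline Z^\mu$ against $\int\partial_{\mu^L}v\,\mathrm{d}\mu^Y$, so one genuinely needs the sharp $L^{p^\prime}$/$L^{p^\prime/(p^\prime-1)}$ pairing above, as well as careful bookkeeping of the chain rule on $\Pc(\R^2)$ in the presence of common noise to be sure that the idiosyncratic contributions really do cancel out of the martingale part. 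A secondary, more routine, issue is the admissibility of the candidate maximiser $\overline{\zeta}^{\star}$, which is the familiar caveat of verification theorems stated on a space of measures.
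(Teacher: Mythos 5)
Your proposal is correct and follows essentially the same route as the paper's proof in Appendix \ref{proof:Pb_Principal_general}: apply the chain rule under $\Cc^{1,2}$--regularity to $t\longmapsto v(t,\mu^Y_t)$, use the HJB equation \eqref{eq:pde_vP} to identify the finite--variation part with $\tfrac12\big(h_t(\overline\zeta^\P_t)-\sup_{\overline v}h_t(\overline v)\big)\le 0$, upgrade the $W^\circ$--stochastic integral to a true martingale via Burkholder--Davis--Gundy and the Hölder pairing of the $L^{p'/(p'-1)}$--bound in \eqref{eq:conditions_martingale_general} against the $L^{p'}$--estimate on $(Z,\overline Z^\mu)$ from Lemma \ref{lemma:integZ}, and conclude by taking expectations with equality at $\overline\zeta^\star$. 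Your explicit flagging of the admissibility of the feedback candidate ($\overline\zeta^\star\in\overline\Vc$ and existence of the corresponding $\P^\star\in\Qc$) is a point the paper leaves implicit, but it is the standard caveat of such verification theorems and does not change the argument.
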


{\color{black} The result is quite similar to those found in the literature on McKean--Vlasov problems, see, e.g., \citet*{bensoussan2013mean, bensoussan2015master, bensoussan2017interpretation, pham2018bellman, pham2017dynamic, bayraktar2018randomized}, or \citet*{djete2019mckean} for the most recent results.} We refer to Appendix \ref{proof:Pb_Principal_general} for the proof of this Proposition. In the following two subsections, we will specify the utility function $U^P$ in order to consider two different cases:
\begin{enumerate}
    \item[$\bullet$] the case of a risk--averse Principal, with a CARA utility function and a risk--aversion parameter $R_P > 0$, in Subsection \ref{sss:principal_cara};
    \item[$\bullet$] the case of a risk--neutral Principal, with $U^P (x) = x$, in Subsection \ref{sss:principal_RN}.
\end{enumerate}

All the functions defined in the first case will be indexed by $P$ to make the dependency explicit on $R_P$, for example $V^{P}$ for the value of the Principal's problem. For the sake of consistency, they will be indexed by $0$ in the second case, meaning that, informally, it is sufficient to set $R_P=0$ in a function defined in the risk--averse case to obtain the function in risk--neutral case.

\subsubsection{Principal with CARA utility}\label{sss:principal_cara}

Under a constant relative risk aversion specification of the utility function of the producer, that is $U^P (x) := - \erm^{- R_P x}$, for some risk--aversion $R_P > 0$, we are looking for a solution $v^P$ of \eqref{eq:pde_vP}  with the form 
\begin{align}\label{eq:def_uP}
    v^P (t, \mu^Y_t) = - \erm^{ R_P \big( \mathbb{E}^{\P_t} [L_t] - u^P (t, \mu_t^X) \big)}, \;
    \text{with} \; u^P (T, \mu_T^X) = 0.
\end{align}

To lighten to notations, we will denote, for all function $u : [0,T] \times \Pc (\R) \longrightarrow \R$,
\begin{align*}
    \overline{u}_{\mu^X} ( t, \mu ) &:= \int \partial_{\mu^X} u \big( t, \mu \big) (x) \mu (\drm x), \; 
    \overline{u}_{x, \mu^X} ( t, \mu ) := \int \partial_x \partial_{\mu^X} u \big( t, \mu \big) (x) \mu (\drm x), \\
    \text{ and }  \overline{u}_{\mu^X, \mu^X} ( t, \mu ) &:= \iint  \partial_{\mu^X}^2 u^P \big( t, \mu \big) ( x, \widetilde{x}) \mu (\drm x) \mu (\drm \widetilde x), \; \text{ for } \;  (t, \mu) \in [0,T] \times \Pc (\R),
\end{align*}
and we define the risk--aversion ratio $\overline{R}$ as $
    1/\overline R := 1/R_A + 1/R_P,$
with the convention $\overline{R} = 0$ if $R_A$ or $R_P$ is equal to zero. It remains to solve a simplified HJB equation  for the function $u^P$:
\begin{align}\label{eq:pde_principal_RA_general}
    0 = &\ - \partial_t u^P \big( t, \mu^X_t \big)
    + \int (g-f)(x) \mu_t^X (\drm x)
    + \dfrac{\theta}{2} \big(\sigma^{\circ}\big)^2
    - \dfrac{1}{2} \big( \sigma^{\circ} \big)^2 \overline{u}^P_{x,\mu^X} \big( t, \mu^X_t \big)
    + \dfrac{1}{2} \Big( \big(\sigma^{\circ}\big)^2 \overline{R} - \overline{\rho} \Big) \big( \overline{u}^P_{\mu^X} \big( t, \mu^X_t \big) \big)^2 \nonumber \\
    &- \dfrac{1}{2} \big( \sigma^{\circ} \big)^2 \overline{u}^P_{\mu^X, \mu^X} \big( t, \mu^X_t \big)
    + \dfrac{1}{2} \inf_{z \in \R} \Big\{ 
    F_0 \big( q \big( z,\overline{u}^P_{x,\mu^X} \big( t, \mu^X_t \big) \big) \big)
    + \overline{\rho} \big( (z^{-} \wedge A_{\textnormal{max}}) + \overline{u}^P_{\mu^X} \big( t, \mu^X_t \big) \big)^2
    \Big\},
\end{align}
with terminal condition $u^P \big(T, \mu^X_T \big) = 0$, and where 
\begin{align*}
    q (z, u) &:=\theta+ R_A z^2 - u \;
    \text{ and } \; F_0 (q) := \inf_{\gamma < 0} \{ \Sigma^{\star} ( \gamma ) q + c_{\beta}^{\star} ( \gamma) \}.
\end{align*}

In order to apply Theorem \ref{thm:Pb_Principal_general}, we however need to ensure that Condition \eqref{eq:conditions_martingale_general} holds for the function $v^P$. Therefore, we assume that there exists some $\overline p > \frac{p^\prime}{p^\prime-1}$ such that the following technical condition on the integrability of the contracts is enforced
\begin{equation}\label{integrability:rp:xi}
\tag{\textbf{CARA}}
\E^\P \bigg[ \sup_{0\leq t\leq T} \erm^{\overline p R_P \E^\P \big[ \xi_t^{\overline \zeta} \big| \Fc_t^{\circ} \big]}\bigg]<+\infty.
\end{equation} 
We will make use of the notation $\varepsilon = \sqrt{\overline p (p^\prime-1)/p^\prime} $. Under this assumption on contracts, we derive from Theorem \ref{thm:Pb_Principal_general} the following verification result.
\begin{proposition}\label{prop:Pb_Principal_RA_general}
    Let $u$ be a solution to {\rm PDE} \eqref{eq:pde_principal_RA_general}, smooth enough in the sense of {\rm Definition \ref{def:C12}} and satisfying
    \begin{align}\label{eq:conditions_martingale_RP}
       \E^\P \bigg[  \bigg(\int_0^T \big| \overline u_{\mu^X} \big( t, \mu^X_t \big) \big|^2 \drm t \bigg)^{\frac {p^\prime}{2}} + \sup_{0 \leq t \leq T} \exp\bigg( - \frac{q^\prime p^\prime}{p^\prime-1} R_P u (t, \mu^X_t)\bigg)  \bigg] < + \infty,
    \end{align}
   for $q' = \varepsilon/(\varepsilon-1)$. Moreover, let $\overline v^\star : [0,T] \times \Pc(\R) \longrightarrow \R^3$ be a function satisfying
    \begin{align*}
        h^P \big( \overline{u}_{\mu^X} \big( t, \mu_t^X \big), \overline{u}_{x, \mu^X} \big( t, \mu_t^X \big), \overline v^\star \big( t, \mu_t^X \big) \big) = \inf_{\overline v \in \R^3} h^P(\overline{u}_{\mu^X} \big( t, \mu_t^X \big), \overline{u}_{x, \mu^X} \big( t, \mu_t^X \big), \overline v \big),
    \end{align*}
    where 
    \begin{align}\label{eq:def_hP}
        h^P(u_1, u_2, \overline v) := &\
        \Sigma^{\star} (\gamma) \big(h + R_A z^2 - u_2 \big)
        + c_\beta^\star (\gamma)
        + c_\alpha^\star (z) 
        + 2 \overline{\rho} \big( z^{-} \wedge A_{\textnormal{max}} \big) u_1 \nonumber \\
        &+ \big(\sigma^{\circ}\big)^2 (R_A + R_P) \big(z + \overline z^{\mu} \big)^2
        - 2 R_P \big( \sigma^{\circ} \big)^2 \big( z + \overline z^{\mu} \big) u_1,
    \end{align}
    then
    \begin{enumerate}[label=$(\roman*)$]
        \item $V_0^{P} = - \erm^{ R_P ( \xi_0 - u ( 0, \mu^X_0 ) ) }$;
        \item the optimal payment rate to induce a reduction of the average consumption deviation is a process $Z^\star$, defined for all $t \in [0,T]$ by $Z^\star_t = z^\star \big(t, \mu^X_t \big)$, where the function $z^\star$ is solution of the minimisation problem in \eqref{eq:pde_principal_RA_general} and satisfies
        \begin{align*}
            z^{\star} (t, \mu) =0, \text{ when } \; \overline{u}_{\mu^X} (t, \mu ) \geq 0,
            \text{ and } \; z^{\star} (t, \mu) \in \big[\overline{u}_{\mu^X} (t, \mu ) \vee - A_{\textnormal{max}}, 0 \big]  \text{ when } \; \overline{u}_{\mu^X} \big(t, \mu \big) \leq 0;
        \end{align*}
        \item the optimal payment rate $\overline{Z}^{\mu,\star}$ is a process defined for all $t \in [0,T]$ by $\overline{Z}_t^{\mu,\star} = \overline{z}^{\mu,\star} \big( t, \mu^X_t \big)$ where
        \begin{align*}
            \overline{z}^{\mu,\star} ( t, \mu ) := - z^{\star} ( t, \mu ) + \dfrac{R_P}{R_A + R_P} \overline{u}_{\mu^X} ( t, \mu ).
        \end{align*}
        Recalling that $\overline{Z}_s^{\mu,\star} := \widetilde{\mathbb{E}}^{\P_s} \big[ Z_s^{\mu,\star} (\widetilde X_s) \big]$, we can arbitrary set that $z^{\mu,\star} ( t, \mu) (x) \equiv \overline{z}^{\mu,\star}( t, \mu )$ for all $x \in \R$.
        \item the optimal payment rate $\Gamma^{\star}$ to induce a reduction of the volatility of the consumption deviation is a process defined for all $t \in [0,T]$ by $\Gamma^{\star}_t = \gamma^{\star} \big( t, \mu^X_t \big)$ where
        \begin{align*}
            \gamma^{\star} ( t, \mu ) := - \max \bigg\{ \theta - \overline{u}_{x,\mu^X} ( t, \mu ) + R_A \big( z^{\star} ( t, \mu ) \big)^2, \dfrac{1}{\overline{\lambda}} \bigg\}, \; \text{ where  } \overline{\lambda} = \max_{k=1, \dots, d} \lambda^k;
        \end{align*}
        \item the second-best optimal contract is given by
        \begin{align*}
  U_A^{-1} (R_0)&  - \int_0^T \Hc (X_s, \mu^X_s, \overline \zeta_s^{\star}, \alpha_s^{\star})  \drm s 
            + \int_0^T Z^{\star}_s \big( \drm X_s - \widetilde{\mathbb{E}}^{\P_s} \big[ \drm \widetilde X_s \big] \big)
            + \dfrac{R_P}{R_A + R_P} \int_0^T \overline{u}_{\mu^X} \big(s, \mu^X_s \big) \widetilde{\mathbb{E}}^{\P_s} \big[ \drm \widetilde X_s \big] \nonumber \\
            &+ \dfrac{1}{2} \int_0^T \big( \Gamma^{\star}_s + R_A \big(Z^{\star}_s\big)^2 \big) \drm \langle X \rangle_s
            + \dfrac{1}{2} \dfrac{R_A R^2_P}{(R_A + R_P)^2} \big( \sigma^{\circ} \big)^2 \int_0^T  \big( \overline{u}_{\mu^X} \big(s, \mu^X_s \big) \big)^2 \drm s
            - \dfrac12 R_A \big( \sigma^{\circ} \big)^2 \int_0^T \big( Z^{\star}_s \big)^2 \drm s,
        \end{align*}
        where $\mathcal H$ was defined above in \eqref{eq:hamiltonian}.
    \end{enumerate}
\end{proposition}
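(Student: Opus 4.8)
The plan is to verify, in the CARA specialisation, the hypotheses of the general verification result Theorem~\ref{thm:Pb_Principal_general} for the candidate value function $v^P$ built from the ansatz \eqref{eq:def_uP}, and then to read the optimisers off the resulting Hamiltonian. Writing $v^P(t,\nu) = -\erm^{R_P(\int \ell\,\nu(\drm x,\drm \ell) - u^P(t,\nu^X))}$ for $\nu\in\Pc(\R^2)$ with first marginal $\nu^X$, the first step is to express all the measure--derivatives appearing in PDE \eqref{eq:pde_vP} in terms of $u^P$ and its derivatives. Because $v^P$ depends on the $L$--marginal only through its mean, the $\mu^L$--direction derivatives are elementary: $\partial_{\mu^L}v^P = R_P v^P$ and $\partial^2_{\mu^L,\mu^L}v^P = R_P^2 v^P$ (both constant in the space variable), while $\partial_x\partial_{\mu^L}v^P = \partial_\ell\partial_{\mu^X}v^P = 0$; the remaining derivatives factor as $-R_P v^P$ times the corresponding derivative of $u^P$ for the first--order ones, and pick up an additional quadratic chain--rule term at second order, e.g. $\partial^2_{\mu^X,\mu^X}v^P(y,\widetilde y) = R_P^2 v^P\,\partial_{\mu^X}u^P(x)\partial_{\mu^X}u^P(\widetilde x) - R_P v^P\,\partial^2_{\mu^X,\mu^X}u^P(x,\widetilde x)$. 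The explicit time derivative is $\partial_t v^P = -R_P v^P\,\partial_t u^P$.

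Next I would substitute these expressions into \eqref{eq:pde_vP}, integrate every term against $\mu^Y$ (respectively $\mu^Y\otimes\mu^Y$), and divide through by the \emph{strictly negative} quantity $R_P v^P$; this division turns the supremum over $\overline v$ into an infimum, matching the $\inf_{z\in\R}$ and the $F_0=\inf_{\gamma<0}$ appearing in \eqref{eq:pde_principal_RA_general}. Concretely, the function $h$ of \eqref{eq:def_h} evaluated at the ansatz equals $R_P v^P\,h^P(\overline u^P_{\mu^X},\overline u^P_{x,\mu^X},\overline v)$ with $h^P$ as in \eqref{eq:def_hP}, so it remains to carry out the three one--dimensional minimisations of $h^P$. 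Minimising $\Sigma^\star(\gamma)\,q(z,\overline u^P_{x,\mu^X}) + c_\beta^\star(\gamma)$ over $\gamma<0$ is the definition of $F_0\circ q$, and using the closed forms of $\Sigma^\star$ and $c_\beta$ together with the truncations of $b^\star$ at $1$ and $B_{\textnormal{min}}$ yields $\gamma^\star = -\max\{\theta - \overline u^P_{x,\mu^X} + R_A(z^\star)^2,\,1/\overline\lambda\}$. Minimising over $\overline z^\mu$ is an unconstrained quadratic, $(\sigma^\circ)^2(R_A+R_P)(z+\overline z^\mu)^2 - 2R_P(\sigma^\circ)^2(z+\overline z^\mu)\overline u^P_{\mu^X}$, with minimiser $z+\overline z^\mu = \tfrac{R_P}{R_A+R_P}\overline u^P_{\mu^X}$, i.e. $\overline z^{\mu,\star} = -z^\star + \tfrac{R_P}{R_A+R_P}\overline u^P_{\mu^X}$, whose substitution contributes $-(\sigma^\circ)^2\tfrac{R_P^2}{R_A+R_P}(\overline u^P_{\mu^X})^2$. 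The crucial bookkeeping point is that this term combines with the $R_P^2 v^P(\overline u^P_{\mu^X})^2$ coming from the quadratic chain--rule part of $\iint\partial^2_{\mu^X}v^P$ and with the $\overline\rho(\overline u^P_{\mu^X})^2$ extracted by completing the square in the $z$--minimisation (using $c_\alpha^\star(z)=\overline\rho(z^-\wedge A_{\textnormal{max}})^2$) to produce exactly the coefficient $\tfrac12\big((\sigma^\circ)^2\overline R - \overline\rho\big)$, where $1/\overline R = 1/R_A + 1/R_P$. The leftover terms — the constant $\tfrac\theta2(\sigma^\circ)^2$, the running cost $\int(g-f)\,\drm\mu^X$, and the McKean--Vlasov contributions $-\tfrac12(\sigma^\circ)^2\overline u^P_{x,\mu^X}$ and $-\tfrac12(\sigma^\circ)^2\overline u^P_{\mu^X,\mu^X}$ — then match \eqref{eq:pde_principal_RA_general} line by line, while the terminal condition $v^P(T,\cdot)=U^P(-\E^{\mu^L_T}[L_T])$ forces $u^P(T,\cdot)=0$.

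I would then check the remaining two hypotheses of Theorem~\ref{thm:Pb_Principal_general}. Smoothness of $v^P$ in the sense of Definition~\ref{def:C12} follows from that of $u^P$ by composition with the smooth map $s\mapsto\erm^{R_P s}$ and the linearity of the $\mu^L$--dependence. For the integrability condition \eqref{eq:conditions_martingale_general} applied to $v^P$, note that $\big|\int\partial_{\mu^X}v^P\,\drm\mu^Y\big| = R_P|v^P|\,|\overline u^P_{\mu^X}|$ and $\big|\int\partial_{\mu^L}v^P\,\drm\mu^Y\big| = R_P|v^P|$, and that $|v^P|$ is controlled by $\erm^{R_P\E^\P[\xi_t^{\overline\zeta}\mid\Fc_t^\circ]}$ up to exponentials of $|u^P|$ and of quantities bounded on $[0,T]$; a Hölder splitting with the conjugate exponents built from $\overline p$, $\varepsilon=\sqrt{\overline p(p'-1)/p'}$ and $q'=\varepsilon/(\varepsilon-1)$ then bounds these using the technical assumption \eqref{integrability:rp:xi} for the exponential factor and \eqref{eq:conditions_martingale_RP} for the $(\int|\overline u^P_{\mu^X}|^2)^{1/2}$ and $\exp(-\tfrac{q'p'}{p'-1}R_P u^P)$ factors, the inequality $\overline p > p'/(p'-1)$ being exactly what makes the exponents admissible. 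Theorem~\ref{thm:Pb_Principal_general} then gives $V_0^P = v^P(0,\mu^Y_0) = -\erm^{R_P(\xi_0 - u(0,\mu^X_0))}$, which is $(i)$, together with the fact that the maximiser of $h$ — equivalently the minimiser $\overline v^\star$ of $h^P$ — is an optimal triple of parameters. Items $(ii)$--$(iv)$ are precisely the three componentwise minimisers computed above, the stated sign and range constraints on $z^\star$ following from differentiating the (strictly convex in $z$) map $z\mapsto F_0(q(z,\overline u^P_{x,\mu^X})) + \overline\rho((z^-\wedge A_{\textnormal{max}}) + \overline u^P_{\mu^X})^2$ and using $F_0'\ge 0$, and $(iii)$ being $\overline z^{\mu,\star}=-z^\star+\tfrac{R_P}{R_A+R_P}\overline u^P_{\mu^X}$. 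Finally $(v)$ is obtained by substituting $\overline\zeta^\star = (Z^\star,\overline Z^{\mu,\star},\Gamma^\star)$ into the simplified contract form \eqref{eq:xi_simplify} — valid at the unique equilibrium, where $\widehat\mu=\mu^\star$ and $\widehat X$ is a copy of $X$ — replacing $\overline Z^{\mu,\star}$ by $-Z^\star+\tfrac{R_P}{R_A+R_P}\overline u^P_{\mu^X}$, and regrouping the $\drm X_s$, $\widetilde\E^{\P_s}[\drm\widetilde X_s]$ and $\drm s$ integrals.

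The step I expect to be the main obstacle is the bookkeeping in the second paragraph: pushing the measure derivatives of the exponential ansatz through PDE \eqref{eq:pde_vP} and recombining the partially optimised Hamiltonian so that exactly the coefficients $\overline R$, $\overline\rho$, $\overline u^P_{x,\mu^X}$ and $\overline u^P_{\mu^X,\mu^X}$ of \eqref{eq:pde_principal_RA_general} appear with no stray cross terms — in particular tracking how the interaction between the $\overline z^\mu$--quadratic, the second chain--rule term in $\partial^2_{\mu^X,\mu^X}v^P$, and the square completion in $z$ jointly assemble the risk--aversion ratio $\overline R$. The integrability verification is routine once the Hölder exponents are lined up, and the componentwise optimisations and the final substitution are elementary.
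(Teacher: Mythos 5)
Your proposal is correct and follows essentially the same route as the paper's proof: the same exponential ansatz for $v^P$, the same reduction of the sup in \eqref{eq:pde_vP} to $R_P v^P \inf_{\overline v} h^P$ after dividing by the negative factor, the same componentwise minimisations (first in $\overline z^\mu$, then in $\gamma$ via $F_0$, then the sign analysis in $z$), and the same Hölder splitting with exponents $p'$, $\varepsilon$, $q'$ together with \eqref{integrability:rp:xi} to verify \eqref{eq:conditions_martingale_general} before invoking Theorem \ref{thm:Pb_Principal_general}. The only slight imprecision is describing the conditional expectation of $\int_0^t g(X_s)\,\drm s$ inside $L_t$ as a "quantity bounded on $[0,T]$" — it is not bounded, but its exponential moment is finite by the linear growth of $g$ and the bounded drift and volatility of $X$, which is exactly the extra Hölder factor the paper uses.
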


We refer to Appendix \ref{proof:Pb_Principal_RA_general} for the proof of this proposition, which is a consequence of Theorem \ref{thm:Pb_Principal_general}.

\medskip

\textbf{Interpretation of the optimal contract.} 
The most interesting part of the optimal contract is given by Proposition \ref{prop:Pb_Principal_RA_general} $(v)$, and concerns the infinitesimal payment $Z^{\star}_s \big( \drm X_s - \widetilde{\mathbb{E}}^{\P_s} \big[ \drm \widetilde X_s \big] \big)$. Indeed, since $Z^\star \leq 0$ by Proposition \ref{prop:Pb_Principal_RA_general} $(ii)$, this payment is positive if the consumer's deviation consumption is below the mean of others' deviation. Conversely, if the consumer make less effort than the rest of the pool, this part of the compensation will be negative. Therefore, a part of the compensation is based on the comparison between the deviation consumption of a consumer and the mean of others' deviation consumption. 

\medskip

In addition to that, we can derive a more illuminating form of contract by denoting by $X^\circ$ the deviation consumption without common noise, whose dynamic under optimal efforts is given by
\begin{align*}
    \drm X^\circ_t = - \overline{\rho} \big( (Z^\star_t)^{-} \wedge A_{\textnormal{max}} \big) \mathrm{d} t + \sigma^{\star} ( \Gamma^\star_t ) \cdot \mathrm{d}W_t.
\end{align*}
The contract can thus be written in terms of the common noise as
\begin{align}\label{eq:contract_w0}
 U_A^{-1} (R_0)&  - \int_0^t \Hc \big(X^\circ_s + \sigma^\circ W^\circ_s, \overline \zeta_s^{\star} \big)  \drm s 
    + \int_0^t Z^{\star}_s \drm X^\circ_s
    + \dfrac{1}{2} \int_0^t \big( \Gamma^{\star}_s + R_A \big(Z^{\star}_s\big)^2 \big) \drm \langle X^\circ \rangle_s \nonumber 
   + \dfrac{R_P}{R_A + R_P} \sigma^\circ \int_0^t \overline{u}^P_{\mu^X} \big(s, \mu^X_s \big) \drm W^\circ_s \\
   & + \dfrac{1}{2} \dfrac{R_A R^2_P}{(R_A + R_P)^2} \big( \sigma^{\circ} \big)^2 \int_0^t  \big( \overline{u}^P_{\mu^X} \big(s, \mu^X_s \big) \big)^2 \drm s,
   \end{align}
where $\Hc (x, \overline \zeta^{\star}) :=   \dfrac12 H_d (z^\star) + \dfrac12 H_v (\gamma^\star) + f(x).$ We can then divide the study of the contract in two parts
    \begin{enumerate}[label=$(\roman*)$] 
        \item The first line of the contract defined in \eqref{eq:contract_w0} is the classical contract form for drift and volatility control, indexed on the process $X^\circ$, that is the part of the deviation consumption which is really controlled by the Agent:
        \begin{itemize}
            \item the contract is linear in the level of $X^\circ$ and its quadratic variation $\langle X^\circ \rangle$;
            \item by responding to the contract with optimal efforts, the consumer gains a certain amount of utility. Therefore, the Principal can subtract the certainty equivalent of this utility gain from the contract, that is the constant part $\int_0^t \Hc \big(X^\circ_s + \sigma^\circ W^\circ_s, \overline \zeta_s^{\star} \big)  \drm s$;
            \item due to the risk--aversion of the consumer, the additional payment $\frac{1}{2} R_A \big(Z^{\star}_t\big)^2 \drm \langle X^\circ \rangle_t$ is needed to compensate the infinitesimal payment $Z_t^\star \drm X^\circ_t$.
        \end{itemize}
Since the common noise $W^\circ$ represents in our framework the climate hazards, the process $X^\circ$ can also be seen as the deviation consumption adjusted for climate hazards. Therefore, this part of the contract is a fixed compensation, independent of weather conditions.
        \item The other part of the contract is an indexation on the common noise, that is the remaining risk the Principal wants to give to the Agent. As for the infinitesimal payment $Z_t \drm X_t$, due to the risk aversion of the consumer, the term $
            \frac{R_P}{R_A + R_P} \sigma^\circ \overline{u}^P_{\mu^X} \big(s, \mu^X_s \big) \drm W^\circ_s,$
        must be compensated by 
        \begin{align*}
            \dfrac12 R_A \bigg( \dfrac{R_P}{R_A + R_P} \sigma^\circ  \overline{u}^P_{\mu^X} \big(s, \mu^X_s \big) \bigg)^2 \drm \langle W^\circ \rangle_s = \dfrac{1}{2} \dfrac{R_A R^2_P}{(R_A + R_P)^2} \big( \sigma^{\circ} \big)^2 \big( \overline{u}^P_{\mu^X} \big(s, \mu^X_s \big) \big)^2 \drm s.
        \end{align*}
    \end{enumerate}
    
We can already notice that, if the Principal is risk--neutral, she will not use at all the common noise to provide incentives to the Agent. Indeed, since she is risk--neutral and the consumers are risk--averse, it is too costly to share the risk, and she can bear it alone. We refer to the next Subsection for the detailed contract in the risk--neutral case.

\medskip

The conclusion of this interpretation is that the indexation of the contract on others allows the Principal to divide the deviation consumption of an Agent into two parts: the part which is really controlled by the Agent, $X^\circ$, and the common noise. Hence, she offers a compensation indexed on the controlled deviation $X^\circ$ to encourage the Agent for making effort on the drift and the volatility. Moreover, if she is risk--averse, she adds to this contract a part indexed on the common noise, to share the remaining risk, even if regulatory rules prevent her from using the common noise directly in the contract.

\begin{remark}\label{rk:consumer_RN_general}
    In the case where the consumers are risk--neutral $(R_A =0)$, {\rm PDE} \eqref{eq:pde_principal_RA_general} reduces to
    \begin{align*}
        0 = &\ - \partial_t u^P \big( t, \mu_t^X \big)
        + \int (g-f)(x) \mu_t^X (\drm x)
        + \dfrac{\theta}{2} \big(\sigma^{\circ}\big)^2
        - \dfrac{1}{2} \big( \sigma^{\circ} \big)^2 \overline{u}^P_{x, \mu^X} \big( t, \mu_t^X \big)
        - \dfrac{1}{2} \overline{\rho} \big( \overline{u}^P_{\mu^X} \big( t, \mu_t^X \big) \big)^2  \\
        &- \dfrac{1}{2} \big( \sigma^{\circ} \big)^2 \overline{u}^P_{\mu^X, \mu^X} \big( t, \mu_t^X \big)
        + \dfrac{1}{2} F_0 \big( \theta - \overline{u}^P_{x, \mu^X} \big( t, \mu_t^X \big) \big)
        + \dfrac{1}{2} \inf_{z \in \R} \Big\{ 
        \overline{\rho} \big( (z^{-} \wedge A_{\textnormal{max}}) + \overline{u}^P_{\mu^X} \big( t, \mu_t^X \big) \big)^2
        \Big\}.
    \end{align*}
    
Noting that the infimum is attained for all $t \in [0,T]$ at $Z_t^{\star} = \overline{u}^P_{\mu^X} \big( t, \mu_t^X \big)$, the optimal payment rate $\Gamma^{\star}$ to induce a reduction of the volatility of the consumption deviation is
    \begin{align*}
        \Gamma_t^{\star} = - \max \bigg\{ \theta - \overline{u}^P_{x,\mu^X} \big( t, \mu_t^X \big), \dfrac{1}{\overline{\lambda}} \bigg\},
    \end{align*}
    and the optimal payment rate $\overline{Z}^{\mu, \star}$ is
    \begin{align*}
        \overline{Z}_t^{\mu,\star} = - Z_t^{\star} + \overline{u}^P_{\mu^X} \big( t, \mu_t^X \big) = 0 \; \text{ for all } t \in [0,T].
    \end{align*}
    
Hence, the certainty equivalent of the Principal's utility and the resulting optimal contract does not depend on the producer's risk aversion $R_P$. Moreover, since $\overline{Z}^{\mu, \star}=0$, the contract is not indexed on the law of other consumers. This result is particularly interesting since this contract has a classical form in the sense of {\rm \cite{aid2018optimal}} extended to a common noise model. Note that unlike the literature on Principal--Agent problems considering one Agent, the case $R_A=0$ does not coincide with the first--best problem $($see \textnormal{Appendix \ref{sec:firstbest})}.
    \end{remark}


\subsubsection{Risk--neutral Principal}\label{sss:principal_RN}

The value function of a risk--neutral Principal is defined as $V_0^{0} = \sup_{\P \in \Qc} \mathbb{E}^{\P} \big[- \mathbb{E}^{\P_T} \big[ L^{\overline \zeta^\P}_T \big] \big]$. Under this specification for $U^P$, we are looking for a solution $v^0$ of \eqref{eq:pde_vP} with the form
\begin{align}\label{eq:def_u0}
    v^0 (t, \mu_t^Y) = - \mathbb{E}^{\P_t} \Big[ L^{\overline \zeta}_t \Big] + u^0 \big( t, \mu_t^X \big), \;
    \text{with} \; u^0 \big(T, \mu_T^X \big) = 0.
\end{align}

By adapting the reasoning of Proposition \ref{prop:Pb_Principal_RA_general}, we obtain the following HJB equation  associated to $u^0$
\begin{align}\label{eq:pde_principal_RN_general}
    0 = &\ - \partial_t u^{0} (t, \mu_t^X)
    + \int (g-f)(x) \mu_t^X (\drm x) 
    + \dfrac{\theta}{2} \big(\sigma^{\circ}\big)^2
    - \dfrac{1}{2} \big( \sigma^{\circ} \big)^2 \overline{u}^0_{x, \mu^X} \big( t, \mu^X_t \big)
    - \dfrac{1}{2} \overline{\rho} \big( \overline{u}^0_{\mu^X} \big( t, \mu^X_t \big) \big)^2 \nonumber \\
    &- \dfrac{1}{2} \big( \sigma^{\circ} \big)^2 \overline u_{\mu^X, \mu^X}^{0}  \big( t, \mu^X_t \big)
    + \dfrac{1}{2} \inf_{z \in \R} \Big\{ 
    F_0 \big( q \big(z,\overline{u}^0_{x,\mu^X} \big( t, \mu^X_t \big) \big) \big)
    + \overline{\rho} \big( \big( z^{-} \wedge A_{\textnormal{max}} \big) + \overline{u}^0_{\mu^X} \big( t, \mu^X_t \big) \big)^2
    \Big\},
\end{align}
with terminal condition $u^0 \big( T, \mu^X_T \big) = 0$. This result can be found intuitively by setting $R_P=0$ in \eqref{eq:pde_principal_RA_general}.
Similarly, we can deduce the following result from Proposition \ref{prop:Pb_Principal_RA_general} with $R_P=0$. 

\begin{proposition}\label{prop:Pb_Principal_RN_general}
    If there is a solution $u$ to the {\rm PDE} \eqref{eq:pde_principal_RN_general}, smooth enough in the sense of Definition \ref{def:C12}, such that the following condition is satisfied
    \begin{align}\label{eq:conditions_martingale_RN}
        \E^\P \bigg[ \bigg(\int_0^T \big| \overline u_{\mu^X} \big( t,\mu_t^X \big) \big|^2\mathrm{d}t\bigg)^{1/2}\bigg] < + \infty,
    \end{align} 
    and a function $\overline v^\star : [0,T] \times \Pc (\R) \longrightarrow \R^3$ satisfying 
    \begin{align*}
        h^0 \big(\overline{u}_{\mu^X} \big(t, \mu^X_t \big), \overline{u}_{x, \mu^X} \big(t, \mu^X_t \big), \overline v^\star \big(t, \mu^X_t \big) \big) = \inf_{\overline v \in \R^3} h^0 \big( \overline{u}_{\mu^X} \big(t, \mu^X_t \big), \overline{u}_{x, \mu^X} \big(t, \mu^X_t \big), \overline v \big),
    \end{align*}
    then
    \begin{enumerate}[label=$(\roman*)$]
        \item $V_0^{0} = - \xi_0 + u ( 0, \mu^X_0 )$;
         \item the optimal payment rate to induce a reduction of the average consumption deviation is the process $Z^\star$ defined for all $t \in [0,T]$ by $Z^\star_t = z^\star \big( t, \mu_t^X \big)$ where the function $z^\star$ is the optimiser  of the minimisation problem in \eqref{eq:pde_principal_RN_general}, and satisfies
        \begin{align*}
            z^{\star} (t,\mu) =0, \mbox{ when } \overline{u}_{\mu^X} (t,\mu) \geq 0,
            \mbox{ and } z^{\star}(t,\mu) \in \big[\overline{u}_{\mu^X} (t,\mu) \vee - A_{\textnormal{max}}, 0 \big]  \mbox{ when } \overline{u}_{\mu^X} (t,\mu) \leq 0;
        \end{align*}
        \item the optimal payment rate $\overline{Z}^{\mu, \star}$ is equal to $-Z^{\star}$;
        \item the optimal payment rate $\Gamma^{\star}$ is defined for all $t \in [0,T]$ by $\Gamma_t^{\star} := \gamma_t^\star \big( t, \mu^X_t \big)$ where
        \begin{align*}
            \gamma^{\star} (t, \mu) := - \max \bigg\{\theta- \overline{u}_{x,\mu^X} (t, \mu)  + R_A (z^{\star} (t, \mu))^2, \dfrac{1}{\overline{\lambda}} \bigg\};
        \end{align*}
               \item let $\overline \zeta^\star := \big( Z^\star, \overline Z^{\mu, \star}, \Gamma^\star \big)$, then the second--best optimal contract is given by
        \begin{align*}
         \hspace{-3em}  \xi_0  - \int_0^t \Hc (X_s, \mu^X_s, \overline \zeta_s^{\star}, \alpha_s^{\star})  \drm s 
            + \int_0^t Z^{\star}_s \big( \drm X_s - \widetilde{\mathbb{E}}^{\P_s} \big[ \drm \widetilde X_s \big] \big)
            + \dfrac{1}{2} \int_0^t \big( \Gamma^{\star}_s + R_A \big(Z^{\star}_s\big)^2 \big) \drm \langle X \rangle_s- \dfrac12 R_A \big( \sigma^{\circ} \big)^2 \int_0^t \big( Z^{\star}_s \big)^2 \drm s.
        \end{align*}
    \end{enumerate}
\end{proposition}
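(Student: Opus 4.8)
The plan is to obtain Proposition~\ref{prop:Pb_Principal_RN_general} as the specialisation of Theorem~\ref{thm:Pb_Principal_general} to the risk--neutral utility $U^P=\mathrm{id}$, running the same verification scheme as in the proof of Proposition~\ref{prop:Pb_Principal_RA_general} but with the \emph{affine} ansatz~\eqref{eq:def_u0}, $v^0(t,\mu^Y_t)=-\E^{\P_t}[L_t^{\overline\zeta}]+u^0(t,\mu^X_t)$, in place of the exponential ansatz~\eqref{eq:def_uP}. Since $\overline R=0$, $R_P/(R_A+R_P)=0$ and $R_AR_P^2/(R_A+R_P)^2=0$ at $R_P=0$, every formula in the statement is the formal limit $R_P\to0$ of its counterpart in Proposition~\ref{prop:Pb_Principal_RA_general}; I nonetheless run the argument directly on $v^0$ because $U^P$ degenerates at $R_P=0$.

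First I would insert the ansatz into the HJB equation~\eqref{eq:pde_vP}. As $v^0$ is affine in the $L$--component of $Y=(X,L)^\top$, one has $\partial_{\mu^L}v^0\equiv-1$ and all second--order $L$--derivatives vanish, while the $X$--direction derivatives of $v^0$ are exactly those of $u^0$. The term $-\mathrm d\,\E^{\P_t}[L_t]$ is computed from the drift $b(X_t,\overline\zeta_t)$ and the diffusion vector in~\eqref{eq:drift_vol}: conditioning on $\Fc^{P,\circ}_t$ removes the $W$--martingale part and leaves the drift $-\big(c^\star(Z_t,\Gamma_t)+\int(g-f)\,\mathrm d\mu^X_t+\tfrac12R_AZ_t^2\Sigma^\star(\Gamma_t)+\tfrac12R_A(\sigma^\circ)^2(Z_t+\overline Z^\mu_t)^2+\tfrac\theta2(\Sigma^\star(\Gamma_t)+(\sigma^\circ)^2)\big)\mathrm dt$ plus a $W^\circ$--martingale. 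Substituting into~\eqref{eq:pde_vP}, using the shape of $h$ in~\eqref{eq:def_h} with $\partial_{\mu^L}v^0\equiv-1$, the $L$--contributions cancel against $-\mathrm d\,\E^{\P_t}[L_t]$, the supremum over $\overline v$ turns into the infimum displayed in~\eqref{eq:pde_principal_RN_general}, and after collecting the $(\sigma^\circ)^2$ cross terms carried by $\partial^2_{\mu^X}v^0$ and $\partial_x\partial_{\mu^X}v^0$ one recovers exactly PDE~\eqref{eq:pde_principal_RN_general} for $u^0$, the terminal condition $u^0(T,\cdot)=0$ being forced by $v^0(T,\mu^Y_T)=-\E^{\mu^L_T}[L_T]$.

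Next I would check the hypotheses of Theorem~\ref{thm:Pb_Principal_general} for $v^0$: regularity in the sense of Definition~\ref{def:C12} is inherited from $u^0$ since $v^0$ is affine in $L$, and condition~\eqref{eq:conditions_martingale_general} reduces to~\eqref{eq:conditions_martingale_RN} because $\partial_{\mu^L}v^0\equiv-1$ makes the second expectation trivial (so the exponent $p'$ from Lemma~\ref{lemma:integZ} plays no role), while the first expectation is precisely $\E^\P[(\int_0^T|\overline u^0_{\mu^X}(t,\mu^X_t)|^2\mathrm dt)^{1/2}]$; in particular no analogue of~\eqref{integrability:rp:xi} is needed, consistently with the statement. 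Theorem~\ref{thm:Pb_Principal_general}$(i)$ then yields $V^0_0=v^0(0,\mu^Y_0)=-\xi_0+u^0(0,\mu^X_0)$, i.e.\ point~$(i)$. The optimiser $\overline v^\star=(z^\star,\overline z^{\mu,\star},\gamma^\star)$ of Theorem~\ref{thm:Pb_Principal_general}$(ii)$ is read off by minimising $h^0$, the $R_P=0$ instance of $h^P$ from~\eqref{eq:def_hP}: its $\overline z^\mu$--dependence is only through $(\sigma^\circ)^2R_A(z+\overline z^\mu)^2\ge0$, minimised at $\overline z^{\mu,\star}=-z^\star$ (point~$(iii)$); minimising over $\gamma<0$ gives $\inf_\gamma\{\Sigma^\star(\gamma)q(z,\overline u_{x,\mu^X})+c_\beta^\star(\gamma)\}=F_0(q(z,\overline u_{x,\mu^X}))$ and, from the explicit $\Sigma^\star,c_\beta^\star$, the maximiser formula of point~$(iv)$; and the residual minimisation in $z$ collapses, after absorbing $c_\alpha^\star(z)+2\overline\rho(z^{-}\wedge A_{\textnormal{max}})\overline u_{\mu^X}$, to the bracket $\inf_z\{F_0(q(z,\overline u_{x,\mu^X}))+\overline\rho((z^{-}\wedge A_{\textnormal{max}})+\overline u_{\mu^X})^2\}$ of~\eqref{eq:pde_principal_RN_general}, whose minimiser $z^\star$ satisfies the sign bounds of point~$(ii)$ as in Proposition~\ref{prop:Pb_Principal_RA_general}$(ii)$. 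Finally, point~$(v)$ follows by inserting $\overline\zeta^\star=(Z^\star,\overline Z^{\mu,\star},\Gamma^\star)$ into the equilibrium form of a simple contract (Definition~\ref{def:simple_contracts} with the simplification~\eqref{eq:xi_simplify}, valid since at the equilibrium of Theorem~\ref{thm:mfe} the deviation $\widehat X$ is a copy of $X$ and $\widehat\mu=\mu^\star$): as $\overline Z^{\mu,\star}=-Z^\star$ one has $\overline Z^{\mu,\star}(\overline Z^{\mu,\star}+2Z^\star)=-(Z^\star)^2$, so the quadratic term collapses to $-\tfrac12R_A(\sigma^\circ)^2\int_0^t(Z^\star_s)^2\mathrm ds$, giving the stated contract.

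I expect the only real difficulty to be the bookkeeping in the second step: checking that the $L$--direction terms in~\eqref{eq:pde_vP} are exactly matched by $-\mathrm d\,\E^{\P_t}[L_t]$ and that the common--noise cross terms $\partial^2_{\mu^X}v^0$, $\partial_x\partial_{\mu^X}v^0$ and $(\sigma^\circ)^2(z+\overline z^\mu)^2(\cdots)$ regroup precisely into PDE~\eqref{eq:pde_principal_RN_general}. Once the ansatz reduction is done, the remaining steps are a direct quotation of Theorem~\ref{thm:Pb_Principal_general} together with elementary finite--dimensional optimisation identical to the CARA case.
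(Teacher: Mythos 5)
Your proposal is correct and follows essentially the same route as the paper: the paper's own proof is exactly the verification that $v^0$ from \eqref{eq:def_u0} satisfies the hypotheses of Theorem \ref{thm:Pb_Principal_general}, noting that $\int\partial_{\mu^L}v^0\,\mathrm{d}\mu^Y_t=-1$ reduces condition \eqref{eq:conditions_martingale_general} to \eqref{eq:conditions_martingale_RN}, with the optimal rates read off by minimising $h^P$ at $R_P=0$. Your write-up simply carries out in more detail the steps the paper delegates to the proof of Proposition \ref{prop:Pb_Principal_RA_general}, and your computations (in particular $\overline Z^{\mu,\star}=-Z^\star$ and the collapse of the quadratic term to $-\tfrac12 R_A(\sigma^\circ)^2\int_0^t(Z^\star_s)^2\,\mathrm{d}s$) are accurate.
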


\begin{proof}
    The proof consists simply in a slight adaptation of the proof in Appendix \ref{proof:Pb_Principal_RA_general}, by showing that the function $v^0$, defined as \eqref{eq:def_u0}, satisfies the assumptions necessary for the application of Theorem \ref{thm:Pb_Principal_general}. Noticing that
    \begin{align*}
        \int \partial_{\mu^X} v(t, \mu^Y_t)(x, \ell) \mu^Y_t (\drm x, \drm \ell) = \overline u_{\mu^X} \big( t,\mu_t^X \big) \; \text{ and } \; \int \partial_{\mu^L} v(t, \mu^Y_t)(x, \ell)  \mu^Y_t (\drm x, \drm \ell) = -1,
    \end{align*}
    and since $u$ satisfies the condition \eqref{eq:conditions_martingale_RN}, we directly deduce that $v$ satisfies \eqref{eq:conditions_martingale_general}.
\end{proof}

\begin{remark}
    Notice that the risk--neutral Principal's problem can be rewritten as follows
    $V_0^{0} = \sup_{\P \in \Qc} \mathbb{E}^{\P} \big[- L^{\overline \zeta^\P}_T \big]$. Therefore, it is a standard stochastic control problem and can thus be solved in the classical way. Nevertheless, for the sake of consistency throughout this paper, we choose to solve it using {\rm Theorem \ref{thm:Pb_Principal_general}}.
\end{remark}

{\textbf{Interpretation of the optimal contract.} As in the previous subsection, to better see the compensation the consumer will receive, we can denote by $X^\circ$ his deviation consumption without common noise, and write the contract in term of common noise:
\begin{align}\label{eq:contract_w0_RN}
    &\ \xi_t^{\overline\zeta^\star} = \xi_0 - \int_0^t \Hc \big(X^\circ_s + \sigma^\circ W^\circ_s, \overline \zeta^\star_s \big)  \drm s 
    + \int_0^t Z^{\star}_s \drm X^\circ_s
    + \dfrac{1}{2} \int_0^t \big( \Gamma^{\star}_s + R_A \big(Z^{\star}_s\big)^2 \big) \drm \langle X^\circ \rangle_s.
\end{align}

Therefore, in the risk--neutral case, the optimal contract is the classical contract form for drift and volatility control, indexed on the process $X^\circ$, the deviation consumption adjusted for climatic hazards. This result comes from the fact that the Principal uses the indexation of the contract on the deviation of others, $\overline{z}^{\mu}$, to isolate the common noise. Hence, she has a clear view of what is the deviation consumption of the Agents, without the common noise ($X^{\circ}$). Therefore, she can offer a payment which is only indexed on the part of the consumption which is really optimised by the Agents. In this particular case of risk--averse Agents and risk--neutral Principal, the Principal can bear the risk alone and offers a contract which does not depend on the common noise.

\subsection{Application to linear energy value discrepancy (EVD)}\label{sec:applicationEVD}

{\color{black} The energy value discrepancy is the difference between a consumer's preference toward his deviation consumption (represented by the function $f$) and the production costs of this deviation (represented by $g$).
Following the line of \cite{aid2018optimal}, to obtain closed--form solutions, we consider in this section the case where 
\begin{align*}
    (f-g) (x) = \delta x, \; x \in \mathbb{R}.
\end{align*} 
Intuitively, if $\delta$ is positive, this means that the energy is more valuable for the consumer than it is costly for the producer. Therefore, a reduction of the consumption has a negative effect more important on the consumers' utilities than the positive effect on the producer's utility. Similarly, $\delta$ negative implies that an increase of consumption induces more cost for the producer than the benefit generated for the consumer. Therefore, intuitively, if $\delta$ is negative, it will be easier for the Principal to incentivise the consumers to reduce their consumption. }

\medskip

Under this assumption, we derive a closed-form solution in both cases of a risk--averse and risk--neutral Principal and more explicit optimal payment rates. To lighten the notation, we define
\begin{align*}
    \overline h(t, z) := F_0 \big(\theta+ R_A z^2 \big) + \overline{\rho} \big( (z^{-} \wedge A_{\textnormal{max}}) + \delta (T-t) \big)^2,\; (t,z)\in[0,T]\times\R.
\end{align*}
We sum up the results in the following proposition.


\begin{proposition}\label{prop:Pb_Principal_linear}
    Let the energy value discrepancy be linear, i.e. $(f-g)(x) = \delta x$, $x \in \mathbb{R}$. Define the certainty equivalent function $u^P$ as
    \begin{align*}
        u^P (t, \mu_t^X) := &\ \delta (T-t) \int x \mu_t^X (\drm x) - \int_t^T m^P(s) \drm s, \;
        \text{where} \; m^P (t) := \dfrac{\theta}{2} \big( \sigma^{\circ} \big)^2 
        + \dfrac{1}{2} \Big( \big( \sigma^{\circ} \big)^2 \overline R - \overline{\rho} \Big) \delta^2 (T-t)^2
        + \dfrac{1}{2}  \inf_{z \in \R} \overline h(t, z),
    \end{align*}
    then
    \begin{enumerate}[label=$(\roman*)$]
        \item the optimal payment rate process $\overline \zeta^\star = \big(Z^\star, \overline Z^{\mu, \star}, \Gamma^\star \big)$ is a deterministic function of time, independent of $\sigma^\circ$ and is defined by
        \begin{align*}
            Z_t^{\star} &= \mathrm{Arg} \min_{z \in \mathbb{R}} \overline h(t, z), \;
            \overline{Z}_t^{\mu, \star} = -Z_t^{\star} + \dfrac{R_P}{R_A + R_P} \delta (T-t) \;
            \text{ and } \; \Gamma_t^{\star} = - \max \bigg\{\theta+ R_A (Z_t^{\star})^2, \dfrac{1}{\overline{\lambda}} \bigg\}.
        \end{align*}
        for all $t \in [0,T]$.
    \item the value function of a producer with a {\rm CARA} utility function and a risk--aversion parameter $R_P$ is given by $V_0^{P} = - \erm^{ R_P ( \xi_0- u^P ( 0, \mu^X_0 ) ) }$;
    \item the value function of a risk--neutral producer is given by $V_0^{0} = - \xi_0 + u^{0} ( 0, \mu^X_0 )$.
    \end{enumerate}
\end{proposition}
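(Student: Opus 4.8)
The plan is to exhibit the certainty--equivalent function $u^P$ (and $u^0$ when $R_P=0$) as an explicit solution of the Hamilton--Jacobi--Bellman equation \eqref{eq:pde_principal_RA_general} (resp. \eqref{eq:pde_principal_RN_general}), to check that it has the regularity of Definition \ref{def:C12} and satisfies the integrability condition \eqref{eq:conditions_martingale_RP} (resp. \eqref{eq:conditions_martingale_RN}), and then to apply Proposition \ref{prop:Pb_Principal_RA_general} (resp. Proposition \ref{prop:Pb_Principal_RN_general}) verbatim. Along the way one must also check that the induced contract lies in the admissible class, i.e. that \eqref{eq:integxi} and \eqref{integrability:rp:xi} hold, which in the linear EVD case will follow from explicit Gaussian estimates.

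First I would compute the measure derivatives of the ansatz. Since $u^P(t,\mu)=\delta(T-t)\int x\,\drm\mu(x)-\int_t^T m^P(s)\,\drm s$ is affine in $\mu$, one has $\partial_t u^P(t,\mu^X_t)=m^P(t)-\delta\int x\,\mu^X_t(\drm x)$, $\partial_{\mu^X}u^P(t,\mu)(x)=\delta(T-t)$, and $\partial_x\partial_{\mu^X}u^P=\partial^2_{\mu^X}u^P=0$; hence $\overline u^P_{\mu^X}(t,\mu^X_t)=\delta(T-t)$ while $\overline u^P_{x,\mu^X}$ and $\overline u^P_{\mu^X,\mu^X}$ vanish. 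Substituting into \eqref{eq:pde_principal_RA_general} and using $(g-f)(x)=-\delta x$, the term $\int(g-f)(x)\mu^X_t(\drm x)=-\delta\int x\,\mu^X_t(\drm x)$ cancels against the corresponding part of $-\partial_t u^P$, all terms carrying a vanishing derivative drop out, $q(z,0)=\theta+R_Az^2$, and the residual infimum is exactly $\inf_{z\in\R}\overline h(t,z)$. What remains is precisely the defining identity of $m^P(t)$, so the PDE holds; the terminal condition $u^P(T,\mu^X_T)=0$ is immediate, and the $\Cc^{1,2}$--regularity of Definition \ref{def:C12} follows because $u^P$ is affine in $\mu$ and $m^P$ is continuous on $[0,T]$ (as $(t,z)\mapsto\overline h(t,z)$ is jointly continuous and coercive in $z$ uniformly for $t\in[0,T]$, so $t\mapsto\inf_z\overline h(t,z)$ is continuous and its argmin can be chosen as a bounded Borel function $Z^\star$).

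Next I would verify \eqref{eq:conditions_martingale_RP}. Its first term is trivial since $\overline u_{\mu^X}(t,\mu^X_t)=\delta(T-t)$ is bounded and deterministic. For the second term, as $u^P$ is affine in $\mu^X_t$, it suffices to bound the exponential moments of $\sup_{0\le t\le T}\big|\int x\,\mu^X_t(\drm x)\big|=\sup_{0\le t\le T}\big|\E^{\P_t}[X_t]\big|$; under the optimal (here deterministic) controls and conditionally on the common noise, $t\mapsto\E^{\P_t}[X_t]$ has bounded drift and is driven solely by $\sigma^{\circ} W^{\circ}$, hence is sub--Gaussian and admits exponential moments of every order. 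The very same estimate shows that, in the linear EVD case, $\sup_{0\le t\le T}\erm^{\overline p R_P\E^\P[\xi^{\overline\zeta}_t\mid\Fc^\circ_t]}$ is $\P$--integrable for every $\overline p$, since after conditioning on $\Fc^\circ_t$ the explicit form of $\xi^{\overline\zeta}_t$ reduces to bounded finite--variation terms plus a term linear in $W^{\circ}$; thus \eqref{eq:integxi} and \eqref{integrability:rp:xi} hold. This integrability bookkeeping is the only genuinely delicate step; everything else is a direct specialisation of the general verification results.

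Finally, reading off the optimisers from parts $(ii)$--$(iv)$ of Proposition \ref{prop:Pb_Principal_RA_general} with $\overline u_{\mu^X}(t,\mu)=\delta(T-t)$ and $\overline u_{x,\mu^X}(t,\mu)=0$ yields exactly the stated expressions $Z^\star_t=\mathrm{Arg}\min_{z\in\R}\overline h(t,z)$, $\overline Z^{\mu,\star}_t=-Z^\star_t+\tfrac{R_P}{R_A+R_P}\delta(T-t)$ and $\Gamma^\star_t=-\max\{\theta+R_A(Z^\star_t)^2,1/\overline\lambda\}$, all deterministic functions of $t$ and manifestly independent of $\sigma^{\circ}$ (which enters only through $m^P$); part $(i)$ of that proposition then gives $V^P_0=-\erm^{R_P(\xi_0-u^P(0,\mu^X_0))}$, proving $(i)$ and $(ii)$. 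For $(iii)$ I would repeat the argument with $R_P=0$, hence $\overline R=0$: the same ansatz, now called $u^0$, solves \eqref{eq:pde_principal_RN_general}, the condition \eqref{eq:conditions_martingale_RN} reduces to the (trivial) first half of the previous estimate, and Proposition \ref{prop:Pb_Principal_RN_general}$(i)$ gives $V^0_0=-\xi_0+u^0(0,\mu^X_0)$.
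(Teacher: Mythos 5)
Your proposal is correct and follows essentially the same route as the paper: plug the affine-in-measure ansatz into the HJB equations \eqref{eq:pde_principal_RA_general} and \eqref{eq:pde_principal_RN_general}, verify the integrability conditions \eqref{eq:conditions_martingale_RP} and \eqref{eq:conditions_martingale_RN} using that $\overline u_{\mu^X}(t,\mu^X_t)=\delta(T-t)$ is bounded, $m^P$ is continuous and $X$ has bounded drift and volatility, and then read off the conclusions from Propositions \ref{prop:Pb_Principal_RA_general} and \ref{prop:Pb_Principal_RN_general}. You merely spell out a few steps the paper treats as immediate (the explicit cancellation in the PDE and the admissibility of the induced contract via \eqref{eq:integxi} and \eqref{integrability:rp:xi}), which is harmless extra care rather than a different argument.
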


\begin{proof}
    The proof of this proposition is a straightforward application of Propositions \ref{prop:Pb_Principal_RA_general} and \ref{prop:Pb_Principal_RN_general} with the specification $(f-g)(x) = \delta x$, since the functions $u^P$ and $u^0$ respectively satisfy Conditions \eqref{eq:conditions_martingale_RP} and \eqref{eq:conditions_martingale_RN}. Indeed, noticing that $\overline u^P_{\mu^X} \big(t, \mu^X_t \big) = \overline u^0_{\mu^X}  \big(t, \mu^X_t \big) = \delta (T-t)$, we clearly have 
    \begin{align*}
        \E^\P \bigg[  \bigg(\int_0^T \big| \overline u^P_{\mu^X} \big( t, \mu^X_t \big) \big|^2 \drm t \bigg)^{\frac {p^\prime}{2}}  \bigg]
        = \delta^{p^\prime} \dfrac{T^{3p^\prime/2}}{3^{p^\prime/2}}  < + \infty \; \text{ and } \; 
        \E^\P \bigg[ \bigg(\int_0^T \big| \overline u^0_{\mu^X} \big( t,\mu_t^X \big) \big|^2\mathrm{d}t\bigg)^{1/2}\bigg]
        = \delta \dfrac{T^{3/2}}{\sqrt{3}} < + \infty.
    \end{align*}    
    Therefore, $u^0$ satisfies the condition \eqref{eq:conditions_martingale_RN}. To show that $u^P$ satisfies \eqref{eq:conditions_martingale_RP}, it remains to prove that
    \begin{align*}
        \E^\P \bigg[ \sup_{0 \leq t \leq T} \erm^{ - \frac{q^\prime p^\prime }{p^\prime -1} R_P \big( \delta (T-t) \E^\P [X_t | \Fc^\circ_t] + \int_t^T m^P(s) \drm s \big) }  \bigg],
        < + \infty
    \end{align*}
    which is true since $m^P$ is continuous and $X$ has bounded drift and volatility.
\end{proof}

\medskip

The above proposition underlines the fact that the optimal payment rates $(Z^\star, \Gamma^\star)$ are the same, in both cases of a risk--averse or risk--neutral Principal. Hence, the efforts of the consumers on their deviation consumptions will be the same, whatever the risk aversion of the Principal. The Principal controls the risk she wants to bear thanks to the control $\overline Z^{\mu}$. Indeed, in the risk--neutral case, the Principal does not care about the risk, hence $\overline Z^{\mu}$ is such that the contract does not depend on the common noise. On the other hand, in the risk--averse case, the Agent is remunerated for a part of the common noise: the risk induced by the common noise is shared between the Agent and the Principal. 

\medskip
Moreover, one can notice that in this particular case, the only information that the Principal uses from the conditional law $\mu^X$ is actually the conditional mean. Indeed, the only term with $\mu^X$ appears in the certainty equivalent function $u^P$ of the producer, with the form $\int x \mu_t^X (\drm x)$ which is equal to $\E^{\P} [X_t | \Fc^\circ_t]$.

\begin{remark}\label{rk:Zstar}
    For all $t \in [0,T]$, the optimal payment rate $Z_t^\star$ is equal to zero if $\delta$ is non-negative and lies between $\delta (T-t) \vee - A_{\textnormal{max}}$ and $0$ if $\delta < 0$.
\end{remark}

\section{Comparison with classical contracts}\label{sec:classical_contract}

The aim of this section is to study the benefits of indexing the contracts on the distribution of the deviation of other consumers.
We thus consider, as a benchmark case, the producer's problem when incentives are limited to payments for efforts of the considered consumer. More precisely, we consider only contracts controlled by $\zeta^{0} := (Z, 0, \Gamma)$ instead of $\zeta := (Z, Z^\mu, \Gamma)$. We denote by $\mathcal V^0$ the corresponding restriction of $\mathcal V$. We provide the optimal contract $\xi^0$ and effort of the consumers in Subsection \ref{ss:contract_zeta0}. We focus on the linear energy value discrepancy (EVD) case to compare the results, in terms of consumers' effort and producer's utility, between the two types of contracts, in both cases of a risk--averse (see Subsection \ref{ss:cara_zeta0}) or a risk--neutral producer (Subsection \ref{ss:RN_zeta0}). 



\subsection{Optimal form of contracts}\label{ss:contract_zeta0}

We consider the same model,
with the same dynamic of the deviation consumption $X$,
but we restrict our study to contracts offered by the Principal to a consumer depending only his deviation. This type of contracts has the classical form of contracts for drift and volatility control (see \cite{aid2018optimal}, \cite{cvitanic2017moral}, \cite{cvitanic2018dynamic}):
\begin{align*}
	\xi_T^0 = &\ \xi_0  - \int_0^T \Hc^\circ (X_t, \zeta^{0}_t)  \drm t 
    + \int_0^T Z_t \drm X_t
    + \dfrac{1}{2} \int_0^T \big( \Gamma_t + R_A Z_t^2 \big) \drm \langle X \rangle_t ,
\end{align*}
where $\zeta^{0} := (Z, 0,\Gamma)$, an $\R \times\{0\}\times  \R$--valued $\F$--predictable process, is the set of parameters chosen by the Principal and with $\xi_0= U_A^{-1} (R_0)$.

\medskip




{\color{black} Intuitively, we have shown in Theorem \ref{thm:mfe} that the consumer's optimal response to a contract indexed by $\zeta = (Z, Z^\mu, \Gamma)$ is independent of $Z^\mu$, and in particular $\nu^{\star} \big(\zeta^0 \big) = (\alpha^{\star}(Z), \beta^{\star}(\Gamma))$. Therefore, in the case of classical contracts, mainly with $Z^\mu = 0$, the latter should be the same function of $(Z, \Gamma)$.
This result coincides with the results of \cite{aid2018optimal}, and is consistent with the Remark \ref{rk:drift_vol_ctrl}. Thus, these contracts lead obviously to the same unique mean--field equilibrium as the one defined in Theorem \ref{thm:mfe}.}



\medskip

The Principal has to choose optimally the indexation parameters $ (Z, \Gamma)$ to maximise her utility. Her value function is the same as before, but restricted to the controls $\zeta^{0} \in \Vc^0$
\begin{equation}\label{pb:benchmark:principal}
	V_0^{0,P} := \sup_{\zeta^0 \in \Vc^0} \E^{\P} \Big[ U^P \Big( - \mathbb{E}^{\P} \Big[ L^{\zeta^0}_T \Big| \Fc^{\circ}_T \Big] \Big) \Big].
\end{equation}

Following the lines of the Section \ref{sec:principal_problem}, 
we obtain an HJB equation  similar to \eqref{eq:pde_vP}, but with the supremum on $\zeta^0$ (intuitively by considering $Z^\mu = 0$ in \eqref{eq:pde_vP}).
Therefore, we can establish an analogous result to Theorem \ref{thm:Pb_Principal_general}.


\subsection{Principal with CARA utility}\label{ss:cara_zeta0}

\subsubsection{Theoretical results}

Under a CARA specification of the utility function of the producer, by mimicking Section \ref{sss:principal_cara}, we expect a solution $v^{0,P}$ to the HJB equation  associated with \eqref{pb:benchmark:principal} to be given by
\begin{align}\label{eq:def_u0P}
    v^{0,P} (t, \mu^Y_t) = - \erm^{ R_P \big( \mathbb{E}^{\P_t} \big[ L^{\zeta^0}_t \big] - u^{0,P} (t, \mu_t^X) \big)},
\end{align}
where $u^{0,P}$ satisfies the following PDE (similar to \eqref{eq:pde_principal_RA_general}) 
\begin{align}\label{eq:pde_principal_RA_general_zeta0}
    0 = &\ - \partial_t u^{0,P} 
    +  \int (g-f)(x) \mu_t^X (\drm x) 
    + \dfrac{\theta}{2} \big( \sigma^{\circ} \big)^2 
    - \dfrac{1}{2} \big( \sigma^{\circ} \big)^2 \overline{u}^{0,P}_{x, \mu^X} 
    - \dfrac{1}{2} \big( \sigma^{\circ} \big)^2 \overline{u}^{0,P}_{\mu^X, \mu^X} 
    - \dfrac{1}{2}  \overline{\rho} \big( \overline{u}^{0,P}_{\mu^X}  \big)^2
    \nonumber \\
    &+ \dfrac{1}{2}  \inf_{z \in \R} \Big\{ 
    F_0 \big( q \big( z,\overline{u}^{0,P}_{x,\mu^X} \big) \big)
    + R_A \big(\sigma^{\circ}\big)^2 z^2 
    + \overline{\rho} \big( \big( z^{-} \wedge A_{\textnormal{max}} \big) + \overline{u}^{0,P}_{\mu^X} \big)^2
    + R_P \big( \sigma^{\circ} \big)^2 \big( - z + \overline{u}^{0,P}_{\mu^X} \big)^2
    \Big\},
\end{align}
with terminal condition $u^{0,P} ( T, \mu_T^X) = 0.$
We can state the equivalent of Proposition \ref{prop:Pb_Principal_RA_general} in our benchmark case with classical contracts.
\begin{proposition}\label{prop:Pb_Principal_RA_general_zeta0}
    If there is a solution $u$ to {\rm PDE} \eqref{eq:pde_principal_RA_general_zeta0}, smooth enough in the sense of {\rm Definition \ref{def:C12}}, satisfying the condition \eqref{eq:conditions_martingale_RP}, and a function $\overline v^{0,\star} : [0,T] \times \Pc (\Cc_T) \longrightarrow \R\times \{0\}\times \mathbb R$ that reaches the infimum of $h^P$ defined by \eqref{eq:def_hP},
    then
    \begin{enumerate}[label=$(\roman*)$]
        \item $V_0^{0,P} = - \erm^{ R_P ( \xi_0 - u ( 0, \mu^X_0 ) ) }$;
        \item the optimal payment rate to induce a reduction of the average consumption deviation is the process $Z^{0,\star}$ defined for all $t \in [0,T]$ by $Z^{0,\star}_t := z^{0,\star} \big(t, \mu^X_t \big)$ where the function $z^{0,\star}$ is the minimiser of the minimisation problem in \eqref{eq:pde_principal_RA_general_zeta0};
        \item the optimal payment rate $\Gamma^{0,\star}$ to induce a reduction of the volatility of the consumption deviation is defined for all $t \in [0,T]$ by $\Gamma_t^{0,\star} := \gamma^{0,\star} \big( t, \mu^X_t \big)$ where
        \begin{align*}
            \gamma^{0,\star} (t, \mu) := - \max \bigg\{\theta- \overline{u}_{x,\mu^X} (t, \mu)  + R_A (z^{0,\star} (t, \mu) )^2, \dfrac{1}{\overline{\lambda}} \bigg\}.
        \end{align*}
    \end{enumerate}
\end{proposition}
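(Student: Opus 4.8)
The plan is to follow the proof of Proposition \ref{prop:Pb_Principal_RA_general} almost verbatim, the only structural change being that the class of admissible payment rates is now constrained to $\overline v = (z, 0, \gamma)$, so that every supremum or infimum over $\R^3$ gets replaced by one over $\R \times \{0\} \times \R$. First I would record that, by the discussion preceding the proposition, the restricted Principal's problem \eqref{pb:benchmark:principal} admits the same weak reformulation as in Section \ref{sec:principal_problem}, with the canonical controlled process $(Y, W, W^\circ)$ driven by the coefficients \eqref{eq:drift_vol} evaluated at $\overline z^\mu = 0$, and that the associated HJB equation on $\Pc(\R^2)$ is exactly \eqref{eq:pde_vP} with the inner optimisation $\sup_{\overline v \in \R^3}$ replaced by $\sup_{\overline v \in \R \times \{0\} \times \R}$. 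The restricted analogue of Theorem \ref{thm:Pb_Principal_general}, obtained by the very same verification argument since the constraint $Z^\mu \equiv 0$ only shrinks the set of competitors, then reduces the proof to exhibiting a smooth solution of this restricted HJB equation together with a measurable selector of its inner optimiser.

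Second, I would substitute the exponential ansatz \eqref{eq:def_u0P}, namely $v^{0,P}(t, \mu^Y_t) = -\exp\big(R_P(\E^{\P_t}[L^{\zeta^0}_t] - u^{0,P}(t, \mu^X_t))\big)$, into the restricted HJB equation. The point is that $\mu \longmapsto \int \ell\, \mu^L(\mathrm{d}\ell)$ is linear in the measure, so its first Lions derivative is constant and all higher ones vanish, while $u^{0,P}$ depends on $\mu$ only through the $X$--marginal; hence the chain rule for the exponential, together with the calculus of \cite[Section 4.3.4]{carmona2018probabilisticII}, yields closed expressions for $\partial_\mu v^{0,P}$, $\partial_y \partial_\mu v^{0,P}$ and $\partial^2_\mu v^{0,P}$ in terms of $\partial_{\mu^X} u^{0,P}$, $\partial_x \partial_{\mu^X} u^{0,P}$ and $\partial^2_{\mu^X} u^{0,P}$, plus the extra $R_P$--quadratic terms coming from $\erm^{R_P \cdot}$. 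Plugging these in, dividing by $R_P v^{0,P}<0$, and collecting the terms carrying $\sigma^\circ$, one checks after some algebra that the measure-PDE collapses to the scalar-measure PDE \eqref{eq:pde_principal_RA_general_zeta0} for $u^{0,P}$: the running term $(f-g)$, the constant $\tfrac\theta2(\sigma^\circ)^2$, the transport terms $\overline u^{0,P}_{x,\mu^X}$ and $\overline u^{0,P}_{\mu^X,\mu^X}$, the quadratic penalty $\overline\rho(\overline u^{0,P}_{\mu^X})^2$, and the inner infimum involving $F_0$ and $\overline\rho(\,\cdot\,)^2$ appear exactly as written, the only new feature relative to \eqref{eq:pde_principal_RA_general} being the additional summands $R_A(\sigma^\circ)^2 z^2 + R_P(\sigma^\circ)^2(-z + \overline u^{0,P}_{\mu^X})^2$, which is precisely what one obtains by setting $\overline z^\mu = 0$ in \eqref{eq:def_h} and \eqref{eq:def_hP}.

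Third, I would verify that the candidate $v^{0,P}$ built from a solution $u$ of \eqref{eq:pde_principal_RA_general_zeta0} satisfying \eqref{eq:conditions_martingale_RP} is admissible for the restricted verification theorem, i.e.\ that it is $\Cc^{1,2}$ in the sense of Definition \ref{def:C12} and satisfies the integrability requirement \eqref{eq:conditions_martingale_general}. This is where \eqref{eq:conditions_martingale_RP} and the standing contract assumption \eqref{integrability:rp:xi} enter: the first integrability in \eqref{eq:conditions_martingale_general} is controlled by the $L^{p'}$--bound on $\int \partial_{\mu^X} v^{0,P}\, \mathrm{d}\mu^Y = \overline u_{\mu^X}$, and the supremum term by the exponential-moment bound on $R_P u(t, \mu^X_t)$ together with the bound on $\E^\P[\xi_t^{\overline\zeta}\mid\Fc_t^\circ]$, via Hölder with the conjugate exponents built from $\varepsilon$ and $q'$ exactly as in the proof of Proposition \ref{prop:Pb_Principal_RA_general}. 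Finally, for the optimisers: once $\gamma$ is minimised out through $F_0$, the inner infimum in \eqref{eq:pde_principal_RA_general_zeta0} is a function of $z$ alone, so $z^{0,\star}(t,\mu)$ is defined as any of its minimisers, which gives $(ii)$; and, exactly as in the unrestricted case, minimising $\Sigma^\star(\gamma)\, q\big(z,\overline u_{x,\mu^X}\big) + c_\beta^\star(\gamma)$ over $\gamma<0$ and comparing with the admissibility constraint $\gamma \geq -1/\overline\lambda$ yields $\gamma^{0,\star}(t,\mu) = -\max\{\theta - \overline u_{x,\mu^X}(t,\mu) + R_A (z^{0,\star}(t,\mu))^2,\ 1/\overline\lambda\}$, proving $(iii)$, while $(i)$ follows from $v^{0,P}(0,\mu^Y_0) = V_0^{0,P}$ together with $\E^{\P}[L_0^{\zeta^0}] = \xi_0$.

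\textbf{Main obstacle.} The delicate part is step two: computing the Lions derivatives of the composite exponential ansatz and checking that the $\sigma^\circ$--weighted second-order measure terms reorganise exactly into \eqref{eq:pde_principal_RA_general_zeta0}, and, intertwined with this, verifying the $\Cc^{1,2}$--regularity and integrability hypotheses so that the restricted verification theorem genuinely applies; everything else is bookkeeping parallel to Proposition \ref{prop:Pb_Principal_RA_general}.
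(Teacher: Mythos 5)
Your proposal is correct and follows exactly the route the paper intends: the paper gives no separate proof for this proposition, deferring entirely to the argument of Proposition \ref{prop:Pb_Principal_RA_general} (Appendix \ref{proof:Pb_Principal_RA_general}) with the control set restricted to $\overline v=(z,0,\gamma)$, which is precisely what you carry out — restricted verification theorem, exponential ansatz \eqref{eq:def_u0P}, reduction to \eqref{eq:pde_principal_RA_general_zeta0}, integrability via \eqref{eq:conditions_martingale_RP} and \eqref{integrability:rp:xi}, and extraction of $z^{0,\star}$ and $\gamma^{0,\star}$. The only point worth double-checking in your "step two" is that the $+\tfrac12(\sigma^\circ)^2 R_P(\overline u_{\mu^X})^2$ contribution from $\partial^2_{\mu^X}v^{0,P}$ cancels against the $-\tfrac12 R_P(\sigma^\circ)^2(\overline u_{\mu^X})^2$ left over after completing the square in $h^P$ at $\overline z^\mu=0$, which is why no $(\sigma^\circ)^2(\overline u_{\mu^X})^2$ term survives outside the infimum in \eqref{eq:pde_principal_RA_general_zeta0}.
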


\begin{remark}\label{remark:classicalcontract}
    In the case where the consumers are risk--neutral $(R_A =0)$, the {\rm PDE} reduces to
    \begin{align*}
        0 = &\ - \partial_t u^{0,P}
        +  \int (g-f)(x) \mu^X (\drm x) 
        + \dfrac{\theta}{2} \big( \sigma^{\circ} \big)^2 
        - \dfrac{1}{2} \big( \sigma^{\circ} \big)^2 \overline{u}^{0,P}_{x, \mu^X}
        - \dfrac{1}{2} \big( \sigma^{\circ} \big)^2 \overline{u}^{0,P}_{\mu^X, \mu^X}
        - \dfrac{1}{2}  \overline{\rho} \big( \overline{u}^{0,P}_{\mu^X} \big)^2 \\
        &+ \dfrac{1}{2} F_0 \big( \theta - \overline{u}^{0,P}_{x,\mu^X} \big)
        + \dfrac{1}{2}  \inf_{z \in \R} \bigg\{ 
        \overline{\rho} \big( \big( z^{-} \wedge A_{\textnormal{max}} \big) + \overline{u}^{0,P}_{\mu^X} \big)^2
        + R_P \big( \sigma^{\circ} \big)^2 \big( - z + \overline{u}^{0,P}_{\mu^X} \big)^2
        \bigg\},
    \end{align*}
    
    
    Hence, the infimum is attained for $z^\star := \overline{u}^{0,P}_{\mu^X}$ and, similarly to {\rm Remark \ref{rk:consumer_RN_general}}, the resulting optimal contract do depend on the producer's risk aversion $R_P$. 
\end{remark}

In order to obtain closed--form solutions, we can study the case of the linear EVD, similarly to Proposition \ref{prop:Pb_Principal_linear}. To lighten the notations, we denote
\begin{align*}
    \overline h^P (t,z) := F_0 (\theta + R_A z^2)
    + R_A \big(\sigma^{\circ}\big)^2 z^2 
    + \overline{\rho} \big( (z^{-} \wedge A_{\textnormal{max}}) + \delta (T-t) \big)^2
    + R_P \big( \sigma^{\circ} \big)^2 \big( - z + \delta (T-t) \big)^2.
\end{align*}
\begin{proposition}\label{prop:Pb_Principal_RA_linear_simple}
    Let the energy value discrepancy be linear, i.e. $(f-g)(x) = \delta x$, $x \in \mathbb{R}$. Then,
    \begin{enumerate}[label=$(\roman*)$]
        \item the producer's value function is given by $V_0^{0,P} = - \erm^{ R_P ( \xi_0 - u^{0,P} ( 0, \mu^X_0 )) }$ where the certainty equivalent function $u^{0,P}$ is characterised by 
        \begin{align*}
            u^{0,P} (t, \mu_t^X) &= \delta (T-t) \int x \mu_t^X (\drm x) - \int_t^T m^{0,P}(s) \drm s, \; \text{ where } \;
            m^{0,P} (t) := \dfrac{\theta}{2} \big( \sigma^{\circ} \big)^2
            - \dfrac{1}{2}  \overline{\rho} \delta^2 (T-t)^2
            + \dfrac{1}{2}  \inf_{z \in \R} \overline h^P (t,z),
        \end{align*}
        \item the optimal payment rate process $\zeta^{0,\star} = \big( Z^{0,\star}, 0, \Gamma^{0,\star} \big)$ is a deterministic function of time given by
        \begin{align*}
            Z_t^{0,\star} = \mathrm{Arg} \min_{z \in \mathbb{R}} \overline h^P (t,z) \;
            \text{ and } \; \Gamma_t^{0,\star} = - \max \bigg\{ \theta + R_A \big( Z_t^{0,\star} \big)^2, \dfrac{1}{\overline{\lambda}} \bigg\} \; \text{ for } t \in [0,T].
        \end{align*}
    \end{enumerate}
\end{proposition}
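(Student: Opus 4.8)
The plan is to verify that the explicit candidate $u^{0,P}$ in the statement solves the reduced Hamilton--Jacobi--Bellman equation \eqref{eq:pde_principal_RA_general_zeta0}, to exhibit the associated optimiser, and to check the integrability requirement \eqref{eq:conditions_martingale_RP}, so that Proposition \ref{prop:Pb_Principal_RA_general_zeta0} applies verbatim; the argument parallels that of Proposition \ref{prop:Pb_Principal_linear}. First I would record the measure derivatives of the candidate. Since $\mu\longmapsto\int x\,\mu(\drm x)$ is linear, its Wasserstein derivative is the constant function $x\longmapsto 1$; hence for $u^{0,P}(t,\mu)=\delta(T-t)\int x\,\mu(\drm x)-\int_t^T m^{0,P}(s)\,\drm s$ one gets $\partial_{\mu^X}u^{0,P}(t,\mu)(x)=\delta(T-t)$, so $\overline u^{0,P}_{\mu^X}(t,\mu)=\delta(T-t)$, while $\partial_x\partial_{\mu^X}u^{0,P}\equiv 0$ and $\partial^2_{\mu^X}u^{0,P}\equiv 0$, i.e. $\overline u^{0,P}_{x,\mu^X}=\overline u^{0,P}_{\mu^X,\mu^X}=0$; finally $\partial_t u^{0,P}(t,\mu)=-\delta\int x\,\mu(\drm x)+m^{0,P}(t)$.

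Substituting these into \eqref{eq:pde_principal_RA_general_zeta0} and using $(g-f)(x)=-\delta x$, the terms $-\partial_t u^{0,P}$ and $\int(g-f)(x)\mu^X_t(\drm x)$ cancel the $x$--dependence and leave $-m^{0,P}(t)$; moreover $q\big(z,\overline u^{0,P}_{x,\mu^X}\big)=\theta+R_Az^2$, so $F_0\big(q(z,0)\big)=F_0(\theta+R_Az^2)$ and the bracket under the infimum is exactly $\overline h^P(t,z)$. The equation thus reduces to $0=-m^{0,P}(t)+\tfrac{\theta}{2}(\sigma^\circ)^2-\tfrac12\overline\rho\,\delta^2(T-t)^2+\tfrac12\inf_{z\in\R}\overline h^P(t,z)$, which is precisely the definition of $m^{0,P}$, and the terminal condition $u^{0,P}(T,\mu)=0$ is immediate. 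For the optimiser, minimising $h^P$ defined by \eqref{eq:def_hP} over $\R\times\{0\}\times\R$ amounts to first minimising over $\gamma<0$ (producing $F_0$, the infimum being attained since $\gamma\longmapsto\Sigma^\star(\gamma)(\cdot)+c_\beta^\star(\gamma)$ is continuous and coercive) and then over $z$; as $z\longmapsto\overline h^P(t,z)$ is continuous and coercive, $Z^{0,\star}_t:=\mathrm{Arg}\min_z\overline h^P(t,z)$ defines a measurably selected deterministic function of $t$, and $\Gamma^{0,\star}_t$ is read off from Proposition \ref{prop:Pb_Principal_RA_general_zeta0}$(iii)$ with $\overline u_{x,\mu^X}=0$, giving $\Gamma^{0,\star}_t=-\max\{\theta+R_A(Z^{0,\star}_t)^2,1/\overline\lambda\}$.

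It remains to check that $u:=u^{0,P}$ satisfies \eqref{eq:conditions_martingale_RP}. The first term is trivial: $\overline u_{\mu^X}(t,\mu^X_t)=\delta(T-t)$ is deterministic and bounded, so $\E^\P\big[\big(\int_0^T|\overline u_{\mu^X}(t,\mu^X_t)|^2\,\drm t\big)^{p^\prime/2}\big]=|\delta|^{p^\prime}(T^3/3)^{p^\prime/2}<+\infty$. For the second term, $u^{0,P}(t,\mu^X_t)=\delta(T-t)\,\E^\P[X_t\,|\,\Fc^\circ_t]-\int_t^T m^{0,P}(s)\,\drm s$; the integral is bounded because $m^{0,P}$ is continuous on $[0,T]$, and, since the admissible drift and volatility of $X$ are bounded (recall $A$ and $B$ are compact), $\E^\P[X_t\,|\,\Fc^\circ_t]$ is, up to an additive constant with finite exponential moments, dominated by the Gaussian process $\sigma^\circ W^\circ_t$, so $\sup_{0\le t\le T}\exp\big(-\tfrac{q^\prime p^\prime}{p^\prime-1}R_P\,u^{0,P}(t,\mu^X_t)\big)$ is integrable by the reflection principle. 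Applying Proposition \ref{prop:Pb_Principal_RA_general_zeta0} then yields $V_0^{0,P}=-\erm^{R_P(\xi_0-u^{0,P}(0,\mu^X_0))}$ together with the announced form of $\zeta^{0,\star}=(Z^{0,\star},0,\Gamma^{0,\star})$. The only mildly delicate point is this last integrability verification; everything else is a direct substitution into \eqref{eq:pde_principal_RA_general_zeta0} and an appeal to Proposition \ref{prop:Pb_Principal_RA_general_zeta0}.
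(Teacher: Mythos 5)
Your proposal is correct and follows exactly the route the paper intends: the paper gives no separate proof of this proposition but refers to the argument "similarly to Proposition \ref{prop:Pb_Principal_linear}", whose proof is precisely a direct substitution of the linear candidate into the reduced HJB equation, identification of the infimum bracket with $\overline h^P$, verification of Condition \eqref{eq:conditions_martingale_RP} (using that $\overline u_{\mu^X}=\delta(T-t)$ is deterministic, $m^{0,P}$ is continuous, and $X$ has bounded drift and volatility), and an appeal to the general verification result, here Proposition \ref{prop:Pb_Principal_RA_general_zeta0}. Your computation of the measure derivatives, the cancellation with $(g-f)(x)=-\delta x$, and the reading of $\Gamma^{0,\star}$ from item $(iii)$ with $\overline u_{x,\mu^X}=0$ all match the paper's intended argument.
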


The main point is to study the consumers' effort and the producer's utility when she offers \textit{new contracts} compared to \textit{classical ones}, through a comparative analysis of the previous proposition with Proposition \ref{prop:Pb_Principal_linear}.

\subsubsection{Numerical results and discussion}\label{sss:cara_zeta0_comparison}

To compare the efforts of consumers on the average level of their consumption, we need to compare $Z^\star$ with $Z^{0,\star}$, and $\Gamma^\star$ with $\Gamma^{0,\star}$ for their efforts on the volatility, since, by Theorem \ref{thm:mfe}, for $t \in [0,T]$,
\begin{align*}
    \alpha_t^\star := a^\star (Z_t^\star) \; \text{ and } \; \beta_t^\star := b^\star (\Gamma_t^\star).
\end{align*}
Although some inequalities can be obtained analytically, it may be required to numerically compute the optimal payment rate $Z^\star$, since this is the only quantity for which we cannot get a closed--form expression in the linear EVD case, see Proposition \ref{prop:Pb_Principal_linear}. For numerical computations, the calibration of the parameters detailed in \cite{aid2018optimal} is used. Indeed, even if they considered in \cite{aid2018optimal} the consumption, by fixing the initial condition of the consumption to be zero, the consumption process $X$ in their work becomes directly the observed reduction of consumption, which is our canonical process $X$. More precisely, we will consider most of the time $T=5.5$ h, $\theta=4 \times 10^{-3}$ p/kW$^2$h, $\kappa=11.76$ p/kWh, $\delta=-55.44$ p/kWh, $R_P=6 \times 10^{-3}$ p$^{-1}$, $R_A=5,7 \times 10^{-3}$ p$^{-1}$, $\rho=9.3  \times 10^{-5}$ kW$^2$h$^{-1}$p$^{-1}$, $\eta=1$, $\lambda=2.8  \times 10^{-2}$ kW$^2$h/p (p stands for pence). We will explicitly indicate different values for specific parameters, for instance when we investigate the influence of $R_P$ on the utility of the Principal. Concerning the volatility calibrated in \cite{aid2018optimal} (namely the nominal volatility, with value $0.085$ kW/h$^{1/2}$), we will suppose that it is equal to the total volatility without effort in our model, i.e. $\sqrt{\Sigma(1) + ( \sigma^\circ )^2}$. Hence, we will consider different values for $\sigma^\circ$ but such that the total volatility without effort remains constant, equal to $0.085$. Notice that the numerical computations will be performed for $d=1$.

\medskip

\textbf{Comparison of efforts.} In order to compare the effort of the consumers, we need to distinguish two cases, depending on the sign of the linear EVD parameter $\delta$.

\medskip

$(i)$ We first focus on the most representative case where $\delta \leq 0$, since empirical results in \cite{aid2018optimal} provide $\delta = -55.44$. However, analytic comparison of efforts in this case is not clear. Figure \ref{fig:delta_neg} represents the optimal efforts of the consumers on the drift and the volatility for two values of $R_P$ close to the one calibrated in \cite{aid2018optimal}, and when $50\%$ of the variance is explained by the common noise (\textit{i.e.} $\sigma^\circ = \sigma = 0.085/\sqrt{2} \approx 0.0601$). The blue lines represent the optimal efforts in the case of classical contracts, while the orange lines are dedicated to our new contracts. Recall that, with contracts indexed on others, the effort of the Agents on the drift and on the volatility are independent of the risk aversion of the Principal. Therefore, 
\begin{itemize}
    \item for $R_P = 0.006$ (upper graphs), thanks to the contract indexed on others, the Principal can incentivise the consumers to make more effort on the drift and on the volatility (orange curves compared to blue ones), for the entire duration of the contract;
    \item if the risk aversion of the Principal increases ($R_P = 0.03$, lower graphs), with the classical contract, she is asking the consumer to make more efforts, whereas with the contract indexed on others, she requires the same effort regardless of her risk aversion. Thus, we obtain that with the new contract, the consumer makes still more efforts in the beginning of the contract, but less after. 
\end{itemize}

\begin{figure}[!ht]
    \begin{center}
        \includegraphics[height=8cm]{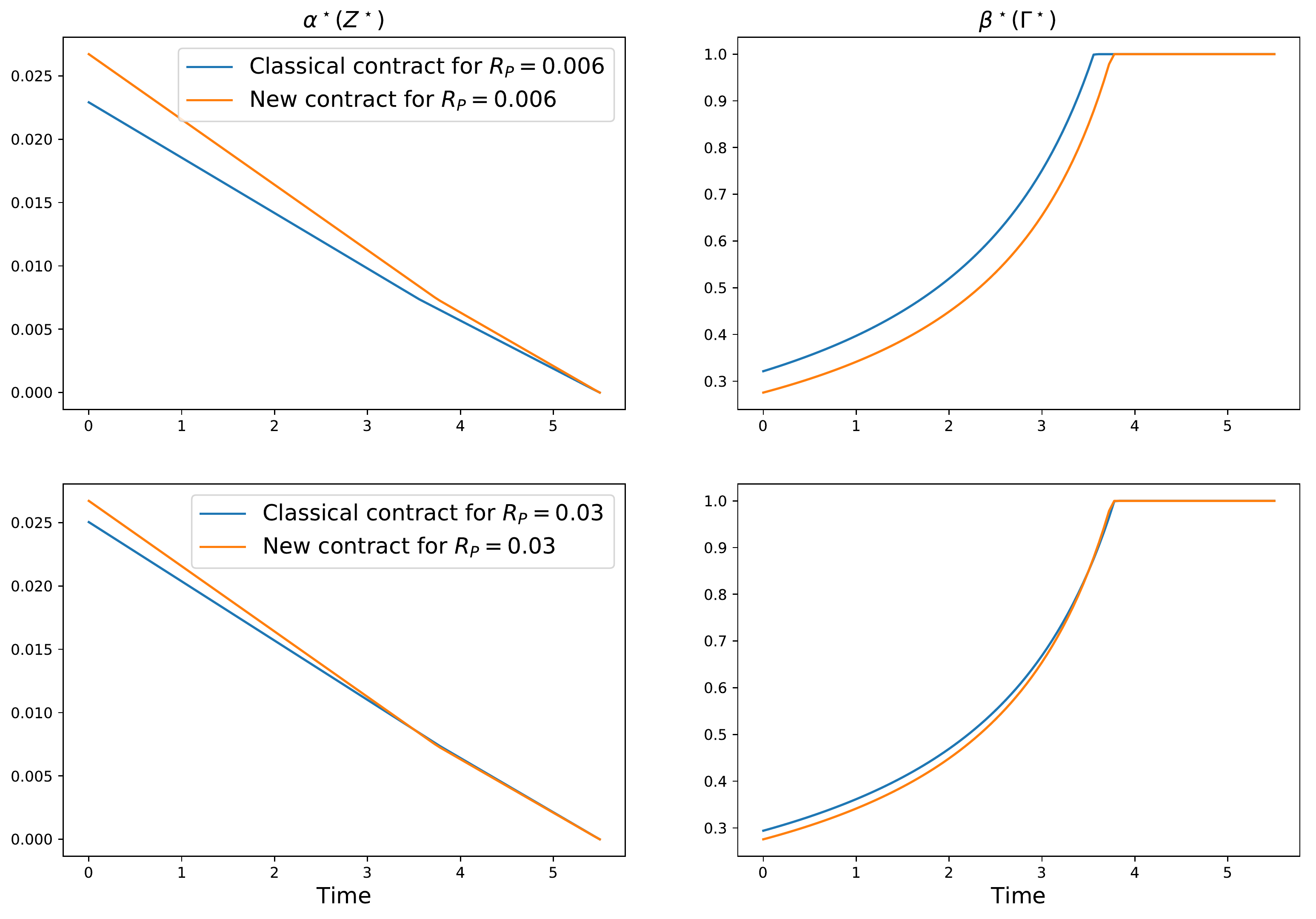}
        \caption{Comparison of efforts in the linear EVD case. \\ Parameters: $\sigma^\circ = \sigma = 0.085/\sqrt{2}$ and $R_P = 0.006$ (upper graphs) or $R_P = 0.03$ (lower graphs).}
        \label{fig:delta_neg}
    \end{center}
\end{figure}  

The fact that the efforts are decreasing in time can be explained as follows. At every moment $t \in [0,T]$ of the contract, what matters to the Principal is rather the integral between $0$ and $t$ of the efforts than the instantaneous efforts. Indeed, she cares about the deviation $X_t$ for all $t \in [0,T]$, which is defined by $X_t = - \int_0^t \alpha_t \drm t + \int_0^t \sigma \sqrt{\beta_t} \drm W_t + \int_0^t \sigma^\circ \drm W^\circ_t$, and the quadratic variation of $X_T$ given by $\langle X \rangle_T = \int_0^T \big( \sigma^2 \beta_t + \big(\sigma^\circ\big)^2 \big) \drm t$. Therefore, it is in the Principal's interest to ask for more effort at the beginning of the contract. In order to assess the benefit of our new contrats on the efforts of the consumers, we compute the following two quantities, 
\begin{align*}
    \Delta \alpha^\star := \frac{\int_0^T \big( a^\star (Z^\star_t) - a^\star \big(Z_t^{0,\star} \big) \big) \drm t }{\int_0^T a^\star \big(Z_t^{0,\star} \big) \drm t} \; \text{ and } \; \Delta \beta^\star := - \frac{ \sigma^2 \int_0^T \big(  b^\star (\Gamma^\star_t) - b^\star \big(\Gamma_t^{0,\star} \big) \big) \drm t }{\sigma^2 \int_0^T b^\star \big(\Gamma_t^{0,\star} \big) \drm t + T \big(\sigma^\circ\big)^2 }
\end{align*}
representing the relative gain respectively in mean and in volatility of the consumption between new and classical contracts. More precisely, if $\Delta \alpha^\star \geq 0$ (resp. $\Delta \beta^\star \geq 0$), the new contracts incentivise the consumers to decrease more the mean (resp. the volatility) of their total consumption at the end of the contract (at time $T$). The results are presented in Figure \ref{fig:diff_effort}. We choose values for $R_P$ of the same order than the estimation in \cite{aid2018optimal}. For $\sigma^\circ$, we set values such that $( \sigma^\circ )^2 = x \%$ of the total variance without effort (nominal variance), i.e. $\Sigma(1) + ( \sigma^\circ)^2 = 0.085^2$. For example, when $\sigma^\circ = 0.085$, then $(\sigma^\circ)^2 = 100 \%$ of the nominal variance: this means that the volatility in the deviation consumption is entirely related to climate hazards ($\sigma = 0$).

\begin{figure}[!ht]
    \begin{center}
        \includegraphics[height=6cm]{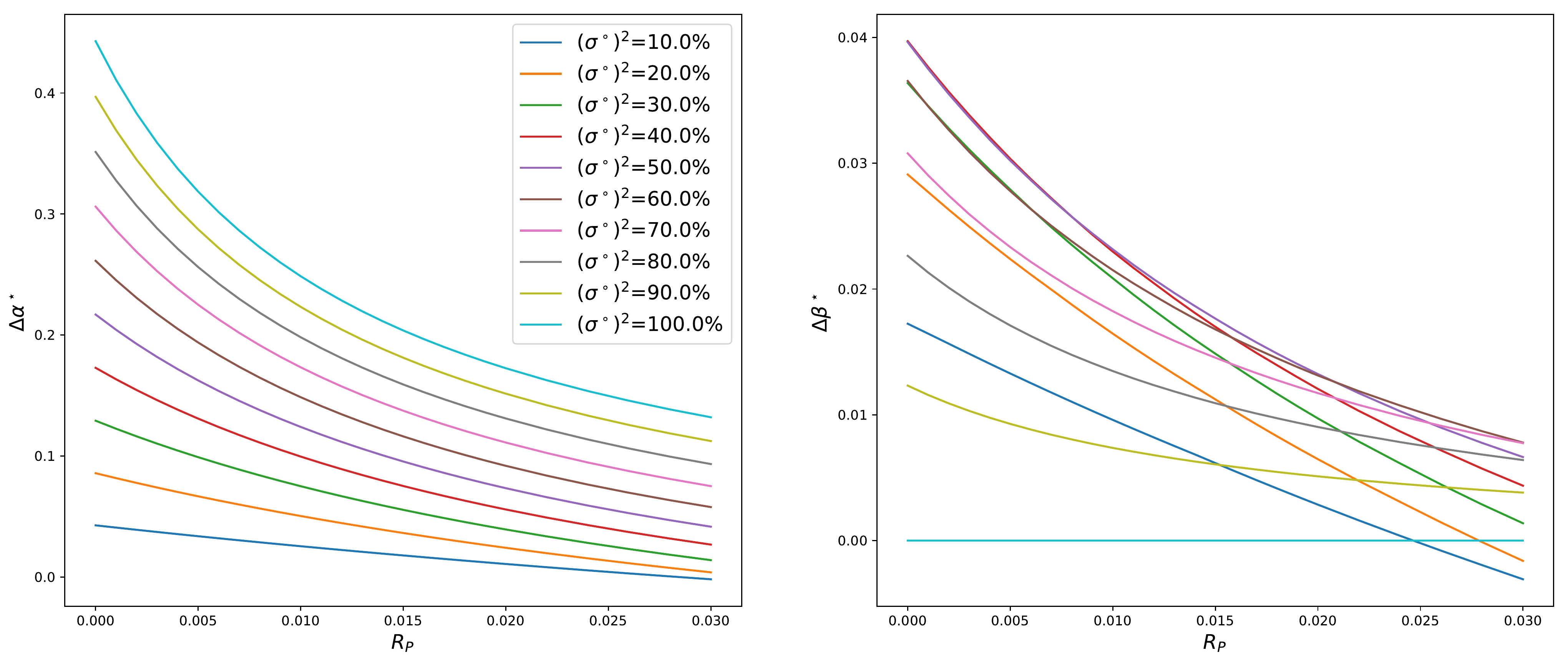}
        \caption{Relative gain in mean (left) and in volatility (right) of the consumption in the linear EVD case.}
        \label{fig:diff_effort}
    \end{center}
\end{figure}  

\medskip

Figure \ref{fig:diff_effort} show that, in most cases, our new contracts lead to a significant decrease in consumption on average and on volatility compared to classical contracts. However, when the risk--aversion of the Principal increases, this gain becomes negative. Nevertheless, it can be stressed that, on the one hand, for the parameters calibrated in \cite{aid2018optimal}, there is a significant gain regardless of the correlation with the common noise. On the other hand, even if the Principal has a relatively high risk--aversion, our new contracts allow a reduction on average and on volatility of the consumption if it is strongly impacted by weather conditions. Therefore, our contracts can help to better manage consumption cut-off during peak demand due to climate hazard.


\medskip

$(ii)$ Out of curiosity, we can also investigate the positive $\delta$ case. Using Remark \ref{rk:Zstar}, we can prove, analytically in this case, that $Z^{0,\star} \geq Z^{\star} = 0$. On the one hand, since the payment rate on the drift is positive in both cases and using Theorem \ref{thm:mfe}, the effort of the consumer to reduce his consumption in average is zero. On the other hand, this inequality leads to $\Gamma^{0, \star} \leq \Gamma^{\star} \leq 0$. Therefore, when the Principal offers classical contracts, she incentivises more the consumers to make effort on the volatility. Nevertheless, since $\lambda = 2.8 \times 10^{-2}$ and $\theta=4 \times 10^{-3}$, we can remark that the consumer will in fact make no effort on the volatility, since 
\begin{align*}
    \Gamma_t^{0,\star} = - \max \bigg\{\theta+ R_A (Z_t^{0,\star})^2, \frac{1}{\lambda} \bigg\} = - \frac{1}{\lambda},
\end{align*}
and thus 
\begin{align*}
    \beta_t^{\star}( \Gamma_t^{0,\star}) = 1 \wedge \big( \lambda \big( \Gamma_t^{0,\star})^- \big)^{-1/(\eta^k + 1)} \vee B_{\textnormal{min}} = 1.
\end{align*}
Therefore, for small $\lambda$, both contracts lead to zero effort on the drift and on the volatility. To find a case where our contracts induce less effort on the volatility than classical ones, we need to increase drastically $\lambda$ for example. Figure \ref{fig:delta_pos} illustrates this particular case, with $\delta = 5$, $\lambda = 2.8$ and when $50\%$ of the variance is explained by the common noise ($\sigma^\circ = \sigma = 0.085/\sqrt{2}$).

\begin{figure}[!ht]
    \begin{center}
        \includegraphics[height=8cm]{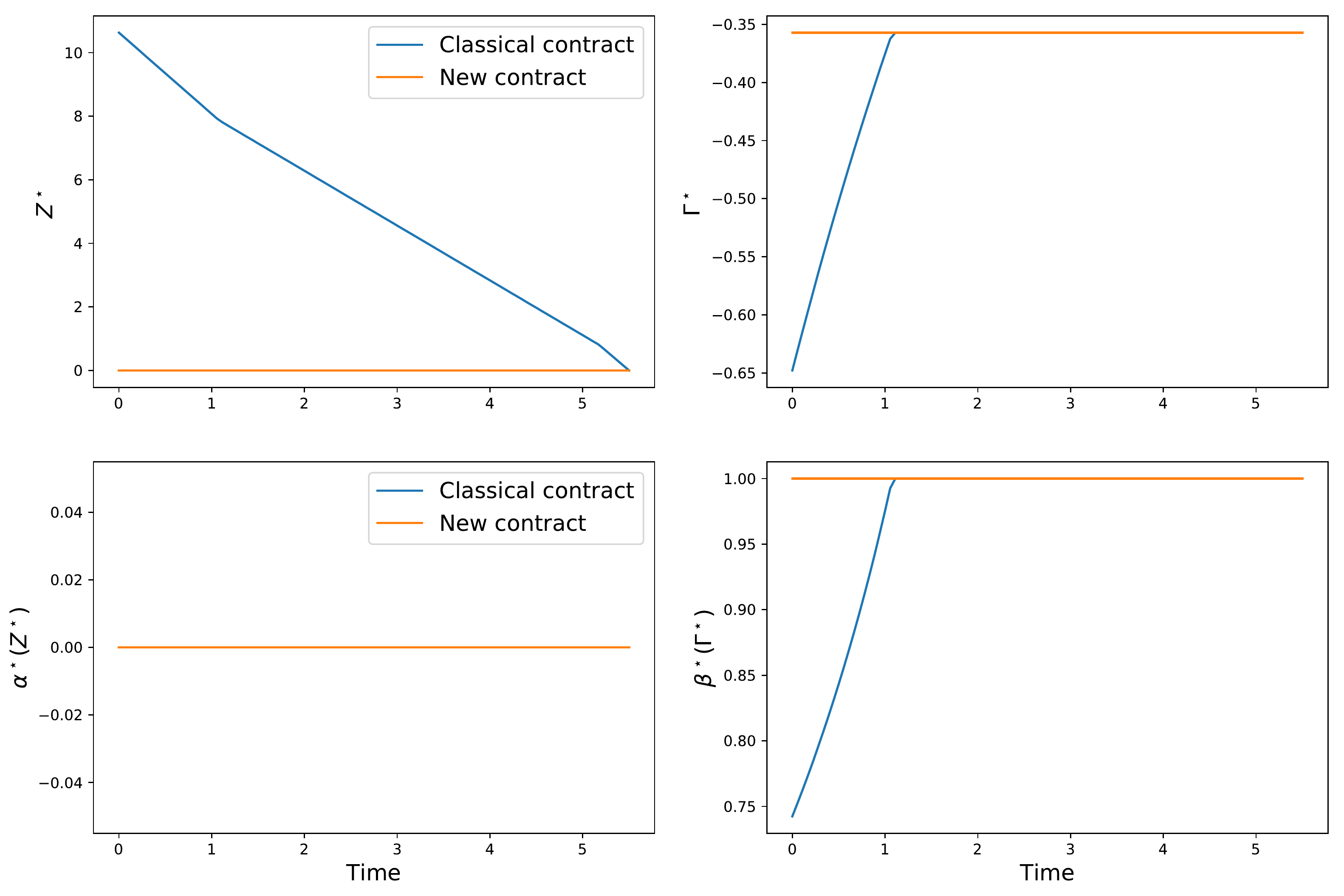}
        \caption{Comparison of payment rates and efforts in the linear EVD case with $\delta \geq 0$. \\
        Parameters: $\delta=5$, $\lambda=2.8$ and $\sigma^\circ = \sigma = 0.085/\sqrt{2}$.}
        \label{fig:delta_pos}
    \end{center}
\end{figure} 

\medskip
    
Although this result does not seem intuitive, the explanation is as follows. When $\delta$ is positive, a reduction of the consumption strongly decreases the consumers' utility in comparison to the marginal gain on the producer's utility. Therefore, it is too costly for the Principal to incentivise the consumers to make efforts, in particular on the drift. Thus, the Principal sets a non negative payment rate $Z$ (upper left graph). However, since she is risk--averse, she wants to share some risk with the consumers through the contract.  In the classical contracts case, the only way to share risk is through the infinitesimal payment $Z^{0,\star}_t \drm X_t$. This explain why $Z^{0,\star}$ is positive (upper left graph, blue curve), while, in the new contracts case, the Principal can share the risk through the indexation on others by $\overline Z^{\mu, \star}$, and can therefore set $Z^\star=0$ (upper left graph, orange curve). Moreover, when the Principal offers a positive payment $Z$, she needs to compensate by giving a negative payment $\Gamma$ (upper right graph, blue curve), in order to minimise the payment indexed on the quadratic variation $(\Gamma_t + R_A Z_t^2) \drm \langle X \rangle_t$. Otherwise, in the new contracts case, the Principal manages the risk through the payment rate $\overline Z^\mu$, and does not need to compensate with a small (negative) $\Gamma$ (upper right graph, orange curve).

\medskip
    
We may think that the previous results are disappointing, because consumers make less effort to reduce the volatility of their consumption (lower right graph). But as mentioned above, we insist on the fact that this case is not supposed to happen, since, according to calibration in \cite{aid2018optimal}, $\lambda=2.8 \times 10^{-2}$ and $\delta=-55.44 \leq 0$.

\medskip

\textbf{Comparison of utility.} We can prove analytically that the utility of the Principal is bigger in the case she can indexed contracts on others' deviation consumption. Indeed, we have
\begin{align*}
    m^{0,P} (t) = &\ \dfrac{\theta}{2} \big( \sigma^{\circ} \big)^2
    - \dfrac{1}{2}  \overline{\rho} \delta^2 (T-t)^2
    + \dfrac{1}{2}  \inf_{z \in \R} \overline h^P (t,z) \\
    \geq &\ \dfrac{\theta}{2} \big( \sigma^{\circ} \big)^2
    - \dfrac{1}{2}  \overline{\rho} \delta^2 (T-t)^2
    + \dfrac{1}{2}  \inf_{z \in \R} \overline h(t,z)
    + \dfrac{1}{2} \big(\sigma^{\circ}\big)^2 \inf_{z \in \R} \Big\{ R_A  z^2 + R_P ( - z + \delta (T-t) )^2 \Big\}.
\end{align*}

The second infimum is attained on $z = R_P \delta (T-t) / (R_A + R_P)$, which leads to
\begin{align*}
    m^{0,P} (t) \geq &\ \dfrac{\theta}{2} \big( \sigma^{\circ} \big)^2
    + \dfrac{1}{2} \Big( \big(\sigma^{\circ}\big)^2 \overline R - \overline{\rho} \Big) \delta^2 (T-t)^2
    + \dfrac{1}{2}  \inf_{z \in \R} \overline h(t,z)
    = m^P (t).
\end{align*}
Hence, for all $t \in [0,T]$ we have $m^{0,P} (t) \geq  m^P (t)$, and thus
$V_0^{P} \geq V_0^{0,P}$. This result is very intuitive since in the case of new contracts, the Principal has more controls to maximise her utility (three instead of two).

\medskip

In order to assure convergence when $R_P$ tends to zero, we draw in Figure \ref{fig:utility_difference} the utility difference defined as follow
\begin{align*}
    \Delta V = \dfrac{1+V_0^P}{R_P} - \dfrac{1+V_0^{0,P}}{R_P} =  \dfrac{1}{R_P} \big( V_0^P - V_0^{0,P} \big),
\end{align*}
since 
\begin{align*}
    \lim_{R_P \rightarrow 0} \dfrac{1+V_0^P}{R_P} = \lim_{R_P \rightarrow 0} \dfrac{1- \erm^{ R_P ( \xi_0 - u^P ( 0, \mu^X_0 ) ) }}{R_P} = -  \xi_0 + u^0 ( 0, \mu^X_0 ) = V_0^0.
\end{align*}
We study the effect of the risk--aversion parameter $R_P$ on the producer's utility, and also the impact of the percentage of variance related to common noise. 

\medskip

\begin{figure}[!ht]
    \begin{center}
        \includegraphics[height=6cm]{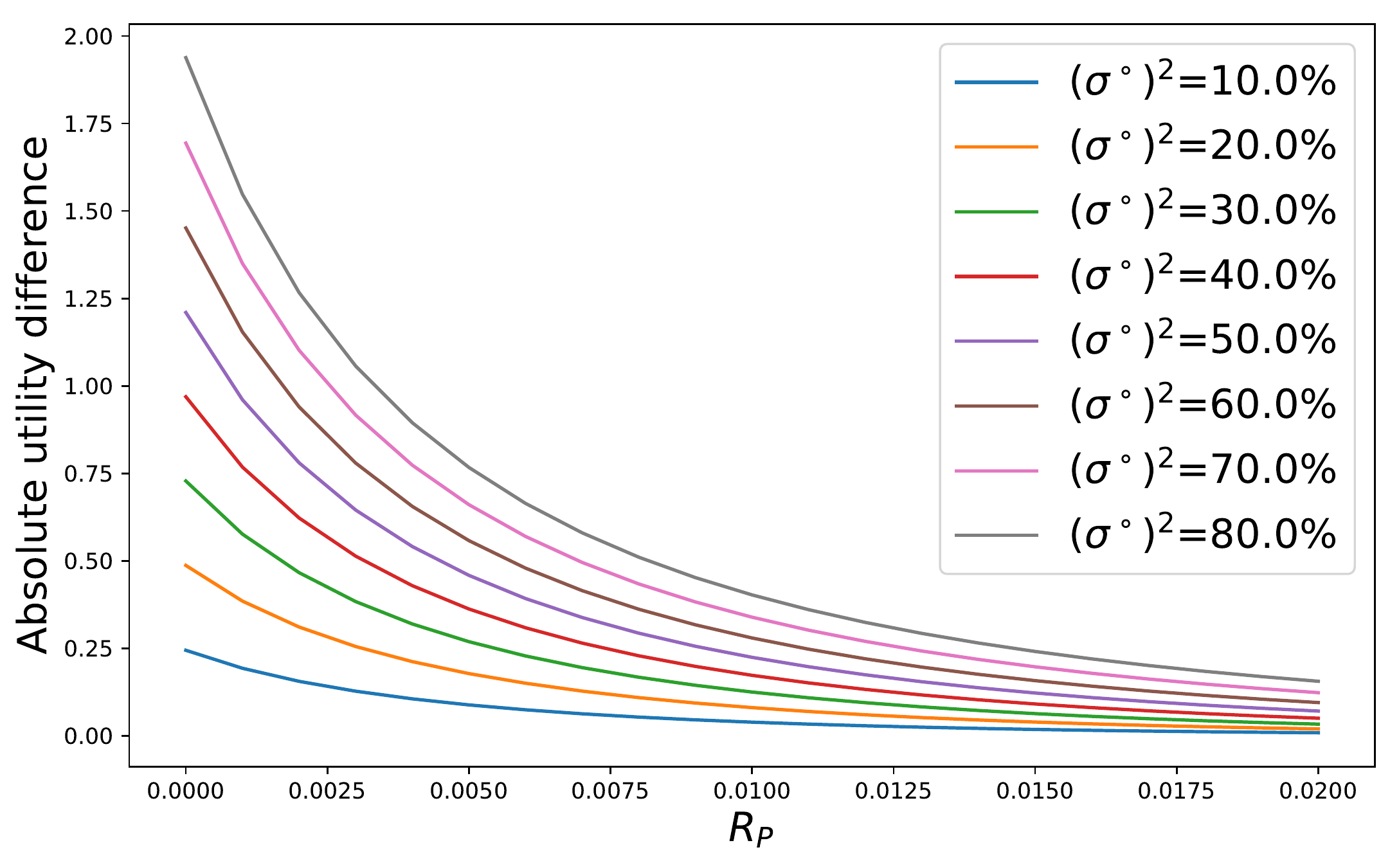}
        \caption{Absolute utility difference in the linear EVD case. \\ Variation with respect to the risk aversion parameter $R_P$ and the correlation with the common noise $\sigma^\circ$.}
        \label{fig:utility_difference}
    \end{center}
\end{figure}

First of all, notice that the utility difference is of order $1$, as the utility itself. Therefore, we have a significant gain by implementing this type of contracts, which is confirmed by Figure \ref{fig:relative_utility_difference}, representing the relative utility difference, computed as:
\begin{align*}
    \overline \Delta V = \dfrac{\dfrac{1+V_0^P}{R_P} - \dfrac{1+V_0^{0,P}}{R_P}}{\dfrac{1+V_0^{0,P}}{R_P}}
    = \dfrac{V_0^P - V_0^{0,P}}{1+V_0^{0,P}}.
\end{align*}

Moreover, it is clear that the more significant the correlation with the common noise is, the more important the utility difference is. This is an expected result, since our type of contracts allows the Principal to better choose the remaining risk she wants to bear, by indexing the contract on others. We have already noticed that, in absence of common noise ($\sigma^\circ = 0$), our contracts are reduced to classical contracts for drift and volatility control. 

\medskip

Figure \ref{fig:utility_difference} shows also that the gain in utility is decreasing with the risk--aversion of the Principal. Indeed, with classical contracts, she is forced to give to the consumers some risk through the linear payment $Z_t \drm X_t$. Since the consumers are risk--averse too, this payment must be compensated by a deterministic one. Our new contracts allows her to better choose the risk she wants to bear. On the one hand, if her risk--aversion is low, she can keep all the risk to herself. Thus, she does not need to compensate the risk with a deterministic payment, and therefore she make a significant gain. On the other hand, if she is risk--averse, she adds a random part to the contract, indexed on the common noise:
\begin{align*}
    \dfrac{R_P}{R_A + R_P} \sigma^\circ \delta \int_0^t (T-s) \drm W^\circ_s.
\end{align*}
But, since the consumers are risk--averse too, this random payment must be compensated by its quadratic variation:
\begin{align*}
    \dfrac{1}{2} \dfrac{R_A R^2_P}{(R_A + R_P)^2} \big( \sigma^{\circ} \big)^2 \delta^2 \int_0^t  \big( T-s \big)^2 \drm s,
\end{align*}
Therefore, the more risk--averse the Principal is, the more costly it is for her to spread the risk.

\begin{figure}[!ht]
    \begin{center}
        \includegraphics[height=6cm]{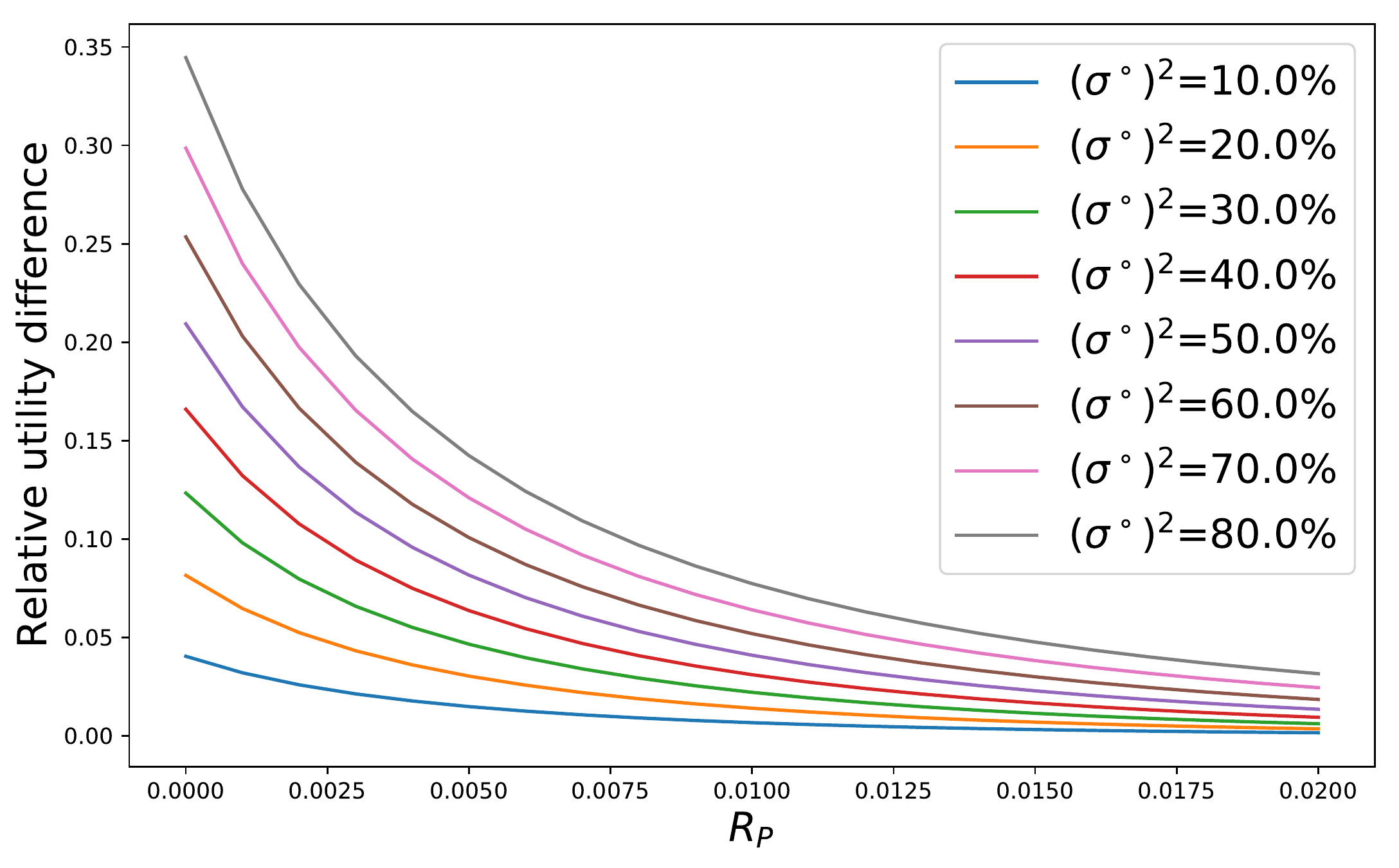}
        \caption{Relative utility difference in the linear EVD case. \\ Variation with respect to the risk aversion parameter $R_P$ and the correlation with the common noise $\sigma^\circ$.}
        \label{fig:relative_utility_difference}
    \end{center}
\end{figure}

\subsection{Risk--neutral Principal}\label{ss:RN_zeta0}

If the Principal is risk--neutral,
we expect a solution of the form 
\begin{align}\label{eq:def_u00}
    v^{0,0} (t, \mu^Y_t) = - \mathbb{E}^{\P_t} \Big[ L_t^{\zeta^0} \Big] + u^{0,0} (t, \mu_t^X).
\end{align}

It can be formally proved that the results of the previous section hold with $R_P =0$. In particular, $u^{0,0}$ is solution to the PDE \eqref{eq:pde_principal_RA_general_zeta0} with $R_P = 0$,
and we can also state the equivalent of the proposition \ref{prop:Pb_Principal_RA_general_zeta0} with $R_P=0$ in this case.
But, in order to obtain closed--form solution, we focus on the case of the linear energy value discrepancy.
\begin{proposition}[Proposition \ref{prop:Pb_Principal_RA_linear_simple} with $R_P=0$]\label{prop:Pb_Principal_RN_linear_simple}
    Let the energy value discrepancy be linear, i.e. $(f-g)(x) = \delta x$, $x \in \mathbb{R}$. Then
    \begin{enumerate}
        \item[$(i)$] the producer's value function is given by $V_0^{0,0} = - \xi_0 + u^{0,0} ( 0, \mu^X_0 ) $ where the certainty equivalent function $u^{0,0}$ is characterised by 
        \begin{align*}
            u^{0,0} (t, \mu_t^X) &= \delta (T-t) \int x \mu_t^X (\drm x) - \int_t^T m^{0,0}(s) \drm s, \; \text{ where } \;
            m^{0,0} (t) := \dfrac{\theta}{2} \big( \sigma^{\circ} \big)^2
            - \dfrac{1}{2}  \overline{\rho} \delta^2 (T-t)^2
            + \dfrac{1}{2}  \inf_{z \in \R} \overline h^0 (t,z),
        \end{align*}
        \item[$(ii)$] the optimal payment rate process $\zeta^{0,\star} = (Z^{0,\star}, \Gamma^{0,\star})$ is a deterministic function of time given by
        \begin{align*}
            Z_t^{0,\star} = \mathrm{Arg} \min_{z \in \mathbb{R}} \overline h^0 (t,z) \;
            \text{ and } \; \Gamma_t^{0,\star} = - \max \bigg\{\theta+ R_A (Z_t^{0,\star})^2, \dfrac{1}{\overline{\lambda}} \bigg\}.
        \end{align*}
    \end{enumerate}
\end{proposition}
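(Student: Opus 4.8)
The plan is to follow the same strategy as for Propositions~\ref{prop:Pb_Principal_linear} and~\ref{prop:Pb_Principal_RA_linear_simple}, simply deleting every term carrying a factor $R_P$: the statement is the specialization to the linear energy value discrepancy $(f-g)(x)=\delta x$ of the verification result for classical contracts with a risk--neutral Principal, namely the $R_P=0$ analogue of Proposition~\ref{prop:Pb_Principal_RA_general_zeta0}, which is itself a consequence of Theorem~\ref{thm:Pb_Principal_general} applied to the ansatz~\eqref{eq:def_u00}. Thus the whole proof reduces to producing a solution of the associated Hamilton--Jacobi--Bellman equation, checking its regularity and integrability, and reading off the minimizers.

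First I would insert the candidate $u^{0,0}(t,\mu):=\delta(T-t)\int x\,\mu(\drm x)-\int_t^T m^{0,0}(s)\,\drm s$ into PDE~\eqref{eq:pde_principal_RA_general_zeta0} taken at $R_P=0$. Since $\mu\longmapsto\int x\,\mu(\drm x)$ is affine, its Lions derivative is the constant function $x\longmapsto1$ and all its higher and mixed derivatives vanish; hence $\overline u^{0,0}_{\mu^X}(t,\mu)=\delta(T-t)$, while $\overline u^{0,0}_{x,\mu^X}\equiv0$ and $\overline u^{0,0}_{\mu^X,\mu^X}\equiv0$. Using $\partial_t u^{0,0}(t,\mu)=-\delta\int x\,\mu(\drm x)+m^{0,0}(t)$, the identity $\int(g-f)(x)\mu^X_t(\drm x)=-\delta\int x\,\mu^X_t(\drm x)$, and $q(z,0)=\theta+R_Az^2$, the two terms linear in $x$ cancel and the equation collapses to the pointwise relation
\[
m^{0,0}(t)=\frac{\theta}{2}\big(\sigma^{\circ}\big)^2-\frac12\,\overline\rho\,\delta^2(T-t)^2+\frac12\inf_{z\in\R}\overline h^0(t,z),
\]
with $\overline h^0(t,z):=F_0(\theta+R_Az^2)+R_A(\sigma^{\circ})^2z^2+\overline\rho\big((z^-\wedge A_{\textnormal{max}})+\delta(T-t)\big)^2$, i.e. $\overline h^P$ evaluated at $R_P=0$; this is exactly the stated formula for $m^{0,0}$, and the terminal condition $u^{0,0}(T,\cdot)=0$ is immediate. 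The minimizer of the infimum is $Z^{0,\star}_t=\mathrm{Arg}\min_{z\in\R}\overline h^0(t,z)$, and substituting $\overline u^{0,0}_{x,\mu^X}\equiv0$ into the $\gamma$--formula of the verification result gives $\Gamma^{0,\star}_t=-\max\{\theta+R_A(Z^{0,\star}_t)^2,1/\overline\lambda\}$.

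It then remains to check the hypotheses of the verification theorem. For the $\Cc^{1,2}$--regularity of Definition~\ref{def:C12}: for each $t$ the map $\mu\longmapsto u^{0,0}(t,\mu)$ is affine in $\mu$, hence trivially $\Cc^2$ in the Lions sense and admissible for the chain rule of~\cite[Theorem~4.14]{carmona2018probabilisticII}, while $t\longmapsto u^{0,0}(t,\mu)$ is $C^1$ once $t\longmapsto\inf_{z\in\R}\overline h^0(t,z)$ is continuous, which follows from the joint continuity of $(t,z)\longmapsto\overline h^0(t,z)$ and the fact that the infimum is attained on a bounded set. For the integrability: since $R_P=0$ only the first term of condition~\eqref{eq:conditions_martingale_RP} survives, which is precisely condition~\eqref{eq:conditions_martingale_RN}, and since $\overline u^{0,0}_{\mu^X}(t,\mu^X_t)=\delta(T-t)$ is deterministic we get $\E^\P\big[\big(\int_0^T|\overline u^{0,0}_{\mu^X}(t,\mu^X_t)|^2\,\drm t\big)^{1/2}\big]=|\delta|\,T^{3/2}/\sqrt3<+\infty$ for every $\P\in\Qc$; moreover, exactly as in the proof of Proposition~\ref{prop:Pb_Principal_RN_general}, the corresponding condition~\eqref{eq:conditions_martingale_general} for the full ansatz $v^{0,0}$ of~\eqref{eq:def_u00} follows from $\int\partial_{\mu^X}v^{0,0}(t,\mu^Y_t)(x,\ell)\mu^Y_t(\drm x,\drm\ell)=\overline u^{0,0}_{\mu^X}(t,\mu^X_t)$ and $\int\partial_{\mu^L}v^{0,0}(t,\mu^Y_t)(x,\ell)\mu^Y_t(\drm x,\drm\ell)=-1$. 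The $R_P=0$ version of Proposition~\ref{prop:Pb_Principal_RA_general_zeta0} then yields $V^{0,0}_0=-\xi_0+u^{0,0}(0,\mu^X_0)$ and the optimality of $\zeta^{0,\star}=(Z^{0,\star},0,\Gamma^{0,\star})$.

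I do not anticipate any serious obstacle: the computation is the one already performed for Proposition~\ref{prop:Pb_Principal_RA_linear_simple} with the $R_P$--terms removed, and the linearity of the energy value discrepancy makes every measure--derivative of $u^{0,0}$ trivial. The only points requiring an explicit line of justification are the continuity of $t\longmapsto\inf_{z\in\R}\overline h^0(t,z)$ used for the $\Cc^{1,2}$--regularity, and the transfer of the smoothness and integrability requirements of Theorem~\ref{thm:Pb_Principal_general} from $u^{0,0}$ to the ansatz $v^{0,0}$; both are routine once the explicit form of $u^{0,0}$ is at hand.
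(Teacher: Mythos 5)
Your proposal is correct and follows exactly the route the paper takes (and in fact spells out more than the paper does): the paper justifies this proposition simply as Proposition \ref{prop:Pb_Principal_RA_linear_simple} evaluated at $R_P=0$, which in turn rests on plugging the affine-in-$\mu$ ansatz into the HJB equation, noting $\overline{u}_{\mu^X}=\delta(T-t)$ with vanishing higher measure derivatives, and checking Condition \eqref{eq:conditions_martingale_RN} exactly as in the proof of Proposition \ref{prop:Pb_Principal_linear}. Your verification of the cancellation of the $\delta\int x\,\mu(\drm x)$ terms, the resulting ODE for $m^{0,0}$, and the reading-off of $Z^{0,\star}$ and $\Gamma^{0,\star}$ matches the paper's computation.
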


Let us compare this Proposition with the Proposition~\ref{prop:Pb_Principal_linear}.

\medskip

\textbf{Comparison of efforts.} As previously, we need to distinguish between cases according to positive or negative $\delta$.

\medskip

$(i)$ On the one hand, in the meaningful case, when $\delta \leq 0$, the efforts of the consumers are higher when the Principal can indexed contracts on other's deviation. In fact, let us recall the optimal payment rates $Z$ in the two cases:
\begin{align*}
    Z_t^{0,\star} &= \mathrm{Arg} \min_{z \in \R} \Big\{ F_0 (h + R_A z^2)
    + R_A \big(\sigma^{\circ}\big)^2 z^2 
    + \overline{\rho} \big( (z^{-} \wedge A_{\textnormal{max}}) + \delta (T-t) \big)^2 
    \Big\} \leq 0, \\
    \text{ and } \; Z_t^{\star} &= \mathrm{Arg} \min_{z \in \R} \Big\{ F_0 (h + R_A z^2)
    + \overline{\rho} \big( (z^{-} \wedge A_{\textnormal{max}}) + \delta (T-t) \big)^2 
    \Big\} \leq 0.
\end{align*}

By definition of the minimum, we have:
\begin{align*}
    &\ F_0 \big(\theta+ R_A \big( Z_t^{0,\star} \big)^2 \big)
    + R_A \big(\sigma^{\circ}\big)^2  \big( Z_t^{0,\star} \big)^2
    + \overline{\rho} \big(  \big( \big( Z_t^{0,\star} \big)^{-} \wedge A_{\textnormal{max}} \big) + \delta (T-t) \big)^2 \\
    \leq &\ F_0 (\theta + R_A z^2)
    + R_A \big(\sigma^{\circ}\big)^2 z^2 
    + \overline{\rho} ( (z^{-}\wedge A_{\textnormal{max}}) + \delta (T-t) )^2,
\end{align*}
for all $z$, and in particular for $z=Z_t^{\star}$. In the same way,
\begin{align*}
    F_0 \big(\theta+ R_A \big( Z_t^{\star} \big)^2 \big)
    + \overline{\rho} \big( \big( Z_t^{\star} \big)^{-}\wedge A_{\textnormal{max}}) + \delta (T-t) \big)^2
    &\leq F_0 (h + R_A z^2)
    + \overline{\rho} ( (z^{-}\wedge A_{\textnormal{max}}) + \delta (T-t) )^2,
\end{align*}
for all $z$ and in particular for $z=Z_t^{0,\star}$. Hence, using the first and then the second inequality we have
\begin{align*}
    R_A \big(\sigma^{\circ}\big)^2 \Big( \big( Z_t^{0,\star} \big)^2 - \big( Z_t^{\star} \big)^2  \Big)
    \leq &\ F_0 \Big(\theta+ R_A \big( Z_t^{\star} \big)^2 \Big) - F_0 \Big(\theta+ R_A \big( Z_t^{0,\star} \big)^2 \Big) \\
    &+ \overline{\rho} \Big( \big( Z_t^{\star} \big)^{-}\wedge A_{\textnormal{max}}  + \delta (T-t) \Big)^2 - \overline{\rho} \Big(  \big( Z_t^{0,\star} \big)^{-} \wedge A_{\textnormal{max}} + \delta (T-t) \Big)^2 \leq 0.
\end{align*}

Hence $R_A \big(\sigma^{\circ}\big)^2 \big( Z_t^{0,\star} - Z_t^{\star} \big) \big( Z_t^{0,\star} + Z_t^{\star} \big) \leq 0,$ and since $Z_t^{0,\star}, \; Z_t^{\star} \leq 0$, we obtain $0 \geq Z_t^{0,\star} \geq Z_t^{\star}$. Therefore, in the case where the Principal can index contracts on the deviation consumption of others, the efforts of the consumer to reduce his deviation consumption in average is more important. Moreover, the inequality on the optimal payment rate $Z$ implies that $0 \geq \Gamma_t^{0,\star} \geq \Gamma^{\star}$: the effort on the volatility is also more important. These results are presented in Figure \ref{fig:R_P=0} for $(\sigma^\circ)^2 = 50\%$, \textit{i.e.} $50\%$ of the variance is explained by the common noise.

\begin{figure}[!ht]
    \begin{center}
        \includegraphics[height=5cm]{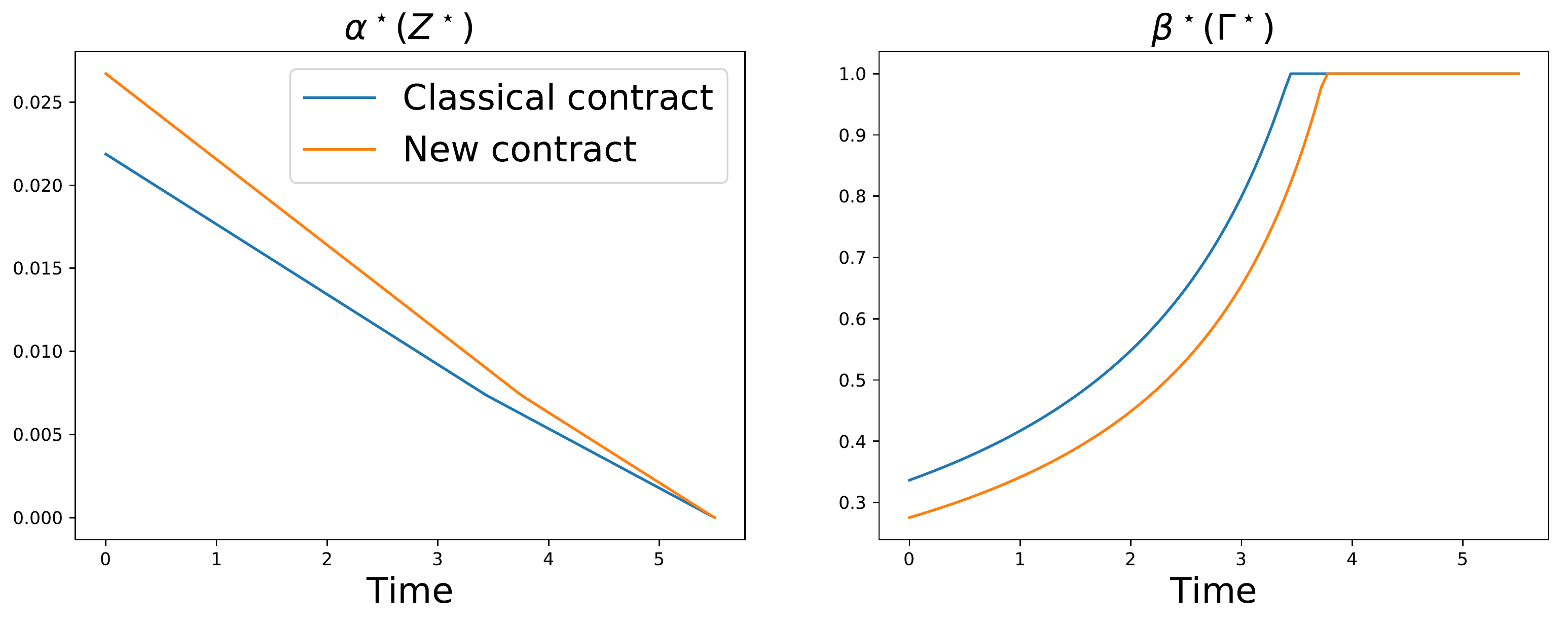}
        \caption{Comparison of efforts in the linear EVD case for a risk--neutral Principal.}
        \label{fig:R_P=0}
    \end{center}
\end{figure}  

\medskip

$(ii)$ On the other hand, if $\delta \geq 0$, we obtain $Z^{\star} = 0$ in both cases, which leads to zero effort from the consumers on their deviation consumption in average. Moreover, the optimal payment rate $\Gamma^{\star}$ is also the same in both cases and leads to the same effort on the volatility. Therefore, in this particular case, the choice of new or classical contracts does not affect the consumers efforts.

\bigskip

\textbf{Comparison of utility.} The utility of a risk--neutral Principal is higher when she can index contracts on others' deviation consumption. Indeed,
\begin{align*}
    m^{0,0} (t) = &\ \dfrac{\theta}{2} \big( \sigma^{\circ} \big)^2
    - \dfrac{1}{2}  \overline{\rho} \delta^2 (T-t)^2
    + \dfrac{1}{2}  \inf_{z \in \R} \Big\{ 
    F_0 (h + R_A z^2)
    + R_A \big(\sigma^{\circ}\big)^2 z^2 
    + \overline{\rho} \big( (z^{-}\wedge A_{\textnormal{max}}) + \delta (T-t) \big)^2 \Big\} \\
    \geq &\ \dfrac{\theta}{2} \big( \sigma^{\circ} \big)^2
    - \dfrac{1}{2}  \overline{\rho} \delta^2 (T-t)^2
    + \dfrac{1}{2}  \inf_{z \in \R} \Big\{ 
    F_0 (h + R_A z^2)
    + \overline{\rho} \big( (z^{-}\wedge A_{\textnormal{max}}) + \delta (T-t) \big)^2
    \Big\} = m^{0} (t).
\end{align*}

Hence, for all $t \in [0,T]$ we have $m^{0,0} (t) \geq  m^{0} (t)$, which leads to
\begin{align*}
    - \int_t^T m^{0,0} (s) \drm s \leq - \int_t^T m^{0} (s) \drm s,
\end{align*}
and we conclude that $V_0^{0} \geq V_0^{0,0}$. This result is shown in Figure  \ref{fig:utility_difference_RN}. The absolute utility difference and the relative utility difference are increasing with respect to the correlation with the common noise, which is consistent with the intuition that the contract allows the Principal to manage the remaining risk. The more important this risk is, the more gain in utility the Principal receives. 

\begin{figure}[!ht]
    \begin{center}
        \includegraphics[height=5cm]{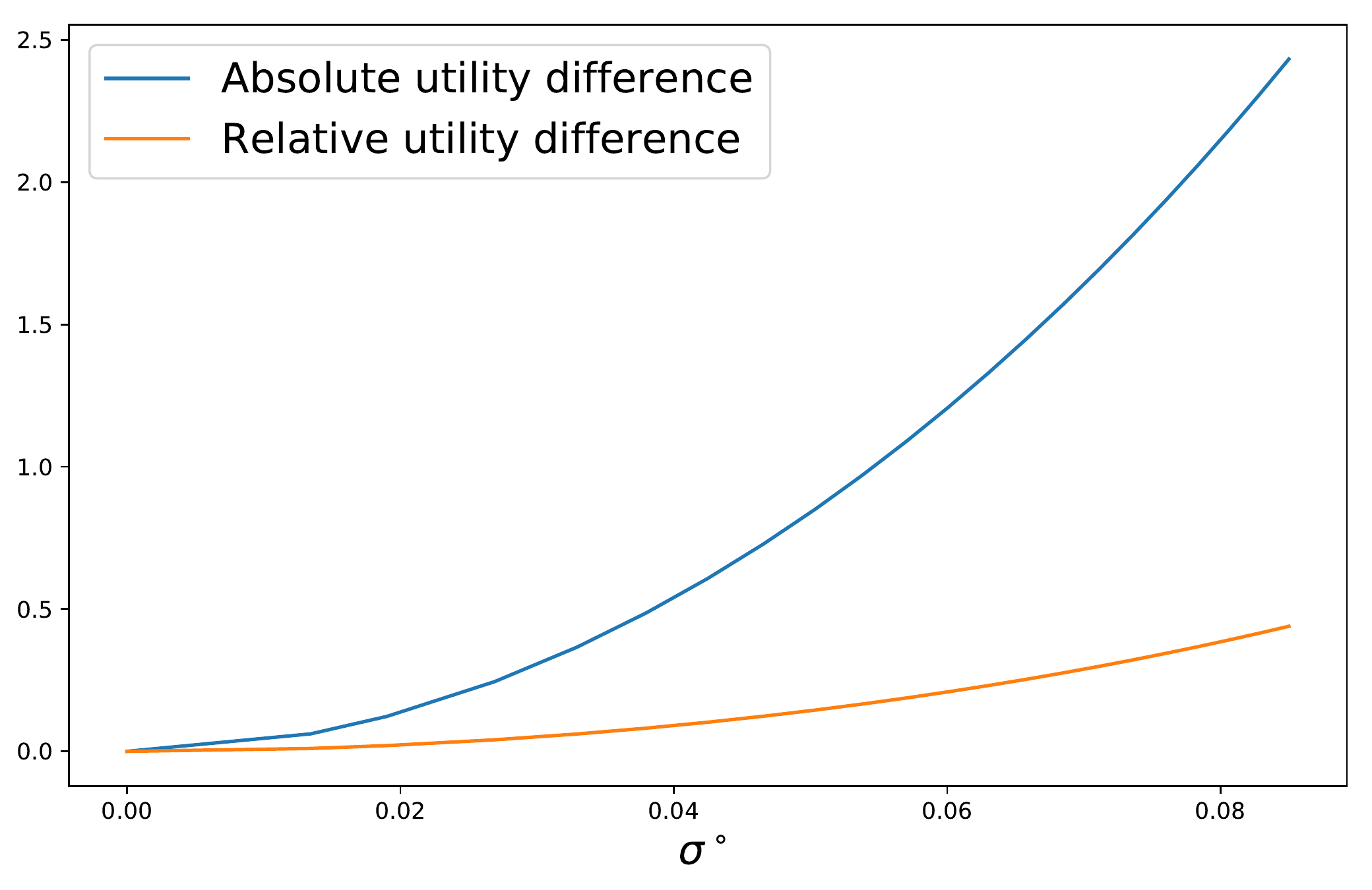}
        \caption{Absolute and relative utility difference in the linear EVD case for a risk--neutral Principal. \\ Variation with respect to the correlation with the common noise $\sigma^\circ$.}
        \label{fig:utility_difference_RN}
    \end{center}
\end{figure}

\medskip

To conclude this section, considering linear EVD, there is a net benefice from implementing contracts with indexation on others' deviation consumption. In addition to the substantial gain in utility for the Principal, this type of contracts induces, in general, more efforts of the consumers to reduce their consumption in average and with less volatility. The best results are obtained for a risk--neutral producer. In this case, if the variance of the deviation is only explained by the common noise, the utility with new contracts is up to $1.5$ the utility with classical contracts (see Figure \ref{fig:utility_difference_RN}). Moreover, Figure \ref{fig:diff_effort} (left) shows that consumers reduce their consumption by $1.5$ times more on average. The best results on volatility is when the volatility is half explained by the common noise, \textit{i.e.} for $\sigma^\circ= 0.085 / \sqrt{2}$. The consumers thus increase their efforts on the volatility by almost $4\%$.

\section{Extension and first--best}
\subsection{Contractible common noise}\label{sec:observable_common_noise}

Throughout this paper, we have studied optimal contracting in the case where the Principal can not offer a contract directly indexed on the common noise. 
The goal of this section is to define an optimal form of contract in the case of a contractible common noise, and compare the results to those obtained in the previous case. In Section \ref{sec:consumer_problem}, we were looking at the optimal form of contracts for the representative consumer in the case the Principal can only index the contract on his deviation consumption and the conditional law of others. The Remark \ref{rk:contract_w0_observable} leads us to also study the class of contracts one can obtain if the Principal is able to index the contract on it. From \eqref{contract:dependency_w0}, we recall that, in this case, the Principal is offering a contract $\F^{\textnormal{obs},\circ}$--measurable. 

\medskip

Therefore, we expect that the value function of the consumer now depends on three state variables: $X$, $\widehat \mu$ and $W^{\circ}$. We consider the dynamic version of the value function of the representative consumer, $V_t^A$, which may be written as
\begin{align*}
    V^{A}_t = v^A \big(t, X_{t \wedge \cdot}, W^{\circ}_{t \wedge \cdot}, \widehat \mu_{t} \big).
\end{align*} 

If the function $v^A : [0,T] \times \Cc([0,T], \R^2) \times \Pc (\Cc_T)$ is smooth enough in the sense defined in \cite[Section 4.3.4]{carmona2018probabilisticII}, we can apply the Chain Rule with Common Noise under $\Cc^{1,2,2}$--regularity, defined in \cite[Theorem 4.17]{carmona2018probabilisticII} to $v^A$.
Following the same reasoning as the one developed in the Subsection \ref{ss:contract_zmu}, and using the computations in Remark \ref{rk:contract_w0_observable}, we obtain the following form of contract, 
indexed by the triple $\zeta^\circ = (Z, Z^\circ, \Gamma)$,
\begin{align}\label{eq:contractform_w0}
    \xi_t = &\ \xi_0  - \int_0^t \Hc^\circ (X_s, \zeta^\circ_s)  \drm s 
    + \int_0^t Z_s \drm X_s 
    + \dfrac{1}{2} \int_0^t \big( \Gamma_s + R_A Z_s^2 \big) \drm \langle X \rangle_s
    + \sigma^{\circ} \int_0^t Z^{\circ}_s \drm W^{\circ}_s 
    + \dfrac{1}{2} R_A \big( \sigma^{\circ} \big)^2 \int_0^t Z^{\circ}_s \big( 2 Z_s + Z^{\circ}_s \big)^2 \drm s.
\end{align}

Therefore, if the Principal observes the common noise, it is equivalent to index the contract on the law of others or on the common noise. Indeed, the form of contract we obtain here is the same as the one in Remark \ref{rk:contract_w0_observable}. This contracts leads obviously to the same effort of the consumers and to a unique mean--field equilibrium. If we compare both cases where the Principal observes or not the common noise, the only difference is the measurability of the contract's parameters. In fact, the optimal form of contract is the same whether the Principal observes or not the common noise. Of course, the form of the contract is more explicit in this case, and we don't need the indexation on the law of others, but the only change is that the control $\zeta^\circ$ of the Principal here is predictable with respect to a bigger filtration $\F^{\textnormal{obs},\circ}$. 

\medskip

To sum up, if the Principal does not observe the common noise, $\F^{\text{obs}}$ is the natural filtration generated by $X$ and $\widehat \mu$, thus $\zeta$ is $\F^{\text{obs}}$--measurable and the optimal form of contract is given by \eqref{contract_form_mu}. If the Principal observes the common noise, $\F^{\text{obs}, \circ}$ is the natural filtration generated by $X$, $\widehat \mu$ and $W^\circ$, thus $\zeta$ is $\F^{\text{obs}, \circ}$--measurable and the optimal form of contract is given by \eqref{eq:contractform_w0}. In this case, indexing the contract on $\widehat \mu$ or $W^\circ$ is equivalent. In both cases, the contract form is the same, but we are allowed or not to write it in term of the common noise.

\medskip
If the Principal observes the common noise, she might have an interest to offer contracts indexed on it. However, intuitively, in the linear energy discrepancy case, since the optimal compensation rates for the contract are deterministic, the Principal should not gain from indexing the contract on the common noise. The optimisation problem of the Principal is the same as before, except that the supremum is taken over contracts $\xi \in \Xi^\circ$. 
In fact, following the same reasoning as in Section \ref{sec:principal_problem}, Theorem \ref{thm:Pb_Principal_general} and the propositions \ref{prop:Pb_Principal_RA_general}, \ref{prop:Pb_Principal_RN_general} and \ref{prop:Pb_Principal_linear} can be established in this case. Therefore, there is no benefice for indexing the contract on the common noise, if there is already an indexation on the law of others. This means that our contracts indexed on the law of others allow the Principal to index the compensation on the common noise in a hidden way.

\subsection{First--best problem}\label{sec:firstbest}

In this section, we concentrate our attention to the so-called first-best framework, where there is no moral hazard and the Principal can actually choose directly both the contract $\xi$ as well as the actions of the Agents. 

\medskip

Given the reservation utility level of the representative Agent, $R_0$, the problem of the Principal is
\begin{align*}
    V_0^{\textnormal{FB}} := \inf_{\rho > 0} \Big\{ - \rho R_0 + \sup_{(\P, \mu^X) \in \Pc \times \Pc (\Cc_T)} \sup_{\xi \in \Xi^{\textnormal{FB}}} \big\{ J_0^P (\xi, \P) + \rho J_0^A (\xi, \mu^X, \P) \big\} \Big\},
\end{align*}
where $\rho > 0$ is the Lagrange multiplier associated to the participation constraint, and $\Xi^{FB}$ is defined by \eqref{def:Xi_FB}. Recall that the representative Agent is risk--averse, with a risk--aversion parameter $R_A$. We can consider both the cases of a risk--averse or risk--neutral Principal. 
We give the results below, then we provide the proofs in Appendix \ref{proof:firstbest}.

\subsubsection{Principal with CARA utility}

If the utility of the Principal is defined as $U^P(x) = - \erm^{-R_P x}$,
using the same tools as in Section \ref{sec:principal_problem}, we obtain the following proposition.

\begin{proposition}\label{prop:Pb_principal_FB_RA}
    In the first--best case, for a risk--averse producer with a CARA utility function
    
    \medskip
    $(i)$ the utility of the producer is given by 
    \begin{align*}
        V_0^{\textnormal{FB}} &= R_0 \bigg( \dfrac{V_0^{\overline R}}{R_0} \bigg)^{1 + \frac{R_P}{R_A}}, \; \text{ where } \; V_0^{\overline R} = - \erm^{ -\overline R \; u^{FB} (0, \mu_0^X)}
    \end{align*}
    and $u^{FB}$ solves the following {\rm HJB} equation
    \begin{align*}
        \partial_t u^{FB}&=
        + \int (g-f) (x) \mu^X (\drm x)
        + \dfrac{\theta}{2} \big( \sigma^\circ \big)^2 
        - \dfrac{1}{2} \big( \sigma^{\circ} \big)^2 \overline u^{FB}_{x, \mu^X}
        + \dfrac{1}{2} \big( \sigma^{\circ} \big)^2 \overline R \; \big( \overline u^{FB}_{\mu^X} \big)^2
        - \dfrac{1}{2} \big( \sigma^{\circ} \big)^2
        \overline u^{FB}_{\mu^X \mu^X} \\
        &+ \overline{\rho} \overline u^{FB}_{\mu^X} \big( \big( \overline u^{FB}_{\mu^X} \big)^{-} \wedge A_{\textnormal{max}} \big) 
        + \dfrac{1}{2} c_{\alpha}^{\star} \big( \overline u^{FB}_{\mu^X} \big)
        - \dfrac{1}{2} \big( \overline u^{FB}_{x, \mu^X} -\theta \big) \Sigma^{\star} \big( \overline u^{FB}_{x, \mu^X} -\theta \big) 
        + \dfrac{1}{2} c_{\beta}^{\star} \big( \overline u^{FB}_{x, \mu^X} -\theta \big),\\
 u(T,\cdot)&=-1;
    \end{align*}
    
    $(ii)$ the optimal effort is the process $\nu^{FB,\star} := (\alpha^{FB,\star}, \beta^{FB,\star})$ defined for all $t \in [0,T]$ by $\alpha^{FB,\star}_t := a^{FB,\star} \big(t, \mu^X_t \big)$ and $\beta^{FB,\star}_t := b^{FB,\star} \big(t, \mu^X_t \big)$ where, for $k=1, \dots, d$,
    \begin{align*}
        a^{k, FB,\star} (t,\mu) := \rho^k \big( \big( \overline{u}^{FB}_{\mu^X} (t,\mu) \big)^{-} \wedge A_{\textnormal{max}} \big) \; \text{ and } \; b^{k,FB, \star} := 1 \wedge \big( \lambda^k \big( \overline{u}^{FB}_{x, \mu^X} (t, \mu) -\theta \big)^{-} \big)^{\frac{-1}{\eta^k + 1}} \vee B_{\textnormal{min}};
    \end{align*}
    
    $(iii)$ the optimal contract is given by
    \begin{align*}
        \xi^\star = 
        - \dfrac{1}{R_A} \ln (-R_0)
        + \int_0^T \big( c \big( \nu_s^{FB,\star} \big) - f(X_s) \big) \drm s;
    \end{align*}
    
    $(iv)$ in the linear energy discrepancy case, $u^{FB}$ is of the form
    \begin{align*}
        u^{FB} \big(t, \mu_t^X \big) &= \delta (T-t) \int x \mu_t^X (\drm x) - \int_t^T \overline m^P (s) \drm s,
    \end{align*}
    where 
    \begin{align*}
        \overline m^P (t) = &\ 
        \dfrac{\theta}{2} \big( \sigma^\circ \big)^2 
        + \dfrac12 \Big( \big( \sigma^{\circ} \big)^2 \overline R - \overline \rho \Big) \delta^2 (T-t)^2
        + \dfrac{\overline \rho}{2} \big( \delta^- (T-t) \wedge A_{\textnormal{max}} + \delta (T-t) \big)^2
        + \dfrac{\theta}{2} \Sigma^{\star} \big( -\theta \big) 
        + \dfrac12 c_{\beta}^{\star} \big( -\theta \big),
    \end{align*}
    and the functions defining optimal efforts are deterministic functions of time:
    \begin{align*}
        a^{k,FB, \star} (t) := \rho^k \big( \big( \delta (T-t) \big)^{-} \wedge A_{\textnormal{max}} \big) \; \text{ and } \; b^{k,FB, \star} (t) := 1 \wedge \big( \lambda^k\theta\big)^{\frac{-1}{\eta^k + 1}} \vee B_{\textnormal{min}}, \; \text{ for } k=1, \dots, d.
    \end{align*}
\end{proposition}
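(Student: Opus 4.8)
The plan is to proceed exactly as in the moral-hazard analysis of Section \ref{sec:principal_problem}, but now solving the relaxed (Lagrangian) problem without the incentive-compatibility constraint. First I would fix the Lagrange multiplier $\rho>0$ and consider, for a given effort process $\nu=(\alpha,\beta)$ chosen directly by the Principal, the inner supremum over contracts $\xi\in\Xi^{\textnormal{FB}}$. Since $U^A(x)=-\mathrm e^{-R_Ax}$ and $U^P(x)=-\mathrm e^{-R_Px}$ are both CARA, the map $\xi\mapsto J_0^P(\xi,\P)+\rho J_0^A(\xi,\mu^X,\P)$ is concave and the optimal $\xi$ is obtained by a pointwise (first-order) argument: the optimal sharing rule equates (up to the constants controlled by $\rho$) the marginal utilities of Principal and Agent, which for CARA utilities is the classical Borch rule. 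This yields that the optimal $\xi^\star$ has the form of a deterministic constant plus exactly the certainty-equivalent of the Agent's cost-minus-preference running cost, i.e. $\xi^\star=-\frac1{R_A}\ln(-R_0)+\int_0^T(c(\nu_s^{\textnormal{FB},\star})-f(X_s))\mathrm ds$ once the participation constraint is saturated, giving point $(iii)$. Optimising over $\rho$ then reproduces the power-type relation $V_0^{\textnormal{FB}}=R_0(V_0^{\overline R}/R_0)^{1+R_P/R_A}$ with the risk-aversion ratio $\overline R$ defined by $1/\overline R=1/R_A+1/R_P$, a standard computation for aggregating two CARA agents.

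Next I would turn the remaining problem — after substituting the optimal sharing rule — into a McKean–Vlasov control problem for the state variable $X$ alone, now with the drift $\alpha$ and volatility $\beta$ directly controlled. This is structurally the same weak-formulation set-up as in Subsection \ref{ss:def_pb_producer} and the proof of Theorem \ref{thm:Pb_Principal_general}: I would write the candidate value function $V_0^{\overline R}=-\mathrm e^{-\overline R\,u^{\textnormal{FB}}(0,\mu_0^X)}$, apply the chain rule with common noise (Definition \ref{def:C12}) to $-\mathrm e^{-\overline R(\cdots)}$ along any admissible $\P$, and check that the drift coefficient of the resulting semimartingale decomposition is controlled by the HJB operator appearing in the statement. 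The optimisers in that HJB equation — minimising $\int(c_\alpha(a)+\text{linear in }a)$ over $a\in A$ and $c_\beta(b)-\text{(something)}\Sigma(b)$ over $b\in B$ — give precisely $a^{k,\textnormal{FB},\star}=\rho^k((\overline u^{\textnormal{FB}}_{\mu^X})^-\wedge A_{\textnormal{max}})$ and $b^{k,\textnormal{FB},\star}=1\wedge(\lambda^k(\overline u^{\textnormal{FB}}_{x,\mu^X}-\theta)^-)^{-1/(\eta^k+1)}\vee B_{\textnormal{min}}$, which is point $(ii)$; note the shift by $\theta$ coming from the $\frac\theta2\mathrm d\langle X\rangle$ penalty in the Principal's criterion, contrasting with the moral-hazard case. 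A verification argument (martingale property of the stopped process, integrability of the stochastic integrals, then taking expectations and limits) closes $(i)$.

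For point $(iv)$, the linear EVD case, I would plug the ansatz $u^{\textnormal{FB}}(t,\mu_t^X)=\delta(T-t)\int x\,\mu_t^X(\mathrm dx)-\int_t^T\overline m^P(s)\mathrm ds$ into the HJB equation. Because $(f-g)(x)=\delta x$ and the only measure-dependence is through the conditional mean, all the derivatives simplify: $\overline u^{\textnormal{FB}}_{\mu^X}=\delta(T-t)$, $\overline u^{\textnormal{FB}}_{x,\mu^X}=0$, and $\overline u^{\textnormal{FB}}_{\mu^X,\mu^X}=0$, so the PDE collapses to an ODE for $\overline m^P$ whose right-hand side is exactly the expression given; the optimal efforts then become the stated deterministic functions of time. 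I would also check, as in the proof of Proposition \ref{prop:Pb_Principal_linear}, that $u^{\textnormal{FB}}$ satisfies the integrability/martingale conditions needed to legitimise the verification step (continuity of $\overline m^P$ and boundedness of the drift and volatility of $X$).

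The main obstacle I anticipate is the rigorous verification argument in the McKean–Vlasov setting with common noise: establishing that the candidate $u^{\textnormal{FB}}$ is smooth enough in the sense of Definition \ref{def:C12} to apply the chain rule, and that the local martingale obtained is a true (super/sub)martingale so that the inequality $V_0^{\overline R}\le-\mathrm e^{-\overline R u^{\textnormal{FB}}(0,\cdot)}$ (and its reverse along the optimiser) holds. This is exactly the technical content already handled in the proof of Theorem \ref{thm:Pb_Principal_general} and Proposition \ref{prop:Pb_Principal_RA_general}, so I would reuse those integrability estimates (Lemma \ref{lemma:integZ} and the conditions \eqref{eq:conditions_martingale_general}) essentially verbatim; the first-best simplification — no 2BSDE, no incentive constraint — makes this strictly easier than the second-best case, the only genuinely new bookkeeping being the Lagrangian optimisation over $\rho$ and the CARA aggregation yielding $\overline R$.
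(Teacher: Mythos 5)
Your proposal follows essentially the same route as the paper: Lagrangian relaxation in $\rho$, first-order (Gâteaux/Borch) optimisation over $\xi$ for the concave CARA functional, explicit minimisation over $\rho$ yielding the power formula with aggregated risk aversion $\overline R$, and then a McKean--Vlasov HJB verification reusing the machinery of Theorem \ref{thm:Pb_Principal_general}, which is exactly how the paper argues (indeed the paper dispatches $(i)$, $(ii)$, $(iv)$ with the same "same tools as Section \ref{sec:principal_problem}" appeal you make). The only nuance worth keeping in mind is that the Principal's utility is applied to $\E^{\P}[\xi+K_T^P\,|\,\Fc_T^\circ]$, so the first-order condition must be conditioned on $\Fc_T^\circ$ before solving, producing a sharing rule in which $\xi-K_T^{A}$ is $\Fc_T^\circ$-measurable rather than a literal pointwise Borch rule — but this does not change your conclusions.
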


\subsubsection{Risk--neutral Principal}

The risk--neutral Principal problem is very similar to the risk--averse one, informally the following Proposition is obtained by setting $R_P = 0$ (and thus $\overline R = 0$) in Proposition \ref{prop:Pb_principal_FB_RA}
\begin{proposition}\label{prop:Pb_principal_FB_RN}
    In the first--best case, for a risk--neutral producer,
    
    \medskip
    $(i)$ her utility is given by 
    \begin{align*}
        V_0^{\textnormal{FB}} &= \dfrac{1}{R_A} \ln ( - R_0 ) + V^0 (0, \mu_0^X),
    \end{align*}
    where $V^0$ is solution to the following {\rm HJB} equation
    \begin{align*}
        0 = &\ \partial_t V^0
        + \int (f-g) (x) \mu^X (\drm x)
        - \dfrac{\theta}{2} \big( \sigma^\circ \big)^2 
        + \dfrac{1}{2} \big( \sigma^{\circ} \big)^2 \overline{V}^0_{x, \mu^X}
        + \dfrac{1}{2} \big( \sigma^{\circ} \big)^2 \overline{V}^0_{\mu^X, \mu^X} \\
        &- \overline{\rho} \overline{V}^0_{\mu^X} \big( \big( \overline{V}^0_{\mu^X} \big)^{-} \wedge A_{\textnormal{max}} \big) - \dfrac{1}{2} c_{\alpha}^{\star} \big( \overline{V}^0_{\mu^X} \big)
        + \dfrac{1}{2} \big( \overline{V}^0_{x, \mu^X} -\theta \big) \Sigma^{\star} \big( \overline{V}^0_{x, \mu^X} -\theta \big) - \dfrac{1}{2} c_{\beta}^{\star} \big( \overline{V}^0_{x, \mu^X} -\theta \big);
    \end{align*}
    
    $(ii)$ in the linear energy discrepancy case, $V^0$ is of the form
    \begin{align*}
        V^0 (t, \mu_t^X) &= \delta (T-t) \int x \mu_t^X (\drm x) - \int_t^T \overline m^0 (s) \drm s,
    \end{align*}
    where $\overline m^0$ is equal to $\overline m^P$ for $\overline R = 0$,
    and the functions defining optimal efforts are the same deterministic functions of time as in the risk--averse case.
\end{proposition}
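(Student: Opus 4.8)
The plan is to obtain Proposition \ref{prop:Pb_principal_FB_RN} as the $R_P=0$ (and hence $\overline R = 0$) degeneration of Proposition \ref{prop:Pb_principal_FB_RA}, but keeping track of the fact that, unlike the CARA case where the producer's value factorises multiplicatively, a risk--neutral producer's value is additive. So I would not literally set $R_P=0$ in the final formula $V_0^{\textnormal{FB}} = R_0 (V_0^{\overline R}/R_0)^{1+R_P/R_A}$; instead I would redo the Lagrangian computation from scratch with $U^P(x)=x$. First I would recall the Lagrangian formulation: $V_0^{\textnormal{FB}} = \inf_{\rho>0}\{-\rho R_0 + \sup_{(\P,\mu^X)}\sup_{\xi\in\Xi^{\textnormal{FB}}}(J_0^P(\xi,\P)+\rho J_0^A(\xi,\mu^X,\P))\}$. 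For fixed $(\P,\mu^X)$ and fixed effort, the inner maximisation over $\xi$ is a pointwise problem: maximise $-\E^\P[\E^{\P_T}[L_T^\xi]] + \rho\, \E^\P[U^A(\xi - \int_0^T(c(\nu)-f(X))\mathrm d t)]$ over $\xi$; since $U^A(x)=-\mathrm e^{-R_A x}$ is strictly concave and the producer is now linear, first--order conditions give $\mathrm e^{-R_A(\xi-\int_0^T(c-f)\mathrm d t)}$ constant, i.e. $\xi^\star = \frac1{R_A}\ln(\rho R_A) + \int_0^T(c(\nu^\star_s)-f(X_s))\mathrm d s + (\text{deterministic constant})$ — so the optimal contract is deterministic modulo the cost/preference integrals, exactly as claimed in part $(iii)$ of Proposition \ref{prop:Pb_principal_FB_RA}, and in particular carries no common--noise indexation.

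Next I would substitute this optimal $\xi^\star$ back. The producer then pays the certainty equivalent of the Agent's cost, so her objective collapses to $\sup_{(\P,\mu^X)}\sup_{\nu}\,\E^\P\big[\int_0^T(f-g)(X_s)\mathrm d s - \int_0^T c(\nu_s)\mathrm d s - \frac\theta2\langle X\rangle_T\big] + \frac1{R_A}\ln(-R_0)$ after optimising the Lagrange multiplier $\rho$ (the $\inf_\rho\{-\rho R_0 + \frac1{R_A}(1+\ln(\rho R_A)) + \cdots\}$ part is an elementary one--dimensional optimisation yielding the $\frac1{R_A}\ln(-R_0)$ term, using $R_0<0$). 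The remaining problem is a genuine control problem of a McKean--Vlasov SDE with common noise for the state $X$, with running reward $(f-g)(X_t) - c^\star(\cdots) - \frac\theta2(\Sigma(\beta_t)+(\sigma^\circ)^2)$; here I would invoke exactly the machinery already set up in Section \ref{sec:principal_problem} (the chain rule with common noise of Definition \ref{def:C12}, and the verification argument behind Theorem \ref{thm:Pb_Principal_general}), applied to the ansatz $V^0(t,\mu_t^X)$. Writing the HJB equation for $V^0$ and optimising pointwise over $(\alpha,\beta)$ — using that $c_\alpha$ is quadratic (explicit minimiser $a^{k}=\rho^k((\,\cdot\,)^-\wedge A_{\max})$) and that the $\beta$--part gives the Legendre--type transforms $c_\alpha^\star$, $c_\beta^\star$, $\Sigma^\star$ already introduced — produces precisely the stated HJB equation for $V^0$ in part $(i)$. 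The $\Sigma^{\star}(\overline V^0_{x,\mu^X}-\theta)$ and $c_\beta^\star(\overline V^0_{x,\mu^X}-\theta)$ terms arise because the volatility effort $\beta$ enters both the diffusion coefficient (hence through $\overline V^0_{x,\mu^X}$, the derivative of the quadratic--variation contribution) and the $\theta$--penalty.

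For part $(ii)$, the linear energy value discrepancy case $(f-g)(x)=\delta x$, I would plug the affine--in--the--mean ansatz $V^0(t,\mu_t^X)=\delta(T-t)\int x\,\mu_t^X(\mathrm d x) - \int_t^T \overline m^0(s)\,\mathrm d s$ into the HJB equation. Then $\overline V^0_{\mu^X}(t,\mu)=\delta(T-t)$, $\overline V^0_{x,\mu^X}=0$, and the second--order measure derivatives $\overline V^0_{\mu^X,\mu^X}$ vanish, so all the $\mu^X$--dependence cancels and one is left with an ODE identifying $\overline m^0$; comparing with the formula for $\overline m^P$ and noting every occurrence of $\overline R$ there multiplies a term that disappears when $\overline R=0$, one reads off $\overline m^0 = \overline m^P|_{\overline R=0}$, and the optimal efforts $a^{k,FB,\star}(t)=\rho^k((\delta(T-t))^-\wedge A_{\max})$, $b^{k,FB,\star}(t)=1\wedge(\lambda^k\theta)^{-1/(\eta^k+1)}\vee B_{\min}$ coincide with the risk--averse ones (they never depended on $R_P$ since the optimal contract shares no risk in the first--best). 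The main obstacle is not any single computation but making the verification rigorous: one must check that the candidate $V^0$ is smooth enough in the sense of Definition \ref{def:C12}, that the associated martingale/integrability conditions analogous to \eqref{eq:conditions_martingale_general} hold (here trivial since $\overline V^0_{\mu^X}(t,\mu)=\delta(T-t)$ is bounded and $X$ has bounded drift and volatility), and above all that exchanging $\inf_\rho$ with the double supremum is legitimate — i.e. that strong duality holds for the participation constraint; this is handled exactly as in the CARA first--best proof in Appendix \ref{proof:firstbest}, so I would simply point to that argument with the harmless modification $U^P(x)=x$ rather than reprove it.
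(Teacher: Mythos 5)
Your proof is correct, and it is in fact more careful than the paper's own treatment: the paper simply asserts that Proposition \ref{prop:Pb_principal_FB_RN} is ``obtained by setting $R_P=0$'' in Proposition \ref{prop:Pb_principal_FB_RA}, and its Appendix only writes out the CARA computation. You rightly observe that one cannot literally substitute $R_P=0$ into the multiplicative formula $V_0^{\textnormal{FB}}=R_0\big(V_0^{\overline R}/R_0\big)^{1+R_P/R_A}$ (that degenerates, since $V_0^{\overline R}\to -1$ as $\overline R\to 0$; recovering the additive formula from it requires a normalised limit of the form $\lim_{R_P\to 0}(1+V_0^{\textnormal{FB}})/R_P$). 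Redoing the Lagrangian with $U^P(x)=x$ is the honest route: the first--order condition gives $\xi^\star=\tfrac1{R_A}\ln(\rho R_A)+K_T^{A}$ exactly, the Agent's utility collapses to the constant $-1/(\rho R_A)$, and optimising the multiplier produces the $\tfrac1{R_A}\ln(-R_0)$ term, leaving the stated McKean--Vlasov control problem for $V^0$; your identification of $\overline m^0$ with $\overline m^P|_{\overline R=0}$ via $\tfrac{\overline\rho}{2}(u+v)^2-\tfrac{\overline\rho}{2}v^2=\tfrac{\overline\rho}{2}u^2+\overline\rho uv$ with $u=\delta^-(T-t)\wedge A_{\max}$, $v=\delta(T-t)$ is exactly what makes the two expressions agree. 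What your approach buys is an actual proof of part $(i)$ rather than an appeal to formal substitution; what the paper's shortcut buys is brevity, at the cost of leaving the passage from the multiplicative to the additive value unexplained. One small slip: in your parenthetical the one--dimensional problem should read $\inf_{\rho>0}\{-\rho R_0-\tfrac1{R_A}(1+\ln(\rho R_A))+\cdots\}$ (minus sign in front of the logarithmic term); with the plus sign you wrote, the infimum over $\rho$ is not attained in $(0,\infty)$. This does not affect your stated conclusion, whose optimiser $\rho^\star=-1/(R_AR_0)$ and value $\tfrac1{R_A}\ln(-R_0)$ are correct.
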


\section{Conclusion}\label{sec:conclusion}

We extend in this paper the problem of demand response contracts in electricity markets set in \cite{aid2018optimal} by considering a continuum of consumers with mean--field interaction, whose consumption is impacted by a common noise. We proved that the producer can benefit from considering the continuum of consumers with mean--field interaction by indexing contracts on the consumption of one Agent and the law of others. This new type of contracts allows the Principal to reward a consumer who makes more effort than the others, or to penalise him if he makes less effort. At least in the linear energy value discrepancy case, the producer's utility is increased by the use of new contracts, and in most cases these contracts induce more efforts from the consumers to reduce the average level of their consumption and with less volatility. 

\medskip

The optimal contract is indexed in a hidden way on the consumption adjusted for climate hazards. This allows the Principal to offer a compensation indexed on the process which is really controlled by the consumer, to encourage him for making effort on the drift and the volatility of his deviation. Moreover, if the Principal is risk-averse, she can add to this contract a part indexed on others, which is in fact an indexation on the common noise, to better choose the remaining risk she wants to bear. 

\medskip

In the case where the principal is authorised to index the contract directly to the common noise, we obtain the same form of contracts. Therefore, contracting on the conditional law of others or on common noise is strictly equivalent. Nevertheless, if the Principal could not observe the common noise, or if there exist some regulatory rules preventing her from using the common noise directly in the contract, indexing the contract on others is a way to overcome this.

\medskip

Our approach provides better management of the risk associated with common noise. Thus, the greater the variance explained by the common noise, the more significant the results are. Therefore, these new contracts could improve demand response during periods where consumption is strongly affected by weather conditions, for example in winter, when the risk of electricity blackouts is high, and thus demand response is more than needed. Naturally, in the absence of common noise, our contracts are reduced to classical contracts for drift and volatility control.

\begin{appendices}

\section{Technical proofs}\label{sec:technical_proofs}

\subsection{Proof of Lemma \ref{lemma:rep}}\label{ss:proof_lemma}

Let $\P \in \Pc$. First of all, by definition of $\Pc$ we have $\P [ \Lambda \in \U_0 ] =1$, thus $\Lambda(\mathrm{d}s,\mathrm{d} v) = \delta_{\nu^\P_s}(\mathrm{d} v) \mathrm{d}s$ $\P$--a.s. for some $\F$--predictable control process $\nu^\P := \big( \alpha^\P, \beta^\P \big)$. Therefore, $(X, W, W^\circ)$ is an It\=o process with drift $A(\nu^\P)$ and quadratic variation $B\big(\nu_t^\P \big) B^\top \big( \nu_t^\P \big)$ under $\P$, where
\begin{align*}
    B\big(\nu_t^\P \big) B^\top \big( \nu_t^\P \big) =
    \begin{pmatrix}
        \Sigma(\beta_t^\P) + \big(\sigma^\circ\big)^2 & \sigma^\top(\beta_t^\P) & \sigma^{\circ} \\
        \sigma (\beta_t^\P) & \mathrm{I}_d & \mathbf{0}_d \\
        \sigma^\circ & \mathbf{0}_d^\top & 1
    \end{pmatrix} 
    = 
    \begin{pmatrix}
        0 & \sigma^\top (\beta_t^\P) & \sigma^{\circ} \\
        \mathbf{0}_d & \mathrm{I}_d & \mathbf{0}_d \\
        0 & \mathbf{0}_d^\top & 1
    \end{pmatrix}
    \times 
    \begin{pmatrix}
        0 & \mathbf{0}_d^\top & 0 \\
        \sigma(\beta_t^\P) & \mathrm{I}_d & \mathbf{0}^\top_d \\
        \sigma^{\circ} & \mathbf{0}_d^\top & 1,
    \end{pmatrix},\; \drm t \otimes \P (\drm \omega)-\mathrm{a.e.}
\end{align*}

Furthermore, following the line of \citet*{lin2018second}, we consider the extended space $\Omega^e = \Omega \times \Omega'$ where $\Omega' = \Cc ( [0,T], \R^{d+2})$. $\Omega'$ is equipped with the filtration $(\Fc'_t)_{t \geq 0}$, generated by the canonical process, and $\P_0'$ is the Wiener measure on $\Omega'$. We define $\Fc^e_t := \Fc_t \otimes \Fc_t'$, $\F^e := \F \otimes \F'$ and $\P^e := \P \otimes \P_0'$. We denote $X^e$, $W^e$ and $W^{\circ,e}$ the natural extensions of $W$ and $W^\circ$ from $\Omega$ to $\Omega^e$. By \citet*[Theorem 4.5.2]{stroock1997multidimensional}, there is a $d+2$--dimensional Brownian motion $B^e$ on $(\Omega^e, \F^e, \P^e)$, such that
\begin{align*}
    \drm
    \begin{pmatrix}
         X^e_t \\
         W^e_t \\
         W^{\circ,e}_t
    \end{pmatrix} 
    = 
    \begin{pmatrix}
        - \alpha_t^\P \cdot \mathbf{1}_d \\
        \mathbf{0}_d \\
        0
    \end{pmatrix} \drm t
    + 
    \begin{pmatrix}
        0 & \sigma^\top( \beta_t^{\P} ) & \sigma^{\circ} \\
        \mathbf{0}_d & \mathrm{I}_d & \mathbf{0}_d \\
        0 & \mathbf{0}_d^\top & 1
    \end{pmatrix}
    \drm B^e_t.
\end{align*}

Therefore, we have $\drm W^e_t = \drm \big( B_t^{e,2}, \dots, B_t^{e,d+1} \big)^\top$ and $\drm W^{\circ,e}_t = \drm B_t^{e,d+2}$. Then, 
\begin{align*}
    X^e_t = - \int_0^t \alpha_s^\P \cdot \mathbf{1}_d \drm s + \int_0^t \big( 0, \sigma^\top( \beta_s^{\P} ), \sigma^{\circ} \big) \drm B^e_s = - \int_0^t \alpha_s^\P \cdot \mathbf{1}_d \drm s + \int_0^t \sigma ( \beta_s^{\P} ) \cdot \drm W^e_s + \int_0^t \sigma^{\circ} \drm W^{\circ,e}_s,
\end{align*}
for $t\geq 0$, $\P^e$--a.s., which implies the desired result.

\subsection{Another representation for the set of measures}\label{appendix_anotherrepres}
The general approach to moral hazard problems in \citet*{cvitanic2018dynamic} requires to distinguish between the efforts of the agent which give rise to absolutely continuous probability measures in $\Pc$, namely the ones for which only the drift changes, or for which the volatility control changes while keeping fixed the quadratic variation of $X$. The goal of this subsection is to provide the appropriate formulation in our setting. Let us start by introducing some notations.

\vspace{0.5em}
We let $\overline \Pc$ be the set of probability measures $\overline \P$ on $(\Omega, \Fc_T)$ such that
\begin{enumerate}[label=(\roman*)]
    \item  the canonical vector process $(X,W,W^\circ)^\top$ is an $(\F,\P)$--local martingale for which there exists an $\F$--predictable and $B$--valued process $\beta^{\overline \P}$ such that the $\overline \P$--quadratic variation of $(X,W,W^\circ)^\top$ is $\overline \P$--a.s. equal to
     \[
    \begin{pmatrix}
            \Sigma\big(\beta_s^{\overline \P}\big) + \big(\sigma^\circ\big)^2 & \sigma^\top\big( \beta_s^{\overline \P}\big) & \sigma^{\circ} \\
        \sigma \big( \beta_s^{\overline \P} \big) & \mathrm{I}_d & \mathbf{0}_d \\
        \sigma^\circ & \mathbf{0}_d^\top & 1
    \end{pmatrix},\; s\in[0,T];
    \]
    \item $\overline\P\big[\Lambda \in \U_0]=1$.
\end{enumerate}
Arguing as in Lemma \ref{lemma:rep}, we know that for all $\overline\P\in\overline\Pc$
\begin{align*}
    X_t &= X_0  + \int_0^t  \sigma \big(\beta^{\overline \P}_s) \cdot \drm W_s + \int_0^t \sigma^{\circ} \drm W^{\circ}_s, ~t\in[0,T], \; \overline\P-\textnormal{a.s.}
\end{align*}

{\color{black} Notice that using classical results of \citet*{bichteler1981stochastic} (see \citet*[Proposition 6.6]{neufeld2014measurability} for a modern presentation), we can define a pathwise version of the quadratic variation of $\langle X\rangle$,
being $\F$--predictable, allowing us to define the following $\R_+$--valued process
\[
S_t:=\underset{n\rightarrow+\infty}{\mathrm{limsup}}\; n\big(\langle X\rangle_t-\langle X\rangle_{t-1/n}\big).
\]}

\begin{definition}\label{def:Pnu}
For any $A$--valued and $\F$--predictable process $\alpha$, any $\overline\P\in\overline\Pc$, and $\F$--predictable and $B$--valued process $\beta$\footnote{Strictly speaking, the process $\beta$ should be indexed by the measure $\overline \P$, but we chose to not do so in order to alleviate notations.} such that, $\overline\P$--a.s., $\Sigma\big(\beta^{\overline\P}\big)=\Sigma\big(\beta\big)$,
we define the equivalent measures $\overline \P^{\nu}$, for $\nu := (\alpha, \beta)$, by their Radon--Nykodym density on $\Fc_T$:
\[
\frac{\mathrm{d}\overline\P^{\nu}}{\mathrm{d}\overline\P}:=\exp\bigg(-\int_0^T\frac{\alpha_s\cdot\mathbf{1}_d}{\Sigma(\beta_s)}\sigma\big(\beta_s\big)\cdot\mathrm{d}W_s-\frac12\int_0^T(\alpha_s\cdot\mathbf{1}_d)^2\mathrm{d}s\bigg).
\]
\end{definition}
Notice that such a measure is well--defined since $A$ is a compact set, and the $B$--valued processes are automatically bounded and bounded away from $0$. It is then immediate to check that the $\Pc$ coincides exactly with the set of all probability measures of the form $\overline \P^{\nu}$, which satisfy in addition
\begin{enumerate}[label=(\roman*)]
     \item $\overline\P^{\nu} \circ (X_0)^{-1} = \varrho$, and there exists a measure $\iota$ on $\mathbb R^d\times\mathbb R$ such that $\overline \P^{\nu} \circ \big((W_0,W_0^{\circ})\big)^{-1} = \iota$;
    \item for $\overline\P$--a.e. $\omega \in \Omega$ and for every $t \in[0,T]$, we have $\mu_t(\omega)= \big( \overline\P^{\nu} \big)^{\omega}_t \circ (X_{t\wedge \cdot})^{-1}$;
    \item $(W^{\circ},\mu)$ is $\overline \P^{\nu}$--independent of $W$.
\end{enumerate}
For any $\overline \P\in\overline\Pc$, we denote by $\overline \Uc(\overline\P)$ the set of controls $\nu \in \Uc$ such that $\overline \P^{\nu}\in\Pc$.

\subsection{Best--reaction functions of the consumers and equilibrium}

In this Subsection, we wish to relate the best--reaction function of a consumer to a given contract $\xi\in\Xi$, and a given measure $\widehat \mu$ played by the mean--field of the other consumers, in other words $V_0^A(\xi,\widehat\mu)$, to an appropriate second--order BSDE.

\medskip
With this in mind, we consider the process $S^0$, taking values in the set of symmetric positive $2\times2$ matrices, defined by
\[
S^0_t:=\begin{pmatrix}
S_t + \big(\sigma^\circ \big)^2 & \sigma^\circ\\
\sigma^\circ & 1
\end{pmatrix},\; t\in[0,T],
\]
as well as the following norms, for any $p\geq 1$, 
\[
 \|Z\|^p_{\mathbb H^p}
 :=
 \sup_{\overline\P\in\overline\Pc} \E^{\overline\P}\bigg[\bigg(\int_0^T Z_t^\top S^0_tZ_t\mathrm{d}t\bigg)^{\frac p2}\bigg], \ \mbox{and}\ 
 \|Y\|_{\mathbb S^p}^p
 :=
 \sup_{\P\in\overline\Pc} \E^\P\bigg[\sup_{0\leq t\leq T}|Y_t|^p\bigg],
\]
defined for any $\mathbb F^{\rm obs}$--predictable, $\R^2$--valued process $Z$ and any $\mathbb F^{\rm obs}$--optional, $\R$--valued process $Y$ with c\`adl\`ag paths.  
 
\medskip
We consider the following so--called second--order BSDE (2BSDE for short)
\begin{equation}\label{eq:2bsde2}
 Y_t
 =
-\mathrm{e}^{-R_A \xi}+\int_t^TF(X_s,Y_s,Z^1_s,S_s)\mathrm{d}s
 -\int_t^T Z^1_s\mathrm{d}X_s-\sigma^\circ\int_t^T Z^2_s\mathrm{d}W^\circ_s+\int_t^T\mathrm{d} K_s,
 \end{equation}
The following definition recalls the notion of 2BSDE, and uses the additional notation 
\[
 \overline{\mathcal{P}}_t(\overline{\mathbb{P}},(\F^{\rm obs})^{+})
 :=
 \left\{\overline{\P}^\prime\in\overline{\mathcal{P}}: \overline\P[E]=\overline{\P}^\prime[E]~\mbox{for all}~E\in(\Fc^{\rm obs}_t)^{+}\right\}, \ \text{for any $(\overline\P,t)\in\overline\Pc\times[0,T]$}. 
\]

\begin{definition} \label{def:2BSDE}
We say that $(Y,Z, K)$ is a solution to the {\rm 2BSDE} \eqref{eq:2bsde2} if for some $k>1$
\begin{enumerate}[label=$(\roman*)$] 
    \item $Y$ is a c\`adl\`ag and an $ (\F^{\rm obs})^{\overline\Pc}_+$--optional process, and $\| Y \|_{\S^k}<+\infty$;
    \item $Z =(Z^1,Z^2)^\top$ is an $ (\F^{\rm obs})^{\overline\Pc}$--predictable $\R^2$--valued process with $\big\|Z\big\|_{\H^k}<+\infty$;
    \item $K$ is an $ (\F^{\rm obs})^{\overline\Pc}$--optional, c\`adl\`ag, non--decreasing, $K_0=0$, $\underset{\overline\P\in\overline\Pc}{\sup}\ \E^{\overline\P}[K_T^k]<+\infty$, and satisfies the minimality condition
\begin{equation}\label{minimality2}
 K_t
 =
 \underset{ \overline{\mathbb{P}}^\prime \in \overline{\mathcal{P}}_t(\overline{\mathbb{P}},\F^{+}) }{ {\rm essinf}^{\overline{\mathbb P}}}\mathbb{E}^{\overline{\mathbb P}^\prime}\Big[ K_T\Big|(\Fc^{\rm obs}_t)^{\overline\P+}\Big],
 ~0\leq t\leq T,
 \ \overline{\P}-a.s.\mbox{ for all}\ 
 \overline\P \in \overline{\mathcal P}.
 \end{equation}
\end{enumerate}
\end{definition}
The main result of this section relates the solution to the above 2BSDE to the best--reaction function of the Agent. 
\begin{proposition}\label{prop:genbestreac}
Fix $(\xi,\widehat\mu) \in \Xi \times \Pc(\Cc_T)$. Let $(Y,(Z^1,Z^2)^\top, K)$ be a solution to the {\rm 2BSDE} \eqref{eq:2bsde2}.
We have
\[
V_0^A(\xi,\widehat\mu)=\sup_{\overline\P\in\overline\Pc}\E^{\overline\P}[Y_0].
\]
Conversely, the $($dynamic$)$ value function $V_t^A(\xi,\widehat\mu)$ always provides the first component of a solution to \eqref{eq:2bsde2}. Moreover, any optimal effort $\nu^{\star}:=(\alpha^{\P^\star},\beta^{\P^\star})$, {\color{black}and the optimal measure $\overline \P^\star \in \overline \Pc$ must be such that 
 \[
 K=0,\; \big(\overline{\P}^\star\big)^{\nu^{\star}}-\mathrm{a.s.}, \; \big(\alpha^{\star}_t,\beta^{\star}_t\big)\in\underset{(a,b)\in A\times\Sigma^{-1}(S_t)}{\mathrm{argmax}}\; F(X_{t\wedge\cdot}, Y_t, Z^1_t,S_t),\; \big(\overline{\P}^\star\big)^{\nu^{\star}}-\mathrm{a.s.},
 \]
where $\big(\overline \P^\star \big)^{\nu^\star}$ is defined from $\overline \P^\star$ by {\rm Definition \ref{def:Pnu}}.}
\end{proposition}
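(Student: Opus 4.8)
The plan is to follow the standard BSDE-based verification argument for moral-hazard problems of \cite{cvitanic2018dynamic}, adapted to the weak formulation with common noise and conditional laws developed above. First I would fix $(\xi,\widehat\mu)\in\Xi\times\Pc(\Cc_T)$ and a solution $(Y,(Z^1,Z^2)^\top,K)$ to the 2BSDE \eqref{eq:2bsde2}. The key observation is that by Definition \ref{def:Pnu} any admissible measure $\overline\P^\nu\in\Pc$ is obtained from some $\overline\P\in\overline\Pc$ by a Girsanov change of measure affecting only the drift of $X$, while the quadratic variation $\langle X\rangle$ (hence the process $S$) is unchanged. Under $\overline\P^\nu$, the dynamics $\drm X_t = -\alpha_t\cdot\mathbf 1_d\,\drm t + \sigma(\beta_t)\cdot\drm W_t + \sigma^\circ\,\drm W^\circ_t$ hold, so plugging this into \eqref{eq:2bsde2} and using the definition \eqref{def:F} of $F$, the ``$\drm s$''-part of $Y$ becomes
\[
\big(F(X_s,Y_s,Z^1_s,S_s) + Z^1_s\,\alpha_s\cdot\mathbf 1_d\big)\drm s + \drm K_s \ \geq\ -R_A Y_s\big(c(\alpha_s,\beta_s) - f(X_s)\big)\drm s,
\]
since $\beta_s\in\Sigma^{-1}(S_s)$ $\overline\P^\nu$-a.s. (the quadratic variation is fixed) and $K$ is non-decreasing. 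I would then apply It\^o's formula to $-\tfrac1{R_A}\ln(-Y_t) + \int_0^t(c(\nu_s)-f(X_s))\drm s$, or equivalently integrate the stochastic differential, and use the integrability conditions in Definition \ref{def:2BSDE} together with \eqref{eq:integxi} to argue that the stochastic integrals $\int Z^1\,\drm X$ and $\int Z^2\,\drm W^\circ$ are true $\overline\P^\nu$-martingales (after a localisation argument and uniform integrability coming from $\|Y\|_{\S^k}<\infty$, $\|Z\|_{\H^k}<\infty$). Taking expectations yields $\E^{\overline\P^\nu}[Y_0] \geq J_0^A(\xi,\widehat\mu,\overline\P^\nu)$, and since $\E^{\overline\P^\nu}[Y_0]=\E^{\overline\P}[Y_0]$ does not depend on $\nu$, taking the supremum over $\nu\in\overline\Uc(\overline\P)$ and then over $\overline\P\in\overline\Pc$ gives $\sup_{\overline\P}\E^{\overline\P}[Y_0]\geq V_0^A(\xi,\widehat\mu)$.

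For the reverse inequality, the minimality condition \eqref{minimality2} is the crucial ingredient: it guarantees the existence, for each $\overline\P\in\overline\Pc$, of a sequence of measures $\overline\P^n\in\overline\Pc_t(\overline\P,(\F^{\rm obs})^+)$ along which $\E^{\overline\P^n}[K_T\mid(\Fc^{\rm obs}_t)^+]\to 0$. Combined with a measurable-selection argument applied to the maximiser in \eqref{def:F} --- selecting $(\alpha^n,\beta^n)$ with $\beta^n\in\Sigma^{-1}(S)$ and $\alpha^n\cdot\mathbf 1_d$ approximately optimal --- one constructs a minimising sequence of efforts turning the inequality above into an asymptotic equality, so that $\E^{\overline\P}[Y_0]\leq V_0^A(\xi,\widehat\mu) + \varepsilon_n$. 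This yields $\sup_{\overline\P}\E^{\overline\P}[Y_0]\leq V_0^A(\xi,\widehat\mu)$ and hence equality. For the statement that the dynamic value function $V_t^A(\xi,\widehat\mu)$ is itself the first component of a solution, I would invoke the now-standard well-posedness theory for 2BSDEs with this type of quadratic-exponential generator, citing \cite{cvitanic2018dynamic} and the estimates of \cite{bouchard2018unified} referenced in the proof of Theorem \ref{thm:mfe}; the generator $F$ is Lipschitz in $z$ after the exponential change of variables and the terminal condition $-\erm^{-R_A\xi}$ has the required integrability by \eqref{eq:integxi}, so the general existence/uniqueness result applies and the solution's $Y$-component coincides with $V^A$ by the dynamic programming principle.

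Finally, for the characterisation of optimal efforts: if $\nu^\star=(\alpha^\star,\beta^\star)$ with associated $\overline\P^\star\in\overline\Pc$ is optimal, then the inequality in the first step must be an equality $(\overline\P^\star)^{\nu^\star}$-a.s., which forces simultaneously $\drm K_s=0$ on the support (so $K=0$, $(\overline\P^\star)^{\nu^\star}$-a.s.) and $F(X_s,Y_s,Z^1_s,S_s) + Z^1_s\,\alpha^\star_s\cdot\mathbf 1_d = -R_A Y_s(c(\nu^\star_s)-f(X_s))$, i.e. $(\alpha^\star_s,\beta^\star_s)\in\mathrm{argmax}_{(a,b)\in A\times\Sigma^{-1}(S_s)} F(X_{s\wedge\cdot},Y_s,Z^1_s,S_s)$ --- reading the sign conventions off \eqref{def:F} and recalling $Y<0$. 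The main obstacle I anticipate is the careful handling of the weak-formulation/conditional-law machinery: one must check that the Girsanov densities of Definition \ref{def:Pnu} genuinely preserve membership in $\Pc$ (in particular that the conditional-law fixed-point constraint $(iv)$ and the independence constraint $(v)$ of Definition \ref{def:Pc} are stable under the drift change), and that the filtration enlargements $(\F^{\rm obs})^{\overline\Pc}_+$ do not interfere with the martingale property of the stochastic integrals --- this is exactly where the quasi-sure aggregation results and the minimality condition must be used delicately, rather than the dynamic-programming step itself which is routine once the framework is in place.
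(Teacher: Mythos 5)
Your plan is, at bottom, the same argument the paper uses. The paper's proof consists of citing the verification result of \cite[Proposition 4.6]{cvitanic2018dynamic} together with the 2BSDE wellposedness theory of \cite{possamai2018stochastic}, and then checking their hypotheses; you instead write out the verification argument itself (supermartingale inequality under each $\overline\P^{\nu}$ via the Girsanov structure of Definition \ref{def:Pnu}, the minimality condition \eqref{minimality2} plus measurable selection for the reverse inequality, and saturation of the inequality to characterise optimal efforts). That architecture is sound and matches the cited argument step for step, and you correctly identify the stability of $\Pc$ under the drift changes and the aggregation issues as the delicate points.

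One concrete step would fail as written. For the converse direction you invoke ``the general existence/uniqueness result'' on the grounds that $F$ is Lipschitz in $z$ and the terminal condition is integrable; but $F(x,y,z,S)=\sup_{(a,b)}\{-a\cdot\mathbf{1}_d z+R_Ay(c(a,b)-f(x))\}$ contains the term $R_Ayf(x)$ with $f$ only of linear growth, so $F$ is \emph{not} Lipschitz in $y$ uniformly in $x$ and the standard assumptions are not met directly. The paper disposes of this by passing to the 2BSDE satisfied by $\erm^{-\int_0^t f(X_{s\wedge\cdot})\drm s}Y_t$, which removes the linear term; your plan needs the same transform. Two further hypothesis checks that the paper performs and your plan leaves implicit are the rewriting of the generator as a function of $(S^0)^{1/2}(z^1,z^2)^\top$ (legitimate since $S^0$ is invertible), so that it fits the format of the general theorems, and the saturation of $\overline\Pc$, which is what allows the conclusion to pass from $\overline\Pc$ to the equivalent measures $\overline\P^{\nu}$ constituting $\Pc$ and to assert that the dynamic value function is itself the $Y$--component of a solution.
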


\begin{proof}
The proof is classical and follows the lines of \citet*[Proposition 4.6]{cvitanic2018dynamic}. We thus only mention here why the required assumptions are satisfied. First of all, the map $F$ can readily be rewritten as a function $\widetilde F(x,y,(S^0)^{1/2}(z^1,z^2)^\top, S^0)$ as done by \citet*{possamai2018stochastic}, since the matrix $S^0$ is always invertible in our setting. Moreover, we can easily get rid off the linear term $-R_Af(x)y$ in $F$ by considering the 2BSDE satisfied by $\mathrm{e}^{-\int_0^tf(X_{s\wedge\cdot})\mathrm{d}s}Y_t$ instead. We therefore assume without loss of generality that $f$ is $0$ here. Then, $\widetilde F$ and the terminal condition $-\mathrm{e}^{-R_A \xi}$ satisfy the Lipschitz and integrability properties in \cite[Assumption 1.1 $(i)$ and $(ii)$]{possamai2018stochastic}, because controls are bounded and by definition of the set of contracts, recall \eqref{eq:integxi} (and that the densities from probabilities in $\Pc$ to probabilities in $\overline\Pc$ have moments of any order, uniformly on the measures).  \cite[Assumption 4.1]{possamai2018stochastic} is also automatically satisfied as $\widetilde F$ is $0$ for $y=z^1=z^2=0$.

\medskip
Next, \cite[Assumption 1.1 $(iii)$--$(v)$]{possamai2018stochastic} are also satisfied by the set of measures $\overline \Pc$, see for instance \citet*{nutz2013constructing}. Finally, the set $\overline \Pc$ is saturated in the sense of \cite[Definition 5.1]{possamai2018stochastic}, see 
\cite[Remark 5.1]{possamai2018stochastic}.
\end{proof}
With Proposition \ref{prop:genbestreac} in hand, we can now characterise mean--field equilibria thanks to a 2BSDE of mean--field type, reminiscent of the mean--field BSDE obtained in the setting of \cite{elie2019tale} where only the drift of $X$ was controlled.

\begin{theorem}\label{th:genbestreac}
The pair $(\P^\star,\mu^\star)$ belongs to $\Mc^\star(\xi)$ if and only if {\color{black} there exists $\overline \P^\star \in \overline \Pc$ and $\nu^{\star}:=(\alpha^{\star},\beta^{\star})$ such that $\P^\star= \big( \overline \P^\star \big)^{\nu^\star}$, where $\big( \overline \P^\star \big)^{\nu^\star}$ is defined from $\overline \P^\star$ in the sense of \textnormal{Definition \ref{def:Pnu}}, and
\begin{align*}
    K^\star=0,\; \big( \overline \P^\star \big)^{\nu^\star}-\mathrm{a.s.}, \; \big(\alpha^{\star}_t,\beta^{\star}_t\big)\in\underset{(a,b)\in A\times\Sigma^{-1}(S_t)}{\mathrm{arg} \max}\; F(X_{t\wedge\cdot}, Y_t^\star, Z^{1\star}_t,S_t),\; \big( \overline \P^\star \big)^{\nu^\star}-\text{a.s.},
\end{align*}
where $(Y^\star,(Z^{1\star},Z^{2\star})^\top, K^\star)$ is a solution to the \textnormal{mean--field 2BSDE}}
\begin{equation}\label{eq:2bsde2mf}
    Y_t^\star =
    -\mathrm{e}^{-R_A \xi}+\int_t^TF(X_s,Y_s^\star,Z^{1\star}_s,S_s)\mathrm{d}s
    -\int_t^T Z^{1\star}_s\mathrm{d}X_s
    -\sigma^\circ\int_t^T Z^{2\star}_s\mathrm{d}W^\circ_s
    +\int_t^T\mathrm{d} K^\star_s,
\end{equation}
which satisfies \textnormal{Definition \ref{def:2BSDE}} and the fixed--point constraint
\begin{align*}
    \mu_t(\omega)=( \P^\star)^{\omega}_t \circ (X_{t\wedge \cdot})^{-1}.
\end{align*}
\end{theorem}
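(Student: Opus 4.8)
\textbf{Proof plan for Theorem \ref{th:genbestreac}.}

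The plan is to combine the single--agent characterisation in Proposition \ref{prop:genbestreac} with the fixed--point constraint in Definition \ref{mfe}, treating the mean--field distribution $\widehat\mu$ of the other consumers as a frozen parameter and then closing the loop via the fixed--point requirement. First I would observe that the 2BSDE \eqref{eq:2bsde2mf} is exactly the 2BSDE \eqref{eq:2bsde2} associated, through Proposition \ref{prop:genbestreac}, to the contract $\xi\in\Xi$, with the crucial point being that the generator $F$ defined in \eqref{def:F} does \emph{not} depend on $\widehat\mu$ nor on the efforts $\widehat\nu$ of the other consumers. Indeed, in the simple--contract form \eqref{contract_form_mu} the dependence on $\widehat\mu$ enters only through terms that are, from the representative Agent's viewpoint, part of the (uncontrolled) drift/Hamiltonian $\Hc$; once we pass to the exponential transform $Y=-\mathrm e^{-R_A\xi}$, the generator appearing in Proposition \ref{prop:genbestreac} is precisely $F$, and these $\widehat\mu$--dependent pieces have been absorbed into the representation. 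Hence the solution $(Y^\star,(Z^{1\star},Z^{2\star})^\top,K^\star)$ of \eqref{eq:2bsde2mf} is well defined independently of $\widehat\mu$, and Proposition \ref{prop:genbestreac} gives $V_0^A(\xi,\widehat\mu)=\sup_{\overline\P\in\overline\Pc}\E^{\overline\P}[Y^\star_0]$ for \emph{every} $\widehat\mu$.

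Next I would prove the two implications. For the ``only if'' direction: suppose $(\P^\star,\mu^\star)\in\Mc^\star(\xi)$. By Definition \ref{mfe}$(i)$, $\P^\star$ is optimal for \eqref{pb:consumer} given $\widehat\mu=\mu^\star$; by Lemma \ref{lemma:rep} (and the reformulation of Subsection \ref{appendix_anotherrepres}, i.e.\ Definition \ref{def:Pnu}), there exist $\overline\P^\star\in\overline\Pc$ and $\F$--predictable $\nu^\star=(\alpha^\star,\beta^\star)$ with $\P^\star=(\overline\P^\star)^{\nu^\star}$. Proposition \ref{prop:genbestreac} then applies with this $\widehat\mu=\mu^\star$ and forces, along the optimal measure, $K^\star=0$ and $\nu^\star_t\in\mathrm{arg\,max}_{(a,b)\in A\times\Sigma^{-1}(S_t)}F(X_{t\wedge\cdot},Y^\star_t,Z^{1\star}_t,S_t)$, $(\overline\P^\star)^{\nu^\star}$--a.s. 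Finally, Definition \ref{mfe}$(ii)$ is literally the fixed--point constraint $\mu_t(\omega)=(\P^\star)^\omega_t\circ(X_{t\wedge\cdot})^{-1}$, so all the announced conditions hold. For the ``if'' direction: given $\overline\P^\star$, $\nu^\star$ and a solution $(Y^\star,(Z^{1\star},Z^{2\star})^\top,K^\star)$ to \eqref{eq:2bsde2mf} satisfying Definition \ref{def:2BSDE}, the optimality criterion of Proposition \ref{prop:genbestreac} (its converse half, i.e.\ that $K=0$ together with the pointwise maximisation of the generator is \emph{sufficient} for optimality) shows that $\P^\star:=(\overline\P^\star)^{\nu^\star}$ is optimal for \eqref{pb:consumer} with $\widehat\mu$ taken to be $\mu^\star$, the conditional law prescribed by the fixed--point constraint; this is exactly Definition \ref{mfe}$(i)$, and the assumed fixed--point constraint is Definition \ref{mfe}$(ii)$, so $(\P^\star,\mu^\star)\in\Mc^\star(\xi)$.

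The main subtlety — and the step I expect to require the most care — is the consistency check that the measure $\widehat\mu$ used when invoking Proposition \ref{prop:genbestreac} can indeed be taken equal to $\mu^\star$, the conditional law of $X$ under $\P^\star$ itself: one must verify that freezing $\widehat\mu=\mu^\star$ in the definition of the simple contract \eqref{contract_form_mu} and then solving the Agent's problem does reproduce the same 2BSDE \eqref{eq:2bsde2mf}, which relies on the self--consistent simplification \eqref{eq:xi_simplify} valid \emph{because} the optimal drift $\alpha^\star$ is the one plugged into $\Hc$ (the argument already used in the proof of Theorem \ref{thm:mfe}). A secondary point is to make sure the measurability/integrability hypotheses in Definition \ref{def:2BSDE} for the mean--field 2BSDE are inherited from those of the standard 2BSDE \eqref{eq:2bsde2}; since $F$ and the terminal condition $-\mathrm e^{-R_A\xi}$ are unchanged, this follows verbatim from the estimates cited in the proof of Proposition \ref{prop:genbestreac} (the Lipschitz and integrability conditions of \cite[Assumption 1.1]{possamai2018stochastic}, together with \eqref{eq:integxi}), so no new work is needed there beyond recording the reduction.
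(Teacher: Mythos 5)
Your overall strategy coincides with the paper's: Theorem \ref{th:genbestreac} follows by combining the best--reaction characterisation of Proposition \ref{prop:genbestreac}, applied with the frozen measure $\widehat\mu=\mu^\star$, with the fixed--point requirement of Definition \ref{mfe}$(ii)$. The two implications you spell out are exactly what the paper's one--line proof leaves implicit, and your explicit appeal to the sufficiency half of the verification argument (that $K=0$ plus pointwise maximisation of $F$ is sufficient, not just necessary, for optimality) is if anything more careful than the original.

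There is, however, one assertion in your first paragraph that is false in the setting of the theorem and should be removed. The statement concerns a \emph{general} contract $\xi\in\Xi$, i.e.\ a measurable functional $\xi(X,\widehat\mu)$ as in \eqref{contract:dependency}. The generator $F$ of \eqref{def:F} indeed does not involve $\widehat\mu$, but the terminal condition $-\mathrm e^{-R_A\xi(X,\widehat\mu)}$ of \eqref{eq:2bsde2} does; hence the solution $(Y,(Z^1,Z^2)^\top,K)$, the value $V_0^A(\xi,\widehat\mu)$ and the best--response measure all depend on $\widehat\mu$, contrary to your claim that the solution of \eqref{eq:2bsde2mf} is ``well defined independently of $\widehat\mu$''. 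This dependence is precisely what makes \eqref{eq:2bsde2mf} a \emph{mean--field} 2BSDE: it is the 2BSDE \eqref{eq:2bsde2} whose terminal condition is evaluated at the conditional law produced by its own solution, the coupling being closed by the fixed--point constraint. Your supporting discussion of simple contracts, of \eqref{contract_form_mu} and of the simplification \eqref{eq:xi_simplify} belongs to the proof of Theorem \ref{thm:mfe}, where one constructs a solution explicitly for $\xi\in\Xi_{\mathrm S}$, and is out of place here. Fortunately the mis--step is not load--bearing: in both of your implications you do freeze $\widehat\mu=\mu^\star$ before invoking Proposition \ref{prop:genbestreac}, which is all that is required; deleting the independence claim (and the sentence asserting $V_0^A(\xi,\widehat\mu)=\sup_{\overline\P\in\overline\Pc}\E^{\overline\P}[Y_0^\star]$ for every $\widehat\mu$ with the same $Y^\star$) leaves a correct proof that matches the paper's.
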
 
\begin{proof}
By Proposition \ref{prop:genbestreac}, we have a characterisation of the best--reaction function of the Agent to an arbitrary $\widehat\mu$. An equilibrium then necessitates only that $\widehat\mu$ coincides with the conditional distribution of $X$ under $\P^\star$, which is exactly what is given by the mean--field 2BSDE \eqref{eq:2bsde2mf}.
\end{proof}

We end this section with the following lemma which provides us with explicit integrability properties for the processes $Z$ and $Z^\mu$ associated to a contract in $\Xi_{\rm S}$, {\color{black} in the sense of Definition \ref{def:simple_contracts}}. This will prove useful for us when analysing the problem of the Principal.
\begin{lemma}\label{lemma:integZ}
For any $(Z,\overline Z^\mu,\Gamma)\in\overline\Vc$, there exists some $p^\prime\in(1,p)$ such that 
\[
\sup_{\P\in\Pc}\E^\P\bigg[\bigg(\int_0^T|Z_s|^2\mathrm{d}s\bigg)^{p^\prime/2}\bigg]+ \sup_{\P\in\Pc}\E^\P\bigg[\bigg(\int_0^T\big|\overline Z^\mu_s\big|^2\mathrm{d}s\bigg)^{ p^\prime/2}\bigg]<+\infty.
\]
\end{lemma}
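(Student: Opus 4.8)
The plan is to exploit the semimartingale decomposition of the simple contract $\xi^{\xi_0,\zeta}$ given in \eqref{contract_form_mu}, together with the a priori bound $\sup_{\P\in\Pc}\E^\P[\sup_{0\le t\le T}\mathrm{e}^{pR_A|\xi^{\xi_0,\zeta}_t|}]<+\infty$ that comes from the definition of $\Vc$, and then apply the BMO/martingale-moment machinery already invoked in the proof of Theorem \ref{thm:mfe}. First I would recall from that proof the substitution $Y_t=-\mathrm{e}^{-R_A\xi^{\xi_0,\zeta}_t}$, which turns the contract dynamics into the (mean--field) 2BSDE \eqref{eq:2bsde2mf}, with martingale integrands $Z^1_t=-R_AY_tZ_t$ and $Z^2_t=-R_AY_t\overline Z^\mu_t$ (using that at equilibrium $\widehat\mu=\mu^\star$ so $\widehat\E^{\widehat\P_t}[Z^\mu_t(\widehat X_{t\wedge\cdot})]=\overline Z^\mu_t$, exactly the reduction exposed in Remark \ref{rk:contract_w0_observable} and in the simplification \eqref{eq:xi_simplify}). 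Since $A$ is compact and $B$-valued processes are bounded and bounded away from $0$, the driver $F$ of \eqref{eq:2bsde2mf} has linear growth in $(y,z^1)$ and the terminal data $-\mathrm{e}^{-R_A\xi}$ has $p$-th moments uniformly over $\Pc$, so \citet*[Theorem 2.1 and Proposition 2.1]{bouchard2018unified} (cited in the proof of Theorem \ref{thm:mfe}) already yield $\sup_{\P\in\Pc}\E^\P[(\int_0^T|Z^1_s|^2\,\mathrm{d}s)^{p^\prime/2}+(\int_0^T|Z^2_s|^2\,\mathrm{d}s)^{p^\prime/2}]<+\infty$ for some $p^\prime\in(1,p)$; here $p^\prime$ must be taken strictly below $p$ precisely because one loses a little integrability passing from the terminal condition to the martingale part.

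Next I would transfer these bounds from $(Z^1,Z^2)$ back to $(Z,\overline Z^\mu)$. Since $Z_s=-Z^1_s/(R_AY_s)$ and $\overline Z^\mu_s=-Z^2_s/(R_AY_s)$, and $1/|Y_s|=\mathrm{e}^{-R_A\xi^{\xi_0,\zeta}_s}$, I would apply a Hölder inequality with conjugate exponents: choose $p^{\prime\prime}\in(p^\prime,p)$, write $|Z_s|^2=|Z^1_s|^2\,\mathrm{e}^{2R_A\xi_s}/R_A^2$, and bound
\[
\E^\P\Big[\Big(\int_0^T|Z_s|^2\,\mathrm{d}s\Big)^{p^\prime/2}\Big]
\le \frac1{R_A^{p^\prime}}\,\E^\P\Big[\sup_{0\le t\le T}\mathrm{e}^{p^\prime R_A\xi_t}\Big(\int_0^T|Z^1_s|^2\,\mathrm{d}s\Big)^{p^\prime/2}\Big],
\]
then split with Hölder so that the $\sup$-exponential term lands in an $L^q$ space controlled by $\sup_\P\E^\P[\mathrm{e}^{pR_A|\xi|}]$ and the martingale term lands in an $L^r$ space with $r p^\prime/2<p^{\prime\prime}/2$, which by the previous paragraph is finite; the same argument handles $\overline Z^\mu$. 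Taking a supremum over $\P\in\Pc$ gives the claim, after relabelling the final exponent (still some $p^\prime\in(1,p)$).

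The main obstacle, and the point requiring care, is the bookkeeping of exponents: one must simultaneously stay below $p$ for the exponential moments of $\xi$, below $p$ for the 2BSDE moment estimate, and keep the Hölder conjugates compatible, which forces $p^\prime$ to be chosen strictly inside $(1,p)$ with a little room on both sides. A secondary subtlety is that the bound in the definition of $\Vc$ is stated with the factor $pR_A$ and a $\sup$ over $[0,T]$ of $\mathrm{e}^{pR_A|\xi_t|}$, so one must make sure the pathwise supremum (not merely $\xi_T$) is what enters the Hölder step — this is exactly why $\Vc$ was defined with a $\sup_{0\le t\le T}$ rather than just the terminal value, and I would emphasise that in the write-up. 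Everything else — the linear growth of $F$, the uniform moment estimates on the Girsanov densities between $\Pc$ and $\overline\Pc$, the invertibility of $S^0$ — has already been established in the excerpt and can be cited directly.
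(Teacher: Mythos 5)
Your proposal follows essentially the same route as the paper's proof: pass to the 2BSDE components $Z^1=-R_AYZ$, $Z^2=-R_AY\overline Z^\mu$, obtain their $\H^{\bar p}$--integrability from the construction in the proof of Theorem \ref{thm:mfe} (via \cite{bouchard2018unified}), control $\sup_t|1/Y_t|^p$ by the defining integrability of $\Vc$ (and the moments of the densities between $\Pc$ and $\overline\Pc$), and conclude by H\"older with the exponent bookkeeping you describe. The only slip is the sign in ``$1/|Y_s|=\mathrm{e}^{-R_A\xi_s}$'' (it should be $\mathrm{e}^{R_A\xi_s}$), which is harmless since $\Vc$ controls $\mathrm{e}^{pR_A|\xi_t|}$ and your subsequent display uses the correct factor $\mathrm{e}^{2R_A\xi_s}$.
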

\begin{proof}
First, we know by Theorem \ref{thm:mfe} and its proof that if we define
\[
Y_t:=-\mathrm{e}^{-R_A\xi_t^{\xi_0,\zeta}},\;  Z^1_t:=-R_AY_tZ_t,\;   Z^2_t:=-R_AY_t \overline Z^\mu_t,\;  \underline \Gamma_t:= -R_AY_t\Gamma_t,\; t\in[0,T],
\]
\[
 K_t:=\int_0^t\bigg(-R_AY_s \Hc^\circ(X_s,\zeta_s)-\frac12\underline\Gamma_s \Big( S_s + \big( \sigma^\circ \big)^2 \Big) - F(X_s, Y_s,  Z^1_s, S_s ) \bigg) \mathrm{d}s,\; t\in[0,T],
\]
then $(Y,(Z^1,Z^2)^\top,K)$ solves 2BSDE \ref{eq:2bsde2} and in particular that there is some $\bar p\in (1,p)$ such that $(Z^1,Z^2)^\top\in\H^{\bar p}$. Furthermore, we also have that 
\[
\bigg|\frac1{Y_t}\bigg|=\mathrm{e}^{R_A\xi_t^{\xi_0,\zeta}},
\]
so that we deduce using H\"older's inequality, the definition of $\Xi_{\rm S}$, and the fact that densities between measures in $\Pc$ and $\overline\Pc$ have moments of any order
\[
\sup_{\P\in\Pc}\E^\P\bigg[\sup_{0\leq t\leq T}\bigg|\frac1{Y_t}\bigg|^p\bigg]<+\infty.
\]
Then, we have for any $\tilde p\in(1,\bar p)$
\begin{align*}
\sup_{\P\in\Pc}\E^\P\bigg[\bigg(\int_0^T|Z_s|^2\mathrm{d}s\bigg)^{\tilde p/2}\bigg]&=\frac1{R_A^{\tilde p}}\sup_{\P\in\Pc}\E^\P\bigg[\bigg(\int_0^T\bigg|\frac{Z^1_s}{Y_s}\bigg|^2\mathrm{d}s\bigg)^{\tilde p/2}\bigg]\\
&\leq \frac1{R_A^{\tilde p}}\sup_{\P\in\Pc}\E^\P\bigg[\sup_{0\leq t\leq T}\bigg|\frac1{Y_t}\bigg|^{\tilde p}\bigg(\int_0^T\big|Z^1_s\big|^2\mathrm{d}s\bigg)^{\tilde p/2}\bigg]\\
&\leq  \frac1{R_A^{\tilde p}}\sup_{\P\in\Pc}\bigg(\E^\P\bigg[\sup_{0\leq t\leq T}\bigg|\frac1{Y_t}\bigg|^p\bigg]\bigg)^{\frac{\tilde p}{p}}\sup_{\P\in\Pc}\bigg(\E^\P\bigg[\bigg(\int_0^T\big|Z^1_s\big|^2\mathrm{d}s\bigg)^{\frac{p\tilde p}{2(p-\tilde p)}}\bigg]\bigg)^{1-\tilde p/p}
\end{align*}
To conclude, we want to make sure that we can choose $\tilde p\in (1,\bar p)$ such that $p\tilde p/(p-\tilde p)\in(1,\bar p]$. This is equivalent to having
\[
\begin{cases}
\displaystyle \tilde p>\frac{p}{1+p}\\
\displaystyle \tilde p\leq \frac{p\bar p}{p+\bar p},
\end{cases}
\]
which is always possible since $p\bar p/(p+\bar p)>p/(1+p)\Longleftrightarrow \bar p(p-1)>0$. The same reasoning gives us the required result for $\overline Z^\mu$.
\end{proof}

\subsection{Proof of Theorem \ref{thm:main}}\label{proof:main}
Before explaining how to prove the aforementioned result, notice that the second equality in Theorem \ref{thm:main} is trivial. Indeed, in absence of limited liability, the value of the Principal is a non--increasing function of the utility obtained by the Agent. Mathematically, this translates into the fact that the dynamics of both state variables in the Principal's problem actually do not depend on $L$, so that the dependence on the associated initial value is straightforward.

\medskip
The proof of the first equality relies on arguments similar to the ones developed in \cite{cvitanic2018dynamic}, \cite{elie2019tale} and \cite{aid2018optimal}. Using Proposition \ref{prop:genbestreac} and Theorem \ref{th:genbestreac}, we know that for $\xi\in\Xi$, we have that there exists an equilibrium $(\P^\star, \mu^\star) \in \Mc^\star(\xi)$, where $\P^\star$ is such that for $\P^{\star}$--a.e. $\omega \in \Omega$ and for every $t \in [0,T]$, we have
    \begin{align*}
        \mu^{\star}_t(\omega)=(\P^\star)^{\omega}_t \circ (X_{t \wedge \cdot} )^{-1},
    \end{align*}
and
 \[
 K=0,\; \P^{\star}-\mathrm{a.s.}, \; \big(\alpha^{\star}_t,\beta^{\star}_t\big)\in\underset{(a,b)\in A\times\Sigma^{-1}(S_t)}{\mathrm{argmax}}\; F(X_{t\wedge\cdot}, Y_t, Z^1_t,S_t),\; \P^{\star}-\mathrm{a.s.},
 \]
 where $K$ is the last component of the solution $(Y,(Z^1,Z^2)^\top,K)$ of the 2BSDE
 \[
  Y_t
 =
-\mathrm{e}^{-R_A \xi}+\int_t^TF(X_s, Y_s, Z^1_s,S_s)\mathrm{d}s
 -\int_t^T Z^1_s\mathrm{d}X_s-\sigma^\circ\int_t^T Z^2_s\mathrm{d}W^\circ_s+\int_t^T\mathrm{d} K_s.
 \]
 
The main difference between contracts in $\Xi$ and $\Xi_{\rm S}$ comes from whether the process $K$ above is absolutely continuous with respect to Lebesgue measure or not. Since it is not in general, we will approximate it by a sequence of absolutely continuous ones. Fix thus some $\eps>0$, and define the absolutely continuous approximation of $K$
\[
 K^{\eps}_t
 :=
 \frac1\eps\int_{(t-\eps)^+}^t K_s\mathrm{d}s,\; t\in[0,T].
\]
Clearly, $ K^{\eps}$ is $\F^{\overline\Pc}$--predictable, non--decreasing $\overline\Pc$--q.s. and
 \begin{equation}\label{eq:kk}
 K^\eps=0,
 \; \P^\star-\mbox{a.s. for all}\
 (\P^\star,\mu^\star)\in \Mc^\star(\xi).
 \end{equation}
We next define for any $t\in[0,T]$ the process
 \begin{equation}\label{Yeps}
 Y^\eps_t
 :=
 Y_0
 -\int_0^t F(X_s, Y^\eps_s,Z^1_s,S_s)\mathrm{d}s
 +\int_0^t Z^1_s \mathrm{d}X_s+\sigma^\circ\int_0^t Z^2_s\mathrm{d}W^\circ_s
 -\int_0^t \mathrm{d}K^\eps_s,
 \end{equation}
and verify that $(Y^\eps, (Z^1,Z^2), K^{\eps})$ solves the 2BSDE with terminal condition $-\mathrm{e}^{-R_A\xi^\eps}:=Y^\eps_T$ and generator $F$. Indeed, since $K^\eps \leq K$,  $ K^{\eps}$ does satisfy the required minimality condition, which is obvious by \eqref{eq:kk}.  We also verify that $\sup_{\overline\P\in\overline\Pc}\E^{\overline\P}\big[|\mathrm{e}^{-R_A\xi^\eps}|^{p}\big]<\infty$. Thus, by \cite[Theorem 4.4]{possamai2018stochastic}, we have the estimates
 \begin{equation}\label{estimates-eps}
 \|Y^\eps\|_{\S^{\bar p}}+\big\|(Z^1,Z^2)^\top\big\|_{\H^{\bar p}}<\infty,\;
 \mbox{for}\;
 \bar p\in(1,p).
 \end{equation}
We finally observe that a probability measure $\P$ satisfies $K=0,$ $\P$--a.s. if and only if it satisfies $K^\eps=0$, $\P$--a.s. Notice then that for any $(t,\omega,x,y,z^1,z^2)\in[0,T]\times\Omega \times\R^4$, the map
\begin{equation}\label{surjective}
    \gamma\longmapsto -R_A y \Hc^\circ \bigg( x, - \dfrac{1}{R_A y} (z^1,z^2,\gamma) \bigg) - \frac12 \gamma \Big(S_s(\omega) + \big( \sigma^\circ \big)^2 \Big) -F(x,y,z^1,S_s(\omega))
    ~\mbox{is surjective on}~(0,\infty).
\end{equation}
Indeed, it is non--negative, by definition of $\Hc^\circ$ and $F$, convex, continuous on the interior of its domain, and is coercive by the boundedness of the controls.

\medskip
Let $\dot{K}^\eps$ denote the density of the absolutely continuous process $K^\eps$ with respect to the Lebesgue measure. Applying a classical measurable selection argument (the maps appearing here are continuous, and we can use the results from \citet*{benes1970existence,benes1971existence}), we may deduce the existence of an $\F^{\rm obs}$--predictable process $\Gamma^\eps$ such that
\[
 \dot{K}_s^{\eps}
 =
-R_A Y^\eps_s \Hc^\circ \bigg( X_s,  - \dfrac{1}{R_A Y^\eps_s} (Z^1_s, Z^2_s, \Gamma_s^\eps) \bigg) - \frac12 \Gamma_s^\eps \Big( S_s + \big( \sigma^\circ \big)^2 \Big) - F(X_s,Y^\eps_s, Z^1_s,S_s).
\]

For $\dot{K}_s^{\eps}>0$, this is clear from \eqref{surjective}, and if $\dot{K}_s^{\eps}=0$, $\Gamma^\eps_s$ can be chosen arbitrarily. Substituting in \eqref{Yeps}, it follows that the following representation for $Y^\eps$ holds
\[
 Y^\eps_t
 =
 Y_0
 +\int_0^t R_A Y^\eps_s\Hc^\circ\bigg( X_s, - \dfrac{1}{R_A Y^\eps_s} (Z^1_s,Z^2_s,\Gamma_s^\eps) \bigg) \mathrm{d}s
 +\int_0^t Z^1_s \mathrm{d}X_s
 +\sigma^\circ \int_0^t Z^2_s \mathrm{d}W^\circ_s
 +\frac 12\int_0^t \Gamma^\eps_s\mathrm{d}\langle X\rangle_s.
\]

Applying It\=o's formula to $-1/R_A\log(-Y_t^\eps)$, 
\begin{align*}
    \xi^\eps_t = &- \frac1{R_A} \log(-Y_0^\eps) - \int_0^t \Hc^\circ (X_s, \overline \zeta_s)  \drm s 
    + \int_0^t Z_s \drm X_s 
    + \sigma^{\circ} \int_0^t \overline{Z}_s^{\mu} \drm W^{\circ}_s 
    + \dfrac{1}{2} \int_0^t \big( \Gamma_s + R_A |Z_s|^2 \big) \drm \langle X \rangle_s \\
    &+ \dfrac{1}{2} R_A  \big( \sigma^{\circ} \big)^2 \int_0^t  \overline Z_s^{\mu} \big(\overline Z_s^{\mu}+ 2Z_s\big)  \drm s.
\end{align*}
where
\[ 
    \overline \zeta := (Z, \overline Z^\mu, \Gamma) = - \dfrac1{R_A Y^\eps} (Z^1, Z^2, \Gamma^\eps).
\]

Define $Z^\mu$ to be any $\F^{\rm obs}$--predictable process, taking values in $\Lc$, such that $\widehat\E^{\P^\star_s}[ Z_s^\mu(\widehat X_{s\wedge\cdot})]= \overline Z^\mu_s$. Using the dynamic of $\widehat X$, we deduce that
\begin{align*}
    \xi_t^{\eps} := &\ -\frac1{R_A}\log(-Y^\eps_0)  - \int_0^t \Hc (X_s, \widehat \mu_s, \zeta_s, \alpha_s^\star)  \drm s 
    + \int_0^t Z_s \drm X_s 
    + \dfrac{1}{2} \int_0^t \big( \Gamma_s + R_A Z_s^2 \big) \drm \langle X \rangle_s
    + \int_0^t \widehat{\mathbb{E}}^{\P^\star_s} \big[ Z_s^{\mu} \big(\widehat X_{s\wedge\cdot}\big) \drm \widehat X_s \big]
    \nonumber \\
    &+ \dfrac{1}{2} R_A \int_0^t  \widehat{\mathbb{E}}^{\P^\star_s} \widecheck{\mathbb{E}}^{\P^\star_s} \big[ Z_s^{\mu} \big(\widehat X_{s\wedge\cdot}\big) Z_s^{\mu} \big(\widecheck{X}_{s\wedge\cdot}\big) \drm \big\langle \widehat X, \widecheck{X} \big\rangle_s \big]
    + R_A \int_0^t Z_s \widehat{\mathbb{E}}^{\P^\star_s}  \big[Z_s^{\mu} \big(\widehat X_{s\wedge\cdot}\big) \drm \big\langle X, \widehat X \big\rangle_s \big],\; t\in[0,T],
\end{align*}
where $\zeta:= (Z, Z^\mu, \Gamma).$ This shows that the contract $\xi^\eps$ has the required dynamics \eqref{contract_form_mu}, since, at equilibrium, $\mu^\star=\widehat\mu$ and the effort $\alpha^\star = \widehat \alpha^\star$ and is unique. The fact that it belongs to $\Xi_{\rm S}$ then stems from \eqref{estimates-eps} and arguments similar to those in Lemma \ref{lemma:integZ}. We can then conclude, in view of Appendix \ref{appendix_anotherrepres} and as in the proof of \cite[Theorem 3.6]{cvitanic2018dynamic} by noting that $\xi^\varepsilon=\xi,\; \mathbb P^\star$--a.s.

\subsection{Proof of Theorem \ref{thm:Pb_Principal_general}}\label{proof:Pb_Principal_general}

Let $v$ be a solution to the PDE \eqref{eq:pde_vP}, smooth enough in the sense of Definition \ref{def:C12}, such that the condition \eqref{eq:conditions_martingale_general} is satisfied. Moreover, assume that the supremum in the PDE \eqref{eq:pde_vP} is attained for a function $\overline v^\star$, from $[0,T] \times \Pc(\R^2)$ to  $\R^3$. We need to prove that $V_0^{P} = v(0,\mu^Y_0)$.

\medskip

Applying the Chain Rule under $\Cc^{1,2}$--regularity on $v$, as a function depending on time and on the conditional law $\mu^Y$ of $Y^{\overline \zeta} = (X, L^{\overline \zeta})^\top$, we obtain:
\begin{align*}
    \drm v ( t, \mu^Y_t ) = \bigg( & \partial_t v ( t, \mu^Y_t )
    + \int \partial_{\mu^L} v ( t, \mu^Y_t ) (x, \ell) (g-f)(x) \mu^Y_t (\drm x, \drm \ell) 
    + \dfrac{1}{2} \big( \sigma^{\circ} \big)^2 \int \int \partial_{\mu^X}^2  v ( t, \mu^Y_t ) (y, \widetilde y) \mu^Y_t (\drm y) \mu^Y_t (\drm \widetilde y) \\
    &+ \dfrac{\theta}{2} \big(\sigma^{\circ}\big)^2 \int \partial_{\mu^L} v ( t, \mu^Y_t ) (x, \ell) \mu^Y_t (\drm x, \drm \ell) 
    + \dfrac{1}{2} \big( \sigma^{\circ} \big)^2 \int \partial_{x} \partial_{\mu^X} v ( t, \mu^Y_t ) (x, \ell) \mu^Y_t (\drm x, \drm \ell) \\
    &+ \dfrac{1}{2} h(\mu^Y_t, \partial_{\mu} v (t, \mu^Y_t), \partial_{y} \partial_\mu v(t,\mu^Y_t), \partial^2_{\mu} v(t,\mu^Y_t), \overline \zeta) \bigg) \drm t \\
    &+ \sigma^{\circ} \int \big( - \partial_{\mu^X} v ( t, \mu^Y_t ) (x, \ell)  + \partial_{\mu^L} v ( t, \mu^Y_t ) (x, \ell) \big( Z_t + \overline Z_t^{\mu} \big) \big) \mu^Y_t (\drm x, \drm \ell) \drm W_t^{\circ}.
\end{align*}

By assumption, $v$ is solution to the HJB equation \eqref{eq:pde_vP}:
\begin{align*}
    0 = &\ \partial_t v ( t, \mu^Y_t )
    + \dfrac{1}{2} \big( \sigma^{\circ} \big)^2 \int \int \partial_{\mu^X}^2  v ( t, \mu^Y_t ) \big(x, \ell, \widetilde x, \widetilde \ell \big) \mu^Y_t (\drm x, \drm \ell) \mu^Y_t (\drm \widetilde x, \drm \widetilde \ell)
    + \int \partial_{\mu^L} v ( t, \mu^Y_t ) (x, \ell) (g-f)(x) \mu^Y_t (\drm x, \drm \ell) \nonumber \\
    &+ \dfrac{\theta}{2} \big(\sigma^{\circ}\big)^2 \int \partial_{\mu^L} v ( t, \mu^Y_t ) (x, \ell) \mu^Y_t (\drm x, \drm \ell)
    + \dfrac{1}{2} \big( \sigma^{\circ} \big)^2 \int \partial_{x} \partial_{\mu^X} v ( t, \mu^Y_t ) (x, \ell) \mu^Y_t (\drm x, \drm \ell) \nonumber \\
    &+ \dfrac{1}{2} \sup_{\overline v \in \R^3} h(\mu^Y_t, \partial_{\mu} v (t, \mu^Y_t), \partial_{y} \partial_\mu v(t,\mu^Y_t), \partial^2_{\mu} v(t,\mu^Y_t), \overline v),
\end{align*}
therefore we obtain
\begin{align*}
    v (T, \mu^Y_T ) = &\ v(0,\mu^Y_0) 
    + \dfrac{1}{2} \int_0^T \Big( 
    h(\mu^Y_t, \partial_{\mu} v, \partial_{y} \partial_\mu v, \partial^2_{\mu} v, \overline \zeta) 
    -  \sup_{\overline{\zeta} \in \R^3} h(\mu^Y_t, \partial_{\mu} v, \partial_{y} \partial_\mu v, \partial^2_{\mu} v, \overline v)
    \Big) \drm t \\
    &+ \sigma^{\circ} \int_0^T \int \big( - \partial_{\mu^X} v ( t, \mu^Y_t ) (x, \ell)  + \partial_{\mu^L} v ( t, \mu^Y_t ) (x, \ell) \big( Z_t + \overline Z_t^{\mu} \big) \big) \mu^Y_t (\drm x, \drm \ell) \drm W_t^{\circ}
\end{align*}

Under Assumption \eqref{eq:conditions_martingale_general} on the partial derivatives of $v$, the process
\begin{align*}
    \sigma^{\circ} \int_0^\cdot \int \big( - \partial_{\mu^X} v ( t, \mu^Y_t ) (x, \ell)  + \partial_{\mu^L} v ( t, \mu^Y_t ) (x, \ell) \big( Z_t + \overline Z_t^{\mu} \big) \big) \mu^Y_t (\drm x, \drm \ell) \drm W_t^{\circ}.
\end{align*}
is a $\P$--martingale, since the following quantities
\begin{align*}
    \underbrace{\E^\P \bigg[ \sup_{0\leq t\leq T} \bigg| \int_0^t \int \partial_{\mu^X} v ( s, \mu^Y_s ) (x, \ell)   \mu^Y_s (\drm x, \drm \ell) \drm W_s^\circ \bigg| \bigg] }_{=:A},\; \text{and}\; \underbrace{\E^\P \bigg[ \sup_{0\leq t\leq T} \bigg| \int_0^t \int \partial_{\mu^L} v ( s, \mu^Y_s ) (x, \ell) \big( Z_s + \overline Z_s^{\mu} \big) \mu^Y_s (\drm x, \drm \ell) \drm W_s^\circ \bigg| \bigg] }_{=:B},
\end{align*}
are finite. Indeed, on the one hand, using Burkholder--Davis--Gundy inequality, we have for some constant $C>0$, independent of $\P$,
\begin{align*}
    A&\leq C \E^\P \bigg[ \bigg(\int_0^T \bigg( \int \partial_{\mu^X} v ( t, \mu^Y_t ) (x, \ell) \mu^Y_t (\drm x, \drm \ell) \bigg)^2 \drm s\bigg)^{1/2} \bigg] <+\infty,
\end{align*}
by the first part of Assumption \eqref{eq:conditions_martingale_general}. On the other hand, using in addition H\"older's inequality, we have, for some constant $\widetilde C > 0$ independent of $\P$, and for $p^\prime>1$ given by Lemma \ref{lemma:integZ}
\begin{align*}
    B&\leq \widetilde C  \E^\P \bigg[ \bigg(\int_0^T \bigg( \int\partial_{\mu^L} v ( s, \mu^Y_s ) (x, \ell) \big( Z_s + \overline Z_s^{\mu} \big)\mu^Y_s (\drm x, \drm \ell) \bigg)^2 \drm s\bigg)^{1/2} \bigg] \\
    &\leq  2 \widetilde C  \E^\P \bigg[ \sup_{0\leq t\leq T} \bigg| \int\partial_{\mu^L} v ( t, \mu^Y_t ) (x, \ell)\mu^Y_t (\drm x, \drm \ell)\bigg|\bigg(\int_0^T  \big( Z_s^2 + (\overline Z_s^{\mu})^2 \big) \drm s\bigg)^{1/2} \bigg] \\
 &\leq  2 \widetilde C  \bigg(\E^\P \bigg[\sup_{0\leq t\leq T} \bigg| \int\partial_{\mu^L} v ( t, \mu^Y_t ) (x, \ell)\mu^Y_t (\drm x, \drm \ell)\bigg|^{\frac{p^\prime}{p^\prime-1}} \bigg]\bigg)^{1-\frac1{p^\prime}} \bigg(\E^\P \bigg[ \bigg(\int_0^T  \big( Z_s^2 + (\overline Z_s^{\mu})^2 \big) \drm s\bigg)^{p^\prime/2} \bigg]\bigg)^{\frac1{p^\prime}}<+\infty.
\end{align*}
Indeed, the first term is finite by the second part of Assumption \eqref{eq:conditions_martingale_general}, and the second term is also finite since $\overline \zeta \in \overline \Vc$ implies that $Z$ and $\overline Z^\mu$ are in $\H^{p^\prime}$ by Lemma \ref{lemma:integZ}. Therefore we obtain 
\begin{align*}
    \E^\P \big[ v (T, \mu^Y_T ) \big] = v(0,\mu^Y_0) 
    + \int_0^T \E^\P \Big[ 
    &\ h(\mu^Y_t,\partial_{\mu} v (t,\mu^Y_t), \partial_{y} \partial_{\mu} v (t,\mu^Y_t), \partial^2_{\mu} v (t,\mu^Y_t), \overline \zeta_t) \nonumber \\
    &- \sup_{\overline{v} \in \R^3}\theta(\mu^Y_t, \partial_{\mu} v (t,\mu^Y_t), \partial_{y} \partial_{\mu} v (t,\mu^Y_t), \partial^2_{\mu} v (t,\mu^Y_t), \overline v)
    \Big] \drm t.
\end{align*}

Using the terminal condition $v^P ( T, \mu^Y_T) = U^P \Big( - \mathbb{E}^{\P_T} \Big[ L^{\overline \zeta}_T \Big] \Big)$ and noticing that for all $t \in [0,T]$,
\begin{align*}
    h(\mu^Y_t,\partial_{\mu} v (t,\mu^Y_t), \partial_{y} \partial_{\mu} v (t,\mu^Y_t), \partial^2_{\mu} v (t,\mu^Y_t), \overline \zeta_t)
    - \sup_{\overline{v} \in \R^3}\theta(\mu^Y_t, \partial_{\mu} v (t,\mu^Y_t), \partial_{y} \partial_{\mu} v (t,\mu^Y_t), \partial^2_{\mu} v (t,\mu^Y_t), \overline v) \leq 0,
\end{align*}
with equality for $\overline{\zeta}^{\star}_t := \overline{v}^\star (t, \mu_t^Y)$ by assumption, which leads to
\begin{align*}
    \E^\P \Big[ U^P \Big( - \mathbb{E}^{\P_T} \Big[ L^{\overline \zeta}_T \Big] \Big) \Big] \leq v(0, \mu^Y_0),
\end{align*}
with equality for $\overline{\zeta}^{\star}$. Therefore, $v(0, \mu^Y_0) = V^{P}$.

\subsection{Proof of Proposition \ref{prop:Pb_Principal_RA_general}}\label{proof:Pb_Principal_RA_general}

Let $u$ be a solution to the PDE \eqref{eq:pde_principal_RA_general}, smooth enough in the sense of Definition \ref{def:C12}, satisfying Condition \eqref{eq:conditions_martingale_RP}. Let $\overline \zeta^\star = (Z^{\star}, \overline{Z}^{\mu,\star}, \Gamma^{\star})$ be a process in $\overline \Vc$ such that for all $t \in [0,T]$, $\overline \zeta_t^\star := \overline v^\star \big(t, \mu^X_t \big)$ is the maximiser of $h^P$ defined by \eqref{eq:def_hP}. We define the function $v$ as:
\begin{align*}
    v(t, \mu^Y_t) = - \erm^{ R_P \big( \mathbb{E}^{\P_t} \big[L^{\overline \zeta^\star}_t \big] - u ( t, \mu^X_t ) \big) }.
\end{align*}

To prove the first point of the proposition, it is sufficient to show that the function $v$ satisfies the assumptions of Theorem \ref{thm:Pb_Principal_general}. Indeed, we will have, by the first point of the theorem, 
\begin{align*}
    - \erm^{ R_P ( \xi_0- u (0, \mu_0^X) ) } = v(0,\mu^Y_0) = V_0^P.
\end{align*}

First of all, $v$ has the same regularity as $u$, and is therefore smooth enough in the sense of Definition \ref{def:C12}. 
Moreover, we can prove that $v$ satisfies the condition \eqref{eq:conditions_martingale_general}. Indeed, since 
\begin{align*}
    \int \partial_{\mu^X} v(t, \mu^Y_t)(x, \ell) \mu^Y_t (\drm x, \drm \ell) &= - R_P v(t,\mu^Y_t) \overline u_{\mu^X} \big( t,\mu^X_t \big) \; \text{ and } \; \int \partial_{\mu^L} v(t, \mu^Y_t )(x, \ell)  \mu^Y_t (\drm x, \drm \ell) = R_P v(t,\mu^Y_t),
\end{align*}
Condition \eqref{eq:conditions_martingale_general} is equivalent here to
\begin{align*}
  \E^\P \bigg[\bigg( \int_0^T \big| \overline u_{\mu^X} \big( t,\mu^X_t \big)  v(t,\mu^Y_t) \big|^2 \drm t\bigg)^{1/2} \bigg] +  \E^\P \bigg[ \sup_{0\leq t\leq T}  \big|  v(t,\mu^Y_t) \big|^{p^\prime/(p^\prime-1)} \bigg] < + \infty.
\end{align*}

By applying H\"older's inequality twice, we have the following upper bound for the first expectation
\begin{align*}
    \bigg( \E^\P \bigg[  \bigg(\int_0^T \big| \overline u_{\mu^X} \big( t,\mu^X_t \big) \big|^2 \drm t\bigg)^{p^\prime/2} \bigg] \bigg)^{1/p^\prime}
    \bigg(\E^\P \bigg[ \sup_{0\leq t\leq T}  \big|  v(t,\mu^Y_t) \big|^{p^\prime /(p^\prime-1)} \bigg]  \bigg)^{1-1/p^\prime },
\end{align*}
where the first expectation is finite since $u$ satisfies Condition \eqref{eq:conditions_martingale_RP}. It remains to prove that the second one is finite. By applying H\"older's inequality,
\begin{align*}
    \E^\P \bigg[ \sup_{0\leq t\leq T}  \big|  v(t,\mu^Y_t) \big|^{p^\prime /(p^\prime -1)} \bigg]
    &= \E^\P \bigg[ \sup_{0\leq t\leq T} \erm^{ \frac{p^\prime R_P}{p^\prime -1} \big( \mathbb{E}^{\P_t} \big[ L^{\overline \zeta^\star}_t \big] - u ( t, \mu^X_t ) \big) } \bigg] \\
    &\leq  \bigg( \E^\P \bigg[ \sup_{0\leq t\leq T} \erm^{\frac{\varepsilon p^\prime R_P}{p^\prime-1} \mathbb{E}^{\P_t} \big[ L^{\overline \zeta^\star}_t \big]}  \bigg] \bigg)^{1/\varepsilon} \bigg( \E^\P \bigg[ \sup_{0\leq t\leq T} \erm^{- \frac{q^\prime p^\prime R_P}{p^\prime -1} u ( t, \mu^X_t ) } \drm t \bigg] \bigg)^{1/q^\prime},
\end{align*}
for $q^\prime = \varepsilon/(\varepsilon-1)$ and recalling that $\varepsilon = \sqrt{\overline p (p^\prime-1)/p^\prime} $. Thus, the second expectation if finite since $u$ satisfies Condition \eqref{eq:conditions_martingale_RP}. Moreover, since $\langle X\rangle$ is bounded and applying again Holder's inequality with $\varepsilon$ and $q'$, there exists some positive constant $C$ such that the first expectation has the following upper bound
\begin{align*}
    C \bigg( \E^\P \bigg[ \sup_{0\leq t\leq T} \erm^{\frac{\varepsilon^2 p^\prime R_P}{p^\prime-1} \mathbb{E}^\P \big[ \xi^{\overline \zeta^\star}_t \big| \Fc_t^\circ \big]}  \bigg] \bigg)^{\frac{1}{\varepsilon^2}} 
    \bigg(  \E^\P \bigg[ \sup_{0\leq t\leq T} \erm^{\frac{\overline p R_P}{\varepsilon-1} \mathbb{E}^\P \big[ \int_0^t g(X_s)\drm s \big| \Fc_t^\circ \big]}  \bigg] \bigg)^{\frac{1}{\varepsilon-1}}.
\end{align*}
By noting that $\frac{\varepsilon^2 p^\prime}{p^\prime -1}=\overline p$, we deduce from \eqref{integrability:rp:xi} that the first term is finite. Since $g$ has linear growth and $X$ has bounded drift and volatility, the second term is also finite.

\medskip

To apply Theorem \ref{thm:Pb_Principal_general}, it remains to prove that $v$ is a solution to the PDE \eqref{eq:pde_vP} and that $\overline \zeta^\star$ satisfies the optimality condition on $h$. By computing the partial derivatives of $v$ in term of $u$,
the function $h$ in the PDE \eqref{eq:pde_vP} can be rewritten as:
\begin{align*}
    h(\mu_t, \partial_{\mu} v, \partial_{y} \partial_{\mu} v, \partial^2_{\mu} v, \overline v) = 
    R_P v \Big(
    & \Sigma^{\star} (\gamma) \big(\theta+ R_A z^2 - \overline{u}_{x,\mu^X}  \big) + c_{\beta}^\star (\gamma)
    + c_{\alpha}^\star (z)
    + 2 \overline{\rho} \big( z^{-} \wedge A_{\textnormal{max}} \big)  \overline{u}_{\mu^X}  \\
    &+ \big(\sigma^{\circ}\big)^2 (R_A + R_P) \big(z + \overline z^{\mu} \big)^2
    - 2 R_P \big( \sigma^{\circ} \big)^2 \big( z + \overline z^{\mu} \big) \overline{u}_{\mu^X} 
    \Big).
\end{align*}

Noticing that $v < 0$, 
\begin{align*}
    \sup_{\overline v \in \R^3} h(\mu^Y_t, \partial_{\mu} v(t, \mu^Y_t), \partial_{y} \partial_{\mu} v(t, \mu^Y_t), \partial^2_{\mu} v(t, \mu^Y_t), \overline v) =  R_P v(t, \mu_t^Y)  \inf_{\overline v \in \R^3} h^P(\mu_t^X, \overline{u}_{\mu^X} \big(t, \mu^X_t \big), \overline{u}_{x, \mu^X} \big(t, \mu^X_t \big), \overline v),
\end{align*}
with $h^P$ defined by \eqref{eq:def_hP}. Since the infimum of $h^P$ on $\overline{z}^{\mu}$ is attained in 
\begin{align*}
    \overline{z}^{\mu,\star} \big(t, \mu^X_t \big) = - z + \dfrac{R_P}{R_A + R_P} \overline{u}_{\mu^X} \big(t, \mu^X_t \big),
\end{align*}
point $(iii)$ of the proposition has been proven and point $(v)$ is an easy computation for simple contracts in Definition \ref{def:simple_contracts} with the optimal payment rate $\overline{Z}_t^{\mu,\star} = \overline{z}^{\mu,\star} \big(t, \mu^X_t \big)$. Moreover, we obtain
\begin{align*}
    \inf_{\overline v \in \R^3} h^P(\mu_t^X, \overline{u}_{\mu^X}, \overline{u}_{x, \mu^X}, \overline v) = 
    &- \big(\sigma^{\circ}\big)^2 \dfrac{R_P^2}{R_A + R_P} \big( \overline{u}_{\mu^X} \big)^2 
    - \overline{\rho} \big( \overline{u}_{\mu^X} \big)^2
    + \inf_{z \in \R} \Big\{ 
    F_0 (q(z,\overline{u}_{x,\mu^X}))
    + \overline{\rho} \big( (z^{-} \wedge A_{\textnormal{max}}) + \overline{u}_{\mu^X} \big)^2
    \Big\}.
\end{align*}

Therefore, the function $v$ is solution to the PDE \eqref{eq:pde_vP} if
\begin{align*}
    0 = R_P v \bigg( 
    &- \partial_t u \big( t, \mu_t^X \big) 
    + \int (g-f)(x) \mu_t^X (\drm x) 
    + \dfrac{\theta}{2} \big(\sigma^{\circ}\big)^2 
    + \dfrac{1}{2} \bigg( \big( \sigma^{\circ} \big)^2 \overline R - \overline{\rho} \bigg)  \big( \overline{u}_{\mu^X} \big( t, \mu_t^X \big) \big)^2 \\
    &- \dfrac{1}{2} \big( \sigma^{\circ} \big)^2 \big( \overline{u}_{x,\mu^X} + \overline{u}_{\mu^X, \mu^X} \big) \big( t, \mu_t^X \big)
    + \dfrac{1}{2} \inf_{z \in \R} \Big\{ 
    F_0 \big( q \big(z,\overline{u}_{x,\mu^X} \big( t, \mu_t^X \big) \big) \big)
    + \overline{\rho} \big( (z^{-} \wedge A_{\textnormal{max}}) + \overline{u}_{\mu^X} \big( t, \mu_t^X \big) \big)^2
    \Big\} \bigg),
\end{align*}
and this equality is true since $u$ is solution to the PDE \eqref{eq:pde_principal_RA_general}.

\medskip

Consider now the following minimisation problem:
\begin{align}\label{eq:min_pb_general}
    \inf_{z \in \R} \Big\{ 
    F_0 \big( q \big(z,\overline{u}_{x,\mu^X} \big( t, \mu_t^X \big) \big) \big)
    + \overline{\rho} \big( (z^{-} \wedge A_{\textnormal{max}}) + \overline{u}_{\mu^X} \big( t, \mu_t^X \big) \big)^2
    \Big\}.
\end{align}

As noticed in \cite[Lemma 4.1]{aid2018optimal}, the function $F_0$ is non--decreasing, and the infimum is reached for $\gamma^{\star} = - q$, which proves the point $(iv)$ of the Proposition, and we have
\begin{align*}
    F_0 (q) = q \Sigma^{\star} ( -q ) + c_{\beta}^{\star} (-q) = \sum_{k=1}^d \dfrac{\big( \sigma^k \big)^2}{\lambda^k} \bigg[ \lambda^k \mathds{1}_{\lambda^k q \leq 1} + \dfrac{1}{\eta^k} \bigg( \big( 1 + \eta^k \big) \big( \lambda^k q \big)^{\frac{\eta^k}{1 + \eta^k}} -1 \bigg)  \mathds{1}_{\lambda^k q > 1} \bigg].
\end{align*}

For $z \geq 0$, the minimisation problem~(\ref{eq:min_pb_general}) is equal to
$\overline{\rho} \big( \overline{u}_{\mu^X} \big( t, \mu_t^X \big) \big)^2 + \inf_{z \geq 0} \big\{ F_0 \big( q \big( z,\overline{u}_{x,\mu^X} \big( t, \mu_t^X \big) \big) \big) \big\}$, and since $F_0$ is non-decreasing, its minimum is attained on the minimum of $q \big( z, \overline{u}_{x,\mu^X} \big( t, \mu_t^X \big) \big)$, for $z=0$. Therefore, \eqref{eq:min_pb_general} is equal to $\overline{\rho} \big( \overline{u}^P_{\mu^X} \big( t, \mu_t^X \big) \big)^2 + F_0 \big(\theta- \overline{u}_{x, \mu^X} \big( t, \mu_t^X \big) \big)$.

\medskip

On the other hand, for $z \leq 0$, the minimisation problem~(\ref{eq:min_pb_general}) is equal to
\begin{align*}
    \inf_{z \leq 0} \Big\{ 
    F_0 \big( q \big( z,\overline{u}_{x,\mu^X} \big( t, \mu_t^X \big) \big) \big)
    + \overline{\rho} \big( (-z \wedge A_{\textnormal{max}}) + \overline{u}_{\mu^X} \big( t, \mu_t^X \big) \big)^2
    \Big\},
\end{align*}
then, if $\overline{u}_{\mu^X} \big( t, \mu_t^X \big) \geq 0$, the infimum is reached on $z=0$. Otherwise, the infimum lie between $\overline{u}_{\mu^X} \big( t, \mu_t^X \big)  \vee - A_{\textnormal{max}}$ and $0$.

\medskip

To sum up, the optimal process $Z^\star$ is defined for $t\in[0,T]$ by $Z_t^\star = z^\star \big(t, \mu^X_t \big)$ which is the minimiser $z^{\star}$ of~(\ref{eq:min_pb_general}) and satisfies
\begin{align*}
    z^{\star} \big(t, \mu^X_t \big) =0, \text{ when } \; \overline{u}_{\mu^X}\big(t, \mu^X_t \big) \geq 0,
    \text{ and } \; z^{\star} \big(t, \mu^X_t \big) \in \big[\overline{u}_{\mu^X} \big(t, \mu^X_t \big) \vee - A_{\textnormal{max}}, 0 \big] \; \text{ when } \; \overline{u}_{\mu^X} \big(t, \mu^X_t \big) \leq 0,
\end{align*}
thus the point $(ii)$ has been proven. 

\medskip

To conclude the proof, it is sufficient to notice that $\overline \zeta^\star$ defined by the triple $(Z^\star, Z^{\mu, \star}, \Gamma^\star)$ satisfies the optimality condition and apply the Theorem \ref{thm:Pb_Principal_general} to the function $v$.

\section{Reservation utility of the consumer}\label{sec:reservation}

The contract $\xi$ offered by the Principal has to satisfy the participation constraint $V^A (\xi) \geq R_0$, where $R_0$ is defined as the expected utility of the consumer without contract:
\begin{align*}
    R_0 := \sup_{\P \in \Pc} \E^\P \bigg[- \exp \bigg( -R_A \int_0^T \big( f(X_s)-c \big( \nu^\P_s \big)  \big) \drm s \bigg) \bigg]. 
\end{align*}
The underlying idea is that consumers will refuse the contract if it provides them with a utility level which is lower than the one they could attain by themselves without any contract. The corresponding value can then be obtained through a standard HJB equation PDE, which we can solve explicitly when the function $f$ is linear. Apart from the terms depending on the common noise, the same results as in \cite{aid2018optimal} are obtained. {\color{black} Nevertheless, we choose to detail here the proofs of the various results of the section, to highlight the impact of the common noise.}

\begin{proposition}[Consumer's reservation utility]\label{prop:RU}
Assume that $f$ has linear growth. Then the following holds.
    \begin{enumerate}[label=$(\roman*)$] 
    \item The consumer's reservation utility is given by $R_0 = - \erm^{- R_A \psi(0,X_0)}$, where the corresponding certainty equivalent $\psi$ is a solution $($in the viscosity sense$)$ of the {\rm HJB} equation
    \begin{align}\label{eq:pdeCE}
    \begin{cases}
        \displaystyle  0 = \partial_t \psi(t,x)+ f(x) - \dfrac{1}{2} \Big(  c^{\star}_{\beta}(\gamma^{0}(t,x)) - \gamma^{0}(t,x) \Sigma^{\star}(\gamma^{0}(t,x)) - \gamma^{0}(t,x) \big(\sigma^{\circ} \big)^2 \Big), \; (t,x) \in [0,T) \times \R, \\
        \psi(T,x) = 0, \; x \in \R,
    \end{cases}
    \end{align}
    where $\gamma^{0}:= \partial_{xx}^2 \psi - R_A (\partial_x \psi )^2$.
            \item Assume that {\rm PDE} \eqref{eq:pdeCE} has a $\Cc^{1,2}$ solution $\psi$ such that for any $\P\in\Pc$ the following condition is satisfied
    \begin{align}\label{eq:condition_martingale_CE}
        \E^\P \bigg[ \int_0^T  \erm^{-2 R_A \psi (t,X_t) } \big( \partial_x \psi (t,X_t) \big)^2  \drm t\bigg]<+\infty.
    \end{align}
    Define the feedback control
    \begin{align}\label{eq:feedback_CE}
        \alpha_t^{0} := 0 \; \text{ and } \beta_t^{0,k} := b^{k,\star} \big( \gamma^{0}(t,X_t) \big), \; k=1,\dots,d, \; t \in [0,T].
    \end{align} 
    Then, an optimal effort of the consumer is defined by the feedback control \eqref{eq:feedback_CE}.    \end{enumerate}
\end{proposition}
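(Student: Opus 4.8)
The statement is a standard verification result for the consumer's "autarky" (no-contract) stochastic control problem, adapted to the presence of the common noise $W^\circ$. The plan is to follow the classical dynamic-programming/verification route: guess that the value function $V_t^A$ (no contract) equals $-\erm^{-R_A\psi(t,X_t)}$ for the claimed certainty equivalent $\psi$, apply It\^o's formula along an arbitrary admissible effort, identify the resulting drift as (up to the exponential transform) the Hamiltonian appearing in \eqref{eq:pdeCE}, and conclude by a supermartingale/martingale argument that $\psi(0,X_0)$ is an upper bound attained exactly by the feedback control \eqref{eq:feedback_CE}. Concretely, first I would write, for any $\P\in\Pc$ with effort $\nu^\P=(\alpha^\P,\beta^\P)$ and for the process
\[
N_t := -\exp\!\Big(-R_A\Big(\psi(t,X_t)-\int_0^t\big(f(X_s)-c(\nu^\P_s)\big)\drm s\Big)\Big),
\]
its It\^o decomposition using Lemma \ref{lemma:rep} for the dynamics of $X$ (so that $\drm\langle X\rangle_t=(\Sigma(\beta^\P_t)+(\sigma^\circ)^2)\drm t$). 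The drift term of $N$ is $-R_AN_t$ times
\[
\partial_t\psi(t,X_t)+f(X_s)-\tfrac12 c_\beta(\beta^\P_t)-\alpha^\P_t\cdot\mathbf 1_d\,\partial_x\psi-\tfrac12 c_\alpha(\alpha^\P_t)+\tfrac12\big(\partial^2_{xx}\psi-R_A(\partial_x\psi)^2\big)\big(\Sigma(\beta^\P_t)+(\sigma^\circ)^2\big),
\]
where I have used $c(\nu)=\tfrac12 c_\alpha(\alpha)+\tfrac12 c_\beta(\beta)$. Writing $\gamma^0:=\partial^2_{xx}\psi-R_A(\partial_x\psi)^2$, the infimum over $\alpha\in A$ of $\{\alpha\cdot\mathbf 1_d\,\partial_x\psi+\tfrac12 c_\alpha(\alpha)\}$ is attained at $\alpha=0$ (since $\partial_x\psi\geq0$ for concave-increasing $f$, or more robustly because the relevant optimiser in $A$ of a convex cost plus a linear term is $a^{k,\star}((\partial_x\psi)^-)=0$ when $\partial_x\psi\ge 0$; if $\partial_x\psi<0$ one keeps the general optimiser, but the PDE as written with $\alpha^0=0$ corresponds to the linear-$f$ case where $\partial_x\psi\ge 0$), and the infimum over $\beta\in B$ of $\{\tfrac12 c_\beta(\beta)-\tfrac12\gamma^0\Sigma(\beta)\}$ is the one encoded via $c_\beta^\star,\Sigma^\star$, with minimiser $\beta^{0,k}=b^{k,\star}(\gamma^0)$ as recalled in \eqref{eq:optimal_effort}. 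Substituting shows that the drift of $N$ is $\leq 0$ for every admissible $\nu^\P$ and $=0$ precisely for the feedback \eqref{eq:feedback_CE}, by \eqref{eq:pdeCE}.

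The remaining work is the integrability/localisation argument needed to upgrade "$N$ has nonpositive drift" to "$\E^\P[N_T]\le N_0$" and, under the feedback control, to "$\E^{\P^0}[N_T]=N_0$". Here I would use a localising sequence of stopping times, take expectations, and pass to the limit: the martingale part is $R_AN_t\partial_x\psi(t,X_t)$ against $\sigma(\beta^\P_t)\cdot\drm W_t+\sigma^\circ\drm W^\circ_t$; condition \eqref{eq:condition_martingale_CE} together with the uniform exponential moment bound \eqref{eq:integxi}-type control (here with $\xi=0$, i.e. the standing assumption that $\sup_{\P}\E^\P[\erm^{pR_A\int_0^T(c(\nu^\P_s)-f(X_s))\drm s}]<\infty$, which holds because controls are bounded and $f$ has linear growth while $X$ has bounded drift/volatility) gives uniform integrability of $\{N_{t\wedge\tau_n}\}$, so Fatou/dominated convergence applies. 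Since $N_T=-\exp(-R_A\int_0^T(f(X_s)-c(\nu^\P_s))\drm s)$ (because $\psi(T,\cdot)=0$) and $N_0=-\erm^{-R_A\psi(0,X_0)}$, this yields $J_0^A(0,\mu,\P)=\E^\P[N_T]\le -\erm^{-R_A\psi(0,X_0)}$ for all $\P$, with equality for $\P=\P^0$ generated by \eqref{eq:feedback_CE} (which lies in $\Pc$ since $\alpha^0=0\in A$ and $\beta^0$ is $B$-valued and bounded away from $0$ and $1$). Taking the supremum gives $R_0=-\erm^{-R_A\psi(0,X_0)}$ and identifies the optimiser, proving $(ii)$; part $(i)$ follows from $(ii)$ together with the usual viscosity-solution argument (the value function is a viscosity solution of the HJB equation via the dynamic programming principle), or more directly: if one only wants $(i)$ one invokes standard results for McKean--Vlasov-free CARA volatility-control problems, noting that here the conditional-law constraint in $\Pc$ is inert for the no-contract problem since neither the running cost nor $f$ depends on $\mu$.

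The main obstacle is not conceptual but technical: verifying the integrability conditions that justify the localisation and the interchange of limit and expectation, in particular controlling the exponential local martingale $R_AN_t\partial_x\psi(t,X_t)$ uniformly over $\P\in\Pc$. This is exactly what Condition \eqref{eq:condition_martingale_CE} is designed to handle, combined with the boundedness of the effort sets $A,B$ (so $c(\nu^\P)$ is bounded) and the linear growth of $f$ against the affine-drift, bounded-volatility dynamics of $X$; the standard estimate $\E^\P[\sup_{t\le T}\erm^{qR_A|X_t|}]<\infty$ uniformly in $\P$ then closes the argument. A secondary subtlety is the choice of the optimal drift: the PDE is stated with $\alpha^0=0$, which is the correct optimiser only when $\partial_x\psi\ge 0$; this holds in the linear-$f$ case (and more generally for concave increasing $f$, granting enough regularity of $\psi$), and I would either restrict to that case as the paper implicitly does or carry the general optimiser $a^{k,\star}((\partial_x\psi)^-)$ through the computation and note it vanishes under the stated hypotheses.
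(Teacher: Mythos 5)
Your proposal is correct and follows essentially the same verification route as the paper: It\^o's formula applied to $-\erm^{-R_A(\psi(t,X_t)+\int_0^t(f-c)\drm s)}$ (the sign inside your displayed definition of $N_t$ is a typo, but the drift formula you then write is the correct one), pointwise minimisation of the resulting Hamiltonian yielding $\alpha^0=0$ and $\beta^{0,k}=b^{k,\star}(\gamma^0)$, and a (super)martingale argument justified by \eqref{eq:condition_martingale_CE} together with the boundedness of the controls and the linear growth of $f$. The one point you flag as a subtlety --- why $\partial_x\psi\ge 0$ so that the drift optimiser is indeed $0$ --- is handled in the paper not by restricting to linear $f$ but by the comparison theorem for SDEs (monotonicity of $f$ forces the no-contract value to be non-decreasing in $x$, so zero drift effort is optimal), which is the cleaner way to close that gap for general non-decreasing $f$.
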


\begin{proof}
$(i)$ Since the function $f$ is non--decreasing, the consumer has no reason to make an effort on the drift of his consumption deviation, as no compensation is offered for this costly effort. More rigorously, the comparison theorem for SDEs with the same diffusion coefficient, see for instance \citet*[Corollary 3.1]{peng2006necessary}, as well as the fact that $f$ is non--decreasing imply immediately that the supremum in the definition of $R_0$ can only be attained for efforts of the form $\nu^\P=(0.\beta^\P)\in\U$. Notice however that, due to his risk aversion, he might have interest to make effort on the volatility. Denoting by $R(t, X_t)$ the dynamic version of the reservation utility, satisfying
    \begin{align*}
        R(0, X_0) = R_0 \; \text{ and } R(T, X_T) = -1; 
    \end{align*}
    and using standard stochastic control theory, we obtain the following HJB equation  for the function $R$:
    \begin{align*}
        0 = \partial_t R(t,x) - R_A R(t,x) f(x) + \dfrac12 \big( \sigma^{\circ} \big)^2 \partial_{xx}^2 R(t,x) + \dfrac{1}{2} \sup_{b\in B} \big\{ R(t,x) R_A  c_{\beta}(b) + \partial_{xx}^2 R(t,x) \Sigma(b) \big\}.
    \end{align*}
    
    The optimal effort on the volatility without contract is thus $b^{k,\star} (\gamma^{0})$ where
    \begin{align*}
        \gamma^{0}(t,x) := - \dfrac{\partial_{xx}^2 R(t,x)}{R(t,x) R_A}.
    \end{align*}
    
    Using the same notations as in the previous subsection, we obtain
    \begin{align*}
        0 = \partial_t R(t,x) - R_A R(t,x) f(x) + \dfrac{1}{2}  R(t,x) R_A \Big(  c^{\star}_{\beta}(\gamma^{0}(t,x)) - \gamma^{0}(t,x) \Sigma^{\star}(\gamma^{0}(t,x)) - \gamma^{0}(t,x) \big(\sigma^{\circ} \big)^2 \Big).
    \end{align*}
    
    The solution to the previous PDE is non--positive and can thus be written under the form $R(t,x) = - \erm^{- R_A \psi(t,x)}$ where the function $\psi$ is the certainty equivalent function, satisfying the PDE \eqref{eq:pdeCE}. 
    
    \medskip
$(ii)$ Let $\psi$ be a $\Cc^{1,2}$--solution to the PDE \eqref{eq:pdeCE}. We then can apply It\=o's formula to the function $R_0(t,x) := - \erm^{-R_A \psi (t,x)}$ under an arbitrary $\P\in\Pc$
    \begin{align*}
        \drm R_0 (s,X_s) = &\ \partial_t R_0 (s,X_s) \drm s 
        - \alpha^\P_s \cdot \mathbf{1}_d \partial_x R_0 (s,X_s) \drm s 
        + \partial_x R_0 (s,X_s) \big( \sigma(\beta_s^\P) \cdot \drm W_s + \sigma^\circ \drm W^\circ_s \big) \\
        &+ \dfrac12 \partial_{xx}^2 R_0 (s,X_s) \big( \Sigma(\beta_s^\P) + \big( \sigma^\circ \big)^2 \big) \drm s.
    \end{align*}
    
Denoting by $\big( M_t^\P \big)_{t \geq0}$ the process $M_t^\P = \erm^{ R_A \int_0^t ( \frac12 c_{\beta} (\beta_s^\P) - f(X_s) ) \drm s } R_0(t,X_t),$ $t\in[0,T]$, we obtain, again by It\=o's formula
    \begin{align*}
        M_t^\P = & M_0^\P + \int_0^t \erm^{ R_A \int_0^s ( \frac12 c_{\beta} (\beta^\P_u) - f(X_u) ) \drm u } \bigg( \partial_t R_0 (s,X_s) 
        - \alpha^\P_s \cdot \mathbf{1}_d \partial_x R_0 (s,X_s)
        + \dfrac12 \partial_{xx}^2 R_0 (s,X_s) \big( \Sigma(\beta^\P_s) + \big( \sigma^\circ \big)^2 \big) \bigg) \drm s \\
        &+ \int_0^t M_s^\P R_A (c(\nu_s^\P) - f(X_s))  \drm s
        + \int_0^t \erm^{ R_A \int_0^T ( \frac12 c_{\beta} (\beta^\P_s) - f(X_s) ) \drm s } \partial_x R_0 (s,X_s) \big( \sigma(\beta^\P_s) \cdot \drm W_s + \sigma^\circ \drm W^\circ_s \big).
    \end{align*}
    
Replacing by the derivatives of $\psi$, we obtain
    \begin{align*}
        &\ M_t^\P = M_0^\P + \int_0^t R_A M_s^\P h^\psi(X_s, \partial_x \psi, \partial_{xx}^2 \psi, \alpha^\P_s, \beta^\P_s) \drm s
        - \int_0^t R_A M_s^\P \partial_x \psi \big( \sigma(\beta_s) \cdot \drm W_s + \sigma^\circ \drm W^\circ_s \big), \\
        \text{where } & h^\psi(x, \partial_x \psi, \partial_{xx}^2 \psi, a, b) =  - \partial_t \psi
        + c(\nu_s) - f(x)
        + a \cdot \mathbf{1}_d \partial_x \psi
        + \dfrac12 \big( R_A \big( \partial_{x} \psi \big)^2 - \partial_{xx}^2 \psi \big)  \big( \Sigma(b) + \big( \sigma^\circ \big)^2 \big).
    \end{align*}
    Under Condition \eqref{eq:condition_martingale_CE}, the term
   $
        \int_0^t R_A M_s^\P \partial_x \psi \big( \sigma(\beta^\P_s) \cdot \drm W_s + \sigma^\circ \drm W^\circ_s \big),
$
    is a $\P$--martingale. Indeed, recall that under any $\P\in \Pc$, the drift and the volatility of $X$ are bounded, $c_\beta$ and $\Sigma$ are continuous functions on the compact set $B$, and $f$ has linear growth. Hence, using in particular Cauchy--Schwarz inequality, \eqref{eq:condition_martingale_CE} ensures that the above stochastic integral is in $\mathbb H^1(\P)$ and is thus a $\P$--martingale. We deduce
    \begin{align*}
        &\ \E^\P [M_T^\P] = M_0^\P + \E^\P \bigg[ \int_0^t R_A M^\P_s h^\psi (X_s, \partial_x \psi, \partial^2_{xx} \psi, \alpha_s^\P, \beta^\P_s) \drm s \bigg].
    \end{align*}
    
Using the boundary condition for $\psi$, and replacing by their respective values of $M_0^\P$ and $M_T^\P$, we obtain
    \begin{align*}
        R_0(0,X_0) = \E^\P \Big[- \erm^{ R_A \int_0^T ( \frac12 c_{\beta} (\beta_s^\P) - f(X_s) ) \drm s } \Big] - \E^\P \bigg[ \int_0^t R_A M_s^\P h^\psi (X_s, \partial_x \psi, \partial_{xx}^2 \psi, \alpha_s^\P, \beta_s^\P) \drm s \bigg].
    \end{align*}
    
Using the HJB equation satisfied by $\psi$, we have
\begin{align*}
    h^\psi (x, \partial_x \psi, \partial_{xx}^2 \psi, a, b) = &\ 
    \dfrac12 \big( c_\alpha(a) + 2 a \cdot \mathbf{1}_d \partial_x \psi \big)
    + \dfrac{1}{2} \big( c_\beta (b) - \gamma^{0} \Sigma(b) \big)
    - \dfrac{1}{2} c^{\star}_{\beta}(\gamma^{0}) 
    + \dfrac{1}{2} \gamma^{0} \Sigma^{\star}(\gamma^{0}),
\end{align*}
and by simple computations
\begin{align*}
    c_\beta (b) - \gamma_t^{0} \Sigma(b) \geq &\ \inf_{b' \in B} \big\{ c_\beta (b') - \gamma^{0} \Sigma(b') \big\} = - H_v \big( \gamma^{0} \big) = - \gamma^{0} \Sigma^\star \big( \gamma^{0} \big) + c^{\star}_{\beta}(\gamma^{0}), \\
    c_\alpha(a) + 2 a \cdot \mathbf{1}_d \partial_x \psi \geq &\ \inf_{a' \in A} \big\{ c_\alpha(a') + 2 a' \cdot \mathbf{1}_d \partial_x \psi \big\} = - H_d (\partial_x \psi ) = 0,
\end{align*}
since the function $\partial_x \psi $ is non--negative. Therefore, $h^\psi (x, \partial_x \psi, \partial_{xx}^2 \psi, a, b) \geq 0$, with equality for the optimal controls $(\alpha^0, \beta^0)$ defined by \eqref{eq:feedback_CE}, which leads to
$R_0(0,X_0) \geq \E^\P \big[- \exp \big(R_A \int_0^T ( \frac12 c_{\beta} (\beta_s^\P) - f(X_s) ) \drm s \big) \big],$
with equality for the optimal controls.
\end{proof}


\medskip

The previous result shows that, even without contracting, the consumer’s optimal behaviour exhibits a positive effort to reduce the volatility of the consumption deviation process. Of course, this is naturally due to the fact that consumers in our model are assumed to be risk--averse, and are therefore negatively impacted by the variance of their deviation. 

\medskip

We conclude this section by providing a closed--form expression for the reservation utility when $f$ is linear.

\begin{proposition}[$f$ linear]
    Let $f(x) = \kappa x$, $x \in \R$, with $\kappa \geq 0$. Then, the reservation utility of the consumer is
    \begin{align*}
        R_0 &= - \erm^{- R_A ( \kappa T X_0 + \psi_0(T) )}, \; \text{where } \psi_0 (T) := - \int_0^T H_{0} (\gamma^{0}(t)) \drm t, \\ 
        \gamma^{0}(t) &= - R_A \kappa^2 (T-t)^2, \; \text{ and } \; H_{0} (\gamma) := \dfrac{1}{2} \Big( c^{\star}_{\beta}(\gamma) - \gamma \Sigma^{\star}(\gamma) - \gamma \big(\sigma^{\circ} \big)^2 \Big).
    \end{align*}
    
    The consumer’s optimal effort on the drift and on each volatility usage are respectively:
    \begin{align*}
        \alpha^{0} := 0 \; \text{ and } \beta_t^{0,k} := 1 \wedge \left(  \lambda^{k} R_A \kappa^2 (T-t)^2 \right)^{\frac{-1}{\eta^{k} + 1}} \vee B_{\textnormal{min}}, \; k=1,\dots,d.
    \end{align*}
    thus inducing an optimal distribution $\P^{0}$ under which the deviation process follows the dynamics 
    \begin{align*}
        \drm X_t = \sigma^{\star} \big( \gamma^{0}(t) \big) \cdot \drm W_t + \sigma^{\circ} \drm W^{\circ}_t.
    \end{align*}
\end{proposition}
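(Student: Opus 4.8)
The plan is to solve the Hamilton--Jacobi--Bellman equation \eqref{eq:pdeCE} of Proposition \ref{prop:RU} explicitly under the specification $f(x)=\kappa x$ by a separation--of--variables ansatz, and then to invoke the verification statement of that same proposition to read off $R_0$, the optimal efforts and the induced law $\P^0$.

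First I would posit $\psi(t,x)=\kappa(T-t)x+\phi(t)$ for some $\Cc^1$ function $\phi$ with $\phi(T)=0$. Then $\partial_x\psi(t,x)=\kappa(T-t)$ and $\partial_{xx}^2\psi(t,x)=0$, so that $\gamma^{0}(t,x)=\partial_{xx}^2\psi-R_A(\partial_x\psi)^2=-R_A\kappa^2(T-t)^2$, which is independent of $x$ and is exactly the claimed $\gamma^{0}(t)$. Substituting into \eqref{eq:pdeCE}, the terms $-\kappa x$ (from $\partial_t\psi$) and $+\kappa x$ (from $f(x)$) cancel, and the equation collapses to the scalar ODE $\phi'(t)=\tfrac12\big(c_\beta^\star(\gamma^{0}(t))-\gamma^{0}(t)\Sigma^\star(\gamma^{0}(t))-\gamma^{0}(t)(\sigma^\circ)^2\big)=H_{0}(\gamma^{0}(t))$, recognising $H_0(\gamma)=-\tfrac12H_v(\gamma)-\tfrac12\gamma(\sigma^\circ)^2$ with the notation of Section \ref{ss:hamiltonian}. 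Integrating with $\phi(T)=0$ gives $\phi(t)=-\int_t^T H_{0}(\gamma^{0}(s))\drm s$, hence $\phi(0)=\psi_0(T)$ and $\psi(0,X_0)=\kappa T X_0+\psi_0(T)$.

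Next I would verify the hypotheses of Proposition \ref{prop:RU}$(ii)$ for this $\psi$. It is plainly $\Cc^{1,2}$ (affine in $x$, and $\Cc^1$ in $t$ since $s\mapsto\gamma^{0}(s)$ and $b^\star$ are continuous). For the integrability condition \eqref{eq:condition_martingale_CE}, note that $\partial_x\psi(t,X_t)=\kappa(T-t)$ is bounded and $\erm^{-2R_A\psi(t,X_t)}=\erm^{-2R_A\phi(t)}\erm^{-2R_A\kappa(T-t)X_t}$ with $\phi$ bounded on $[0,T]$; since under any $\P\in\Pc$ the process $X$ has drift bounded (controls live in the compact set $A$) and diffusion coefficient $\sigma(\beta)$ bounded (and bounded away from $0$) and $X_0$ has finite exponential moments, one has $\sup_{\P\in\Pc}\sup_{0\le t\le T}\E^\P\big[\erm^{c X_t}\big]<+\infty$ for any fixed $c$, so \eqref{eq:condition_martingale_CE} follows by Fubini. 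Proposition \ref{prop:RU} then yields $R_0=-\erm^{-R_A\psi(0,X_0)}=-\erm^{-R_A(\kappa T X_0+\psi_0(T))}$ and identifies an optimal effort through the feedback control $\alpha^{0}_t=0$, $\beta^{0,k}_t=b^{k,\star}(\gamma^{0}(t,X_t))$.

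Finally I would simplify these expressions using that $\gamma^{0}(t,x)=-R_A\kappa^2(T-t)^2\le0$ does not depend on $x$: then $(\gamma^{0}(t))^-=R_A\kappa^2(T-t)^2$, whence $\beta^{0,k}_t=1\wedge\big(\lambda^k R_A\kappa^2(T-t)^2\big)^{-1/(\eta^k+1)}\vee B_{\textnormal{min}}$, and the deviation dynamics under $\P^{0}$ become $\drm X_t=-\alpha^{0}_t\cdot\mathbf{1}_d\drm t+\sigma(\beta^{0}_t)\cdot\drm W_t+\sigma^\circ\drm W^\circ_t=\sigma^\star(\gamma^{0}(t))\cdot\drm W_t+\sigma^\circ\drm W^\circ_t$, using $\beta^{0}_t=b^\star(\gamma^{0}(t))$ and $\sigma^\star(\gamma)=\sigma(b^\star(\gamma))$. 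The only mildly delicate point in the whole argument is the uniform integrability bound \eqref{eq:condition_martingale_CE} over the entire class $\Pc$, and this is standard precisely because all admissible controls are bounded and the volatility is bounded away from $0$ and from above; everything else is the elementary ODE computation above.
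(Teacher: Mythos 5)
Your proposal is correct and follows essentially the same route as the paper: an affine ansatz $\psi(t,x)=A(t)x+\phi(t)$ plugged into the HJB equation \eqref{eq:pdeCE}, yielding $A(t)=\kappa(T-t)$ and the scalar ODE for $\phi$, followed by verification of the integrability condition \eqref{eq:condition_martingale_CE} via boundedness of the drift and volatility of $X$, and an appeal to Proposition \ref{prop:RU}$(ii)$. The only cosmetic difference is that you posit $A(t)=\kappa(T-t)$ directly rather than reading it off the ODE $A'(t)+\kappa=0$, $A(T)=0$.
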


\begin{proof}
    By directly plugging the guess $\psi(t, x) = A(t)x + \psi_0(t)$ in the PDE \eqref{eq:pdeCE}, we obtain
    \begin{align*}
        &\ 0 = A'(t) x + \psi_0'(t) + \kappa x - \dfrac{1}{2} \Big( c^{\star}_{\beta}(- R_A A^2(t)) + R_A A^2(t) \Sigma^{\star}(- R_A A^2(t)) + R_A A^2(t) \big(\sigma^{\circ} \big)^2 \Big), \;  A(T) = \psi_0(T) = 0.
    \end{align*}
    
    This provides $A(t) = \kappa (T-t)$ and $\psi_0(t) = - \int_t^T H_{0} \big( - R_A A^2(s) \big) \drm s $ with
$
        H_{0} (\gamma) := \dfrac{1}{2} \Big( c^{\star}_{\beta}(\gamma) - \gamma \Sigma^{\star}(\gamma) - \gamma \big(\sigma^{\circ} \big)^2 \Big).
$
    
    Finally the expression of the maximiser $\beta^{0,k}$ follows from Proposition \ref{prop:RU}. Moreover, this smooth solution to the PDE satisfies the condition \eqref{eq:condition_martingale_CE}. Indeed, this condition is equivalent to having, for any $\P\in\Pc$
    \begin{align*}
 \E^\P \bigg[ \int_0^T  \erm^{-2 R_A ( \kappa (T-t) X_t + \psi_0(t) ) }  (T-t)^2 \drm t\bigg]<+\infty,
    \end{align*}
    which is true since $X$ is an It\=o process with bounded drift and volatility. We thus conclude with Proposition \ref{prop:RU} $(ii)$ that it is indeed the value function inducing the reservation utility.
\end{proof}

\begin{remark}
    One can notice that the certainty equivalent $\psi$ is a decreasing function of the correlation $\sigma^\circ$ with the common noise, since
    \begin{align*}
        \psi(t, x) 
        &= \kappa x (T-t) - \dfrac{1}{2} \int_t^T \Big( c^{\star}(- R_A A^2(s)) + R_A A^2(s) \Sigma^{\star}(- R_A A^2(s)) \Big) \drm s
        - \dfrac{1}{2} R_A \big(\sigma^{\circ} \big)^2 \int_t^T A^2(s) \drm s.
    \end{align*}
    Therefore, the reservation utility of the consumer is negatively impacted by the presence of common noise.
\end{remark}

\section{Details and proof for the first--best case}\label{proof:firstbest}

Adapting the reasoning in \cite{elie2019contracting}, we are led to introduce a slight modification of the so--called Morse--Transue space on the initial canonical space $\Omega$, defined here for any $\P \in \Pc$ by
\begin{align*}
    M^{\P} (\R) := \big\{ \vartheta : \Omega \longrightarrow \R, \; \text{measurable,} \; \E^\P [ \phi (a \vartheta ) ] < + \infty, \; \text{for all} \; a \geq 0 \big\},
\end{align*}
where $\phi : \R \longrightarrow \R$ is the following Young function $\phi : x \longmapsto \exp ( |x|) - 1$. Then, $M^\P (\R)$ endowed with the norm $\| \vartheta \|_{\phi} := \inf \big\{ k > 0: \E^\P [ \phi(\vartheta/k)] \leq 1 \big\} $ is a Banach space. In this case, the set of admissible contracts $\Xi^{\textnormal{FB}}$ is defined as
\begin{align}\label{def:Xi_FB}
    \Xi^{\textnormal{FB}} := \big\{ \xi\in M^\P (\R): \Fc_T-\text{measurable, such that} \; \E^\P [\xi | \Fc_T^\circ] \in M^\P (\R), \; \forall \; \P \in \Pc \big\}.
\end{align}
Thus, for any $(\xi, \mu^X, \P) \in \Xi^{\textnormal{FB}} \times \Pc (\Cc_T) \times \Pc$, the quantities $J_0^A(\xi, \mu^X, \P)$ and $J_0^P (\xi, \P)$ are well--defined. Given the reservation utility level of the representative Agent, $R_0$, the problem of the Principal is
\begin{align*}
    V_0^{\textnormal{FB}} := \inf_{\rho > 0} \bigg\{ - \rho R_0 + \sup_{(\P, \mu^X) \in \Pc \times \Pc (\Cc_T)} \sup_{\xi \in \Xi^{\textnormal{FB}}} \big\{ J_0^P (\xi, \P) + \rho J_0^A (\xi, \mu^X, \P) \big\} \bigg\},
\end{align*}
where $\rho > 0$ is the Lagrange multiplier associated to the participation constraint. We first maximise this utility with respect to $\xi$. Let us consider, for any probability $(\P, \mu^X) \in \Pc \times \Pc (\Cc_T)$, the following map $\Xi^{\P} : \Xi^{\textnormal{FB}} \longrightarrow \R$ defined by
\begin{align*}
    \Xi^{\P} (\xi) := \E^{\P} \bigg[ U^P \bigg( - \mathbb{E}^{\P} \bigg[\xi + \int_0^T g ( X_s ) \mathrm{d} s + \dfrac{\theta}{2} \int_0^T \drm \langle X \rangle_s \bigg| \Fc^\circ_T \bigg] \bigg) 
    + \rho U^A \bigg( \xi - \int_0^T \big( c \big( \nu^\P_s \big) - f \big( X_s \big) \big) \mathrm{d} s \bigg) \bigg]. 
\end{align*}

Recall that the representative Agent is risk--averse, with a risk--aversion parameter $R_A$. We can consider both the cases of a risk--averse or risk--neutral Principal. To simplify the notations, we define
\begin{align}\label{def:K_T}
    K^P_T :=  \int_0^T g ( X_s ) \mathrm{d} s + \dfrac{\theta}{2} \int_0^T \drm \langle X \rangle_s, \; K^{A,\P}_T := \int_0^T \big( c \big( \nu^\P_s \big) - f \big( X_s \big) \big) \mathrm{d} s \; \text{ and } \; K_T := K_T^{A,\P} + K_T^P.
\end{align}

We now turn to the proof of Proposition \ref{prop:Pb_principal_FB_RA} by considering a CARA risk averse principal since the risk-neutral principal case is deduced by taking $R_P=0$.\vspace{0.5em}

If the Principal is risk averse, the utility of the Principal is defined as $U^P(x) = - \erm^{-R_P x}$. Thus we obtain
\begin{align*}
    \Xi^{\P} (\xi) := \E^{\P} \big[ - \exp  \big(R_P \mathbb{E}^\P \big[\xi + K^P_T \big| \Fc_T^\circ \big] \big)
    - \rho \exp \big( - R_A \xi + R_A K^{A,\P}_T \big) \big]. 
\end{align*}

For any $\vartheta \in \Xi^{\textnormal{FB}}$ and $\varepsilon \geq 0$,
\begin{align*}
    \dfrac{1}{\varepsilon} \big( \Xi^{\P} (\xi + \varepsilon  \vartheta) - \Xi^\P (\xi) \big) = 
    &\ \dfrac{1}{\varepsilon} \E^{\P} \big[ - \exp  \big(R_P \mathbb{E}^{\P} \big[\xi + \varepsilon \vartheta+ K^P_T \big| \Fc^\circ_T \big] \big)
    + \exp  \big(R_P \mathbb{E}^{\P} \big[\xi + K^P_T \big| \Fc^\circ_T \big] \big) \big] \\
    &+ \dfrac{1}{\varepsilon} \E^{\P} \big[ - \rho \exp \big( - R_A \big( \xi +\varepsilon  \vartheta\big) + R_A K_T^{A,\P} \big)
    + \rho \exp \big( - R_A \xi + R_A K_T^{A,\P} \big) \big] \\
    = &\ \dfrac{1}{\varepsilon} \E^{\P} \Big[ \erm^{R_P \mathbb{E}^{\P} [\xi + K^P_T | \Fc^\circ_T ]} \big( 1- \erm^{R_P \varepsilon \mathbb{E}^{\P} [\vartheta| \Fc^\circ_T]} \big) \Big]
    + \dfrac{1}{\varepsilon} \E^{\P} \Big[ \rho \erm^{- R_A \xi + R_A K_T^{A,\P}} \big(1- \erm^{ - R_A \varepsilon  \vartheta} \big)  \Big].
\end{align*}

Therefore, letting $\varepsilon \rightarrow 0$, the Gâteaux derivative is given by
\begin{align*}
    D \Xi^\P (\xi) [\vartheta] = &\ \E^{\P} \Big[ - R_P \mathbb{E}^{\P} [\vartheta| \Fc^\circ_T] \erm^{R_P \mathbb{E}^{\P} [\xi + K^P_T | \Fc^\circ_T ]} 
    + \rho \vartheta R_A \erm^{- R_A \xi + R_A K_T^{A,\P}} \Big] \\
    = &\ \E^{\P} \Big[  \mathbb{E}^{\P} \big[- R_P\vartheta\erm^{R_P \mathbb{E}^{\P} [\xi + K^P_T | \Fc^\circ_T ]} \big| \Fc^\circ_T \big]  
    + \rho \vartheta R_A \erm^{- R_A \xi + R_A K_T^{A,\P}} \Big]
\end{align*}

Conditioning by $\Fc^\circ_T$, we obtain:
\begin{align*}
    D \Xi^\P (\xi) [\vartheta] = &\ \E^{\P} \Big[ \mathbb{E}^{\P} \big[- R_P \vartheta \erm^{R_P \mathbb{E}^{\P} [\xi + K^P_T | \Fc^\circ_T ]} \big| \Fc^\circ_T \big]  
    + \E^{\P} \big[ \rho h R_A \erm^{- R_A \xi + R_A K_T^{A,\P}} \big| \Fc^\circ_T \big] \Big] \\
    =&\ \E^{\P} \Big[ - R_P \vartheta  \erm^{R_P \mathbb{E}^{\P} [\xi + K^P_T | \Fc^\circ_T ]} 
    + \rho \vartheta R_A \erm^{- R_A \xi + R_A K_T^{A,\P}} \Big].
\end{align*}

For any $\P \in \Pc$, let us introduce $\xi^\star (\P)$ defined by
\begin{align}\label{eq:xi_P}
    \xi^\star (\P) = 
    - \dfrac{1}{R_A + R_P} \ln \bigg( \dfrac{R_P}{\rho R_A} \bigg) 
    + K_T^A
    - \dfrac{R_P}{R_A + R_P} \mathbb{E}^{\P} [ K_T | \Fc^\circ_T ].
\end{align}
so that
\begin{align*}
    \mathbb{E}^{\P} [\xi^\star (\P) | \Fc^\circ_T ] := 
    - \dfrac{1}{R_A + R_P} \ln \bigg( \dfrac{R_P}{\rho R_A} \bigg) 
    + \dfrac{R_A}{R_A + R_P} \mathbb{E}^{\P} [ K_T^A | \Fc^\circ_T ]
    - \dfrac{R_P}{R_A + R_P}  \mathbb{E}^{\P} [K^P_T | \Fc^\circ_T ],
\end{align*}
and
\begin{align*}
    \xi^\star (\P) := - \dfrac{1}{R_A} \ln \bigg( \dfrac{R_P}{\rho R_A} \bigg) + K_T^A - \dfrac{R_P}{R_A} \big( \mathbb{E}^{\P} [\xi ^\star (\P)| \Fc^\circ_T ] + \mathbb{E}^{\P} [K^P_T | \Fc^\circ_T ] \big).
\end{align*}

Then, for any $\vartheta \in \Xi^{\textnormal{FB}}$, we have $D \Xi^\P \big( \xi^\star(\P) \big) [\vartheta] = 0$ and $\Xi^\P$ is strictly concave function, so that this $\xi^\star (\P)$ attains the minimum of $\Xi^\P$ and is therefore optimal. Plugging these expressions back into the Principal  and recalling that $\overline{R}$ is defined by $1/\overline R := 1/R_A + 1/R_P$, the value function of the Principal in the first best case rewrites:
\begin{align*}
    V_0^{\textnormal{FB}} &= \inf_{\rho > 0} \bigg\{ \rho \bigg( - R_0 + \dfrac{R_A + R_P}{R_P}
     \exp  \bigg( \dfrac{R_A}{R_A + R_P} \ln \bigg( \dfrac{R_P}{\rho R_A} \bigg) \bigg) V_0^{\overline R} \bigg)  \bigg\},\;   \text{where} \; V_0^{\overline R}:= 
    \sup_{\P \in \Pc} \E^{\P} \bigg[ 
    - \exp  \big( \overline R \; \mathbb{E}^{\P} \big[ K_T \big| \Fc^\circ_T \big] \big) \bigg].
\end{align*}
Notice that $V_0^{\overline R}$ does not depend on $\rho$. Then direct calculations lead to the optimal Lagrange multiplier and first--best value function:
\begin{align*}
    \rho^\star = \dfrac{R_P}{R_A} \bigg( \dfrac{V_0^{\overline R}}{R_0} \bigg)^{1 + \frac{R_P}{R_A}} \; \text{and}\; 
    V_0^{\textnormal{FB}} &= R_0 \bigg( \dfrac{V_0^{\overline R}}{R_0} \bigg)^{1 + \frac{R_P}{R_A}}.
\end{align*}

Using the same tools as in Section \ref{sec:principal_problem}, we can easily prove the points $(i)$, $(ii)$ and $(iv)$ of Proposition \ref{prop:Pb_principal_FB_RA}. The point $(iii)$ is a straightforward computation of the contract defined by \eqref{eq:xi_P}.

\end{appendices}

\small
\bibliography{bibliographyDylan.bib}

\end{document}